\setlist[enumerate]{parsep=0pt plus 4pt,topsep=0pt plus 4pt}
\definecolor{darkblue}{RGB}{0,0,160}
\newcommand{\excise}[1]{}
\newtheorem{thm}{Theorem}[section]
\newtheorem{lemma}[thm]{Lemma}
\newtheorem{cor}[thm]{Corollary}
\newtheorem{prop}[thm]{Proposition}
\newtheorem{hyp}[thm]{Hypothesis}
\theoremstyle{definition}
\newtheorem{example}[thm]{Example}
\newtheorem{remark}[thm]{Remark}
\newtheorem{defn}[thm]{Definition}
\newtheorem{conv}[thm]{Convention}
\numberwithin{equation}{section}
\newcounter{separated}
\newcommand{\Ring}[1]{\ensuremath{\mathbb{#1}}}
\renewcommand\>{\rangle}
\newcommand\0{\mathbf{0}}
\newcommand\DD{\mathscr{D}}
\newcommand\EE{\mathscr{E}}
\newcommand\FF{\mathscr{F}}
\newcommand\GG{\mathscr{G}}
\newcommand\NN{\Ring{N}}
\newcommand\OO{\mathcal{O}}
\newcommand\QQ{{\mathbb Q}}
\newcommand\RR{{\mathbb R}}
\newcommand\UU{\mathscr{U}}
\newcommand\ZZ{{\mathbb Z}}
\newcommand\bb{{\mathbf b}}
\newcommand\ee{{\mathbf e}}
\newcommand\ii{{\mathbf i}}
\newcommand\jj{{\mathbf j}}
\newcommand\kk{\Bbbk}
\newcommand\mm{{\mathfrak m}}
\newcommand\qq{{\mathbf q}}
\newcommand\xx{{\mathbf x}}
\newcommand\cA{{\mathcal A}}
\newcommand\cB{{\mathcal B}}
\newcommand\cH{H}
\newcommand\cM{M}
\newcommand\cN{N}
\newcommand\cP{P}
\newcommand\cQ{Q}
\newcommand\cZ{Z}
\newcommand\oD{\hspace{.3ex}\ol{\hspace{-.3ex}D\hspace{-.05ex}}\hspace{.05ex}}
\newcommand\ob{\hspace{.3ex}^{\raisebox{.8ex}{$\scriptscriptstyle\circ$}}%
	\hspace{-1.22ex}b}
\newcommand\vC{\check{\mathcal C}}
\newcommand\del{\partial}
\renewcommand\aa{{\mathbf a}}
\renewcommand\phi{\varphi}
\newcommand\ale{{\mathord{\mathrm{ale}}}}
\newcommand\con{{\mathord{\mathrm{con}}}}
\newcommand\ord{{\mathord{\mathrm{ord}}}}
\newcommand\too{\longrightarrow}
\newcommand\from{\leftarrow}
\newcommand\into{\hookrightarrow}
\newcommand\otni{\hookleftarrow}
\newcommand\onto{\twoheadrightarrow}
\newcommand\spot{{\hbox{\raisebox{1pt}{\tiny$\scriptscriptstyle\bullet$}}}}
\newcommand\minus{\smallsetminus}
\newcommand\simto{\mathrel{\!\ooalign{$\fillrightmap$\cr\raisebox{.75ex}{$\,\sim\ \hspace{.2ex}$}}}}
\newcommand\cupdot{\ensuremath{\mathbin{\mathaccent\cdot\cup}}}
\newcommand\goesto{\rightsquigarrow}
\newcommand\dirlim{\varinjlim}
\newcommand\nothing{\varnothing}
\newcommand\filleftmap{\mathord\leftarrow \mkern-6mu
	\cleaders\hbox{$\mkern-2mu \mathord- \mkern-2mu$}\hfill
	\mkern-6mu \mathord-}
\newcommand\fillrightmap{\mathord- \mkern-6mu
	\cleaders\hbox{$\mkern-2mu \mathord- \mkern-2mu$}\hfill
	\mkern-6mu \mathord\rightarrow}
\newcommand{\lhookdownarrow}{\rotatebox[origin=c]{-90}{$\into$}}
\newcommand{\twoheaddownarrow}{\rotatebox[origin=c]{-90}{$\onto$}}
\renewcommand\iff{\Leftrightarrow}
\renewcommand\epsilon{\varepsilon}
\renewcommand\implies{\Rightarrow}
\newcommand\dx[1][]{\delta^{\hspace{.1ex}\xi}}
\newcommand\ol[1]{{\overline{#1}}}
\definecolor{lightred}{rgb}{1,.3,.3}
\newcommand\red{\color{lightred}}
\newcommand\blu{\color{blue}}
\newcommand{\aoverb}[2]{{\genfrac{}{}{0pt}{1}{#1}{#2}}}
\def\twoline#1#2{\aoverb{\scriptstyle {#1}}{\scriptstyle {#2}}}
\DeclareMathOperator\Hom{Hom} 
\DeclareMathOperator\Mor{Mor} 
\DeclareMathOperator\coker{coker} 
\newcommand\monomialmatrix[3]{{
\begin{array}{@{}r@{\:}r@{}c@{}l@{}}
  \begin{array}{@{}c@{}}		
	\begin{array}{@{}r@{}}
	\\
	#1
	\end{array}\!
  \end{array}						
&
  \begin{array}{@{}c@{}}		
	\begin{array}{@{}l@{}}\\				
	\end{array}						
	\\							
	\left[\begin{array}{@{}l@{}}				
	#3							
	\end{array}\!						
	\right.							
  \end{array}							
&
  #2					
&
  \begin{array}{@{}c@{}}		
	\begin{array}{@{}l@{}}\\				
	\end{array}						
	\\							
	\left.\!\begin{array}{@{}l@{}}				
	#3							
	\end{array}						
	\right]							
  \end{array}							
\end{array}
}}
\begin{document}

\title[Modules over posets: commutative and homological algebra]%
      {Modules over posets:\\commutative and homological algebra}
\author{Ezra Miller}
\address{Mathematics Department\\Duke University\\Durham, NC 27708}
\urladdr{\url{http://math.duke.edu/people/ezra-miller}}

\makeatletter
  \@namedef{subjclassname@2010}{\textup{2010} Mathematics Subject Classification}
\makeatother
\subjclass[2010]{Primary: 13P25, 05E40, 32S60, 55Nxx, 06F20, 13E99,
13D02, 32B20, 14P10, 14P15, 52B99, 13P20, 14F05, 13A02,
06A07, 68W30, 92D15, 06F05, 20M14; Secondary: 05E15, 13F99,
13Cxx, 32B25, 32C05, 06A11, 20M25, 06B35,
22A25, 06B15, 62H35,
92C15}

\date{25 August 2019}

\begin{abstract}
The commutative and homological algebra of modules over posets is
developed, as closely parallel as possible to the algebra of finitely
generated modules over noetherian commutative rings, in the direction
of finite presentations, primary decompositions, and resolutions.
Interpreting this finiteness in the language of derived categories of
subanalytically constructible sheaves proves two conjectures due to
Kashiwara and Schapira concerning sheaves with microsupport in a given
cone.

The motivating case is persistent homology of arbitrary filtered
topological spaces, especially the case of multiple real parameters.
The algebraic theory yields computationally feasible, topologically
interpretable data structures, in terms of birth and death of homology
classes, for persistent homology indexed by arbitrary posets.

The exposition focuses on the nature and ramifications of a suitable
finiteness condition to replace the noetherian hypothesis.  The
tameness condition introduced for this purpose captures finiteness for
variation in families of vector spaces indexed by posets in a way that
is characterized equivalently by distinct topological, algebraic,
combinatorial, and homological manifestations.  Tameness serves both
the theoretical and computational purposes: it guarantees finite
primary decompositions, as well as various finite presentations and
resolutions all related by a syzygy theorem, and the data structures
thus produced are computable in addition to being interpretable.

The tameness condition and its resulting theory are new even in the
finitely generated discrete setting, where being tame is materially
weaker than being noetherian.
\end{abstract}
\maketitle

\setcounter{tocdepth}{2}
\tableofcontents

\section{Introduction}\label{s:intro}

\subsection*{Overview}\label{sub:overview}

A module over a poset is a family of vector spaces indexed by the
poset elements, with a homomorphism for each poset relation.  The
setup is inherently commutative: the homomorphism for a poset relation
$p \preceq q$ is the composite of the homomorphisms for the relations
$p \preceq r$ and $r \preceq q$ whenever $r$ lies between $p$ and~$q$.
This paper is a study of the extent to which modules over posets
behave like modules over noetherian commutative rings, particularly
when it comes to finite presentations, primary decompositions, and
resolutions.  Interpreting this finiteness in the language of derived
categories of subanalytically constructible sheaves proves two
conjectures due to Kashiwara and Schapira concerning sheaves with
microsupport in a given cone.

The algebraic and homological investigations can be viewed as testing
the frontier of multigraded commutative algebra regarding how far one
can get without a ring and with minimal hypotheses on the
multigrading.  However, beyond this abstract mathematical interest,
the impetus lies in data science applications, where the poset
consists of ``parameters'' indexing a family of topological subspaces
of a fixed topological space.  Taking homology of the subspaces in
this topological filtration yields a poset module, called the
persistent homology of the filtration, referring to how homology
classes are born, persist for a while, and then die as the parameter
moves up in~the~poset.

In ordinary persistent homology, the poset is totally
ordered---usually the real numbers~$\RR$, the integers~$\ZZ$, or a
subset $\{1,\dots,m\}$.  This case is well studied (see
\cite{edelsbrunner-harer2010}, for example), and the algebra is
correspondingly simple \cite{crawley-boevey2015}.  Persistence with
multiple totally ordered parameters, introduced by Carlsson and
Zomorodian \cite{multiparamPH}, has been developed in various ways,
often assuming that the poset is $\NN^n$.  That discrete framework is
preferred in part because it arises frequently when filtering finite
simplicial complexes, but also because settings involving continuous
parameters---including the application that drives the advances
here---unavoidably produce modules that fail to be finitely presented
in several fundamental ways.

The foundations lain here take the lack of noetherian hypotheses
head~on, to open the~possibility of working directly with modules over
arbitrary posets.  The focus is therefore on the nature and
ramifications of a suitable finiteness condition to replace the
noetherian hypothesis.  The tameness condition introduced here appears
to be the natural candidate, capturing finiteness of variation in a
way that is characterized equivalently by distinct topological,
algebraic, combinatorial, and homological manifestations.  Tameness
serves both the theoretical and computational purposes: it guarantees
finite primary decompositions, as well as various finite presentations
and resolutions all related by a syzygy theorem, and the data
structures thus produced are computationally feasible, in addition to
being topologically interpretable in terms of birth and death of
homology classes.  The tameness condition, and its resulting primary
decompositions, syzygy theorem, and interpretable computational
structures, are new even in the finitely generated discrete setting,
where tame is much weaker~than~\mbox{noetherian}.

No restriction on the underlying poset is required except for primary
decomposition, which needs the poset to be a group to allow analogues
of prime ideals and localization.  There is no difference in any of
the theory for cases that are, for example, not locally finite, such
as~$\RR^n$ or other partially ordered real vector spaces.  Moreover,
the data structures and transitions between the topological,
algebraic, combinatorial, and homological perspectives take advantage
of and preserve supplementary geometry, be it subanalytic,
semialgebraic, or piecewise-linear, for instance, if the ambient
structures---the partial orderings and the modules---have such
geometry to begin with.  The subanalytic and piecewise-linear cases
are crucial for the applications to the conjectures of Kashiwara and
Schapira.

\setcounter{tocdepth}{-1}
\subsection*{Acknowledgements}\label{sub:acknowledgements}
\setcounter{tocdepth}{2}

First, a special acknowledgement goes to Ashleigh Thomas, who has been
and continues to be a long-term collaborator on this project.  She was
listed as an author on earlier drafts of \cite{qr-codes} (of which
this is roughly the first third), but her contributions lie more
properly beyond these preliminaries (see \cite{primary-distance}, for
example), so she declined in the end to be named as an author on this
installment.  Early in the development of the ideas here, Thomas put
her finger on the continuous rather than discrete nature of
multiparameter persistence modules for fly wings.  She computed the
first examples explicitly, namely those in
Example~\ref{e:toy-model-fly-wing}, and produced the biparameter
persistence diagrams there as well as some of the figures in
Example~\ref{e:flange-switching}.

Justin Curry pointed out connections from the combinatorial viewpoint
taken here, in terms of modules over posets, to higher notions in
algebra and category theory, particularly those involving
constructible sheaves, which are in the same vein as Curry's proposed
uses of them in persistence \cite{curry-thesis}; see
Remarks~\ref{r:curry}, \ref{r:indicator}, \ref{r:lurie},
and~\ref{r:kan-extension}.

The author is indebted to David Houle, whose contribution to this
project was seminal and remains ongoing; in particular, he and his lab
produced the fruit fly wing images \cite{houle03}.  Paul Bendich and
Joshua Cruz took part in the genesis of this project, including early
discussions concerning ways to tweak persistent (intersection
\cite{bendich-harer2011}) homology for the fly wing investigation.
Ville Puuska discovered several errors in an early version of
Section~\ref{s:encoding}, resulting in substantial correction and
alteration; see Examples~\ref{e:puuska-nonconstant-isotypic}
and~\ref{e:puuska-nontransitive}.  Pierre Schapira gave helpful
comments on Section~\ref{s:derived}.  Banff International Research
Station provided an opportunity for valuable feedback and suggestions
at the workshop there on Topological Data Analysis (August, 2017) as
this research was being completed; many participants, especially the
organizers, Uli Bauer and Anthea Monod, as well as Michael Lesnick,
shared important perspectives and insight.  Thomas Kahle requested
that \mbox{Proposition}~\ref{p:determined} be an equivalence instead
of merely the one implication it had stated.  Hal Schenck gave helpful
comments on an earlier version of the Introduction.  Some passages in
Section~\ref{sub:motivation} are based on or taken verbatim
from~\cite{fruitFlyModuli}.  Portions of this work were funded by NSF
grant~DMS-1702395.

\subsection{Motivation and examples}\label{sub:motivation}

The developments here grew out of investigation of data structures for
real multiparameter persistence modules, where both senses of the word
``real'' are intended: actual---from genuine data, with a particular
dataset in mind---and with parameters taking continuous as opposed to
discrete values.  Instead of reviewing the numerous possible reasons
for considering multiparameter persistence, many already having been
present from the outset \cite[\S1.1]{multiparamPH}, what follows is a
description of the instance of real multiparameter persistence that
arises in the biological problem that the theory here is specifically
designed to serve.

\begin{example}\label{e:fly-wing-filtration}
The veins in a fruit fly
wing can be presented as an embedded planar graph, with a location for
each vertex and an algebraic curve for each arc \cite{houle03}.  There
are many options for statistical summaries of fly wings, some of them
elementary, such as a linear model taking into account a weighted sum
of (say) the number of vertices and the total edge length.  Whatever
the chosen method, it has to grapple with the topological vein
variation, giving appropriate weight to new or deleted singular points
in addition to varying shape, as in the following images.
$$%
\includegraphics[height=25mm]{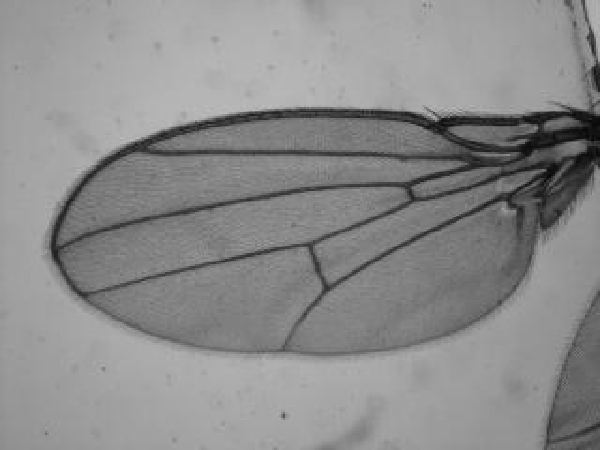}
\qquad
\includegraphics[height=25mm]{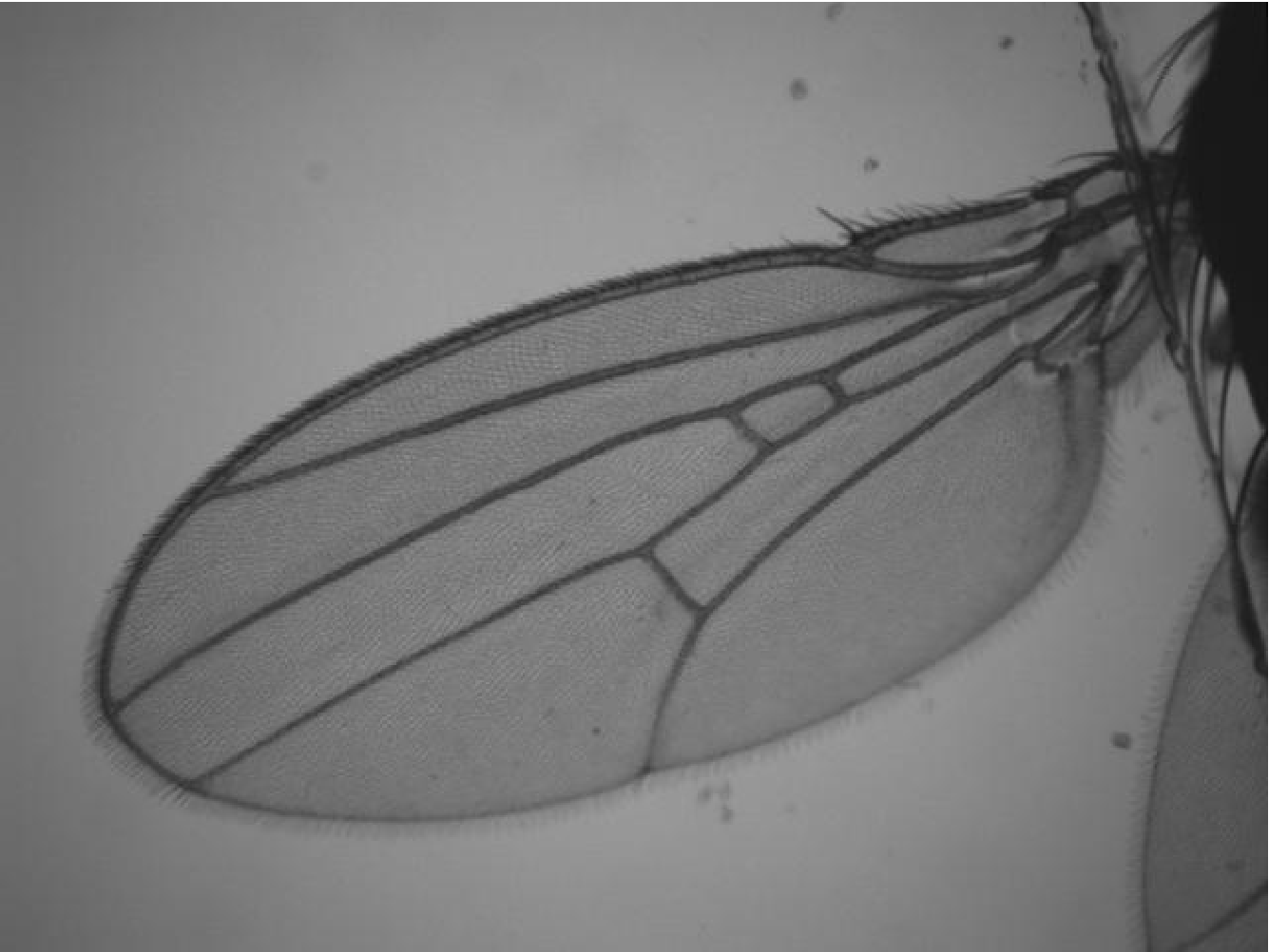}
\qquad
\includegraphics[height=25mm]{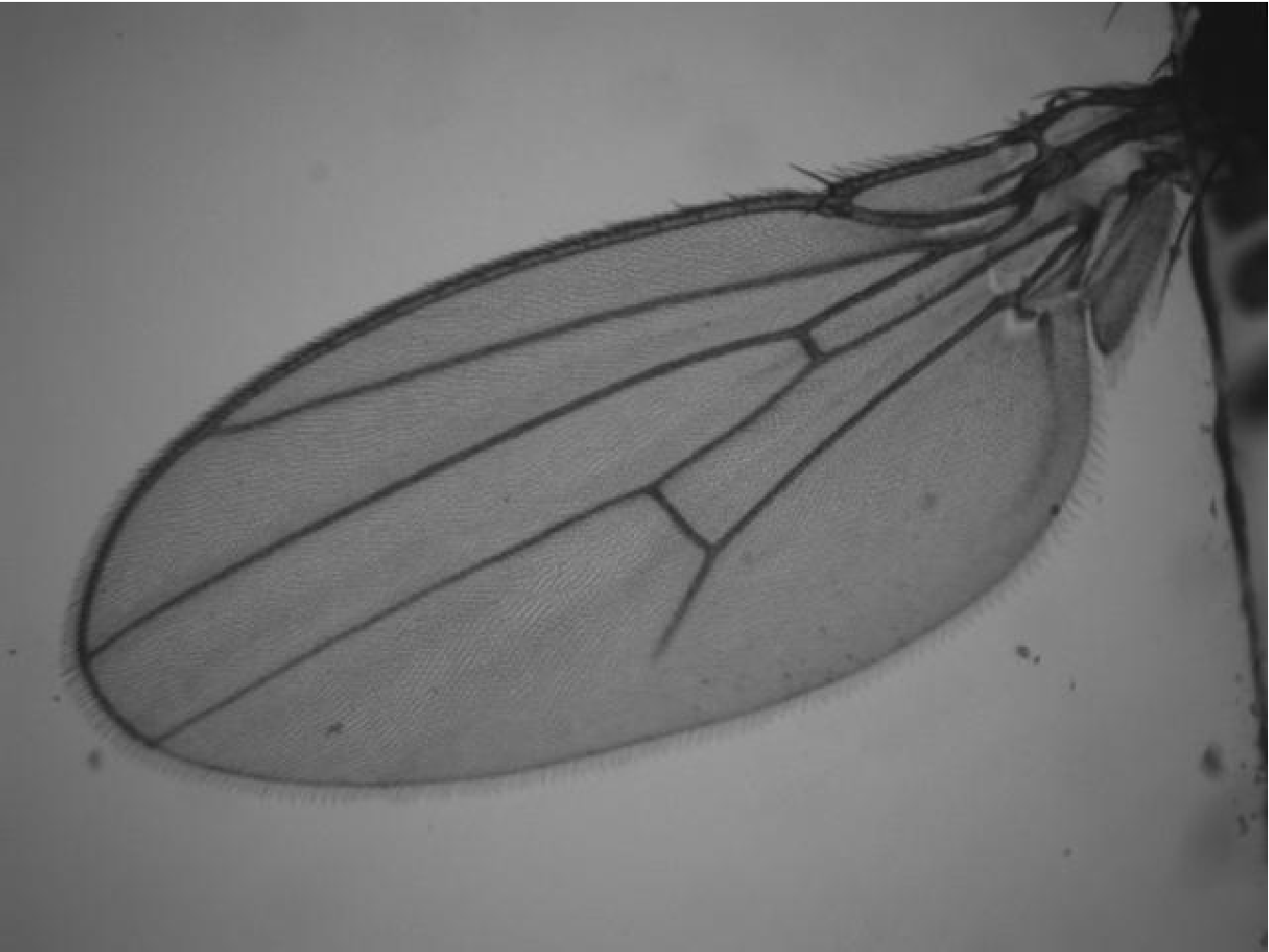}
$$
The nature of wing vein formation from gene expression levels during
embryonic development (see \cite{blair07} for background) provides
rationale for believing that continuous biparameter persistent
homology models biological reality reasonably faithfully.

Let $\cQ = \RR_- \times \RR_+$ with the coordinatewise partial order,
so $(r,s) \in \cQ$ for any nonnegative real numbers~$-r$ and~$s$.  Let
$X = \RR^2$ be the plane in which the fly wing is embedded and define
$X_{rs} \subseteq X$ to be the set of points at distance at least~$-r$
from every vertex and within~$s$ of some edge.  Thus $X_{rs}$ is
obtained by removing the union of the balls of radius~$r$ around the
vertices from the union of $s$-neighborhoods of the edges.  In the
following portion of a fly wing, $-r$ is approximately twice~$s$:
$$%
\includegraphics[height=23mm]{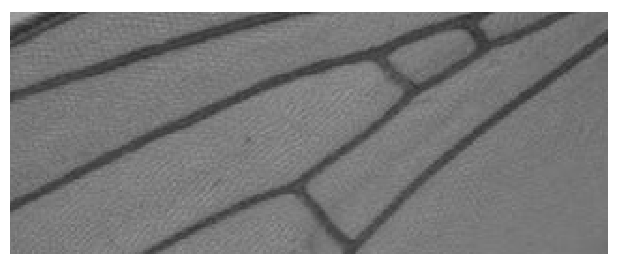}
\quad
\raisebox{10mm}{$\goesto$}
\quad
\includegraphics[height=23mm]{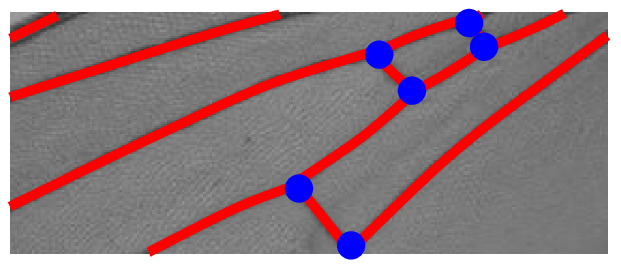}
\vspace{-1ex}
$$
The biparameter persistent homology module $\cM_{rs} = H_0(X_{rs})$
summarizes wing vein structure for the intended biological purposes.
\end{example}

Relevant properties of these modules are best highlighted in a
simplified setting.

\begin{example}\label{e:toy-model-fly-wing}
Using the setup from Example~\ref{e:fly-wing-filtration}, the zeroth
persistent homology for the toy-model ``fly wing'' at left in
Figure~\ref{f:toy-model-fly-wing} is the $\RR^2$-module $\cM$ shown at
\begin{figure}[ht]
\vspace{-8.5ex}
$$%
\begin{array}[b]{@{}c@{}}
\mbox{}\\[28.5pt]
\includegraphics[height=30mm]{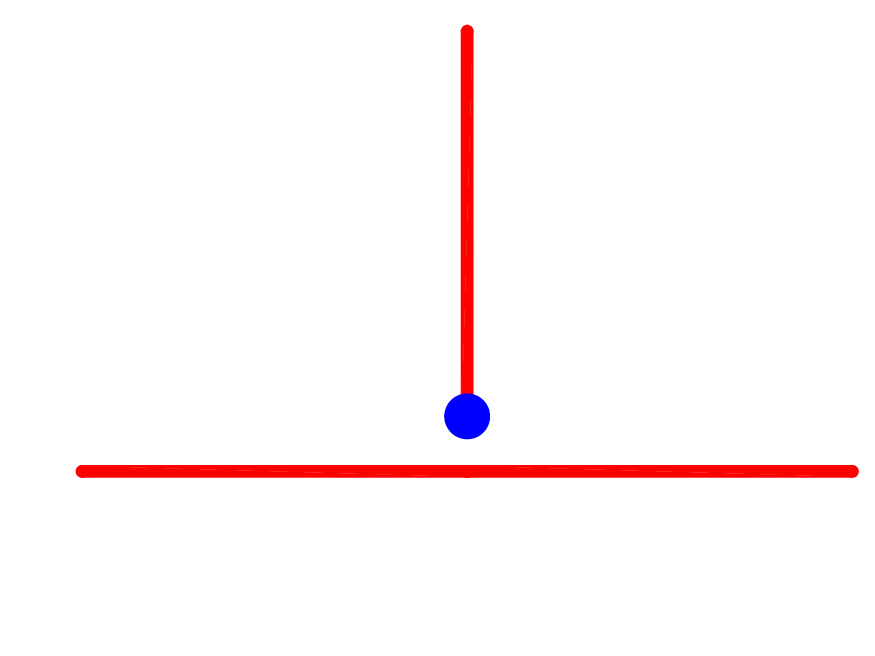}
\\[-22.5pt]
\end{array}
\begin{array}[b]{@{\ \ }c@{}}
\hspace{-3pt}\goesto\\[7mm]\mbox{}
\end{array}
\qquad
\begin{array}[b]{@{\hspace{-10pt}}r@{\hspace{-10pt}}|@{}l@{}}
\begin{array}{@{}c@{}}
\psfrag{r}{\tiny$r \to$}
\psfrag{s}{\tiny$\begin{array}{@{}c@{}}\uparrow\\[-.5ex]s\end{array}$}
\includegraphics[height=30mm]{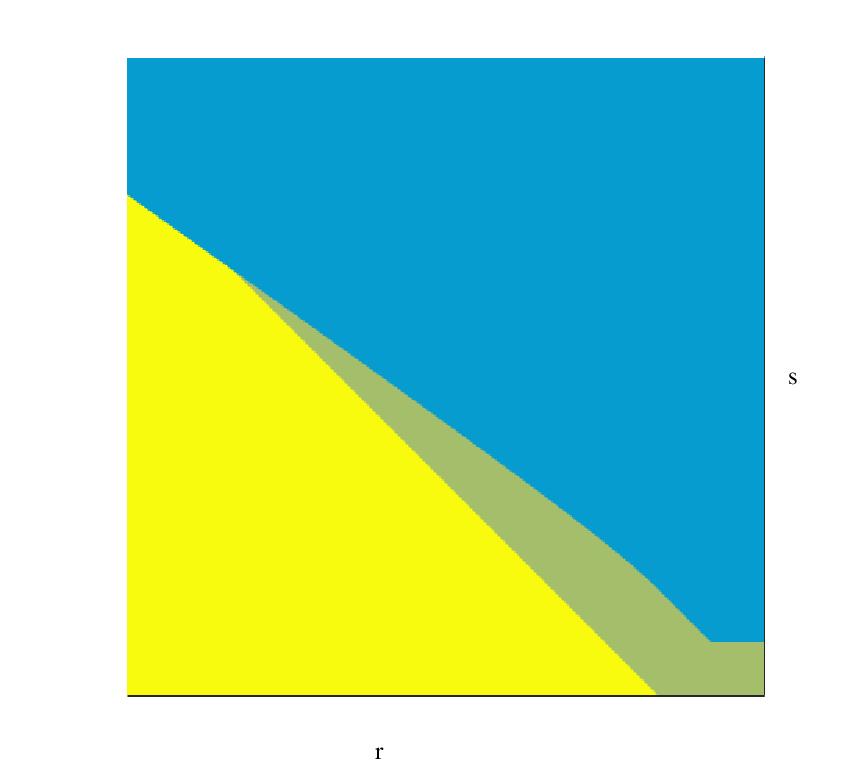}\\[-7.7pt]
\end{array}
&\,\,\,\\[-6pt]\hline
\end{array}
\qquad
\begin{array}[b]{@{\ \ }c@{}}
\hspace{-3pt}\goesto\\[7mm]\mbox{}
\end{array}
\qquad
\begin{array}[b]{@{\hspace{-10pt}}r@{\hspace{-10pt}}|@{}l@{}}
\begin{array}{@{}c@{}}
\psfrag{1}{\tiny$\kk$}
\psfrag{2}{\tiny$\kk^2$}
\psfrag{3}{\tiny$\kk^3$}
\includegraphics[height=30mm]{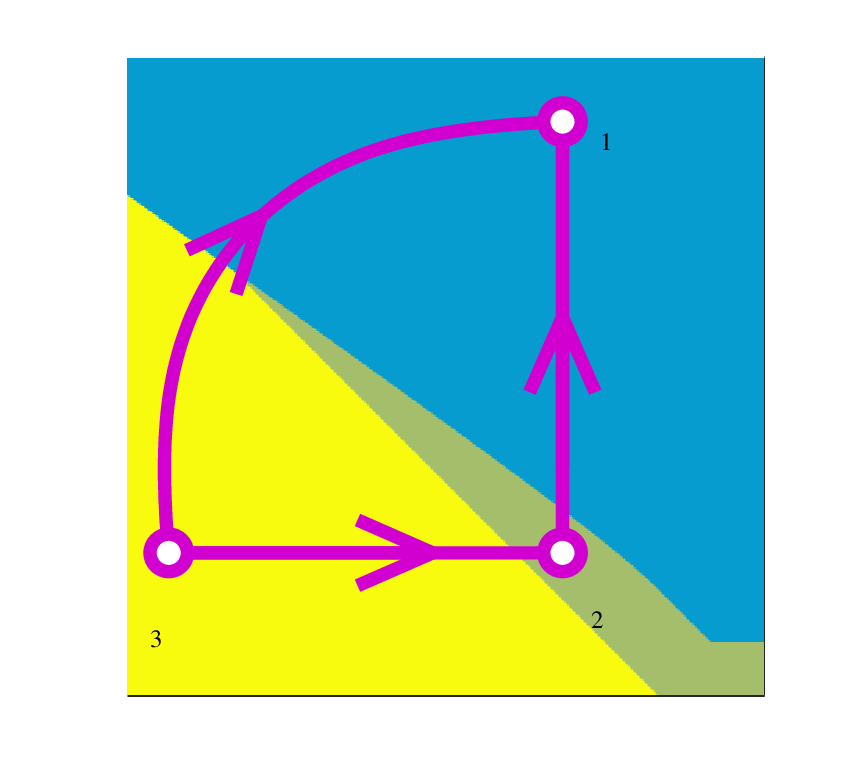}\\[-7.7pt]
\end{array}
&\,\,\,\\[-6pt]\hline
\end{array}
\vspace{-2ex}
$$
\caption{Biparameter persistence module and finite encoding\vspace{-1ex}}
\label{f:toy-model-fly-wing}
\end{figure}
center.  Each point of $\RR^2$ is colored according to the dimension
of its associated vector space in~$\cM$, namely $3$, $2$, or~$1$
proceeding up (increasing~$s$) and to the right (increasing~$r$).  The
structure homomorphisms $\cM_{rs} \to \cM_{r's'}$ are all surjective.

This $\RR^2$-module fails to be finitely presented for three
fundamental reasons.  First, the three generators sit infinitely far
back along the $r$-axis.  (Fiddling with the sign on~$r$ does not
help: the natural maps on homology proceed from infinitely large
radius to~$0$ regardless of how the picture is drawn.)  Second, the
relations that specify the transition from vector spaces of
dimension~$3$ to those of dimension~$2$ or~$1$ lie along a real
algebraic curve, as do those specifying the transition from
dimension~$2$ to dimension~$1$.  These curves have uncountably many
points.  Third, even if the relations are discretized---restrict~$\cM$
to a lattice $\ZZ^2$ superimposed on~$\RR^2$, say---the relations
march off to infinity roughly diagonally away from the origin.  (See
Example~\ref{e:encoding} for the right-hand~image.)
\end{example}

\subsection{Modules over posets}\label{sub:modules}

There are many essentially equivalent ways to think of a poset module.
The definition in the first line of this Introduction is among the
more elementary formulations; see Definition~\ref{d:poset-module} for
additional precision.  Others include a
\begin{itemize}
\item%
representation of a poset \cite{nazarova-roiter};

\item%
functor from a poset to the category of vector spaces (e.g., see
\cite{curry2019});

\item%
vector-space valued sheaf on a poset (e.g., see
Section~\ref{ssub:topology} or \cite[\S4.2]{curry-thesis});

\item%
representation of a quiver with (commutative) relations (e.g., see
\cite[\S A.6]{oudot2015});


\item%
representation of the incidence algebra of a poset
\cite{doubilet-rota-stanley1972}; or

\item%
module over a directed acyclic graph \cite{chambers-letscher2018}.
\end{itemize}
The premise here is that commutative algebra provides an elemental
framework out of which flows corresponding structure in these other
contexts, in which the reader is encouraged to interpret all of the
results.  Section~\ref{s:derived} provides an example of how that can
look, in this case from the sheaf perspective.  Expressing the
foundations via commutative algebra is natural for its infrastructure
surrounding resolutions.  And as the objects are merely graded vector
spaces with linear maps among them---there are no rings to act---it is
also the most elementary language available, to make the exposition
accessible to a wide audience, including statisticians applying
persistent homology in addition to topologists, combinatorialists,
algebraists, geometers, and programmers.

Some of the formulations of poset module are only valid when the poset
is assumed to be locally finite (see \cite{doubilet-rota-stanley1972},
for instance), or when the object being acted upon satisfies a
finitary hypothesis \cite{khripchenko-novikov2009} in which the
algebraic information is nonzero on only finitely many points in any
interval.  This is not a failing of any particular formulation, but
rather a signal that the theory has a different focus.  Combinatorial
formulations are built for enumeration.  Representation theories are
built for decomposition into and classification of irreducibles.
While commutative algebra appreciates a direct sum decomposition when
one is available, such as over a noetherian ring of dimension~$0$, its
initial impulse is to relate arbitrary modules to simpler ones by less
restrictive decomposition, such as primary decomposition, or by
resolution, such as by projective or injective modules.  That is the
tack taken here.

\subsection{Topological tameness}\label{sub:tame}

The finiteness condition introduced here generalizes the notion of
topological tameness for ordinary persistence in a single parameter
(see \mbox{\cite[\S3.8]{chazal-deSilva-glisse-oudot2016}}, for
example), reflecting the intuitive notion that given a filtration of a
topological space from data, only finitely many topologies should
appear.  The \emph{tame} condition
(Definitions~\ref{d:constant-subdivision} and~\ref{d:tame}) partitions
the poset into finitely many domains of constancy for a given module.
Tameness is a topological concept, designed to control the size and
variation of homology groups in subspaces of a fixed topological
space.

\begin{example}\label{e:tame}
The $\RR^2$-module in Example~\ref{e:toy-model-fly-wing} is tame, with
four constant regions: over the bottom-left region (yellow) the vector
space is~$\kk^3$; over the middle (olive) region the vector space
is~$\kk^2$; over the upper-right (blue) region the vector space
is~$\kk$; and over the remainder of~$\RR^2$ the vector space is~$0$.
The homomorphisms between these vector spaces do not depend on which
points in the regions are selected to represent them.  For instance,
$\kk^3 \to \kk^2$ always identifies the two basis vectors
corresponding to the connected components that are the left and right
halves of the horizontally~infinite~red~strip.
\end{example}

It is worth noting that in ordinary totally ordered persistence,
tameness means simply that the bar code (see
Section~\ref{sub:bar-codes}) has finitely many bars, or equivalently,
the persistence diagram has finitely many off-diagonal dots:
finiteness of the set of constant regions precludes infinitely many
non-overlapping bars (the bar code can't be ``too long''), while the
vector space having finite dimension precludes a parameter value over
which lie infinitely many bars (the bar code can't be ``too thick'').

In principle, tameness can be reworked to serve as a data structure
for algorithmic computation, especially in the presence of an
auxiliary hypothesis to regulate the geometry of the constant
regions---when they are semialgebraic or piecewise linear
(Definition~\ref{d:auxiliary-hypotheses}.\ref{i:semialgebraic}
or~\ref{d:auxiliary-hypotheses}.\ref{i:PL}), for example.  The
algorithms would generalize those for polyhedral ``sectors'' in the
discrete case \cite{injAlg} (or see~\cite[Chapter~13]{cca}).

\subsection{Combinatorial tameness: finite encoding}\label{sub:combin-tame}

Whereas the topological notion of tameness requires little more than
an arbitrary subdivision of the poset into regions of constancy
(Definition~\ref{d:constant-subdivision}), the combinatorial
incarnation imposes additional structure on the constant regions,
namely that they should be partially ordered in a natural way.  More
precisely, tt stipulates that the module~$\cM$ should be pulled back
from a $\cP$-module along a poset morphism $\cQ \to \cP$ in which
$\cP$ is a finite poset and the $\cP$-module has finite dimension as a
vector space over the field~$\kk$ (Definition~\ref{d:encoding}).

\begin{example}\label{e:encoding}
The right-hand image in Example~\ref{e:toy-model-fly-wing} is a finite
encoding of~$\cM$ by a three-element poset $\cP$ and the $\cP$-module
$H = \kk^3 \oplus \kk^2 \oplus \kk$ with each arrow in the image
corresponding to a full-rank map between summands of~$H$.
Technically, this is only an encoding of~$\cM$ as a module over $\cQ =
\RR_- \times \RR_+$.  The poset morphism $\cQ \to \cP$ takes all of
the yellow rank~$3$ points to the bottom element of~$\cP$, the olive
rank~$2$ points to the middle element of~$\cP$, and the blue rank~$1$
points to the top element of~$\cP$.  (To make this work over all
of~$\RR^2$, the region with vector space dimension~$0$ would have to
be subdivided, for instance by introducing an antidiagonal extending
downward from the origin, thus yielding a morphism from~$\RR^2$ to a
five-element poset.)  This encoding is semialgebraic
(Definition~\ref{d:auxiliary-hypotheses}): its fibers are real
semialgebraic sets.
\end{example}

In general, constant regions need not be situated in a manner that
makes them the fibers of a poset morphism (Example~\ref{e:subdivide}).
Nonetheless, over arbitrary posets, modules that are tame by virtue of
admitting a finite constant subdivision (Definition~\ref{d:tame})
always admit finite encodings (Theorem~\ref{t:tame}), although the
constant regions are typically subdivided by the encoding poset
morphism.  In the case where the poset is a real vector space, if the
constant regions have additional geometry
(Definition~\ref{d:auxiliary-hypotheses}), then a similarly geometric
finite encoding is possible.

The framework of poset modules arising from filtrations of topological
spaces is more or less an instance of MacPherson's exit path category
\cite[\S1.1]{treumann2009}.  In that context, Lurie defined a notion
of constructibility in the Alexandrov topology \cite[Definitions~A.5.1
and~A.5.2]{lurie2017}, independently of the developments here and for
different purposes.  It would be reasonable to speculate that tameness
should correspond to Alexandrov constructibility, given that encoding
of a poset module is defined by pulling back along a poset morphism
(in Lurie's language, a continuous morphism of posets), but it does
not; see Remark~\ref{r:lurie}.  The difference between constant in the
sense of tameness via constant subdivision
(Section~\ref{sub:constant}) and locally constant in the
sheaf-theoretic sense with the Alexandrov topology makes tameness---in
the equivalent finitely encoded formulation---rather than Alexandrov
constructibility the right notion of finiteness for the syzygy theorem
as well as for algorithmic computation and data analysis applications
of multipersistence.  That contrasts with the comparison between
tameness and subanalytic constructibility in the usual topology on
real vector spaces, which are essentially the same notion for the
relevant sheaves; see Section~\ref{sub:dervied}.

Finite encoding has its roots in combinatorial commutative algebra in
the form of sector partitions \cite{injAlg} (or
see~\cite[Chapter~13]{cca}).  Like sector partitions, finite encoding
is useful, theoretically, for its enumeration of all topologies
encountered as the parameters vary.  However, enumeration leads to
combinatorial explosion outside of the very lowest numbers of
parameters.  And beyond its inherent inefficiency, poset encoding
lacks many of the features that have come to be expected from
persistent homology, including the most salient: a description of
homology classes in terms of their persistence, meaning birth, death,
and lifetime.

\subsection{Discrete persistent homology by birth and death}\label{sub:disc-PH}

$\ZZ^n$-modules have been present in commutative algebra for over half
a century \cite{GWii}, but the perspective arising from their
equivalence with multipersistence is relatively new
\cite{multiparamPH}.  Initial steps have included descriptions of the
set of isomorphism classes \cite{multiparamPH}, presentations
\cite{csv17} and algorithms for computing \cite{computMultiPH,csv12}
or visualizing \cite{lesnick-wright2015} them, as well as interactions
with homological algebra of modules, such as persistence invariants
\cite{knudson2008} and certain notions of multiparameter noise
\cite{scolamiero-chacholski-lundman-ramanujam-oberg16}.

Algebraically, viewing persistent homology as a module rather than
(say) a diagram or a bar code, a birth is a generator.  In ordinary
persistence, with one parameter, a death is more or less the same as a
relation.  However, in multipersistence the notion of death diverges
from that of relation.  The issue is partly one of geometric shape in
the parameter poset, say~$\ZZ^n$ (the uppermost shaded regions
indicate where classes die): \vspace{-.2ex}
$$%
\psfrag{birth}{\tiny birth}
\psfrag{death}{\tiny death}
\psfrag{relation}{\tiny relation}
\includegraphics[height=30mm]{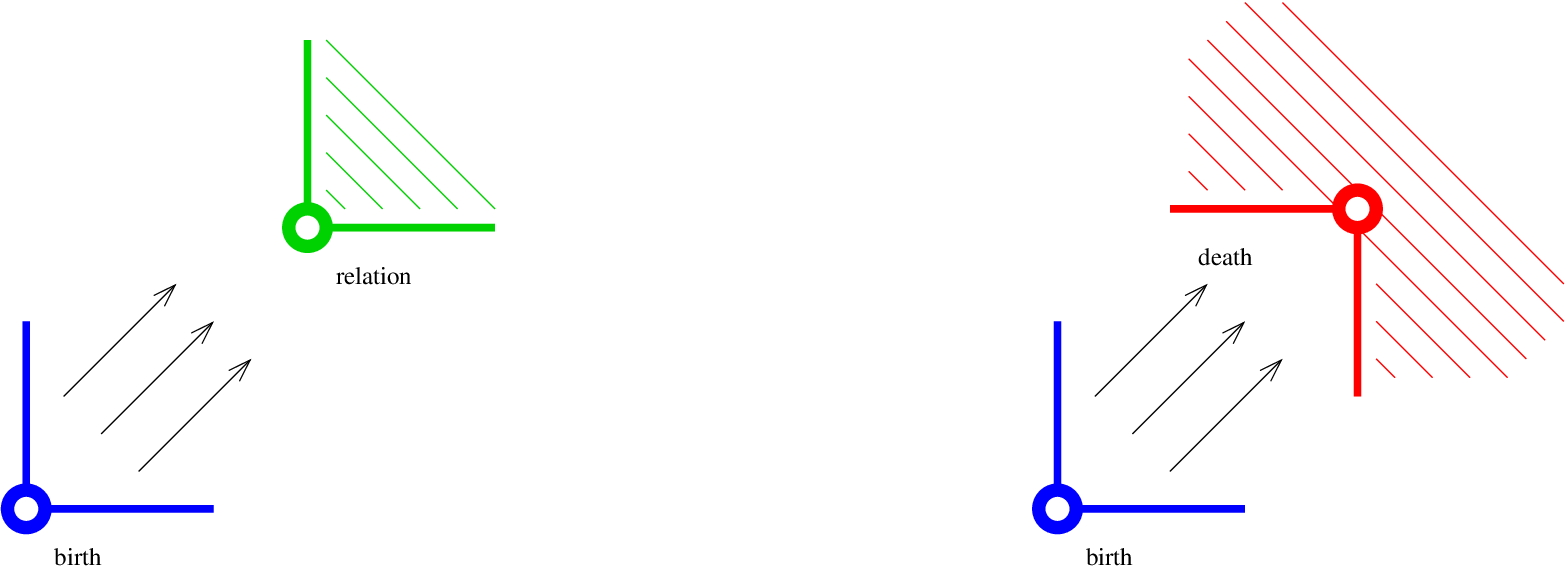}
$$
\vspace{-.25ex}%
If death is to be dual to birth, then a nonzero homology class at some
parameter should die if it moves up along any direction in the poset.
Birth is not the bifurcation of a homology class into two independent
ones; it is the creation of a new class from zero.  Likewise, genuine
death is not the joining of two classes into one; it is annihilation.
And death should be stable, in the sense that wiggling the parameter
and then pushing up should still kill the homology class.

In algebraic language, death is a ``cogenerator'' rather than a
relation.  For finitely generated $\NN^n$-modules, or slightly more
generally for finitely determined modules
(Example~\ref{e:convex-projection} and Definition~\ref{d:determined}),
cogenerators are irreducible components, cf.~\cite[Section~5.2]{cca}.
Indeed, irreducible decomposition suffices as a dual theory of death
in the finitely generated case; this is more or less the content of
Theorem~\ref{t:finitely-determined}.  The idea there is that
surjection from a free module covers the module by sending basis
elements to births in the same (or better, dual) way that inclusion
into an injective module envelops the module by capturing deaths as
injective summands.  The geometry of this process in the parameter
poset on the injective side is as well understood as it is on the free
side \cite[Chapter~11]{cca}, and in finitely generated situations it
is carried out theoretically or algorithmically by finitely generated
truncations of injective modules~\cite{irredRes,injAlg}.

Combining birth as free cover and death as injective hull leads
naturally to flange presentation (Definition~\ref{d:flange}), which
composes the augmentation map $F \onto \cM$ of a flat resolution with
the augmentation map $\cM \into E$ of an injective resolution to get a
homomorphism $F \to E$ whose image is~$\cM$.  The indecomposable
summands of~$F$ capture births and those of~$E$ deaths.  Flange
presentation splices a flat resolution to an injective one in the same
way that Tate resolutions (see \cite{coanda2003}, for example) segue
from a free resolution to an injective one over a Gorenstein local
ring of dimension~$0$.

Why a flat cover $F \onto \cM$ instead of a free one?  There are two
related reasons: first, flat modules are dual to injective ones
(Remark~\ref{r:flat}), so in the context of finitely determined
modules the entire theory is self-dual; and second, births can lie
infinitely far back along axes, as in the toy-model fly wing from
Example~\ref{e:toy-model-fly-wing}.

\subsection{Algebraic tameness:~fringe presentation}\label{sub:alg-tame}%

That multipersistence modules can fail to be finitely generated, like
Example~\ref{e:toy-model-fly-wing} does, in situations reflecting
reasonably realistic data analysis was observed by Patriarca,
Scolamiero, and Vaccarino \cite[Section~2]{psv12}.  Their ``monomial
module'' view of persistence covers births much more efficiently, for
discrete parameters, by keeping track of generators not individually
but gathered together as generators of monomial ideals.  Huge numbers
of predictable syzygies among generators are swallowed and hence are
present only implicitly.  And that is good, as nothing topologically
new about persistence of homology classes is taught by the well known
syzygies of monomial ideals, which in this setting are merely an
interference pattern from the merging of separate birth points of the
same class.

Translating to the setting of continuous parameters, and including the
dual view of deaths, which works just as well, suggests an uncountably
more efficient way to cover births and deaths than listing them
individually.  In fact this urge to gather births or deaths does not
really depend on the transition to continuous parameters from discrete
ones.  Indeed, any $\RR^n$-module~$\cM$ can be approximated by
a~$\ZZ^n$-module, the result of restricting $\cM$ to, say, the
rescaled lattice~$\epsilon\ZZ^n$.  Suppose, for the sake of argument,
that~$\cM$ is bounded, in the sense of being zero at parameters
outside of a bounded subset of~$\RR^n$; think of
Example~\ref{e:toy-model-fly-wing}, ignoring those parts of the module
there that lie outside of the depicted square.
\begin{figure}[h]
\vspace{-2ex}
$$%
\begin{array}[b]{@{\hspace{-10pt}}r@{\hspace{-10pt}}|@{}l@{}}
\begin{array}{@{}c@{}}
\psfrag{r}{}
\psfrag{s}{}
\includegraphics[height=30mm]{toy-model}\\[-7.7pt]
\end{array}
&\,\,\,\\[-6pt]\hline
\end{array}
\quad\
\begin{array}[b]{@{\ \ }c@{}}
\hspace{-3pt}\goesto\\[7mm]\mbox{}
\end{array}
\qquad
\begin{array}[b]{@{\hspace{-10pt}}r@{\hspace{-10pt}}|@{}l@{}}
\begin{array}{@{}c@{}}
\includegraphics[height=30mm]{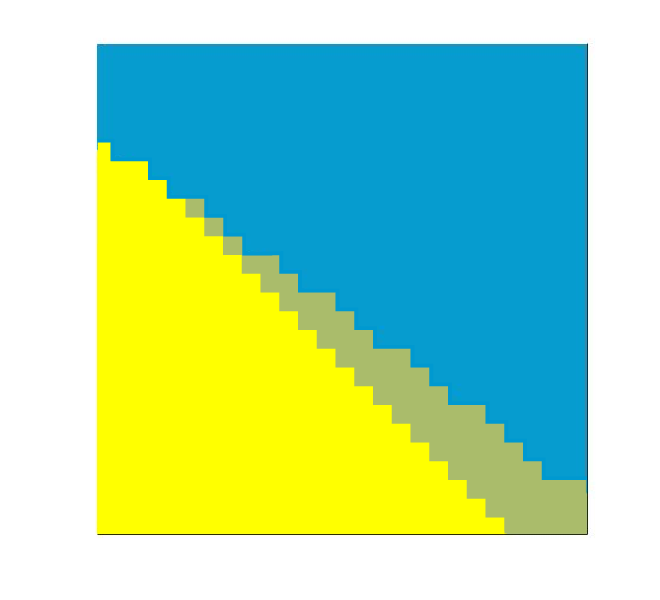}\\[-7.7pt]
\end{array}
&\,\,\,\\[-6pt]\hline
\end{array}
$$\vspace{-4ex}
\end{figure}
Ever better approximations, by smaller~$\epsilon \to 0$, yield sets of
lattice points ever more closely hugging an algebraic curve.
Neglecting the difficulty of computing where those lattice points lie,
how is a computer to store or manipulate such a set?  Listing the
points individually is an option, and perhaps efficient for
particularly coarse approximations, but in~$n$ parameters the
dimension of this storage problem is~$n-1$.  As the approximations
improve, the most efficient way to record such sets of points is
surely to describe them as the allowable ones on one side of an
algebraic curve.  And once the computer has the curve in memory, no
approximation is required: just use the (points on the) curve itself.
In this way, even in cases where the entire topological filtration
setup can be approximated by finite simplicial complexes,
understanding the continuous nature of the un-approximated setup is
both more transparent and more efficient.

Combining flange presentation with this monomial module view of births
and deaths yields \emph{fringe presentation}
(Definition~\ref{d:fringe}), the analogue for modules over an
arbitrary poset~$\cQ$ of flange presentation for finitely determined
modules over $\cQ = \ZZ^n$.  The role of indecomposable free or flat
modules is played by upset modules (Example~\ref{e:indicator}) which
have $\kk$ in degrees from an upset~$U$ and~$0$ elsewhere.  The role
of indecomposable injective modules is played similarly by downset
modules.

Fringe presentation is expressed by a \emph{monomial matrix}
(Definition~\ref{d:monomial-matrix-fr}), an array of scalars with rows
labeled by upsets and columns labeled by downsets.  For example,
\vspace{7ex}
$$%
\monomialmatrix
	{\begin{array}[t]{@{}r@{\hspace{-3.1pt}}|@{}l@{}}
	 \includegraphics[height=15mm]{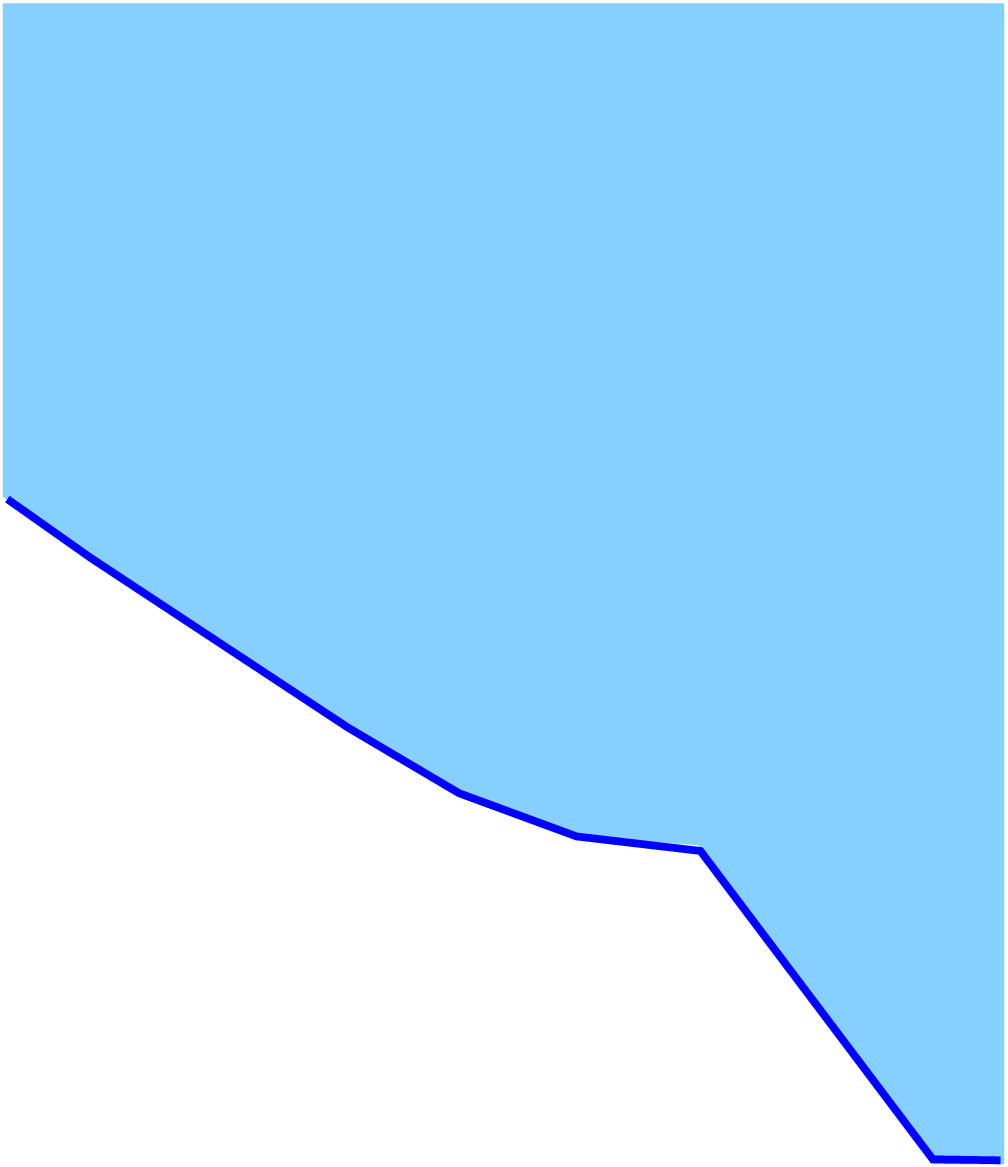}&\ \hspace{-.2pt}\\[-4.3pt]\hline
	\end{array}}
	{\!\!
	 \begin{array}{c}
	 \\[-10ex]
	 \begin{array}[b]{@{}r@{\hspace{-.2pt}}|@{}l@{}}
		\raisebox{-5mm}{\includegraphics[height=17mm]{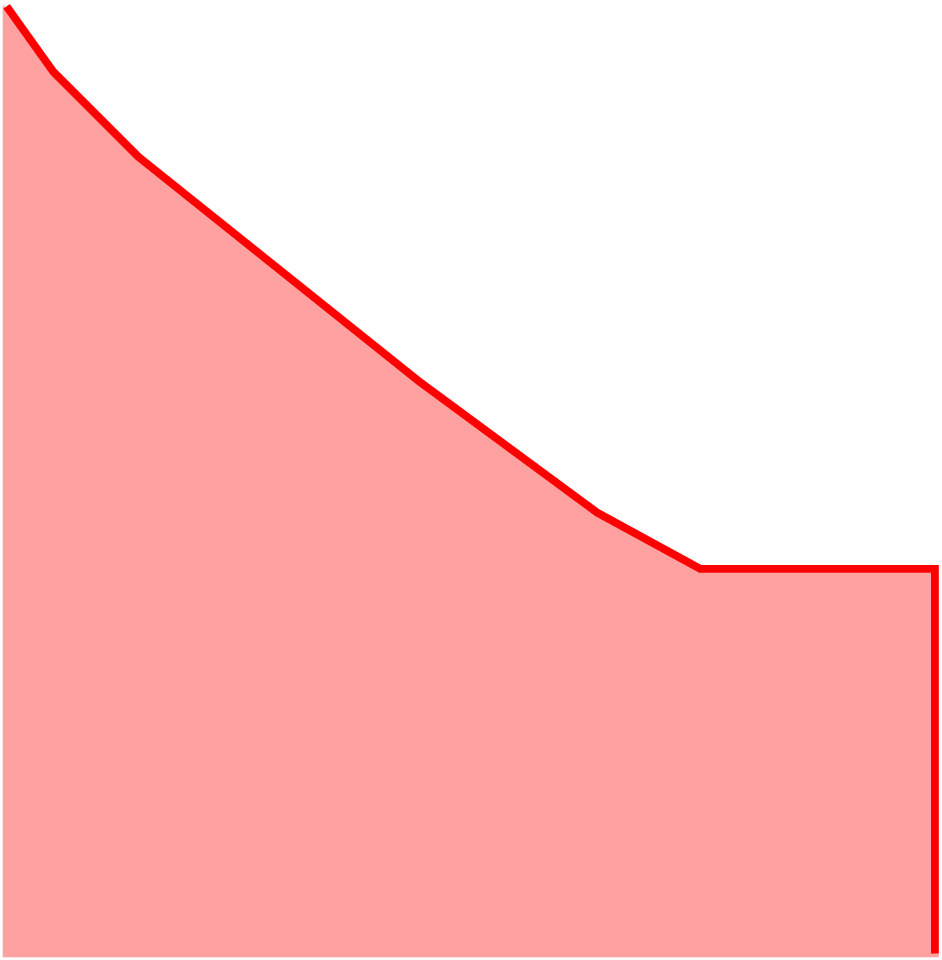}}
		&\ \,\\[-6.3pt]\hline
	 \end{array}
	 \!\!
	 \\[4ex]
	 \phi_{11}
	 \\[3ex]
	 \end{array}}
	{\\\\\\}
\begin{array}{@{}c@{}}
\hspace{.1pt}\ \text{represents a fringe presentation of}\ \hspace{.2pt}
\cM = \kk\!\!
\left[
\begin{array}{@{\ }c@{\,}}
\\[-2.2ex]
\begin{array}{@{}r@{\hspace{-.4pt}}|@{}l@{}}
\includegraphics[height=15mm]{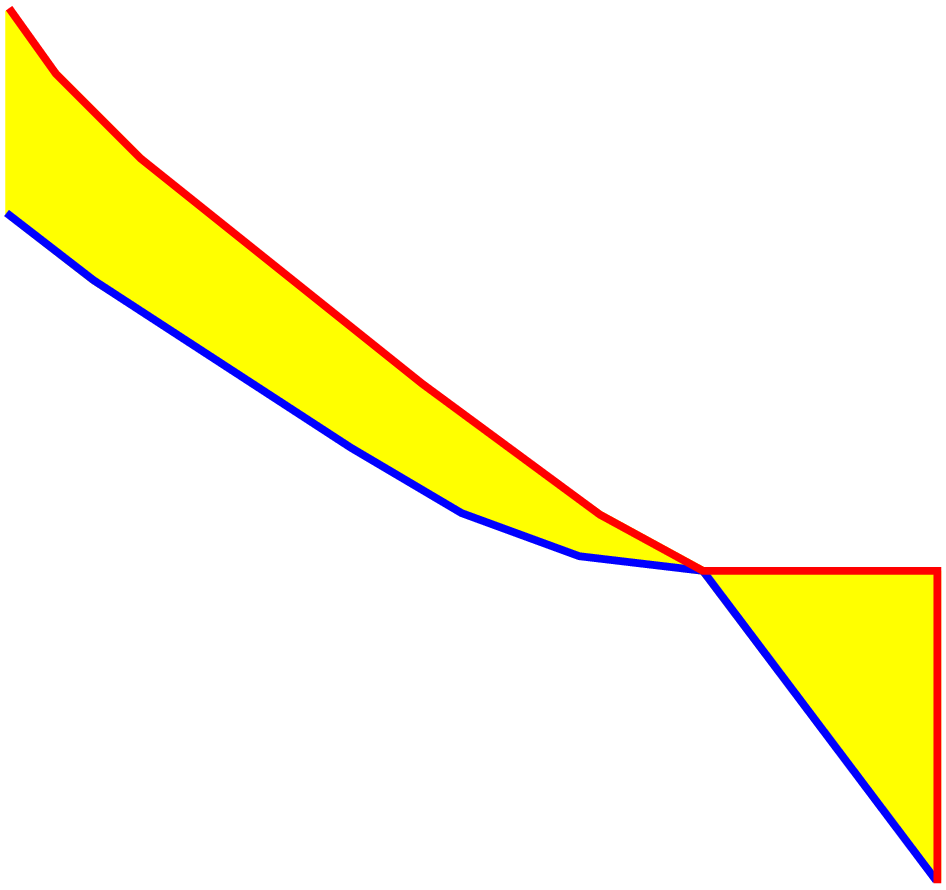}&\ \,\hspace{-.3pt}\\[-4.7pt]\hline
\end{array}
\\[-1ex]\mbox{}
\end{array}
\right]
\\[-3ex]
\end{array}
\vspace{.5ex}
$$
as long as $\phi_{11} \in \kk$ is nonzero.  The monomial matrix
notation specifies a homomorphism
\(
  \kk\bigl[\,
  \begin{array}{@{}c@{}}
  \\[-3ex]
  \begin{array}{@{}r@{\hspace{-1.4pt}}|@{}l@{}}
  \raisebox{-2.2pt}{\includegraphics[height=5mm]{blue-upset}}
  &\hspace{1.7pt}\\[-2pt]\hline
  \end{array}\,
  \end{array}
  \bigr]\!
\to
  \kk\bigl[\,
  \begin{array}{@{}c@{}}
  \\[-3ex]
  \begin{array}{@{}r@{\hspace{-.2pt}}|@{}l@{}}
  \raisebox{-6.2pt}{\includegraphics[height=5mm]{red-downset}}
  &\hspace{1.5pt}\\[-2pt]\hline
  \end{array}
  \end{array}
  \,\bigr]
\)
whose image is~$\cM$, which has $\cM_\aa = \kk$ over the yellow
parameters~$\aa$ and~$0$ elsewhere.  The blue upset specifies the
births at the lower boundary of~$\cM$; unchecked, the classes would
persist all the way up and to the right.  But the red downset
specifies the deaths along the upper boundary of~$\cM$.

When the birth upsets and death downsets are semialgebraic, or
piecewise linear, or otherwise manageable algorithmically, monomial
matrices render fringe presentations effective data structures for
real multipersistence.  Fringe presentations have the added benefit of
being topologically interpretable in terms of birth and death.

Although the data structure of fringe presentation is aimed at
$\RR^n$-modules, it is new and lends insight already for finitely
generated $\NN^n$-modules (even when $n = 2$), where monomial matrices
have their origins \cite[Section~3]{alexdual}.  The context there is
more or less that of finitely determined modules; see
Definition~\ref{d:monomial-matrix-fl} in particular, which is really
just the special case of fringe presentation in which the upsets are
localizations of~$\NN^n$ and the downsets are duals---that is,
negatives---of those.

\subsection{Homological tameness: the syzygy theorem}\label{sub:homalg}

Even in the case of filtrations of finite simplicial complexes by
products of intervals---that is, multifiltrations
(Example~\ref{e:RR+}) of finite simplicial complexes---persistent
homology is not naturally a module over a polynomial ring in $n$ (or
any finite number of) variables.  This is for the same reason that
single-parameter persistent homology is not naturally a module over a
polynomial ring in one variable: though there might only be finitely
many topological transitions, they can (and often do) occur at
incommensurable real numbers.  That said, observe that filtering a
finite simplicial complex automatically induces a finite encoding.
Indeed, the parameter space maps to the poset of simplicial
subcomplexes of the original simplicial complex by sending a parameter
to the simplicial subcomplex it indexes.  That is not the smallest
poset, of course, but it leads to a fundamental point: one can and
should do homological algebra over the finite encoding poset rather
than (only) over the original parameter space.

This line of thinking culminates in a syzygy theorem
(Theorem~\ref{t:syzygy} for modules; Theorem~\ref{t:syzygy-complexes}
for complexes) to the effect that, remarkably, the topological,
algebraic, combinatorial, and homological notions of tameness
available respectively via
\begin{itemize}
\item%
constant subdivision (Definition~\ref{d:tame}),
\item%
fringe presentation (Definition~\ref{d:fringe}),
\item%
poset encoding (Definition~\ref{d:encoding}), and
\item%
indicator resolution (Definition~\ref{d:resolutions})
\end{itemize}
are equivalent.  The moral is that the tame condition over arbitrary
posets appears to be the right notion to stand in lieu of the
noetherian hypothesis over~$\ZZ^n$: the tame condition is robust, has
multiple characterizations from different mathematical perspectives,
and enables algorithmic computation in principle.

The syzygy theorem directly reflects the more usual syzygy theorem for
finitely determined $\ZZ^n$-modules
(Theorem~\ref{t:finitely-determined}), with upset and downset
resolutions being the arbitrary-poset analogues of free and injective
resolutions, respectively, and fringe presentation being the
arbitrary-poset analogue of flange presentation.

Topological tameness via constant subdivision is a~priori weaker (that
is, more inclusive) than combinatorial tameness via finite encoding,
and algebraic tameness via fringe presentation is a~priori weaker than
homological tameness via upset or downset resolution.  Thus the syzygy
theorem leverages relatively weak topological structure into powerful
homological structure.  One consequence is a proof of two conjectures
due to Kashiwara and Schapira; see Section~\ref{sub:dervied}.  The
developments there rely on the fact that, although the
characterizations of tameness require no additional structure on the
underlying poset, any additional structure that is
present---subanalytic, semialgebraic, or piecewise-linear---is
preserved by the transitions between the characterizations of tameness
in the syzygy theorem.

The proof of the syzygy theorem works by reducing to the finitely
determined (Section~\ref{s:ZZn}) case over~$\ZZ^n$.  The main point is
that given a finite encoding of a module over an arbitrary
poset~$\cQ$, the encoding poset can be embedded in~$\ZZ^n$.  The proof
is completed by pushing the data forward to~$\ZZ^n$, applying the more
usual syzygy theorem to finitely determined modules there, and pulling
back to~$\cQ$.

\subsection{Geometric algebra over partially ordered abelian groups}\label{sub:pogroup}

This paper develops commutatve algebra for modules over posets from
scratch.  Alas, the part of the theory relating to primary
decomposition is not amenable to arbitrary posets.  Rather, the
natural setting to carry out primary decomposition is, in the best
tradition of classical mathematics
\cite{birkhoff42,clifford40,riesz40}, over partially ordered abelian
groups (Definition~\ref{d:pogroup}).  Those provide an optimally
general context in which posets have a notion of ``face'' along which
to localize without altering the ambient poset.  That is, a partially
ordered group~$\cQ$ has an origin---namely its identity~$\0$---and
hence a positive cone~$\cQ_+$ of elements it precedes.  A~\emph{face}
of~$\cQ$ is a submonoid of $\cQ_+$ that is also a downset therein
(Definition~\ref{d:face}).  And as everything takes place inside of
the ambient group~$\cQ$, every localization of a $\cQ$-module along a
face (Definition~\ref{d:support}) remains a $\cQ$-module.

In persistence language, a single element in a module over a partially
ordered group can a~priori be mortal or immortal in more than one way.
But some elements die ``pure deaths'' of only a single type~$\tau$.
These are the \emph{$\tau$-coprimary elements} for~a~face~$\tau$.  In
the concrete setting of a partially ordered real vector space with
closed positive cone, a coprimary element is characterized
(Example~\ref{e:closed}, Definition~\ref{d:elementary-coprimary}, and
Theorem~\ref{t:elementary-coprimary})~as
\begin{enumerate}
\item%
\emph{$\tau$-persistent}: it lives when pushed up arbitrarily along
the face~$\tau$; and

\item%
\emph{$\ol\tau$-transient}: it eventually dies when pushed up in any
direction outside~of~$\tau$.
\end{enumerate}

\begin{example}\label{e:hyperbola-GD}
The downset $D$ in~$\RR^2$ consisting of all points beneath the upper
branch of the hyperbola $xy = 1$ canonically decomposes
(Theorem~\ref{t:PF}) as the union\vspace{-.4ex}%
$$%
\psfrag{x}{\tiny$x$}
\psfrag{y}{\tiny$y$}
  \begin{array}{@{}c@{}}\includegraphics[height=25mm]{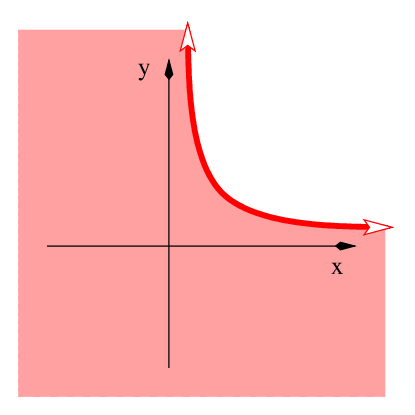}\end{array}
\ =\
  \begin{array}{@{}c@{}}\includegraphics[height=25mm]{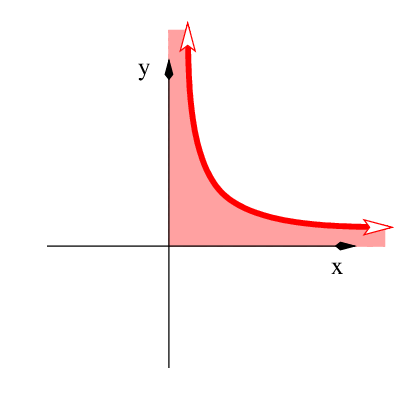}\end{array}
\cup\,
  \begin{array}{@{}c@{}}\includegraphics[height=25mm]{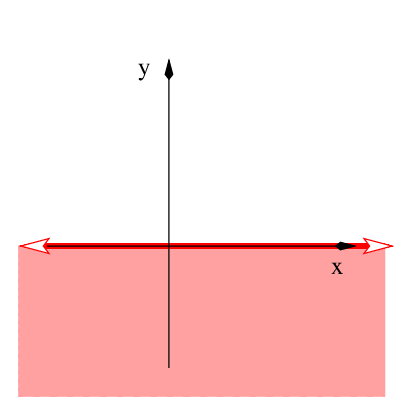}\end{array}
\cup\,
  \begin{array}{@{}c@{}}\includegraphics[height=25mm]{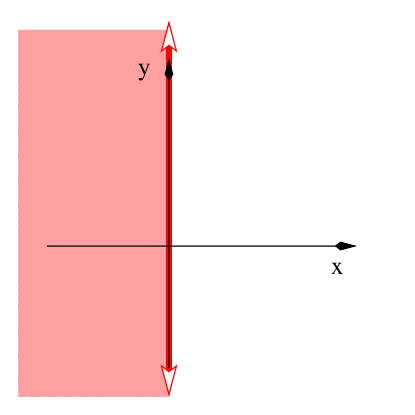}\end{array}
$$\vspace{-.6ex}%
of its subsets that locally die pure deaths of some type: every red
point in the
\begin{itemize}
\item%
leftmost subset on the right dies when pushed over to the right or up far enough;
\item%
middle subset dies in the \emph{localization} of~$D$ along the
$x$-axis (Definition~\ref{d:PF} or Definition~\ref{d:support}) when
pushed up far enough; and
\item%
rightmost subset dies locally along the $y$-axis when pushed over far
enough.
\end{itemize}
\end{example}

Isolating all coprimary elements functorially requires localization
that does not alter the ambient poset, after which local support
functors (Definition~\ref{d:local-support}) do the job, as in ordinary
commutative algebra and algebraic geometry.

What does primary decomposition do?  It expresses a given module~$\cM$
as a submodule of a direct sum of coprimary modules.  Consequently,
this decomposition tells the fortune of every element: its death types
are teased apart as the ``pure death types'' of the coprimary summands
where the element lands with nonzero image.

\begin{example}\label{e:hyperbola-PD}
The union in Example~\ref{e:hyperbola-GD} results in a canonical
primary decomposition
$$%
\psfrag{x}{\tiny$x$}
\psfrag{y}{\tiny$y$}
  \kk\!
  \left[
  \begin{array}{@{}c@{}}\includegraphics[height=25mm]{hyperbola}\end{array}
  \right]
\:\into\ 
  \kk\!
  \left[
  \begin{array}{@{}c@{}}\includegraphics[height=25mm]{hyperbola}\end{array}
  \right]
\oplus\,
  \kk\!
  \left[
  \begin{array}{@{}c@{}}\includegraphics[height=25mm]{x-component}\end{array}
  \right]
\oplus\,
  \kk\!
  \left[
  \begin{array}{@{}c@{}}\includegraphics[height=25mm]{y-component}\end{array}
  \right]
$$
of the downset module $\kk[D]$ over~$\RR^2$ (Corollary~\ref{c:PF}).
Elements in the lower-left quadrant locally die any type of death.
\end{example}

It bears emphasizing that primary decomposition of downset modules, or
equivalently, expressions of downsets as unions of coprimary downsets
(cogenerated by the $\tau$-coprimary elements for some face~$\tau$;
see Definition~\ref{d:primDecomp}), is canonical by Theorem~\ref{t:PF}
and Corollary~\ref{c:PF}, generalizing the canonical primary
decomposition of monomial ideals in ordinary polynomial rings.
However, notably lacking from primary decomposition theory over
arbitrary polyhedral partially ordered abelian groups is a notion of
minimality---alas, a lack that is intrinsic.

\begin{example}\label{e:minimality}
Although three pure death types occur in~$D$ in
Example~\ref{e:hyperbola-GD}, and hence in the union there, the final
two summands in the primary decomposition of~$\kk[D]$ in
Example~\ref{e:hyperbola-PD} are redundant.  One can, of course,
simply omit the redundant summands, but for arbitrary polyhedral
partially ordered groups no criterion is known for detecting a~priori
which summands should be omitted.
\end{example}

The failure of minimality here stems from geometry that can only occur
in partially ordered groups more general than finitely generated free
ones.  More specifically, $D$ contains elements that die pure deaths
of type ``$x$-axis'' but the boundary of~$D$ fails to contain an
actual translate of the face of~$\RR^2_+$ that is the positive
$x$-axis.  This can be seen as a certain failure of localization to
commute with taking homomorphisms into~$\kk[D]$; it is the source of
much of the subtlety in the theory developed in the sequel
\cite{essential-real} to this paper, whose purpose is partly to
rectify, for real multiparameter persistence, the failure of
minimality in Example~\ref{e:hyperbola-PD}.

The view toward algorithmic computation draws the focus to the case
where $\cQ$ is \emph{polyhedral}, meaning that it has only finitely
many faces (Definition~\ref{d:face}).  This notion is apparently new
for arbitrary partially ordered abelian groups.  Its role here is to
guarantee finiteness of primary decomposition of finitely encoded
modules (Theorem~\ref{t:primDecomp}).

\subsection{Bar codes}\label{sub:bar-codes}

Tame modules over the totally ordered set of integers or real numbers
are, up to isomorphism, the same as ``bar codes'': finite multisets of
intervals.  The most general form of this bijection between algebraic
objects and essentially combinatorial objects over totally ordered
sets is due to Crawley-Boevey \cite{crawley-boevey2015}.  At its root
this bijection is a manifestation of the tame representation theory of
the type~$A$ quiver; that is the context in which bar codes were
invented by Abeasis and Del Fra, who called them ``diagrams of boxes''
\cite{abeasis-delFra1980,abeasis-delFra-kraft1981}.  Subsequent
terminology for objects equivalent to these diagrams of boxes include
bar codes themselves (see \cite{ghrist2008}) and planar depictions
discovered effectively simultaneously in topological data analysis,
where they are called persistence diagrams
\cite{edelsbrunner-letscher-zomorodian2002} (see
\cite{cohenSteiner-edelsbrunner-harer2007} for attribution) and
combinatorial algebraic geometry, where they are called lace arrays
\cite{quivers}.

No combinatorial analogue of the bar code can classify modules over an
arbitrary poset because there are too many indecomposable modules,
even over seemingly well behaved posets like~$\ZZ^n$
\cite{multiparamPH}: the indecomposables come in families of positive
dimension.  Over arbitrary posets, every tame module does still admit
a decomposition of the Krull--Schmidt sort, namely as a direct sum of
indecomposables \cite{botnan-crawley-boevey2018}, but again, there are
too many indecomposables for this to be useful in general.

This paper makes no attempt to define bar codes or persistence
diagrams for modules over arbitrary posets.  Instead of decomposing
modules as direct sums of elemental pieces, which could in effect be
arbitrarily complicated, the commutative algebra view advocates
expressing poset modules in terms of decidedly simpler modules,
especially indicator modules for upsets and downsets, by way of less
rigid constructions like fringe presentation, primary decomposition,
or resolution.  This relaxes the direct sum in a $K$-theoretic way,
allowing arbitrary complexes instead of split short exact sequences.

Consequently, various aspects of bar codes are reflected in the
equivalent concepts of tameness.  The finitely many regions of
constancy are seen in topological tameness by constant subdivision.
The matching between left and right endpoints is seen in algebraic
tameness by fringe presentation, where the left endpoints form lower
borders of birth upsets and the right endpoints form upper borders of
death downsets.  The expressions of modules in terms of bars is seen,
in its relaxed form, in homological tameness, where modules become
``virtual'' sums, in the sense of being formal alternating
combinations rather than direct sums.  Primary decomposition isolates
elements that would, in a bar code, lie in bars unbounded in fixed
sets~of~directions~(see~also~\cite{harrington-otter-schenck-tillmann2019}).

Implicit in the notion of bar code is some concept of minimality: left
endpoints must correspond to \emph{minimal} generators, and right
endpoints to \emph{minimal} cogenerators.  These are not available
over arbitrary posets and are subtle to define and handle properly
even for partially ordered real vector spaces \cite{essential-real}.
When minimality is available, instead of a bijection (perfect
matching) from a multiset of births to a multiset of deaths, the best
one can settle for is a linear map from a filtration of the birth
multiset to a filtration of the death multiset
\cite{functorial-multiPH}.  The linear map in the case of a perfect
matching from left endpoints to right endpoints is represented by an
identity matrix, assuming that the left and right endpoints are
ordered consistently.

\subsection{Derived applications}\label{sub:dervied}

The syzygy theorem for poset modules (Theorem~\ref{t:syzygy}) is
intentionally stated in the most elementary language possible, without
sheaves, functors, or derived categories.  But its content has deep
interpretations in these enriched contexts.  Section~\ref{s:derived}
demonstrates this power by proving two conjectures made by Kashiwara
and Schapira.  The first concerns the relationship between, on one
hand, constructibility of sheaves on real vector spaces in the derived
category with microsupport restricted to a cone, and on the other
hand, stratification of the vector space in a manner compatible with
the cone \cite[Conjecture~3.17]{kashiwara-schapira2017}.%
	\footnote{Bibliographic note: this conjecture appears in~v3
	(the version cited here) and earlier versions of the cited
	arXiv preprint.  It does not appear in the published version
	\cite{kashiwara-schapira2018}, which is~v6 on the arXiv.  The
	published version is cited where it is possible to do so, and
	v3 \cite{kashiwara-schapira2017} is cited otherwise.}
The second concerns the case of piecewise linear (PL) objects in this
setting, particularly the existence of polyhedrally structured
resolutions that, in principle, lend themselves to explicit or
algorithmic computation
\mbox{\cite[Conjecture~3.20]{kashiwara-schapira2019}}.  Both
conjectures follow from Theorem~\ref{t:res}, which is essentially a
translation of the relevant real-vector-space special cases of the
syzygy theorem for complexes of poset modules
(Theorem~\ref{t:syzygy-complexes}) into the language of derived
categories of constructible sheaves with conic
microsupport~or~\mbox{under}~a~%
conic~topology.

The theory in Sections~\ref{s:tame}--\ref{s:syzygy} was developed
simultaneously and independently from that in
\cite{kashiwara-schapira2018,kashiwara-schapira2019}, cf.~\S2--5 in
\cite{qr-codes}.  Having made the connection between these approaches,
it is worth comparing them in more detail.

The syzygy theorem here (Theorem~\ref{t:syzygy}) and its combinatorial
underpinnings in Section~\ref{s:encoding} hold over arbitrary posets.
When the poset is a real vector space, the constructibility
encapsulated by topological tameness (Definition~\ref{d:tame}) has no
subanalytic, algebraic, or piecewise-linear hypothesis, although these
additional structures are preserved by the syzygy theorem transitions.
For example, the upper boundary of a downset in the plane with the
usual componentwise partial order could be the graph of any continuous
weakly decreasing function, among other things, and could be present
(i.e.,~the downset is closed) or absent (i.e.,~the downset is open),
or somewhere in between (e.g., a Cantor set could be missing).  The
conic topology in \cite{kashiwara-schapira2018} or
\cite{kashiwara-schapira2019} specializes at the outset to the case of
a partially ordered real vector space, and it allows only subanalytic
or polyhedral regions, respectively, with upsets having closed lower
boundaries and downsets having open upper boundaries.  The
constructibility in
\cite{kashiwara-schapira2018,kashiwara-schapira2019} is otherwise
essentially the same as tameness here, except that tameness requires
constant subdivisions to be finite, whereas constructibility in the
derived category allows constant subdivisions to be locally finite.
That said, this agreement of constructibility with locally finite
tameness that is subanalytic or PL, more or less up to boundary
considerations, is visible in \cite{kashiwara-schapira2017} or
\cite{kashiwara-schapira2019} only via conjectures, namely the ones
proved in Section~\ref{s:derived} using the general poset methods
here.

The theory of primary decomposition in Section~\ref{s:decomp} requires
the poset to be a polyhedral group (Definition~\ref{d:face}): a
partially ordered group whose positive cone has finitely many faces.
Polyhedral groups can be integer or real or something in between, but
the finiteness is essential for primary decomposition in any of these
settings; see Example~\ref{e:circular-cone}.  The local finiteness
allowed by the usual constructibility in
\cite{kashiwara-schapira2018} does not provide a remedy, although it
is possible that the PL hypothesis in \cite{kashiwara-schapira2019}
does.  Note that, in either the integer case or the real case,
detailed understanding of the topology results in a stronger theory of
primary decomposition than over an aribtrary polyhedral group, with
much more complete supporting commutative algebra \cite{essential-real}.

Most of the remaining differences between the developments here and
those in \cite{kashiwara-schapira2018,kashiwara-schapira2019}, beyond
the types of allowed functions and the shapes of allowed regions, is
the behavior allowed on the boundaries of regions.  That difference is
accounted for by the transition between the conic topology and the
Alexandrov topology, the distinction being that the Alexandrov
topology has for its open sets all upsets, whereas the conic topology
has only the upsets that are open in the usual topology.  This
distinction is explored in detail by Berkouk and Petit
\cite{berkouk-petit2019}.  It is intriguing that ephemeral modules are
undetectable metrically \cite[Theorem~4.22]{berkouk-petit2019} but
their presence here brings indispensable insight into homological
behavior in the conic~topology.

\section{Tame poset modules}\label{s:tame}

\subsection{Modules over posets and persistence}\label{sub:persistence}

\begin{defn}\label{d:poset-module}
Let $\cQ$ be a partially ordered set (\emph{poset}) and~$\preceq$ its
partial order.  A \emph{module over~$\cQ$} (or a \emph{$\cQ$-module})
is
\begin{itemize}
\item%
a $Q$-graded vector space $\cM = \bigoplus_{q\in Q} \cM_q$ with
\item%
a homomorphism $\cM_q \to \cM_{q'}$ whenever $q \preceq q'$ in~$Q$
such that
\item%
$\cM_q \to \cM_{q''}$ equals the composite $\cM_q \to \cM_{q'} \to
\cM_{q''}$ whenever $q \preceq q' \preceq q''$.
\end{itemize}
A \emph{homomorphism} $\cM \to \cN$ of $\cQ$-modules is a
degree-preserving linear map, or equivalently a collection of vector
space homomorphisms $\cM_q \to \cN_q$, that commute with the structure
homomorphisms $\cM_q \to \cM_{q'}$ and $\cN_q \to \cN_{q'}$.
\end{defn}

The last bulleted item is \emph{commutativity}: it reflects that
inclusions of subspaces induce functorial maps on homology in the
motivating examples of $\cQ$-modules
(\mbox{Example}~\ref{e:multifiltration}).

\begin{remark}\label{r:poset-module}
Definition~\ref{d:poset-module} is same as the concept of commutative
module over a directed acyclic graph
\cite[Definition~2.2]{chambers-letscher2018}.  (The language of posets
is equivalent as noted before
\cite[Definition~2.1]{chambers-letscher2018}.)

When the poset~$\cQ$ is locally finite (every interval is finite), a
$\cQ$-module is the same a module over the incidence algebra of~$\cQ$
\cite{doubilet-rota-stanley1972}.  But local finiteness, or a
similarly restrictive finitary hypothesis on the module
\cite{khripchenko-novikov2009}, excludes the basic, motivating
examples of real multifiltrations.
\end{remark}

\begin{example}\label{e:multifiltration}
Let $X$ be a topological space and $\cQ$ a poset.
\begin{enumerate}
\item\label{i:filtration}%
A \emph{filtration of~$X$ indexed by~$\cQ$} is a choice of subspace
$X_q \subseteq X$ for each $q \in \cQ$ such that $X_q \subseteq
X_{q'}$ whenever $q \preceq q'$.

\item\label{i:PH}%
The \emph{$i^\mathrm{th}$ persistent homology} of the \emph{filtered
space}~$X$ is the associated homology module, meaning the $\cQ$-module
$\bigoplus_{q \in \cQ} H_i X_q$.
\end{enumerate}
\end{example}

\begin{remark}\label{r:curry}
There are a number of abstract, equivalent ways to phrase
Example~\ref{e:multifiltration}.  For example, a filtration is a
functor from $\cP$ to the category $\mathcal{S}$ of subspaces of~$X$
or an $\mathcal{S}$-valued sheaf on the topological space~$\cP$, where
a base for its \emph{Alexandrov topology} is the set of principal
upsets (i.e., principal dual order~ideals).  For background on and
applications of many of these perspectives, see Curry's dissertation
\cite{curry-thesis}, particularly \S4.2 there.  Some of these
perspectives arise in full force in Section~\ref{s:derived}.
\end{remark}

\begin{conv}\label{c:kk}
The homology here could be taken over an arbitrary ring, but for
simplicity it is assumed throughout that homology is taken with
coefficients in a field~$\kk$.
\end{conv}

\begin{example}\label{e:RR+}
A \emph{real multifiltration} of~$X$ is a filtration indexed
by~$\RR^n$, with its partial order by coordinatewise comparison.
Example~\ref{e:fly-wing-filtration} is a real multifiltration of $X =
\RR^2$ with $n = 2$.  The monoid $\RR_+^n \subset \RR^n$ of
nonnegative real vectors under addition has monoid algebra
$\kk[\RR_+^n]$ over the field~$\kk$, a ``polynomial'' ring whose
elements are (finite) linear combinations of monomials $\xx^\aa$ with
real, nonnegative exponent vectors $\aa = (a_1,\dots,a_n) \in
\RR_+^n$.  It contains the usual polynomial ring $\kk[\NN^n]$ as a
$\kk$-subalgebra.  The persistent homology of a real $n$-filtered
space~$X$ is a \emph{multipersistence module}: an $\RR^n$-graded
module over\/~$\kk[\RR_+^n]$, which is the same thing as an
$\RR^n$-module \cite[\S2.1]{lesnick-interleav2015}.
\end{example}

\subsection{Constant subdivisions}\label{sub:constant}

\begin{defn}\label{d:constant-subdivision}
Fix a $\cQ$-module~$\cM$.  A \emph{constant subdivision} of~$\cQ$
\emph{subordinate} to~$\cM$ is a partition of~$\cQ$ into
\emph{constant regions} such that for each constant region~$I$ there
is a single vector space~$M_I$ with an isomorphism $M_I \to M_\ii$ for
all $\ii \in I$ that \emph{has no monodromy}: if $J$ is some (perhaps
different) constant region, then all comparable pairs $\ii \preceq
\jj$ with $\ii \in I$ and $\jj \in J$ induce the same composite
homomorphism $M_I \to M_\ii \to M_\jj \to M_J$.
\end{defn}

\begin{example}\label{e:puuska-nonconstant-isotypic}
Consider the poset module (kindly provided by Ville Puuska
\cite{puuska18})
$$%
\psfrag{1}{\tiny$1$}
\psfrag{2}{\tiny$2$}
\psfrag{kk}{\tiny$\kk$}
\begin{array}{@{}c@{}}\includegraphics[height=30mm]{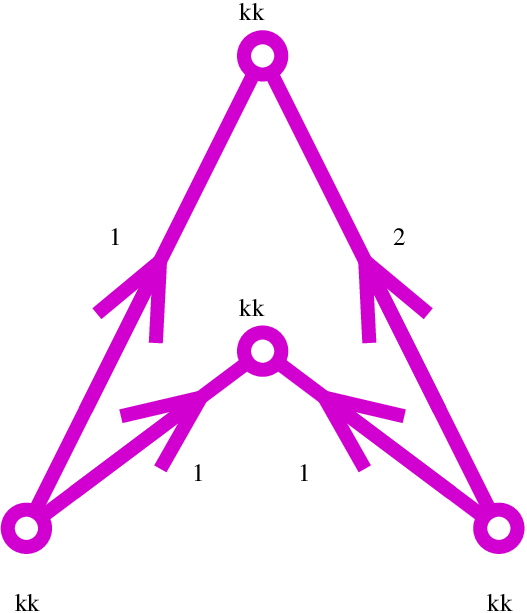}\end{array}
$$
in which the structure morphisms $M_\aa \to M_\bb$ are all identity
maps on~$\kk$, except for the rightmost one.  This example
demonstrates that module structures need not be recoverable from their
\emph{isotypic subdivision}, in which elements of~$\cQ$ lie in the
same region when their vector spaces are isomorphic via a poset
relation.  In cases like this, refining the isotypic subdivision
appropriately yields a constant subdivision.  Here, the two minimum
elements must lie in different constant regions
and the two maximum elements must lie in different constant regions.
Any partition accomplishing these separations---that is, any
refinement of a partition that has a region consisting of precisely
one maximum and one minimum---is a constant subdivision.  Of course, a
finite poset always admits a constant subdivision with finitely many
regions, since~the~\mbox{partition}~into~\mbox{singletons}~works.
\end{example}

\begin{example}\label{e:antidiagonal}
Constant subdivisions need not refine the isotypic subdivision in
Example~\ref{e:puuska-nonconstant-isotypic}, one reason being that a
single constant region can contain two or more incomparable isotypic
regions.  For a concrete instance with a single constant region
comprised of uncountably many incomparable isotypic regions, let $\cM$
be the $\RR^2$-module that has $\cM_\aa = 0$ for all $\aa \in \RR^2$
except for those on the antidiagonal line spanned by $\left[\twoline
1{-1}\right] \in \RR^2$, where $\cM_\aa = \kk$.  There is only one
such $\RR^2$-module because all of the degrees of nonzero graded
pieces of~$\cM$ are incomparable, so all of the structure
homomorphisms $\cM_\aa \to \cM_\bb$ with $\aa \neq \bb$ are zero.
Every point on the line is a singleton isotypic region.
\end{example}

The direction of the line in Example~\ref{e:antidiagonal} is
important: antidiagonal lines, whose points form an antichain
in~$\RR^2$, behave radically differently than diagonal lines.

\begin{example}\label{e:diagonal}
Let $\cM$ be an $\RR^2$-module with $\cM_\aa = \kk$ whenever $\aa$
lies in the closed diagonal strip between the lines of slope~$1$
passing through any pair of points.  The structure homomorphisms
$\cM_\aa \to \cM_\bb$ could all be zero, for instance, or some of them
could be nonzero.  But the length $|\aa - \bb|$ of any nonzero such
homomorphism must in any case be bounded above by the Manhattan (i.e.,
$\ell^\infty$) distance between the two points, since every longer
structure homomorphism factors through a sequence that exits and
re-enters the strip.
$$%
\psfrag{0}{\tiny$0$}
\begin{array}{@{}c@{}}\includegraphics[height=45mm]{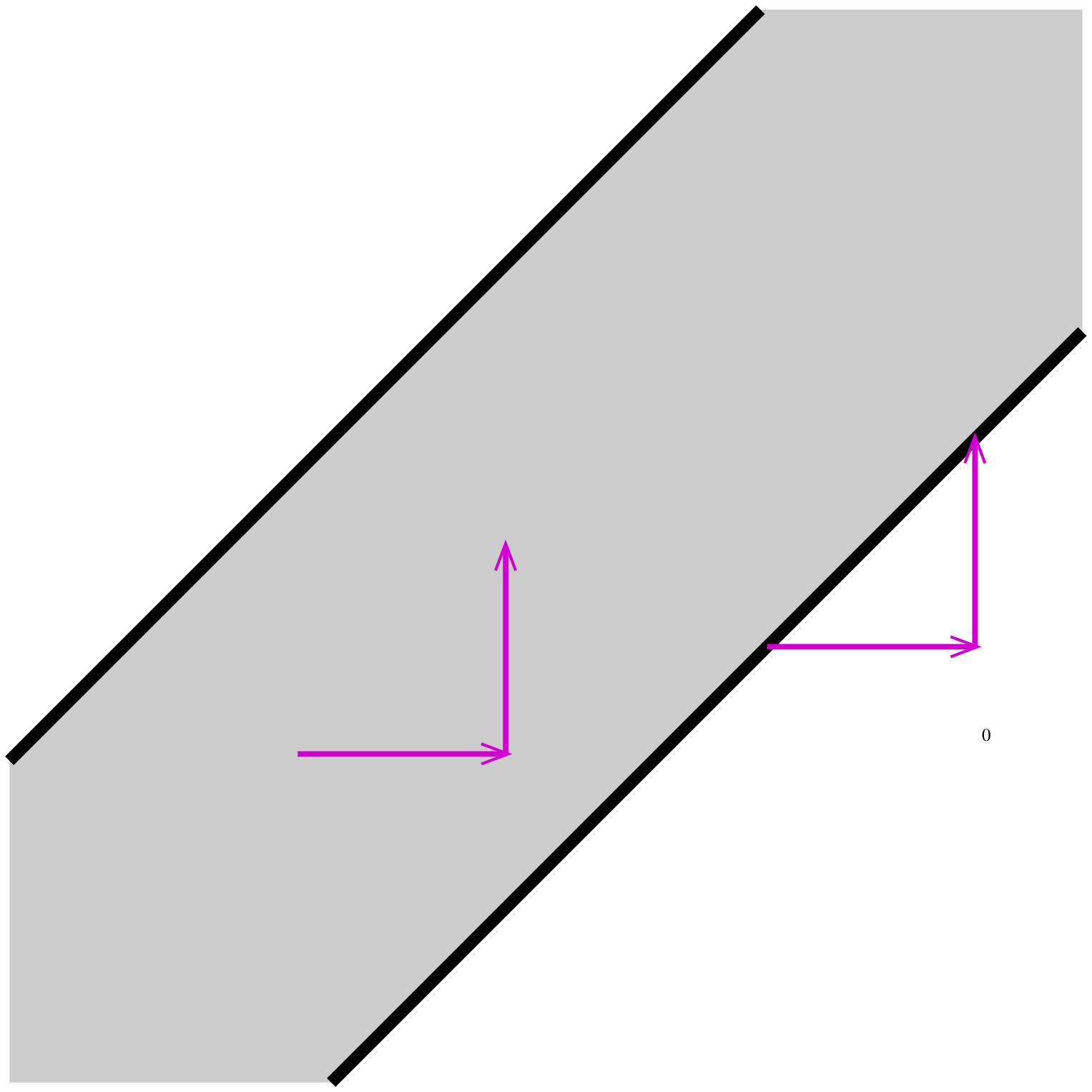}\end{array}
$$
In particular, the structure homomorphism between any pair of points
on the upper boundary line of the strip is zero because it factors
through a homomorphism that points upward first; therefore such pairs
of points lie in distinct regions of any constant subdivision.  The
same conclusion holds for pairs of points on the lower boundary line
of the strip.  When the strip has width~$0$, so the upper and lower
boundary coincide, the module is supported along a diagonal line whose
uncountably many points must all lie in distinct constant regions.
\end{example}

The reference to ``no monodromy'' in
Definition~\ref{d:constant-subdivision} agrees with the usual notion.

\begin{lemma}\label{l:no-monodromy}
Fix a constant region~$I$ subordinate to a poset module~$\cM$.  The
composite isomorphism $M_I \to M_\ii \to \dots \to M_{\ii'} \to M_I$
is independent of the path from~$\ii$ to~$\ii'$ through~$I$, if one
exists.  In particular, when $\ii = \ii'$ the composite is the
identity on~$M_I$.
\end{lemma}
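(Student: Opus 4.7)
The plan is to use the no-monodromy clause of Definition~\ref{d:constant-subdivision} to promote every structure map within the constant region~$I$ to a particular isomorphism, and then telescope. Write $\phi_\ii : M_I \to M_\ii$ for the isomorphism chosen at each $\ii \in I$. Applying the no-monodromy condition with $J = I$ and taking the trivial comparison $\ii \preceq \ii$ yields the composite $\phi_\ii^{-1} \circ \id_{M_\ii} \circ \phi_\ii = \id_{M_I}$. By no-monodromy, the same composite $\phi_\jj^{-1} \circ (M_\ii \to M_\jj) \circ \phi_\ii$ must equal $\id_{M_I}$ for every pair $\ii \preceq \jj$ in~$I$, forcing the structure map $M_\ii \to M_\jj$ to equal $\phi_\jj \circ \phi_\ii^{-1}$. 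In particular every such structure map is an isomorphism, with inverse $\phi_\ii \circ \phi_\jj^{-1}$, so inverted structure maps along downward edges of a zigzag path through~$I$ make sense.

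Given a path $\ii = \ii_0, \ii_1, \ldots, \ii_n = \ii'$ through~$I$ (consecutive elements comparable in either direction), I would unfold the composite in the lemma as
\[
M_I \xrightarrow{\phi_{\ii_0}} M_{\ii_0} \xrightarrow{f_1} M_{\ii_1} \xrightarrow{f_2} \cdots \xrightarrow{f_n} M_{\ii_n} \xrightarrow{\phi_{\ii_n}^{-1}} M_I,
\]
where $f_k$ is the structure map $M_{\ii_{k-1}} \to M_{\ii_k}$ if $\ii_{k-1} \preceq \ii_k$ and the inverse structure map otherwise. By the previous paragraph every $f_k$ equals $\phi_{\ii_k} \circ \phi_{\ii_{k-1}}^{-1}$ regardless of direction, so an induction on $k$ gives $f_n \circ \cdots \circ f_1 = \phi_{\ii_n} \circ \phi_{\ii_0}^{-1}$. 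The full composite is therefore
\[
\phi_{\ii_n}^{-1} \circ \phi_{\ii_n} \circ \phi_{\ii_0}^{-1} \circ \phi_{\ii_0} = \id_{M_I},
\]
establishing path-independence and the identity statement for $\ii = \ii'$ in one stroke.

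The one subtle point is the first step: making sense of an inverted structure map along a downward zigzag edge requires knowing in advance that intra-$I$ structure maps are invertible, and it is precisely the no-monodromy hypothesis---not merely the existence of per-point isomorphisms~$\phi_\ii$---that forces this. Once that observation is in hand the rest is a telescoping identity, presenting no real obstacle.
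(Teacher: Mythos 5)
Your proof is correct and takes essentially the same route as the paper's, which argues by induction on path length with the trivial base case $M_I \to M_\ii \to M_I = \id_{M_I}$. You additionally make explicit what the paper's terse inductive step leaves implicit: the no-monodromy condition with $J = I$, anchored at the reflexive pair $\ii \preceq \ii$, forces every intra-$I$ structure map to equal $\phi_\jj \circ \phi_\ii^{-1}$, which is precisely what makes the inverted structure maps along downward zigzag edges well-defined and lets the composite telescope.
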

\begin{proof}
The second claim follows from the first.  When the path has
length~$0$, the claim is that $M_I \to M_\ii \to M_I$ is the identity
on~$M_I$, which follows by definition.  For longer paths the result is
proved by induction on path length.
\end{proof}

Now it is time to introduce the central finiteness concept of the
paper.

\begin{defn}\label{d:tame}
Fix a poset~$\cQ$ and a $\cQ$-module~$M$.
\mbox{}
\begin{enumerate}
\item\label{i:finite-constant-subdiv}%
A constant subdivision of~$\cQ$ is \emph{finite} if it has finitely
many constant regions.

\item\label{i:Q-finite}%
The $\cQ$-module~$M$ is \emph{$\cQ$-finite} if its components $M_q$
have finite dimension over~$\kk$.

\item\label{i:tame}%
The $\cQ$-module~$M$ is \emph{tame} if it is $\cQ$-finite and $\cQ$
admits a finite constant subdivision subordinate to~$M$.
\end{enumerate}
\end{defn}

\pagebreak
\begin{remark}\label{r:tame}
\mbox{}
\begin{enumerate}
\item%
The tameness condition here includes but is much less rigid than the
compact tameness condition in
\cite{scolamiero-chacholski-lundman-ramanujam-oberg16}, the latter
meaning more or less that the module is finitely generated over a
scalar multiple of~$\ZZ^n$ in~$\QQ^n$.

\item%
Some literature calls Definition~\ref{d:tame}.\ref{i:Q-finite}
\emph{pointwise finite dimensional (PFD)}.  The terminology here
agrees with that in \cite{alexdual}, on which Section~\ref{s:ZZn} here
is based.
\end{enumerate}
\end{remark}

\begin{remark}\label{r:isotypic-expected}
Data analysis should always produce tame persistence modules.  Indeed,
from limited experience, in data analysis the isotypic subdivision
(Example~\ref{e:puuska-nonconstant-isotypic}) appears consistently to
be a finite constant subdivision.  It is unclear what kind of data
situation might produce a nonconstant isotypic subdivision, but it
seems likely that in such cases a constant subdivision can always be
obtained by subdividing---into contractible pieces---isotypic
components that have nontrivial topology and then gathering
incomparable contractible pieces into single constant regions.
Exploring these assertions is left open.
\end{remark}

\begin{lemma}\label{l:subdiv}
Any refinement of a constant subdivision subordinate to a
$\cQ$-module~$\cM$ is a constant subdivision subordinate to~$\cM$.
\end{lemma}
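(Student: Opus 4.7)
The plan is to take the refined subdivision and attach to each refined region the same vector space (and same isomorphisms to stalks) as its ambient original region, then verify that the no-monodromy condition upgrades automatically. Concretely, fix a constant subdivision $\{I\}$ subordinate to~$\cM$ with chosen data $M_I \to M_\ii$ for $\ii \in I$, and let $\{I'\}$ be a refinement. Each refined region $I'$ lies in a unique original region, which I will denote $\iota(I')$; set $M_{I'} := M_{\iota(I')}$, and for $\ii \in I'$ declare the isomorphism $M_{I'} \to M_\ii$ to be the already specified isomorphism $M_{\iota(I')} \to M_\ii$. This immediately supplies the first bulleted requirement of Definition~\ref{d:constant-subdivision}.

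What remains is the no-monodromy condition: given refined regions $I'$ and $J'$ and a comparable pair $\ii \preceq \jj$ with $\ii \in I'$, $\jj \in J'$, the composite $M_{I'} \to M_\ii \to M_\jj \to M_{J'}$ must be independent of the chosen pair. I would split this into two cases according to whether $\iota(I') \neq \iota(J')$ or $\iota(I') = \iota(J')$. In the former case, the composite coincides by construction with the composite $M_{\iota(I')} \to M_\ii \to M_\jj \to M_{\iota(J')}$ from the original subdivision, which is independent of the pair by hypothesis, since the comparable pair $\ii \preceq \jj$ still satisfies $\ii \in \iota(I')$ and $\jj \in \iota(J')$.

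The only point that requires a touch of care is the case $\iota(I') = \iota(J') =: I$, in which a comparable pair $\ii \preceq \jj$ with $\ii, \jj \in I$ may very well exist even though $I'$ and $J'$ have been separated by the refinement. Here the composite $M_{I'} \to M_\ii \to M_\jj \to M_{J'}$ is a path of length one through~$I$ in the sense of Lemma~\ref{l:no-monodromy}, so that lemma identifies it with the identity on $M_I$, which is plainly independent of the comparable pair chosen. Thus in either case the composite depends only on the pair $(I', J')$ and not on the representatives, proving that $\{I'\}$ is a constant subdivision subordinate to~$\cM$. I do not anticipate a serious obstacle; the subtlest bookkeeping step is simply recognizing that the same-original-region case is exactly where Lemma~\ref{l:no-monodromy} is needed.
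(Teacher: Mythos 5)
Your proof is correct and matches the paper's one-line approach: assign to each refined region $I'$ the vector space and isomorphisms of the unique original region $\iota(I')$ containing it, so the no-monodromy condition is inherited. Your case split is actually unnecessary — since $\ii \in I' \subseteq \iota(I')$ and $\jj \in J' \subseteq \iota(J')$, the no-monodromy condition of the original subdivision (Definition~\ref{d:constant-subdivision}, which explicitly allows the ``perhaps different'' region $J$ to equal $I$) already covers the case $\iota(I') = \iota(J')$ by exactly the same argument as the case $\iota(I') \neq \iota(J')$, without any appeal to Lemma~\ref{l:no-monodromy}.
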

\begin{proof}
Choosing the same vector space~$M_I$ for every region of the
refinement contained in the constant region~$I$, the lemma is
immediate from Definition~\ref{d:constant-subdivision}.
\end{proof}

\subsection{Auxiliary hypotheses}\label{sub:auxiliary}\mbox{}

\noindent
Effectively computing with real multifiltered spaces requires keeping
track of the shapes of various regions, such as constant regions.  (In
later sections, other regions along these lines include upsets,
downsets, and fibers of poset morphisms.)  The fact that applications
of persistent homology often arise from metric considerations, which
are semialgebraic in nature, or are approximated by piecewise linear
structures, suggests the following auxiliary hypotheses for
algorithmic developments.  The subanalytic hypothesis is singled out
for the theoretical purposes surrounding conjectures of Kashiwara and
Schapira in Section~\ref{s:derived}.

\begin{defn}\label{d:auxiliary-hypotheses}
Fix a subposet~$\cQ$ of a partially ordered real vector space of
finite dimension (see Definition~\ref{d:pogroup}, or take $\cQ =
\RR^n$ for now).  A partition of~$\cQ$ into subsets~is
\begin{enumerate}
\item\label{i:semialgebraic}%
\emph{semialgebraic} if the subsets are real semialgebraic varieties;

\item\label{i:PL}%
\emph{piecewise linear (PL)} if the subsets are finite unions of
convex polyhedra, where a \emph{convex polyhedron} is an intersection
of finitely many closed or open half-spaces;

\item\label{i:subanalytic}%
\emph{subanalytic} if the subsets are subanalytic varieties;

\item\label{i:classX}%
\emph{of class~$\mathfrak X$} if the subsets lie in a
family~$\mathfrak X$ of subsets of~$\cQ$ that is closed under
complements, finite intersections, negation, and Minkowski sum with
the \emph{positive cone} $\cQ_+$, namely the set of vectors $\qq \in
\cQ$ such that $\0 \preceq \qq$.
\end{enumerate}
A module over~$\cQ$ is \emph{semialgebraic}, or \emph{PL},
\emph{subanalytic}, or \emph{of class~$\mathfrak X$} if it is tamed by
a subordinate finite constant subdivision of the corresponding type.
\end{defn}

\begin{remark}\label{r:auxiliary-hypotheses}
Subposets of partially ordered real vector spaces are allowed in
Definition~\ref{d:auxiliary-hypotheses} to be able to speak of, for
example, piecewise linear sets in rational vector spaces, or
semialgebraic subsets of~$\ZZ^n$ (see Example~\ref{e:circular-cone}
for an instance of the latter).  When $\cQ$ is properly contained in
the ambient real vector space, subsets of~$\cQ$ are semialgebraic, PL,
or subanalytic when they are intersections with~$\cQ$ of the
corresponding type of subset of the ambient real vector space.
\end{remark}

\begin{prop}\label{p:auxiliary-hypotheses}
Fix a partially ordered real vector space~$\cQ$.
\begin{enumerate}
\item\label{i:classes}%
The classes of semialgebraic, PL, and subanalytic subsets of~$\cQ$ are
each closed under complements, finite intersections, and negation.

\item\label{i:sum-semialg}%
The Minkowski sum $S + \cQ_+$ of a semialgebraic set~$S$ with the
positive cone is semialgebraic if~$\cQ_+$ is semialgebraic.

\item\label{i:sum-PL}%
The Minkowski sum $S + \cQ_+$ of a PL set with the positive cone is
semialgebraic if~$\cQ_+$ is polyhedral.

\item\label{i:sum-subanalytic}%
The Minkowski sum $S + \cQ_+$ of a bounded subanalytic set~$S$ with
the positive cone is subanalytic if~$\cQ_+$ is subanalytic.
\end{enumerate}
\end{prop}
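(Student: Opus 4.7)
The plan is to treat the four items in turn. Items (1)--(3) follow from standard closure properties in semialgebraic, subanalytic, and PL geometry, while (4) requires an additional propriety argument because subanalytic sets are not preserved by arbitrary projections.

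For (1), I would argue class by class. Semialgebraic sets form a Boolean algebra by the Tarski--Seidenberg theorem, and negation $x \mapsto -x$ is a polynomial map, so the class is closed under complements, finite intersections, and negation. For subanalytic sets, finite intersections are immediate from the definition, complements are handled by Gabrielov's theorem of the complement, and negation is a real-analytic diffeomorphism, which preserves subanalyticity. For PL sets, a convex polyhedron $P = \bigcap_{i=1}^k H_i$ has complement $\bigcup_{i=1}^k H_i^c$, a finite union of half-spaces (open becomes closed and vice versa); the three closure properties then follow by De~Morgan together with the elementary facts that the intersection of two convex polyhedra is again a convex polyhedron and that the negation of a half-space is a half-space.

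For (2), I would realize $S + \cQ_+$ as the image of the linear projection $\pi \colon \cQ \times \cQ \to \cQ$, $(s,c) \mapsto s + c$, applied to the semialgebraic set $S \times \cQ_+$; by Tarski--Seidenberg the image is semialgebraic. Part (3) then follows immediately: polyhedral cones are semialgebraic, and PL sets are finite unions of convex polyhedra, each of which is an intersection of semialgebraic half-spaces and hence semialgebraic, so (2) applies to give the conclusion.

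For (4) the same projection argument fails, because projections of subanalytic sets need not be subanalytic. Here I would exploit the boundedness of~$S$. The summation map $\sigma(s,c) = s + c$ is real-analytic, and $S \times \cQ_+$ is subanalytic. For any compact $K \subseteq \cQ$, the preimage $\sigma^{-1}(K) \cap (S \times \cQ_+)$ is contained in $S \times (K - \overline S)$, which is bounded because $S$ is. Thus this preimage is a relatively compact subanalytic set, and standard results in subanalytic geometry (Bierstone--Milman) then ensure that its image under the analytic map~$\sigma$ is subanalytic. Since $K \cap (S + \cQ_+) = \sigma\bigl(\sigma^{-1}(K) \cap (S \times \cQ_+)\bigr)$ is subanalytic for every compact~$K$, the set $S + \cQ_+$ is subanalytic. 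The main obstacle lies precisely in this step: boundedness of~$S$ cannot be dropped, since an unbounded~$S$ would allow the preimage of a compact set to be unbounded, breaking the relative compactness needed to invoke subanalyticity of the image.
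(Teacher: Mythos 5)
Your proof is correct and follows essentially the same route as the paper: standard closure properties for item~(1), the projection $(s,c)\mapsto s+c$ applied to $S\times \cQ_+$ for items (2)--(4), and a properness argument via boundedness of~$S$ in the subanalytic case. You fill in more detail than the paper (e.g.\ the explicit Gabrielov and Bierstone--Milman citations, and the explicit bound $\sigma^{-1}(K)\cap(S\times\cQ_+)\subseteq S\times(K-\overline S)$ establishing properness), and you handle the PL complement by unioning complements of half-spaces rather than the paper's hyperplane-arrangement phrasing, but these are presentational rather than substantive differences.
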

\begin{proof}
See \cite{shiota97} (for example) to treat the semialgebraic and
subanalytic cases of item~\ref{i:classes}.  The PL case reduces easily
to checking that the complement of a single polyhedron is PL, which in
turn follows because a real vector space is the union of the
(relatively open) faces in any finite hyperplane arrangement, so
removing a single one of these faces leaves a PL set remaining.

For item~\ref{i:sum-semialg}, use that the image of a semialgebraic
set under linear projection is a semialgebraic set, and then express
$S + \cQ_+$ as the image of $S \times \cQ_+$ under the projection $\cQ
\times \cQ \to \cQ$ that acts by $(\qq,\qq') \mapsto \qq + \qq'$.  The
same argument works for item~\ref{i:sum-PL}.  The same argument also
works for item~\ref{i:sum-subanalytic} but requires that the
restriction of the projection to the closure of $S \times \cQ_+$ be a
proper map, which always occurs when $S$ is bounded.
\end{proof}

\section{Fringe presentation by upsets and downsets}\label{s:fringe}

Presentations are the core structures of computational ring theory.
However, the default notions of free or projective presentation---as
well as more generally and dually the notions of flat or injective
presentation---are too restrictive for modules over arbitrary posets.
These notions all still make formal sense, but it is too much to ask
for presentations that are finite direct sums of indecomposables in
such generality, as demonstrated by the formidable infinitude of such
objects in the case of $\RR^n$-modules like fly wing modules (see
Example~\ref{e:toy-model-fly-wing}).  The idea here, both for
theoretical and computational purposes, is to allow arbitrary upset
and downset modules instead of only flat and injective ones.  The use
of indicator modules instead of free or injective modules gathers the
generators and cogenerators, known as births and deaths in persistent
homology, into finitely many groups (Definition~\ref{d:fringe}),
paving the way for effective data structures in the form of monomial
matrices (Definition~\ref{d:monomial-matrix-fr}).

\subsection{Upsets and downsets}\label{sub:indicator}

\begin{defn}\label{d:indicator}
The vector space $\kk[\cQ] = \bigoplus_{q\in\cQ} \kk$ that assigns
$\kk$ to every point of the poset~$\cQ$ is a $\cQ$-module with
identity maps on~$\kk$.  More generally,
\begin{enumerate}
\item\label{i:upset}%
an \emph{upset} (also called a \emph{dual order ideal}) $U \subseteq
\cQ$, meaning a subset closed under going upward in~$\cQ$ (so $U +
\RR_+^n = U$, when $\cQ = \RR^n$) determines an \emph{indicator
submodule} or \emph{upset module} $\kk[U] \subseteq \kk[\cQ]$; and
\item\label{i:downset}%
dually, a \emph{downset} (also called an \emph{order ideal}) $D
\subseteq \cQ$, meaning a subset closed under going downward in~$\cQ$
(so $D - \RR_+^n = D$, when $\cQ = \RR^n$) determines an
\emph{indicator quotient module} or \emph{downset module} $\kk[\cQ]
\onto \kk[D]$.
\end{enumerate}
When $\cQ$ is a subposet of a partially ordered real vector space, an
indicator module of either sort is semialgebraic, PL, subanalytic, or
of class~$\mathfrak X$ if the corresponding upset or downset is of the
same type (Definition~\ref{d:auxiliary-hypotheses}).
\end{defn}

\begin{remark}\label{r:indicator}
Indicator submodules $\kk[U]$ and quotient modules $\kk[D]$ are
$\cQ$-modules, not merely $U$-modules or $D$-modules, by setting the
graded components indexed by elements outside of the ideals to~$0$.
It is only by viewing indicator modules as $\cQ$-modules that they are
forced to be submodules or quotients, respectively.  For relations
between these notions and those in Remark~\ref{r:curry}, again see
Curry's thesis \cite{curry-thesis}.  For example, upsets form the open
sets in the topology from Remark~\ref{r:curry}.
\end{remark}

\begin{example}\label{e:melting}
Ising crystals at zero temperature, with polygonal boundary conditions
and fixed mesh size, are semialgebraic upsets in~$\RR^n$.  That much
is by definition: fixing a mesh size means that the crystals in
question are (staircase surfaces of finitely generated) monomial
ideals in $n$ variables.  Remarkably, such crystals remain
semialgebraic in the limit of zero mesh size; see \cite{okounkov16}
for an exposition and references.
\end{example}

\begin{example}\label{e:asw}
Monomial ideals in polynomial rings with real exponents, which
correspond to upsets in $\RR^n_+$, are considered in
\cite{andersen--sather-wagstaff2015}, including aspects of primality,
irreducible decomposition, and Krull dimension.  Upsets in $\RR^n$ are
also considered in \cite{madden-mcguire2015}, where the combinatorics
of their lower boundaries, and topology of related simplicial
complexes, are investigated in cases with locally finite generating
sets.
\end{example}

\begin{defn}\label{d:connected-poset}
A poset~$\cQ$ is
\begin{enumerate}
\item\label{i:connected}%
\emph{connected} if every pair of elements $q,q' \in \cQ$ is joined by
a \emph{path} in~$\cQ$: a sequence $q = q_0 \preceq q'_0 \succeq q_1
\preceq q'_1 \succeq \dots \succeq q_k \preceq q'_k = q'$ in~$\cQ$;

\item%
\emph{upper-connected} if every pair of elements in~$\cQ$ has an
upper bound in~$\cQ$;

\item%
\emph{lower-connected} if every pair of elements in~$\cQ$ has a
lower bound in~$\cQ$; and

\item%
\emph{strongly connected} if $\cQ$ is upper-connected and
lower-connected.
\end{enumerate}
\end{defn}

\begin{example}\label{e:connected-poset}
$\RR^n$ is strongly connected.  The same is true of any partially
ordered abelian group (see Section~\ref{sub:polyhedral} for basic
theory of those posets).
\end{example}

\begin{example}\label{e:bounded-poset}
A poset~$\cQ$ is upper-connected if (but not only if,
cf.~Example~\ref{e:connected-poset}) it has a maximum element---one
that is preceded by every element of~$\cQ$.  Similarly, $\cQ$ is
lower-connected if it has a minimum element---one that precedes every
element of~$\cQ$.
\end{example}

\begin{remark}\label{r:pi0}
The relation $q \sim q'$ defined by the existence of a path joining
$q$ to~$q'$ as in Definition~\ref{d:connected-poset}.\ref{i:connected}
is an equivalence relation.
\end{remark}

\begin{defn}\label{d:pi0}
Fix a poset~$\cQ$.  For any subset $S \subseteq \cQ$, write $\pi_0 S$
for the set of connected components of~$S$: the maximal connected
subsets of~$S$, or equivalently the classes under the relation from
Remark~\ref{r:pi0}.
\end{defn}

\begin{prop}\label{p:U->D}
Fix a poset~$\cQ$.
\begin{enumerate}
\item\label{i:U->D}%
For an upset~$U$ and a downset~$D$,
$$%
  \Hom_\cQ(\kk[U], \kk[D]) = \kk^{\pi_0(U \cap D)},
$$
a product of copies of\/~$\kk$, one for each connected component of $U
\cap D$.

\item\label{i:U'->U}%
For upsets $U$ and~$U'$,
$$%
  \Hom_\cQ(\kk[U'], \kk[U]) = \kk^{\{S\in\pi_0 U' \,\mid\, S \subseteq U\}},
$$
a product of copies of\/~$\kk$, one for each connected component
of~$U'$ contained in~$U$.

\item\label{i:D->D'}%
For downsets $D$ and~$D'$,
$$%
  \Hom_\cQ(\kk[D], \kk[D']) = \kk^{\{S\in\pi_0 D' \,\mid\, S \subseteq D\}},
$$
a product of copies of\/~$\kk$, one for each connected component
of~$D'$ contained in~$D$.
\end{enumerate}
\end{prop}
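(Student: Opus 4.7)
The plan is to identify each hom-space directly with a space of locally constant scalar-valued functions on a certain subset of~$\cQ$. A homomorphism between indicator modules is specified, graded piece by graded piece, by scalars $\phi_q \in \kk$ at each $q$ where both source and target are nonzero (and is automatically zero elsewhere), so the question reduces to determining which comparable pairs $q \preceq q'$ impose nontrivial commutativity constraints on these scalars. For each pair $q \preceq q'$ the constraint reads $\phi_{q'} \circ (\kk[\text{src}]_q \to \kk[\text{src}]_{q'}) = (\kk[\text{tgt}]_q \to \kk[\text{tgt}]_{q'}) \circ \phi_q$; since each structure map is either the identity on $\kk$ or zero, the case analysis has only a handful of branches.

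For part~\ref{i:U->D}, the key observation is that the upset property of $U$ rules out $q \in U$ with $q' \notin U$, while the downset property of $D$ rules out $q \notin D$ with $q' \in D$; in every remaining boundary case one of the structure maps is zero and the constraint collapses to $0=0$. The only live constraints come from pairs $q \preceq q'$ with both elements in $U \cap D$, where both structure maps are identities on~$\kk$ and the constraint is $\phi_q = \phi_{q'}$. Iterating along the zigzag paths of Definition~\ref{d:connected-poset}.\ref{i:connected} within $U \cap D$ yields the product $\kk^{\pi_0(U \cap D)}$.

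For parts~\ref{i:U'->U} and~\ref{i:D->D'} the same case analysis produces a new phenomenon: a connected component $S$ of $U'$ (resp.~$D'$) may fail to be contained in $U$ (resp.~$D$), and in that case the scalar on $S$ must vanish. In part~\ref{i:U'->U}, picking $q_0 \in S \setminus U$ and a neighbor $q_0 \preceq r$ in~$S$ gives $\phi_r = 0$ because the target structure map out of $\kk[U]_{q_0}=0$ is zero, while the upset property of $U$ forces any neighbor $r \preceq q_0$ in~$S$ to also lie outside $U$, giving $\phi_r=0$ trivially; propagating along zigzags in $S$ yields $\phi|_S \equiv 0$. Part~\ref{i:D->D'} is dual: the downset property of $D$ ensures that going upward from a point $q_0 \in S \setminus D$ stays outside $D$, while the constraint from a neighbor $r \preceq q_0$ in~$S$ forces $\phi_r = 0$. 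Components $S$ entirely inside the active region contribute a single free scalar each, and exits or re-entries of the indexing upset or downset through the ambient poset always involve a zero structure map on the relevant side, so no constraint links distinct components. The main obstacle, which is really only bookkeeping, is tracking the asymmetry between going up and going down and applying the correct half of the upset or downset property at each step.
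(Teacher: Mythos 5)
Your proof is correct and takes essentially the same approach as the paper: you analyze the commutativity constraint on each comparable pair $q \preceq q'$, observe that the only nontrivial constraints occur where both source and target are nonzero, and propagate along zigzag paths within the relevant region to obtain constancy on connected components, with the extra vanishing constraint in parts~\ref{i:U'->U} and~\ref{i:D->D'} coming from boundary pairs where exactly one graded piece of the second indicator module is zero. The paper's proof is the same argument phrased slightly less systematically—it explicitly separates the ``determined by scalars on components'' direction from the ``any choice of scalars defines a homomorphism'' direction (noting for part~\ref{i:U->D} that comparable pairs in $U \cap D$ automatically lie in the same component), while your constraint-system framing handles both directions at once.
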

\begin{proof}
For the first claim, the action~$\phi_q$ of $\phi: \kk[U] \to \kk[D]$
on the copy of~$\kk$ in any degree $q \in U \minus D$ is~$0$ because
$\kk[D]_q = 0$, so assume $q \in U \cap D$.  Then $\phi_q = \phi_{q'}:
\kk \to \kk$ if $q \preceq q' \in U \cap D$ because $\kk[U]_q \to
\kk[U]_{q'}$ and $\kk[D]_q \to \kk[D]_{q'}$ are identity maps
on~$\kk$.  Similarly, $\phi_q = \phi_{q'}$ if $q \succeq q' \in U \cap
D$.  Induction on the length of the path in
Definition~\ref{d:connected-poset}.\ref{i:connected} shows that
$\phi_q = \phi_{q'}$ if $q$ and~$q'$ lie in the same connected
component of $U \cap D$.  Thus $\Hom_\cQ(\kk[U], \kk[D]) \subseteq
\kk^{\pi_0(U \cap D)}$.  On the other hand, specifying for each
component $S \in \pi_0(U \cap D)$ a scalar $\alpha_S \in \kk$ yields a
homomorphism $\phi: \kk[U] \to \kk[D]$, if $\phi_q$ is defined to be
multiplication by~$\alpha_S$ on the copies of $\kk = \kk[U]_q$ indexed
by $q \in S$ and $0$ for $q \in U \minus D$; that $\phi$ is indeed a
$\cQ$-module homomorphism follows because $\kk[D]_{q'} = 0$ (that is,
$q' \not\in D$) whenever $q' \succeq q \in D$ but $q'$ does not lie in
the connected component of~$U \cap D$ containing~$q$.  Said another
way, pairs of elements of $U \cap D$ either lie in the same connected
component of $U \cap D$ or they are incomparable.

The proofs of the last two claims are similar (and dual to one
another), particularly when it comes to showing that a homomorphism of
indicator modules of the same type---that is, source and target both
upset or both downset---is constant on the relevant connected
components.  The only point not already covered is that if $U'$ is a
connected upset and $U' \not\subseteq U$ then every homomorphism
$\kk[U'] \to \kk[U]$ is~$0$ because $q' \in U' \minus U$ implies
$\kk[U']_{q'} \to 0 = \kk[U]_{q'}$.
\end{proof}

The cases of interest in this paper and its sequels
\cite{essential-real,functorial-multiPH}, particulary real and discrete
polyhedral partially ordered groups (Example~\ref{e:real-pogroup},
Example~\ref{e:discrete-pogroup}, and Definition~\ref{d:face}) such
as~$\RR^n$ and~$\ZZ^n$, have strong connectivity properties, thereby
simplifying the conclusions of Proposition~\ref{p:U->D}.  First, here
is a convenient notation.

\begin{cor}\label{c:U->D}
Fix a poset~$\cQ$ with upsets $U,U'$ and downsets $D,D'$.
\begin{enumerate}
\item\label{i:kk}%
$\Hom_\cQ(\kk[U], \kk[D]) = \kk$ if $U \cap D \neq \nothing$ and
either $U$ is lower-connected as a subposet of~$\cQ$ or $D$ is
upper-connected as a subposet of~$\cQ$.

\item\label{i:U}%
If $U$ and~$U'$ are upsets and $\cQ$ is upper-connected, then
$\Hom_\cQ(\kk[U'],\kk[U]) = \kk$ if $U' \subseteq U$ and\/~$0$
otherwise.

\item\label{i:D}%
If $D$ and~$D'$ are downsets and $\cQ$ is lower-connected, then
$\Hom_\cQ(\kk[D],\kk[D']) = \kk$ if $D \supseteq D'$ and\/~$0$
otherwise.\qed
\end{enumerate}
\end{cor}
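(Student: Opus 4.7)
The plan is to deduce all three parts directly from Proposition~\ref{p:U->D}; the only content is to count connected components of the relevant sets under the connectivity hypotheses.

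For part~\ref{i:kk}, Proposition~\ref{p:U->D}.\ref{i:U->D} gives $\Hom_\cQ(\kk[U],\kk[D]) = \kk^{\pi_0(U\cap D)}$, so it suffices to show that $U\cap D$ is connected whenever it is nonempty and one of the stated hypotheses holds.  Given $q_1,q_2\in U\cap D$, I would exhibit a length-two path joining them inside $U\cap D$.  If $U$ is lower-connected as a subposet of~$\cQ$, pick a lower bound $q\in U$ of $q_1$ and~$q_2$ inside~$U$; then $q\preceq q_1\in D$ forces $q\in D$ because $D$ is a downset, so $q\in U\cap D$ and $q_1\succeq q\preceq q_2$ is the desired path.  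The dual argument, using that $U$ is an upset, handles the case in which $D$ is upper-connected: an upper bound of $q_1,q_2$ in~$D$ automatically lies in~$U$.

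For part~\ref{i:U}, the governing observation is that when $\cQ$ is upper-connected, every upset $V\subseteq\cQ$ is itself connected.  Indeed, for $q_1,q_2\in V$ an upper bound $q\in\cQ$ exists, and $q\succeq q_1\in V$ forces $q\in V$, so $q_1\preceq q\succeq q_2$ lies in~$V$.  Applying this to $U'$ gives $\pi_0 U' = \{U'\}$, and then Proposition~\ref{p:U->D}.\ref{i:U'->U} specializes to $\Hom_\cQ(\kk[U'],\kk[U]) = \kk$ exactly when $U'\subseteq U$, and to $\kk^\nothing = 0$ otherwise.  Part~\ref{i:D} is the dual statement: $\cQ$ lower-connected forces each downset to be connected by the symmetric argument, whence Proposition~\ref{p:U->D}.\ref{i:D->D'} yields $\pi_0 D = \{D\}$ and the claim.

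There is no real obstacle beyond keeping the upset/downset asymmetries straight; the only conceptual step is recognizing that upper- or lower-connectedness of~$\cQ$ upgrades directly to connectedness of every upset or downset, and that crossing between an upset and a downset in part~\ref{i:kk} leverages the closure properties of each to transport a lower (respectively upper) bound into the intersection.
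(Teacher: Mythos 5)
Your argument is correct and is exactly the reasoning the paper leaves implicit behind the terminal \qed: reduce to Proposition~\ref{p:U->D} and observe that the stated connectivity hypotheses force $U\cap D$ (respectively each upset $U'$, each downset $D$) to have a single connected component, so the products $\kk^{\pi_0(\cdot)}$ collapse to $\kk$ or~$0$ as claimed. Nothing is missing; you have simply written out the one-line verifications the paper takes as immediate.
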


\begin{example}\label{e:disconnected-homomorphism}
Consider the poset $\NN^2$, the upset $U = \NN^2 \minus \{\0\}$, and
the downset $D$ consisting of the origin and the two standard basis
vectors.  Then $\kk[U] = \mm = \<x,y\>$ is the graded maximal ideal of
$\kk[\NN^2] = \kk[x,y]$ and $\kk[D] = \kk[\NN^2]/\mm^2$.  Now
calculate
$$%
  \Hom_{\NN^2}(\kk[U],\kk[D])
  =
  \Hom_{\NN^2}(\mm,\kk[\NN^2]/\mm^2)
  =
  \kk^2,
$$
a vector space of dimension~$2$: one basis vector preserves the
monomial~$x$ while killing the monomial~$y$, and the other basis
vector preserves~$y$ while killing~$x$.
\end{example}

\begin{example}\label{e:totally-disconnected}
For an extreme case, consider the poset $\cQ = \RR^2$ with $U$ the
closed half-plane above the antidiagonal line $y = -x$ and $D = -U$,
so that $U \cap D$ is totally disconnected: $\pi_0(U \cap D) = U \cap
D$.  Then $\Hom_\cQ(\kk[U],\kk[D]) = \kk^\RR$ is a vector space of
beyond continuum dimension, the copy of~$\RR$ in the exponent being
the antidiagonal~line.
\end{example}

The proliferation of homomorphisms in
Examples~\ref{e:disconnected-homomorphism}
and~\ref{e:totally-disconnected} is undesirable for both computational
and theoretical purposes; it motivates the following concept.

\begin{defn}\label{d:connected-homomorphism}
Let each of $S$ and $S'$ be a nonempty intersection of an upset in a
poset~$\cQ$ with a downset in~$\cQ$, so $\kk[S]$ and $\kk[S']$ are
subquotients of~$\kk[\cQ]$.  A homomorphism $\phi: \kk[S] \to \kk[S']$
is \emph{connected} if there is a scalar $\lambda \in \kk$ such that
$\phi$ acts as multiplication by~$\lambda$ on the copy of~$\kk$ in
degree $q$ for all $q \in S \cap S'$.
\end{defn}

The cases of interest in the rest of this paper concern three
situations: both $S$ and~$S'$ are upsets, or both are downsets, or $S
= U$ is an upset and $S' = D$ is downset with $U \cap D \neq
\nothing$.  However, the full generality of
Definition~\ref{d:connected-homomorphism} is required in the sequel to
this work \cite{essential-real}.

\begin{remark}\label{r:U->D}
Corollary~\ref{c:U->D} says that homomorphisms among indicator modules
are automatically connected in the presence of appropriate upper- or
lower-connectedness.
\end{remark}

\subsection{Fringe presentations}\label{sub:fringe}\mbox{}

\begin{defn}\label{d:fringe}
Fix any poset~$\cQ$.  A \emph{fringe presentation} of a
$\cQ$-module~$\cM$ is
\begin{itemize}
\item%
a direct sum~$F$ of upset modules~$\kk[U]$,
\item%
a direct sum~$E$ of downset modules~$\kk[D]$, and
\item%
a homomorphism $F \to E$ of $\cQ$-modules with
\begin{itemize}
  \item%
  image isomorphic to~$\cM$ and
  \item%
  components $\kk[U] \to \kk[D]$ that are connected
  (Definition~\ref{d:connected-homomorphism}).
\end{itemize}
\end{itemize}
A fringe presentation
\begin{enumerate}
\item%
is \emph{finite} if the direct sums are finite;

\item\label{i:dominate}%
\emph{dominates} a constant subdivision of~$\cM$ if the subdivision is
subordinate to each summand $\kk[U]$ of~$F$ and~$\kk[D]$ of~$E$; and

\item\label{i:auxiliary-fringe}%
is \emph{semialgebraic}, \emph{PL}, \emph{subanalytic}, or \emph{of
class~$\mathfrak X$} if $\cQ$ is a subposet of a partially ordered
real vector space of finite dimension and the fringe presentation
dominates a constant subdivision of the corresponding type
(Definition~\ref{d:auxiliary-hypotheses}).
\end{enumerate}
\end{defn}

Fringe presentations are effective data structures via the following
notational trick.  Topologically, it highlights that births occur
along the lower boundaries of the upsets and deaths occur along the
upper boundaries of the downsets, with a linear map over the ground
field to relate them.

\begin{defn}\label{d:monomial-matrix-fr}
Fix a finite fringe presentation $\phi: \bigoplus_p \kk[U_p] = F \to E
= \bigoplus_q \kk[D_q]$.  A \emph{monomial matrix} for $\phi$ is an
array of \emph{scalar entries}~$\phi_{pq}$ whose columns are labeled
by the \emph{birth upsets}~$U_p$ and whose rows are labeled by the
\emph{death downsets}~$D_q$:
$$%
\begin{array}{ccc}
  &
  \monomialmatrix
	{U_1\\\vdots\ \\U_k\\}
	{\begin{array}{ccc}
		   D_1    & \cdots &    D_\ell   \\
		\phi_{11} & \cdots & \phi_{1\ell}\\
		\vdots    & \ddots &   \vdots    \\
		\phi_{k1} & \cdots & \phi_{k\ell}\\
	 \end{array}}
	{\\\\\\}
\\
  \kk[U_1] \oplus \dots \oplus \kk[U_k] = F
  & \fillrightmap
  & E = \kk[D_1] \oplus \dots \oplus \kk[D_\ell].
\end{array}
$$
\end{defn}

\begin{prop}\label{p:scalars}
With notation as in Definition~\ref{d:monomial-matrix-fr}, $\phi_{pq}
= 0$ unless $U_p \cap D_q \neq \nothing$.  Conversely, if an array of
scalars $\phi_{pq} \in \kk$ with rows labeled by upsets and columns
label\-ed by downsets has $\phi_{pq} = 0$ unless $U_p \cap D_q \neq
\nothing$, then it represents a fringe~\mbox{presentation}.
\end{prop}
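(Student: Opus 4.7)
Both implications are meant to follow directly from the Hom-space computation in Proposition~\ref{p:U->D}.\ref{i:U->D}, which identifies $\Hom_\cQ(\kk[U],\kk[D])$ with $\kk^{\pi_0(U\cap D)}$, together with Definition~\ref{d:connected-homomorphism}, which picks out the diagonal copy of $\kk$ inside this product as the connected homomorphisms.

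For the forward direction, I would start with a finite fringe presentation $\phi\colon F \to E$ and its scalar entries~$\phi_{pq}$. By Definition~\ref{d:fringe} each component $\kk[U_p] \to \kk[D_q]$ is connected, so it is determined by the single scalar~$\phi_{pq}$ acting as multiplication on every degree in $U_p \cap D_q$. When $U_p \cap D_q = \nothing$, the index set $\pi_0(U_p \cap D_q)$ is empty, so Proposition~\ref{p:U->D}.\ref{i:U->D} forces $\Hom_\cQ(\kk[U_p],\kk[D_q]) = 0$ and hence $\phi_{pq} = 0$. For the converse, given an array of scalars satisfying the stated vanishing condition, I would define candidate component maps $\psi_{pq}\colon \kk[U_p] \to \kk[D_q]$ by multiplication by $\phi_{pq}$ on the degrees in $U_p \cap D_q$ and by zero elsewhere; each such map is connected by construction, and the sum $\phi := \bigoplus_{p,q}\psi_{pq}$ will be the desired homomorphism, with $\cM := \image\phi$ as the presented module.

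The only item requiring actual verification is that each~$\psi_{pq}$ really is a $\cQ$-module homomorphism, which amounts to checking commutativity of the natural square attached to each comparable pair $q' \preceq q''$. This is immediate because $U_p$ is upward-closed and $D_q$ is downward-closed, so the four vertex spaces are $\kk$ or~$0$ in a pattern that makes both composite paths around the square reduce to the same scalar. I do not expect a genuine obstacle; the only subtlety worth articulating in the write-up is that the hypothesis $\phi_{pq} = 0$ on empty intersections is a normalization rather than a structural restriction, as without it any scalar would represent the same zero map from $\kk[U_p]$ to $\kk[D_q]$ and the correspondence between monomial matrices and fringe presentations would fail to be bijective.
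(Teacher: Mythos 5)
Your proof is correct and takes essentially the same route as the paper, which simply cites Proposition~\ref{p:U->D}.\ref{i:U->D} and Definition~\ref{d:connected-homomorphism}; you have just unpacked that citation into the explicit argument. The extra verification that each $\psi_{pq}$ is a $\cQ$-module homomorphism is sound and, if anything, a welcome expansion of what the paper leaves implicit.
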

\begin{proof}
Proposition~\ref{p:U->D}.\ref{i:U->D} and
Definition~\ref{d:connected-homomorphism}.
\end{proof}

\begin{example}\label{e:one-param-fringe}
Fringe presentation in one parameter reflects the usual matching
between left endpoints and right endpoints of a module, once it has
been decomposed as a direct sum of bars.  A single bar, say an
interval $[a,b)$ that is closed on the left and open on the right, has
fringe presentation
$$%
\hspace{8ex}
\psfrag{a}{\footnotesize\raisebox{-.2ex}{$a$}}
\psfrag{b}{\footnotesize\raisebox{-.2ex}{$b$}}
\psfrag{vert-to}{\small\raisebox{-.2ex}{$\downarrow$}}
\psfrag{vert-into}{\small\raisebox{-.2ex}{$\lhookdownarrow$}}
\psfrag{vert-onto}{\small\raisebox{-.2ex}{$\twoheaddownarrow$}}
\psfrag{has image}{}
\begin{array}{@{}l@{}}
\includegraphics[height=20mm]{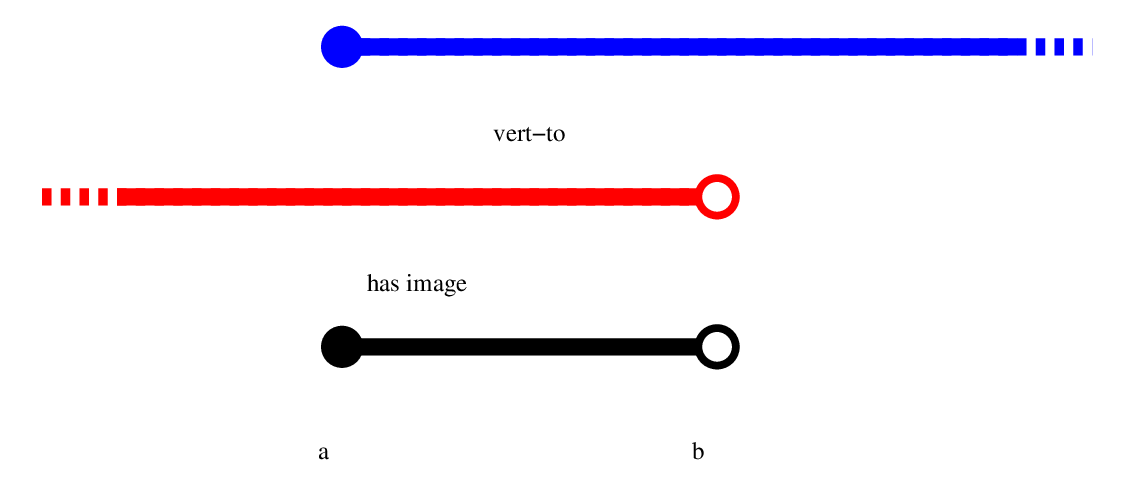}\\[-5ex]
\llap{with image\hspace{-1.5ex}}\\[2ex]
\end{array}
$$
in which a subset $S \subseteq \RR$ is drawn instead of writing
$\kk[S]$.  With multiple bars, the bijection from left to right
endpoints yields a monomial matrix whose scalar entries form the
identity, with rows labeled by positive rays having the specified left
endpoints (the ray is the whole real line when the left endpoint
is~$-\infty$) and columns labeled by negative rays having the
corresponding right endpoints (again, the whole line when the right
endpoint~is~$+\infty$).  In practical terms, the rows and columns can
be labeled simply by the endpoints themselves, with (say) a bar over a
closed endpoint and a circle over an open one.  Thus a standard bar
code, in monomial matrix notation, has the form
$$%
\begin{array}{c}
\monomialmatrix
	{ \blu \ol a_1 \\ \blu\vdots\,\ \\ \blu \ol a_k \\}
	{\begin{array}{ccc}
		\red \ob_1 & \red\cdots & \red \ob_k \\
		    1    &            &          \\
		         &  \ddots    &          \\
		         &            &     1    \\
	 \end{array}}
	{\\\\\\}.
\end{array}
$$
\end{example}

\begin{example}\label{e:two-param-fringe}
Although there are many opinions about what the multiparameter
analogue of a bar code should be, the analogue of a single bar is
generally accepted to be some kind of interval in the parameter
poset---that is, $\kk[U \cap D]$, where $U$ is an upset and $D$ is a
downset---sometimes with restrictions on the shape of the interval,
depending on context.  This case of a single bar explains the
terminology ``birth upset'' and ``death downset''.  For instance, a
fringe presentation of the yellow interval%
$$%
\hspace{-2.2ex}
\ \,\
\begin{array}{@{}r@{\hspace{-5.5pt}}|@{}l@{}}
\raisebox{-.2mm}{\includegraphics[height=25mm]{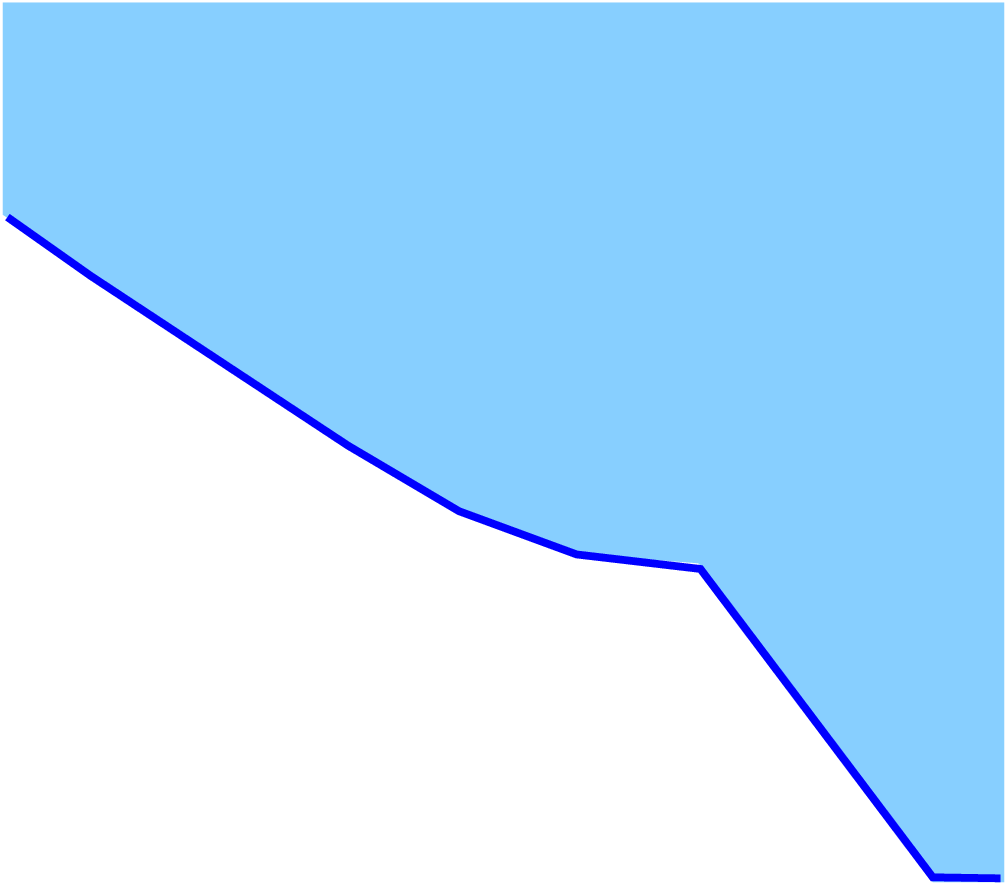}}
&\ \,\hspace{-.2pt}\\[-4.1pt]\hline
\end{array}
\ \to\
\begin{array}{@{}r@{\hspace{-.3pt}}|@{}l@{}}
\raisebox{-5mm}{\includegraphics[height=27mm]{red-downset}}
&\ \,\\[-6.3pt]\hline
\end{array}
\hspace{1.6ex}
\begin{array}{@{}c@{}}
\text{with image}\\[0ex]
\end{array}
\hspace{.7ex}
\hspace{.1pt}
\begin{array}{c}
\\[-1.5ex]
\begin{array}{@{}r@{\hspace{-.4pt}}|@{}l@{}}
\includegraphics[height=25mm]{semialgebraic}&\ \,\hspace{-.3pt}\\[-4.2pt]\hline
\end{array}
\\[-1.5ex]\mbox{}
\end{array}
\hspace{-6pt}
$$
locates the births along the lower boundary of the blue upset and the
deaths along the upper boundary of the red downset.  The scalar
entries relate the births to the deaths.  In this special case of one
bar, the monomial matrix is $1 \times 1$ with a single nonzero scalar
entry; choosing bases appropriately, this nonzero entry might as well
be~$1$.
\end{example}

\pagebreak

\begin{example}\label{e:flange-switching}
Consider an $\NN^2$-filtration of the following simplicial complex.
$$%
  \includegraphics[width=50mm]{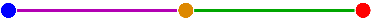}
$$
Each simplex is present above the correspondingly colored rectangular
curve in the following diagram, which theoretically should extend
infinitely far up and to the~right.
$$%
  \includegraphics[width=65mm]{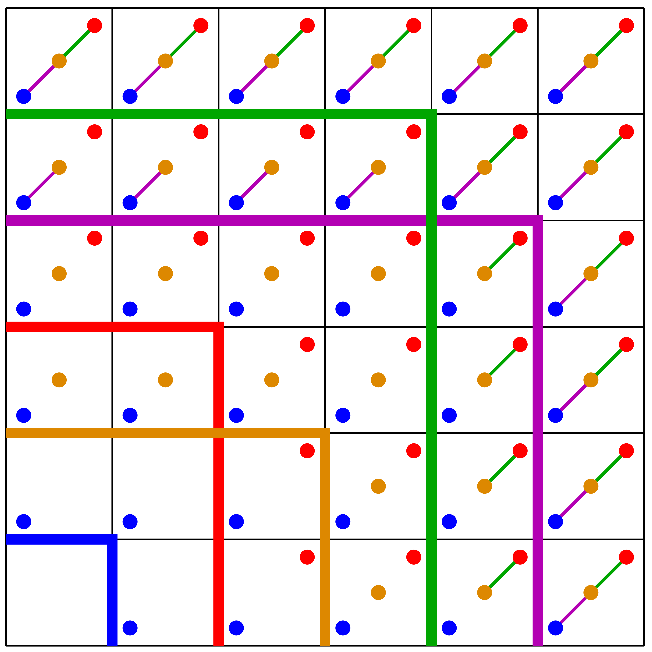}
$$
Each little square depicts the simplicial complex that is present at
the parameter occupying its lower-left corner.  Taking zeroth homology
yields an $\NN^2$-module that replaces the simplicial complex in each
box with the vector space spanned by its connected components.  A
fringe presentation for this $\NN^2$-module is
$$%
\begin{array}{c}
\\
\monomialmatrix
	{\includegraphics[width=10mm]{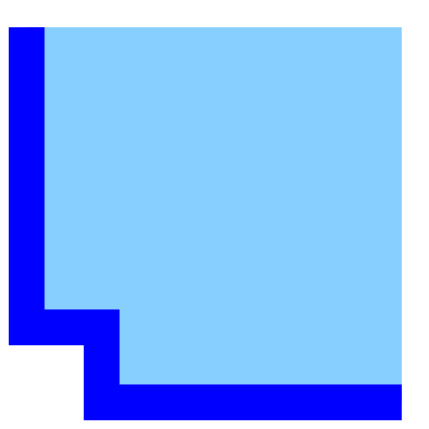}\\
	 \includegraphics[width=10mm]{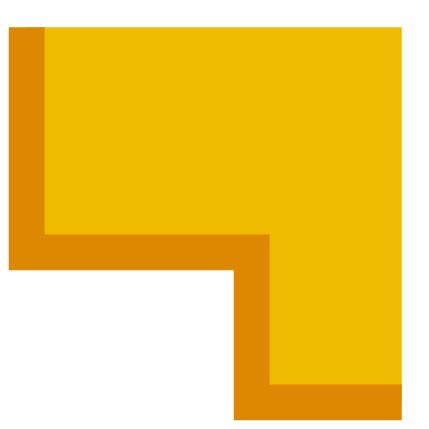}\\
	 \includegraphics[width=10mm]{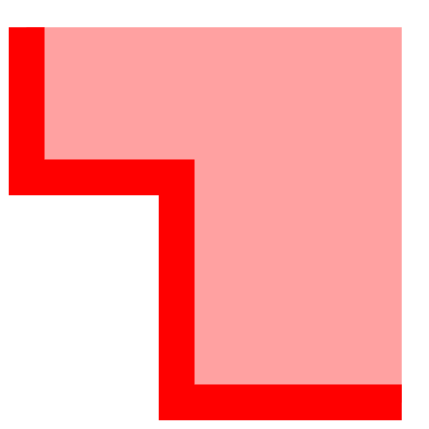}\\}
	{\begin{array}{ccc}
	\\[-8ex]
	\includegraphics[width=10mm]{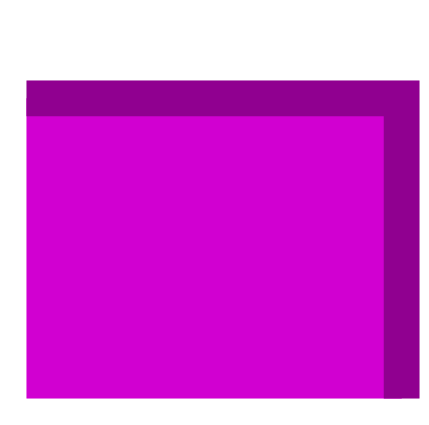}&
	\includegraphics[width=10mm]{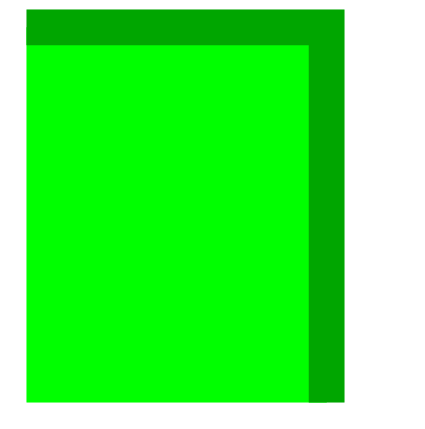}&
	\includegraphics[width=10mm]{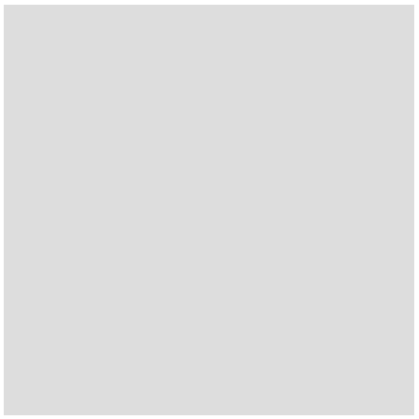}\\
		\\[-2ex] 1 & 0 &1 \\[2.5ex]
		\\[-2ex]-1 & 1 &1 \\[2.5ex]
		\\[-2ex] 0 &-1 &1 \\
	 \end{array}}
	{\\\\\\\\\\\\\\},
\end{array}
$$
where the grey square atop the third column represents the downset
that is all of~$\NN^2$.  This fringe presentation means that, for
example, the connected component that is the blue endpoint of the
simplicial complex is born along the union of the axes with the origin
removed but the point $\bigl[\twoline 11\bigr]$ appended.  The purple
downset, corresponding to the left edge, records the death---along the
upper purple boundary---of the homology class represented by the
difference of the blue (left) and gold (middle) vertices.
Computations and figures for this example were kindly provided by
Ashleigh Thomas.
\end{example}

\begin{remark}\label{r:portmanteau-fr}
The term ``fringe'' is a portmanteau of ``free'' and ``injective''
(that is, ``frinj''), the point being that it combines aspects of free
and injective resolutions while also conveying that the data structure
captures trailing topological features at both the birth and death
ends.
\end{remark}

\section{Encoding poset modules}\label{s:encoding}

Sections~\ref{s:tame} and~\ref{s:fringe} introduce two finiteness
conditions: a topological one (tameness, Definition~\ref{d:tame}),
which is the intuitive control of topological variation in a
filtration, and an algebraic one (fringe presentation,
Definition~\ref{d:fringe}), which provides effective data structures.
To interpolate between them, a third finiteness condition, this one
combinatorial in nature (finite encoding,
Definition~\ref{d:encoding}), serves as a theoretical tool whose
functorial essence supports much of the development in this paper.

\subsection{Finite encoding}\label{sub:encoding}\mbox{}

\noindent
The main result of Section~\ref{s:encoding}, namely
Theorem~\ref{t:tame}, says that tame $\cQ$-modules can be encoded
in the following manner.

\begin{defn}\label{d:encoding}
Fix a poset~$\cQ$.  An \emph{encoding} of a $\cQ$-module $\cM$ by a
poset~$\cP$ is a poset morphism $\pi: \cQ \to \cP$ together with a
$\cP$-module $\cH$ such that $\cM \cong \pi^*\cH =
\bigoplus_{q\in\cQ}H_{\pi(q)}$, the \emph{pullback of~$\cH$
along~$\pi$}, which is naturally a $\cQ$-module.  The encoding is
\emph{finite} if
\begin{enumerate}
\item%
the poset $\cP$ is finite, and
\item%
the vector space $H_p$ has finite dimension for all $p \in \cP$.
\end{enumerate}
\end{defn}

\begin{example}\label{e:wing-subdivision}
Example~\ref{e:toy-model-fly-wing} shows a constant isotypic
subdivision of $\RR^2$ which happens to form a poset and therefore
produces an encoding.
\end{example}

\begin{example}\label{e:flange-switching'}
A finite encoding of the module in Example~\ref{e:flange-switching} is
as follows.
$$%
\psfrag{0-1}{\tiny$\left[\begin{array}{@{}c@{}}
	0\\[-.5ex]1\end{array}\right]$}
\psfrag{10-00-01}{\tiny$\left[\begin{array}{@{}c@{\ }c@{}}
	1&0\\[-.5ex]0&0\\[-.5ex]0&1\end{array}\right]$}
\psfrag{00-10-01}{\tiny$\left[\begin{array}{@{}c@{\ }c@{}}
	0&0\\[-.5ex]1&0\\[-.5ex]0&1\end{array}\right]$}
\psfrag{110-001}{\tiny$\left[\begin{array}{@{}c@{\ }c@{\ }c@{}}
	1&1&0\\[-.5ex]0&0&1\end{array}\right]$}
\psfrag{100-011}{\tiny$\left[\begin{array}{@{}c@{\ }c@{\ }c@{}}
	1&0&0\\[-.5ex]0&1&1\end{array}\right]$}
\psfrag{11}{\tiny$\left[\begin{array}{@{}c@{\ }c@{}}
	1&1\end{array}\right]$}
\psfrag{kk}{\footnotesize$\kk$}
\psfrag{kk2}{\footnotesize$\kk^2$}
\psfrag{kk3}{\footnotesize$\kk^3$}
\begin{array}{@{}c@{}}\includegraphics[height=63mm]{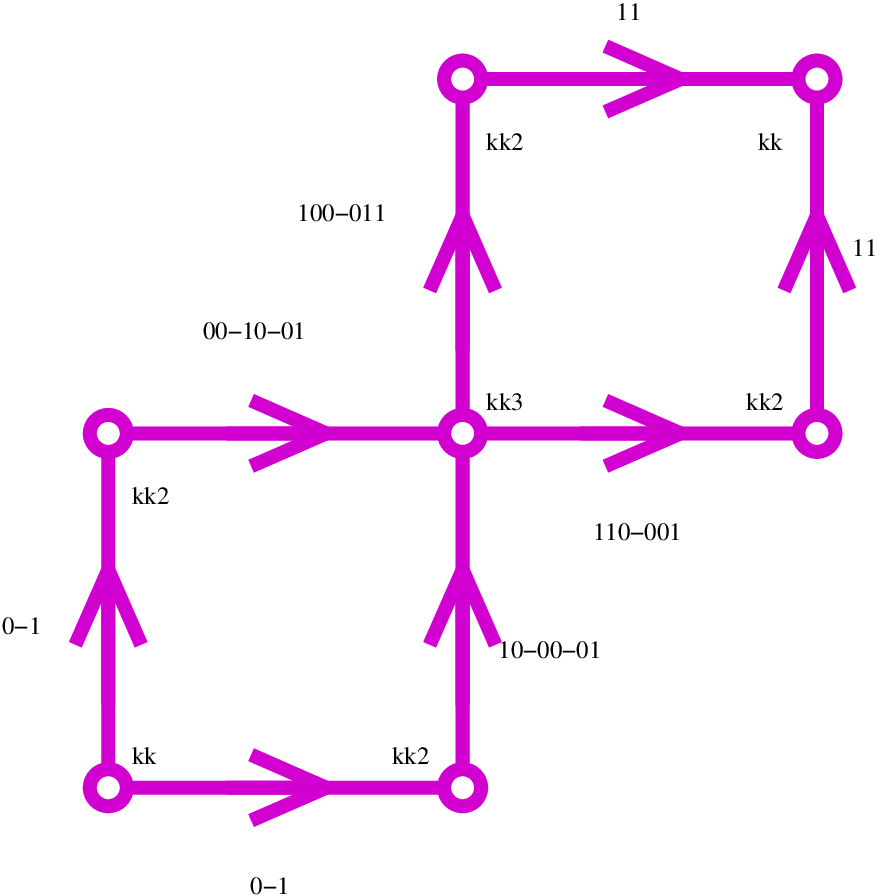}\end{array}
$$
\end{example}

\begin{example}\label{e:subdivide}
There is no natural way to impose a poset structure on the set of
regions in a constant subdivision.  Take, for example, $\cQ = \RR^2$
and $\cM = \kk_\0 \oplus \kk[\RR^2]$, where $\kk_\0$ is the
$\RR^2$-module whose only nonzero component is at the origin, where it
is a vector space of dimension~$1$.  This module~$\cM$ induces only
two isotypic regions, namely the origin and its complement, and they
constitute a constant subdivision.
$$%
\includegraphics[width=10mm]{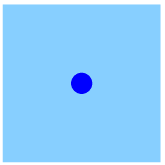}
\begin{array}{c}=\\[4ex]\end{array}
\includegraphics[width=10mm]{}
\hspace{-1.25mm}\begin{array}{c}\cup\\[4ex]\end{array}\hspace{1.8mm}
\includegraphics[width=10mm]{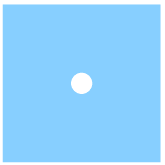}
$$
Neither of the two regions has a stronger claim to precede the other,
but at the same time it would be difficult to justify forcing the
regions to be incomparable.
\end{example}

\begin{example}\label{e:convex-projection}
Take $\cQ = \ZZ^n$ and $\cP = \NN^n$.  The \emph{convex projection}
$\ZZ^n \to \NN^n$ sets to~$0$ every negative coordinate.  The pullback
under convex projection is the \v Cech hull
\cite[Definition~2.7]{alexdual}.  More generally, suppose $\aa \preceq
\bb$ in~$\ZZ^n$.  The interval $[\aa,\bb] \subseteq \ZZ^n$ is a box
(rectangular parallelepiped) with lower corner at~$\aa$ and upper
corner at~$\bb$.  The \emph{convex projection} $\pi: \ZZ^n \to
[\aa,\bb]$ takes every point in~$\ZZ^n$ to its closest point in the
box.  A $\ZZ^n$-module is \emph{finitely determined} if it is finitely
encoded by~$\pi$.
\end{example}

\begin{example}\label{e:indicator}
The indicator module $\kk[\cQ]$ is encoded by the morphism from $\cQ$
to the one-point poset with vector space $\cH = \kk$.  This
generalizes to other indicator modules.
\begin{enumerate}
\item%
Any upset module $\kk[U] \subseteq \kk[\cQ]$ is encoded by a morphism
from~$\cQ$ to the chain~$P$ of length~$1$, consisting of two points $0
< 1$, that sends $U$ to~$1$ and the complement $\ol U$ to~$0$.  The
$P$-module~$H$ that pulls back to~$\kk[U]$ has $H_0 =\nolinebreak
0$~and~$H_1 = \kk$.
\item%
Dually, any downset module $\kk[D]$ is also encoded by a morphism
from~$\cQ$ to the chain $P$ of length~$1$, but this one sends $D$
to~$0$ and the complement $\ol D$ to~$1$, and the $P$-module~$H$ that
pulls back to~$\kk[D]$ has $H_0 = \kk$~and~$H_1 = 0$.
\end{enumerate}
\end{example}

\begin{defn}\label{d:subordinate-encoding}
Fix a poset~$\cQ$ and a $\cQ$-module~$\cM$.
\begin{enumerate}
\item\label{i:morphism}%
A poset morphism $\pi: \cQ \to \cP$ or an encoding of a $\cQ$-module
(perhaps different from~$\cM$) is \emph{subordinate} to~$\cM$ if there
is a $\cP$-module~$\cH$ such that~$\cM \cong\nolinebreak \pi^*\cH$.

\item\label{i:auxiliary-encoding}%
When $\cQ$ is a subposet of a partially ordered real vector space, an
encoding of~$\cM$ is \emph{semialgebraic}, \emph{PL},
\emph{subanalytic}, or \emph{of class~$\mathfrak X$} if the partition
of~$\cQ$ formed by the fibers of~$\pi$ is of the corresponding type
(Definition~\ref{d:auxiliary-hypotheses}).
\end{enumerate}
\end{defn}

\begin{example}\label{e:antidiagonal'}
The ``antidiagonal'' $\RR^2$-module $\cM$ in
Example~\ref{e:antidiagonal} has a semialgebraic poset encoding by the
chain with three elements, where the fiber over the middle element is
the antidiagonal line, and the fibers over the top and bottom elements
are the open half-spaces above and below the line, respectively.
In contrast, using the diagonal line spanned by $\left[\twoline
11\right] \in \RR^2$ instead of the antidiagonal line yields a module
with no finite encoding; see Example~\ref{e:diagonal}.
\end{example}

\begin{lemma}\label{l:constant}
An indicator module is constant on every fiber of a poset morphism
$\pi: \cQ \to \cP$ if and only if the module is the pullback
along~$\pi$ of an indicator $\cP$-module.
\end{lemma}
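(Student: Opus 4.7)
The plan is to prove each implication. The easy direction is $(\Leftarrow)$: if $\cM \cong \pi^{*}\kk[T]$ for some upset or downset $T \subseteq \cP$, then $\cM_q = \kk[T]_{\pi(q)}$ depends only on $\pi(q)$, so the graded components and structure maps restricted to any fiber $\pi^{-1}(p)$ are determined by the single element $p = \pi(q)$; hence $\cM$ is manifestly constant on every fiber.

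For the forward direction, write $\cM = \kk[S]$ with $S$ an upset of $\cQ$ (the downset case is formally dual, by reversing inequalities throughout). Constancy on fibers means that each fiber $\pi^{-1}(p)$ either lies entirely in $S$ or is disjoint from $S$. Set
\[
  T = \{p \in \cP : \pi^{-1}(p) \subseteq S\}.
\]
Then $S = \pi^{-1}(T)$, so $\pi^{*}\kk[T] = \kk[\pi^{-1}(T)] = \kk[S]$ as graded vector spaces; since both have identity structure maps within $S$ and zero maps across its boundary, the identification is as $\cQ$-modules.

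The key step, which I expect to be the main obstacle, is verifying that $T$ is an upset of $\cP$. Given $p \in T$ and $p \preceq p'$ in $\cP$, the goal is $\pi^{-1}(p') \subseteq S$: take any $q \in \pi^{-1}(p) \cap S$, and for $q' \in \pi^{-1}(p')$ use that $\pi$ is order-preserving together with the upset property of $S$ to force $q' \in S$. Lifting the relation $p \preceq p'$ to a single $q \preceq q'$ in $\cQ$ need not be possible, so the argument must instead exploit the fiber-constancy hypothesis together with the commutativity axiom for $\cQ$-modules: if some $q' \in \pi^{-1}(p')$ were not in $S$, the entire fiber of $p'$ would be disjoint from $S$, and comparing structure maps along the composite $q \preceq q'' \preceq q'$ for any $q''$ in an intermediate fiber (which exists by the pullback interpretation of the module) contradicts the persistence of the nonzero class at $q \in S$ along the poset morphism $\pi$. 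Once $T$ is shown to be an upset, $\kk[T]$ is an indicator $\cP$-module whose pullback is $\kk[S]$, completing the proof.
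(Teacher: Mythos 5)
Your forward-direction argument mirrors the paper's: the paper sets $T = \pi(U)$, you set $T = \{p \in \cP : \pi^{-1}(p) \subseteq S\}$ (these agree away from elements of~$\cP$ with empty fiber), both note $\pi^{-1}(T) = S$, and both then face the claim that $T$ is an upset of~$\cP$. You correctly flag this as the main obstacle, whereas the paper asserts it without justification.

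Your proposed fix does not close the gap: the composite $q \preceq q'' \preceq q'$ through an intermediate fiber presupposes an element $q''$ of that fiber comparable to both $q$ and~$q'$, and no such element need exist, since a poset morphism may send incomparable elements to comparable ones. In fact the forward implication is false without further hypotheses. Let $\cQ$ have elements $q_1 \prec q_2$ together with $q_3$ incomparable to both, let $\cP$ be the chain $p_1 \prec p_2 \prec p_3$, and define $\pi$ by $q_1 \mapsto p_1$, $q_3 \mapsto p_2$, $q_2 \mapsto p_3$; this is order-preserving, $S = \{q_1,q_2\}$ is an upset that is a union of $\pi$-fibers, so $\kk[S]$ is constant on every fiber, yet the only $T$ with $\pi^{-1}(T) = S$ is $\{p_1,p_3\}$, which is neither an upset nor a downset of~$\cP$. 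The same example defeats the paper's unproved claim that $\pi(U)$ is an upset. Repairing the statement requires a lifting hypothesis on~$\pi$---for instance that each relation $p \preceq p'$ in~$\cP$ lifts over every $q \in \pi^{-1}(p)$ to some $q \preceq q'$ with $\pi(q') = p'$---which does hold for the uptight encodings of Theorem~\ref{t:tame}. Absent such a hypothesis, your instinct that there is a real obstruction here is sound, and neither your argument nor the paper's surmounts it.
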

\begin{proof}
The ``if'' direction is by definition.  For the ``only if'' direction,
observe that if $U \subseteq \cQ$ is an upset that is a union of
fibers of~$\cP$, then the image $\pi(U) \subseteq \cP$ is an upset
whose preimage equals~$U$.  The same argument works for downsets.
\end{proof}

\begin{example}[Pullbacks of flat and injective modules]\label{e:pullback}
An indecomposable flat $\ZZ^n$-module $\kk[\bb + \ZZ\tau + \NN^n]$ is
an upset module for the poset~$\ZZ^n$.  Pulling back to any poset
under a poset map to~$\ZZ^n$ therefore yields an upset module for the
given poset.  The dual statement holds for any indecomposable
injective module $\kk[\bb + \ZZ\tau - \NN^n]$: its pullback is a
downset module.
\end{example}

Pullbacks have particularly transparent monomial matrix
interpretations.

\begin{prop}\label{p:pullback-monomial-matrix}
Fix a poset~$\cQ$ and an encoding of a $\cQ$-module~$\cM$ via a poset
morphism $\pi: \cQ \to \cP$ and $\cP$-module~$\cH$.  Any monomial
matrix for a fringe presentation of~$\cH$ pulls back to a monomial
matrix for a fringe presentation that dominates the encoding by
replacing the row labels $U_1,\dots,U_k$ and column labels
$D_1,\dots,D_\ell$ with their preimages, namely $\pi^{-1}(U_1), \dots,
\pi^{-1}(U_k)$ and $\pi^{-1}(D_1), \dots, \pi^{-1}(D_\ell)$.
\hfill$\square$
\end{prop}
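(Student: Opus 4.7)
The plan is to unpack what a monomial matrix encodes and verify that pullback along $\pi$ transforms each ingredient into its counterpart on~$\cQ$, using only functoriality of pullback together with Lemma~\ref{l:constant}.

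First I would check that pullback sends indicator modules to indicator modules of preimages. Because $\pi$ is order-preserving, the preimage of an upset $U \subseteq \cP$ is an upset in~$\cQ$ (if $q \in \pi^{-1}(U)$ and $q \preceq q'$ then $\pi(q) \preceq \pi(q') \in U$ since $U$ is an upset), and dually for downsets. Unwinding Definition~\ref{d:encoding} gives $(\pi^*\kk[U])_q = \kk[U]_{\pi(q)}$, which is~$\kk$ exactly when $q \in \pi^{-1}(U)$, so $\pi^*\kk[U] = \kk[\pi^{-1}(U)]$, and likewise $\pi^*\kk[D] = \kk[\pi^{-1}(D)]$. Thus $\pi^*F = \bigoplus_p\kk[\pi^{-1}(U_p)]$ and $\pi^*E = \bigoplus_q\kk[\pi^{-1}(D_q)]$, matching the advertised new row and column labels.

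Second, since pullback along a poset morphism is an exact functor that commutes with images, and $\cH$ is isomorphic to the image of the homomorphism $F \to E$ underlying the given monomial matrix, the induced homomorphism $\pi^*F \to \pi^*E$ has image isomorphic to $\pi^*\cH \cong \cM$. I would then confirm that the scalar entries~$\phi_{pq}$ are unchanged and still witness connectedness: a component map $\phi_{pq}\colon \kk[U_p] \to \kk[D_q]$ acting as multiplication by $\lambda \in \kk$ on every degree of $U_p \cap D_q$ pulls back to a map $\kk[\pi^{-1}(U_p)] \to \kk[\pi^{-1}(D_q)]$ that acts as multiplication by the same~$\lambda$ on every degree of $\pi^{-1}(U_p) \cap \pi^{-1}(D_q) = \pi^{-1}(U_p \cap D_q)$. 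Hence the pulled-back components are connected in the sense of Definition~\ref{d:connected-homomorphism}, and Proposition~\ref{p:scalars} is respected (the preimages $\pi^{-1}(U_p)$ and $\pi^{-1}(D_q)$ intersect whenever $U_p$ and $D_q$ do, since any point $p \in U_p \cap D_q$ has a nonempty fiber).

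Third, to see that this fringe presentation dominates the encoding, I would recall that the fibers of~$\pi$ form a partition of~$\cQ$ which, by Definition~\ref{d:encoding}, is a constant subdivision subordinate to~$\cM$. By Lemma~\ref{l:constant} each summand $\kk[\pi^{-1}(U_p)]$ and $\kk[\pi^{-1}(D_q)]$, being the pullback of an indicator $\cP$-module along~$\pi$, is constant on every fiber of~$\pi$. Therefore the fiber subdivision is subordinate to every summand of~$\pi^*F$ and~$\pi^*E$, which is exactly Definition~\ref{d:fringe}.\ref{i:dominate}.

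There is no real obstacle here: the argument is a string of formal verifications that pullback commutes with the relevant constructions. The only place requiring a moment's care is the connectedness clause, where one must check that the single scalar witnessing connectedness of a component map survives pullback unchanged and labels the same entry of the monomial matrix.
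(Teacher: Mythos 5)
The paper marks this proposition with a \(\square\) and offers no written argument, so there is no official proof to compare against; your proof simply supplies the routine verification the author left to the reader, and it is essentially the argument one would expect. You correctly identify that preimages of upsets (downsets) under a poset morphism are upsets (downsets), that pullback takes the indicator module \(\kk[U]\) to \(\kk[\pi^{-1}(U)]\), that pullback is exact and therefore preserves the image being isomorphic to \(\cM\cong\pi^*\cH\), that the scalar entries and the connectedness condition of Definition~\ref{d:connected-homomorphism} survive pullback verbatim, and that Lemma~\ref{l:constant} gives domination of the encoding.

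One small slip: the parenthetical claim that "any point \(p\in U_p\cap D_q\) has a nonempty fiber" presumes that \(\pi\) is surjective, which Definition~\ref{d:encoding} does not require. Consequently it can happen that \(U_p\cap D_q\neq\nothing\) yet \(\pi^{-1}(U_p)\cap\pi^{-1}(D_q)=\nothing\). This does not break the result: in that case the component homomorphism \(\kk[\pi^{-1}(U_p)]\to\kk[\pi^{-1}(D_q)]\) is forced to be zero (there are no degrees on which a nonzero connected map could act), so the pulled-back array still determines a valid fringe presentation and the entry \(\phi_{pq}\) is simply inert. If you want the pulled-back matrix to satisfy the normalization of Proposition~\ref{p:scalars} literally, you would zero out any such entry, which changes nothing about the map. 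It would be cleaner to either state this explicitly or drop the appeal to nonempty fibers and argue directly that the component map vanishes when the preimage intersection is empty.
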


\subsection{Uptight posets}\label{sub:uptight}\mbox{}

\noindent
Constructing encodings from constant subdivisions uses general poset
combinatorics.

\begin{defn}\label{d:uptight-region}
Fix a poset~$\cQ$ and a set $\Upsilon$ of upsets.  For each poset
element $\aa \in \cQ$, let $\Upsilon_\aa \subseteq \Upsilon$ be the
set of upsets from~$\Upsilon$ that contain~$\aa$.  Two poset elements
$\aa,\bb \in \cQ$ lie in the same \emph{uptight region} if
$\Upsilon_\aa = \Upsilon_\bb$.
\end{defn}

\begin{remark}\label{r:common-refinement}
The partition of $\cQ$ into uptight regions in
Definition~\ref{d:uptight-region} is the common refinement of the
partitions $\cQ = U \cupdot (\cQ \minus U)$ for $U \in \Upsilon$.
\end{remark}

\begin{remark}\label{r:iso-uptight}
Every uptight region is the intersection of a single upset (not
necessarily one of the ones in~$\Upsilon$) with a single downset.
Indeed, the intersection of any family of upsets is an upset, the
complement of an upset is a downset, and the intersection of any
family of downsets is a downset.  Hence the uptight region
containing~$\aa$ equals $\bigl(\bigcap_{U \in \Upsilon_\aa} U\bigr)
\cap \bigl(\bigcap_{U \not\in \Upsilon_\aa} \ol U\bigr)$, the first
intersection being an upset and the second a~downset.
\end{remark}

\begin{prop}\label{p:posetQuotient}
In the situation of Definition~\ref{d:uptight-region}, the relation on
uptight regions given by $A \preceq B$ whenever $\aa \preceq \bb$ for
some $\aa \in A$ and $\bb \in B$ is reflexive and acyclic.
\end{prop}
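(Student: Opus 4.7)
The plan is to establish the two properties separately, both flowing from a single monotonicity observation about the invariant that sends each uptight region $A$ to the common value $\Upsilon_A := \Upsilon_\aa$ for any (hence every) $\aa \in A$.

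\emph{Reflexivity} is immediate: every uptight region is nonempty, so any $\aa \in A$ satisfies $\aa \preceq \aa$, witnessing $A \preceq A$. \emph{Acyclicity} rests on two properties of the map $A \mapsto \Upsilon_A$ from uptight regions to subsets of $\Upsilon$: (i) this map is weakly order-preserving in the sense that $A \preceq B$ implies $\Upsilon_A \subseteq \Upsilon_B$, and (ii) it is injective. Property~(ii) is tautologous from Definition~\ref{d:uptight-region}, since by construction two elements of $\cQ$ lie in the same uptight region exactly when their $\Upsilon$-invariants agree. Property~(i) is the substantive content: if $\aa \preceq \bb$ and $U \in \Upsilon_\aa$, then since $U$ is an upset, $\bb \in U$ as well, so $U \in \Upsilon_\bb$; hence $\Upsilon_\aa \subseteq \Upsilon_\bb$, and if $\aa \in A$ and $\bb \in B$ witness $A \preceq B$, then $\Upsilon_A = \Upsilon_\aa \subseteq \Upsilon_\bb = \Upsilon_B$.

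Given (i) and (ii), suppose toward a contradiction that there is a cycle $A_0 \preceq A_1 \preceq \dots \preceq A_k = A_0$ with not all $A_i$ equal. Chaining (i) along the cycle yields
\[
  \Upsilon_{A_0} \subseteq \Upsilon_{A_1} \subseteq \dots \subseteq \Upsilon_{A_k} = \Upsilon_{A_0},
\]
so every inclusion is an equality. Then (ii) forces $A_0 = A_1 = \dots = A_k$, contradicting the hypothesis. Hence no nontrivial cycle exists, and the relation is acyclic.

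The only content of the argument is the monotonicity in step~(i), which is essentially a restatement of the definition of an upset, so there is no substantive obstacle; the remaining steps are formal consequences, and the main concern is simply arranging them cleanly.
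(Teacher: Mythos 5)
Your proof is correct and follows essentially the same approach as the paper: reflexivity is trivial, and acyclicity follows because the assignment $\aa \mapsto \Upsilon_\aa$ weakly increases along the poset order, forcing any directed cycle to have constant $\Upsilon$-invariant and hence to collapse to a single region. Your write-up merely makes explicit the injectivity observation (which the paper leaves implicit, since it is the definition of an uptight region) and the chain-of-inclusions step.
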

\begin{proof}
The stipulated relation on the set of uptight regions is
\begin{itemize}
\item%
reflexive because $\aa \preceq \aa$ for any element~$\aa$ in any
uptight region~$A$; and

\item%
acyclic because going up from $\aa \in \cQ$ causes the set
$\Upsilon_\aa$ in Definition~\ref{d:uptight-region} to (weakly)
increase, so a directed cycle can only occur with a constant sequence
of sets~$\Upsilon_\aa$.\qedhere
\end{itemize}
\end{proof}

\begin{example}\label{e:puuska-nontransitive}
The relation in Proposition~\ref{p:posetQuotient} makes the set of
uptight regions into a directed acyclic graph, but the relation need
not be transitive.  An example in the poset $\cQ = \NN^2$, kindly
provided by Ville Puuska \cite{puuska18}, is as follows.
Notationally, it is easier to work with monomial ideals in $\kk[x,y] =
\kk[\NN^2]$, which correspond to upsets (see
Example~\ref{e:disconnected-homomorphism}).  Let $\Upsilon =
\{U_1,\dots,U_4\}$ consist of the upsets with indicator modules
$$%
  \kk[U_1] = \<x^2,y\>,\quad
  \kk[U_2] = \<x^3,y\>,\quad
  \kk[U_3] = \<xy\>,\quad
  \kk[U_4] = \<x^2y\>.
$$
Identifying each monomial $x^ay^b$ with the corresponding pair $(a,b)
\in \NN^2$, it follows that $\Upsilon_{\!x^2} = \{U_1\}$, and
$\Upsilon_{\!x^3} = \Upsilon_{\!y} = \{U_1,U_2\}$, and
$\Upsilon_{\!xy} = \{U_1,U_2,U_3\}$ represent three distinct uptight
regions; call them $A$, $B$, and $C$.  They satisfy $A \prec B \prec
C$ because $x^2 \prec x^3$ and $y \prec xy$.  However, $A \not \preceq
C$ because $A = \{x^2\}$ while $U_4$ forces $C = xy\kk[y]$ to consist
of all lattice points in a vertical ray starting at $xy$.
\end{example}

\begin{defn}\label{d:uptight-poset}
In the situation of Definition~\ref{d:uptight-region}, the
\emph{uptight poset} is the transitive closure $\cP_\Upsilon$ of the
directed acyclic graph of uptight regions in
Proposition~\ref{p:posetQuotient}.
\end{defn}

\subsection{Constant upsets}\label{sub:upsets}

\begin{defn}\label{d:uptight-constant}
Fix a constant subdivision of~$\cQ$ subordinate to~$\cM$.  A
\emph{constant upset} of~$\cQ$ is either
\begin{enumerate}
\item%
an upset $U_I$ generated by a constant region~$I$ or
\item%
the complement of a downset $D_I$ cogenerated by a constant
region~$I$.
\end{enumerate}
\end{defn}

\begin{thm}\label{t:constant-uptight}
Let $\Upsilon$ be the set of constant upsets from a~constant
subdivision of~$\cQ$ subordinate to~$\cM$.  The partition of~$\cQ$
into uptight regions for $\Upsilon$ forms another constant subdivision
subordinate to~$\cM$.
\end{thm}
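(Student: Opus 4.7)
The strategy is to verify Definition~\ref{d:constant-subdivision} directly: for each uptight region $A$ produce a reference vector space $M_A$ and a coherent family of isomorphisms $M_A\to M_\aa$ for $\aa\in A$, then check no monodromy between distinct uptight regions. The crux is a sandwich property: if $\aa,\bb\in A$ and $I_\aa,I_\bb$ denote their (original) constant regions, then $\Upsilon_\aa=\Upsilon_\bb$ applied to the constant upsets $U_{I_\bb}$ and $\ol{D_{I_\bb}}$ forces $\aa\in U_{I_\bb}\cap D_{I_\bb}$, so there exist $\cc,\dd\in I_\bb$ with $\cc\preceq\aa\preceq\dd$, and symmetrically for $\bb$ inside $I_\aa$. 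Write $r_I(\ii):M_I\to M_\ii$ for the reference isomorphisms of the given subdivision, $s_{\ii,\jj}$ for the structure maps of $\cM$, and
\[
\alpha_{I,J}=r_J(\jj)^{-1}\circ s_{\ii,\jj}\circ r_I(\ii):M_I\to M_J
\]
for the no-monodromy map supplied by Definition~\ref{d:constant-subdivision} whenever some comparable pair $\ii\preceq\jj$ with $\ii\in I,\jj\in J$ exists.

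First I would prove two lemmas. The module identity $s_{\cc,\dd}=s_{\bb,\dd}\circ s_{\cc,\bb}$ applied to the sandwich $\cc\preceq\bb\preceq\dd$ in $I_\aa$, together with Lemma~\ref{l:no-monodromy} inside $I_\aa$, yields $\alpha_{I_\bb,I_\aa}\circ\alpha_{I_\aa,I_\bb}=\id_{M_{I_\aa}}$; symmetrically, both $\alpha$'s are defined and mutually inverse. The same computation, applied instead to a sandwich $\cc\preceq\jj'\preceq\dd$ with $\cc\in I,\jj'\in J,\dd\in K$, gives the cocycle $\alpha_{J,K}\circ\alpha_{I,J}=\alpha_{I,K}$; this applies in particular whenever $I,J,K$ each contain an element of one common uptight region.

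Now define the new subdivision: for each uptight region $A$, pick $\aa_0\in A$, set $M_A=M_{I_{\aa_0}}$, and declare $\iota_\aa=r_{I_\aa}(\aa)\circ\alpha_{I_{\aa_0},I_\aa}$, which is an isomorphism by what was just proved. To check no monodromy between $A$ and $B$, fix $\aa\preceq\bb$ with $\aa\in A,\bb\in B$ and use the identity $r_{I_\bb}(\bb)^{-1}\circ s_{\aa,\bb}\circ r_{I_\aa}(\aa)=\alpha_{I_\aa,I_\bb}$ (since $(\aa,\bb)$ is a comparable pair for $(I_\aa,I_\bb)$) to rewrite
\[
\iota_\bb^{-1}\circ s_{\aa,\bb}\circ\iota_\aa
=\alpha_{I_\bb,I_{\bb_0}}\circ\alpha_{I_\aa,I_\bb}\circ\alpha_{I_{\aa_0},I_\aa}.
\]
Two applications of cocycle collapse this to $\alpha_{I_{\aa_0},I_{\bb_0}}$: first on $(I_\aa,I_\bb,I_{\bb_0})$ with sandwich element $\bb$ (in $U_{I_\aa}$ from $\aa\preceq\bb$, in $D_{I_{\bb_0}}$ from $\bb,\bb_0\in B$), then on $(I_{\aa_0},I_\aa,I_{\bb_0})$ with sandwich element $\aa$ (in $U_{I_{\aa_0}}$ since $\aa,\aa_0\in A$, and in $D_{I_{\bb_0}}$ because $D_{I_{\bb_0}}$ is a downset containing $\bb$). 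As the result depends only on $A$ and $B$ through the chosen base-points, no monodromy holds.

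The main obstacle is this final bookkeeping: the cocycle must collapse non-chain triples of constant regions, which it does only because the sandwich construction converts $\Upsilon$-membership into an explicit chain $\cc\preceq\jj'\preceq\dd$ along which the module axiom can compose. Once the sandwich element is located in each triple, every remaining identity is routine.
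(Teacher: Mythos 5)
Your proof is correct and follows essentially the same strategy as the paper's: use $\Upsilon_\aa=\Upsilon_\bb$ to sandwich elements of distinct constant regions inside a common uptight region, then invoke the original subdivision's no-monodromy to obtain canonical identifications. The main difference is one of detail rather than approach — you make explicit, via the transition maps $\alpha_{I,J}$, their mutual invertibility, and the cocycle collapse, the final step that the paper states more tersely as ``this, plus the lack of monodromy in constant subdivisions, ensures that $M_A \to M_\aa \to M_\bb \to M_B$ is independent of the choices.''
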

\begin{proof}
Suppose that $A$ is an uptight region that contains points from
constant regions $I$ and~$J$.  Any point in $I \cap A$ witnesses the
containments $A \subseteq D_I$ and $A \subseteq U_I$ of~$A$ inside the
constant upset and downset generated and cogenerated by~$I$.  Any
point $\jj \in J \cap A$ is therefore sandwiched between elements
$\ii, \ii' \in I$, so $\ii \preceq \jj \preceq \ii'$, because $\jj \in
U_I$ (for~$\ii$) and $\jj \in D_I$ (for~$\ii'$).  By symmetry,
switching $I$ and~$J$, there exists $\jj' \in J$ with $\ii' \preceq
\jj'$.  The sequence
$$%
  M_I \to \cM_\ii \to \cM_\jj \to \cM_{\ii'} \to \cM_{\jj'} \to M_J,
$$
where the first and last isomorphisms come from
Definition~\ref{d:constant-subdivision} and the homomorphisms in
between are $\cQ$-module structure homomorphisms, induces isomorphisms
$\cM_\ii \to \cM_{\ii'}$ and $\cM_\jj \to \cM_{\jj'}$ by definition of
constant region.  Elementary homological algebra implies that $\cM_\ii
\to \cM_\jj$ is an isomorphism.  The induced isomorphism $M_I \to M_J$
is independent of the choices of $\ii$, $\jj$, $\ii'$, and~$\jj'$ (in
fact, merely considering independence of the choices of~$\ii$
and~$\jj'$ would suffice) because constant subdivisions
have~no~monodromy.

The previous paragraph need not imply that $I = J$, but it does imply
that all of the vector spaces $M_J$ for constant regions~$J$ that
intersect~$A$ are---viewing the data of the original constant
subdivision as given---canonically isomorphic to~$M_I$, thereby
allowing the choice of $M_A = M_I$.  This, plus the lack of monodromy
in constant subdivisions, ensures that $M_A \to M_\aa \to M_\bb \to
M_B$ is independent of the choices of $\aa \in A$ and $\bb \in B$ with
$\aa \preceq \bb$.  Thus the uptight subdivision is also constant
subordinate~to~$\cM$.
\end{proof}

\begin{example}\label{e:iso-uptight}
Theorem~\ref{t:constant-uptight} does not claim that $I = U_I \cap
D_I$, and in fact that claim is often not true, even if the isotypic
subdivision (Example~\ref{e:puuska-nonconstant-isotypic}) is already
constant.  Consider $\cQ = \RR^2$ and $\cM = \kk_\0 \oplus
\kk[\RR^2]$, as in Example~\ref{e:subdivide}, and take $I = \RR^2
\minus \{\0\}$.  Then $U_I = D_I = \RR^2$, so $U_I \cap D_I$ contains
the other isotypic region $J = \{\0\}$.  The uptight poset $\cP_\cM$
has precisely four elements:
\begin{enumerate}
\item%
the origin $\{\0\} = U_J \cap D_J$;
\item%
the complement $U_J \minus \{\0\}$ of the origin in $U_J$;
\item%
the complement $D_J \minus \{\0\}$ of the origin in $D_J$; and
\item%
the points $\RR^2 \minus (U_J \cup D_J)$ lying only in~$I$ and in
neither $U_J$ nor~$D_J$.
\end{enumerate}
Oddly, uptight region~4 has two connected components, the second and
fourth quadrants $A$ and~$B$, that are incomparable: any chain of
relations from Definition~\ref{d:constant-subdivision} that realizes
the equivalence $\aa \sim \bb$ for $\aa \in A$ and~$\bb \in B$ must
pass through the positive quadrant or the negative quadrant, each of
which accidentally becomes comparable to the other isotypic region~$J$
and hence lies in a different uptight region.
\end{example}

\subsection{Finite encoding from constant subdivisions}\label{sub:existence}

\begin{defn}\label{d:compactly-supported}
If $\cQ$ is a subposet of a partially ordered real vector space, then
a $\cQ$-module~$\cM$ has \emph{compact support} if~$\cM$ has nonzero
components~$\cM_q$ only in a bounded set of degrees $q \in \cQ$.  A
constant subdivision subordinate to such a module is \emph{compact} if
it has exactly one unbounded constant region (namely those $q \in \cQ$
for which $\cM_q = 0$).
\end{defn}

\begin{thm}\label{t:tame}
Fix a $\cQ$-finite module~$\cM$ over a poset~$\cQ$.
\begin{enumerate}
\item\label{i:admits-finite-encoding}%
$\cM$ admits a finite encoding if and only if there exists a finite
constant subdivision of~$\cQ$ subordinate to~$\cM$.  More precisely,

\item\label{i:uptight-encoding}%
the uptight poset of the set of constant upsets from any constant
subdivision yields an \emph{uptight encoding} of~$\cM$ that is finite
if the constant subdivision is finite.

\item\label{i:auxiliary-uptight}%
If $\cQ$ is a subposet of a partially ordered real vector space and
the constant subdivision in the previous item is
\begin{itemize}
\item%
semialgebraic, with $\cQ_{+\!}$ also semialgebraic; or

\item%
PL, with $\cQ_{+\!}$ also polyhedral; or

\item%
compact and subanalytic, with $\cQ_{+\!}$ also subanalytic; or

\item%
of class~$\mathfrak X$,
\end{itemize}
then the relevant uptight encoding is semialgebraic, PL, subanalytic,
or class~$\mathfrak X$.
\end{enumerate}
\end{thm}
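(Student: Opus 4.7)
The plan is to prove the three parts in the order: (2) $\Rightarrow$ the ``only if'' direction of (1), then the ``if'' direction of (1) from finite encoding to finite constant subdivision via fibers, and finally (3) by tracking how the construction in (2) preserves geometric structure.

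For part (2), start with any (finite) constant subdivision subordinate to $\cM$ and let $\Upsilon$ be the set of constant upsets as in Definition~\ref{d:uptight-constant}. Theorem~\ref{t:constant-uptight} upgrades the uptight partition into a constant subdivision subordinate to~$\cM$, producing well-defined vector spaces $M_A$ (one per uptight region~$A$) equipped with canonical isomorphisms $M_A \to M_\aa$ for each $\aa \in A$. Proposition~\ref{p:posetQuotient} makes the uptight regions into a DAG and Definition~\ref{d:uptight-poset} supplies the poset~$\cP_\Upsilon$ via transitive closure. Define a $\cP_\Upsilon$-module $\cH$ by $H_A = M_A$, with structure morphism $H_A \to H_B$ whenever $A \preceq B$ given by the composite $M_A \to M_\aa \to M_\bb \to M_B$ for any witness pair $\aa \preceq \bb$; Theorem~\ref{t:constant-uptight} guarantees this is independent of choices, and passage to the transitive closure is automatic by composition. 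The map $\pi: \cQ \to \cP_\Upsilon$ sending $\aa$ to its uptight region is monotone by construction, and the identity $\pi^*\cH \cong \cM$ holds graded-component-wise by definition of~$M_A$. Finiteness of~$\cP_\Upsilon$ when the subdivision is finite follows because the uptight partition has at most $2^{|\Upsilon|}$ parts (Remark~\ref{r:common-refinement}), and $\cQ$-finiteness of~$\cM$ gives $\dim_\kk H_A < \infty$.

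For the remaining ``if'' direction of part (1), suppose $\cM \cong \pi^*\cH$ for a finite encoding $\pi: \cQ \to \cP$, $\cH$ a finite-dimensional $\cP$-module. Partition $\cQ$ into the fibers $I_p = \pi^{-1}(p)$ for $p \in \cP$. Each $I_p$ is a constant region with $M_{I_p} := H_p$: the natural isomorphism $\cM_\aa = H_{\pi(\aa)} = H_p$ for $\aa \in I_p$ is tautological, and the no-monodromy condition of Definition~\ref{d:constant-subdivision} holds because for any comparable pair $\aa \preceq \bb$ with $\aa \in I_p$, $\bb \in I_{p'}$, the structure map $\cM_\aa \to \cM_\bb$ is by definition the map $H_p \to H_{p'}$ from the $\cP$-module structure, which depends only on $p$ and~$p'$. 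Finiteness of~$\cP$ yields a finite constant subdivision.

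For part (3), trace the geometric structure through the construction in part (2). Each constant upset $U_I = I + \cQ_+$ is obtained by Minkowski sum with the positive cone, and each $\ol{D_I}$ is the complement of $I - \cQ_+$; Proposition~\ref{p:auxiliary-hypotheses} shows that in each of the four auxiliary classes (with the stated assumptions on~$\cQ_+$), these constant upsets remain in the relevant class. Remark~\ref{r:common-refinement} realizes each uptight region as a finite intersection of the $U$'s in~$\Upsilon$ and their complements, so Proposition~\ref{p:auxiliary-hypotheses}\ref{i:classes} keeps the uptight regions, and thus the fibers of~$\pi$, in the relevant class, giving an encoding of the required type per Definition~\ref{d:subordinate-encoding}\ref{i:auxiliary-encoding}. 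The main technical obstacle is the subanalytic case, where Proposition~\ref{p:auxiliary-hypotheses}\ref{i:sum-subanalytic} requires the generating set to be bounded; this is precisely why the ``compact'' hypothesis is imposed in Definition~\ref{d:compactly-supported}: all but the single unbounded constant region are bounded, so their $U_I$'s and $D_I$'s are subanalytic, and the unbounded constant region (where $\cM = 0$) can be absorbed into the remaining complement region of the uptight partition without leaving the subanalytic category.
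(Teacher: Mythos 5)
Your proof follows the same route as the paper's: the easy direction (finite encoding $\Rightarrow$ finite constant subdivision) via fibers, the hard direction via the uptight poset of constant upsets (Theorem~\ref{t:constant-uptight}, Proposition~\ref{p:posetQuotient}, Definition~\ref{d:uptight-poset}), and part~(3) by tracking the geometric class through Minkowski sums (Proposition~\ref{p:auxiliary-hypotheses}) and finite Boolean combinations (Remark~\ref{r:common-refinement}). All essential ingredients are present and correctly invoked.

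One point in part~(3) is imprecise: your handling of the unique unbounded constant region~$I$ in the compact subanalytic case. You say it ``can be absorbed into the remaining complement region of the uptight partition,'' but the uptight partition is determined by the constant upsets themselves, which include $U_I = I + \cQ_+$ and the complement of $D_I = I - \cQ_+$; one is not free to reassign or absorb regions after the fact. What is actually needed is that these two constant upsets are subanalytic despite Proposition~\ref{p:auxiliary-hypotheses}.\ref{i:sum-subanalytic} not applying to unbounded $I$. The paper handles this with a direct observation: since $I$ contains every $q$ outside a bounded set, $I + \cQ_+ = I - \cQ_+ = \cQ$, which is trivially subanalytic. Replacing your ``absorption'' remark with that observation closes the small gap; everything else is fine.
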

\begin{proof}
One direction of item~\ref{i:admits-finite-encoding} is easy: a finite
encoding induces a constant subdivision almost by definition.  Indeed,
if $\pi: \cQ \to \cP$ is a poset encoding of~$\cM$ by a
$\cP$-module~$\cH$, then each fiber~$I$ of~$\pi$ is a constant region
with $M_I = H_{\pi(I)}$.  If $\ii \preceq \jj$ with $\ii \in I$ and
$\jj \in J$, then the composite homomorphism $M_I \to M_\ii \to M_\jj
\to M_J$ is merely the structure morphism $H_{\pi(I)} \to H_{\pi(J)}$
of the $\cP$-module~$\cH$.

The hard direction is producing a finite encoding from a constant
subdivision.  For that, it suffices to prove
item~\ref{i:uptight-encoding}.  Let $\Upsilon$ be the set of constant
upsets from a~constant subdivision of~$\cQ$ subordinate to~$\cM$.
Consider the quotient map $\cQ \to \cP_\Upsilon$ of sets that sends
each element of~$\cQ$ to the uptight region containing it.
Proposition~\ref{p:posetQuotient} and Definition~\ref{d:uptight-poset}
imply that this map of sets is a morphism of posets.  By
Definition~\ref{d:constant-subdivision} the vector spaces $M_A$
indexed by the uptight regions $A \in \cP_\Upsilon$ constitute a
$\cP_\Upsilon$-module~$H$ that is well defined by
Theorem~\ref{t:constant-uptight}.  The pullback of~$H$ to~$\cQ$ is
isomorphic to~$\cM$ by construction.  The claim about finiteness
follows because the number of uptight regions is bounded above by
$2^{2r}$, where $r$ is the number of constant regions in the original
constant subdivision: every element of~$\cQ$ lies inside or outside of
each constant upset and inside or outside of each constant downset.

For claim~\ref{i:auxiliary-uptight}, every constant upset is a
Minkowski sum $I + \cQ_+$ or the complement of $I - \cQ_+ = - (-I +
\cQ_+)$ by Definition~\ref{d:uptight-constant}.  These are
semialgebraic, PL, subanalytic, or of class~$\mathfrak X$,
respectively, by Proposition~\ref{p:auxiliary-hypotheses} (or
Definition~\ref{d:auxiliary-hypotheses} for class~$\mathfrak X$).
Note that in the compact subanalytic case, the unique unbounded
constant region~$I$ afforded by Definition~\ref{d:compactly-supported}
has $I + \cQ_+ = I - \cQ_+ = \cQ$, which is subanalytic.
\end{proof}

\begin{example}\label{e:antidiagonal''}
For the ``antidiagonal'' $\RR^2$-module $\cM$ in
Examples~\ref{e:antidiagonal} and~\ref{e:antidiagonal'}, every point
on the line is a singleton isotypic region, but these uncountably many
isotypic regions can be gathered together: the finite encoding there
is the uptight poset for the two upsets that are the closed and open
half-spaces bounded below by the antidiagonal.
\end{example}

\begin{example}\label{e:subdivide'}
In any encoding of the ``diagonal strip'' $\RR^2$-module $\cM$ in
Example~\ref{e:subdivide}, the poset must be uncountable by
Theorem~\ref{t:tame}.
\end{example}

\subsection{The category of tame modules}\label{sub:cat}

\begin{example}\label{e:ker}
The kernel of a homomorphism of tame modules need not be tame.  The
upset $U \subseteq \RR^2$ that is the closed half-space above the
antidiagonal line~$L$ given by $a + b = 1$ has interior $U^\circ$,
also an upset.  The quotient module $N = \kk[U]/\kk[U^\circ]$ is the
translate by one unit (up or to the right) of the antidiagonal module
in Examples~\ref{e:antidiagonal}, \ref{e:antidiagonal'},
and~\ref{e:antidiagonal''}.  Both $M = \kk[U] \oplus \kk[U]$ and $N$
are tame.  The surjection $\phi: M \onto N$ that acts in every degree
$\aa = \left[\twoline ab \right]$ along~$L$ by sending the basis
vectors of $M_\aa = \kk^2$ to $b$ and $-a$ in $N_\aa = \kk$ has
kernel~$K = \ker\phi$ that is the submodule of~$M$ with
\begin{itemize}
\item%
$\kk^2$ in every degree from~$U^\circ$, and
\item%
the line in~$\kk^2$ through $\left[\twoline 00 \right]$ and
$\left[\twoline ab \right]$ in every degree from~$L$.
\end{itemize}
$$%
\psfrag{x}{\tiny$x$}
\psfrag{y}{\tiny$y$}
0
\ \to\
\begin{array}{@{}c@{}}\includegraphics[height=30mm]{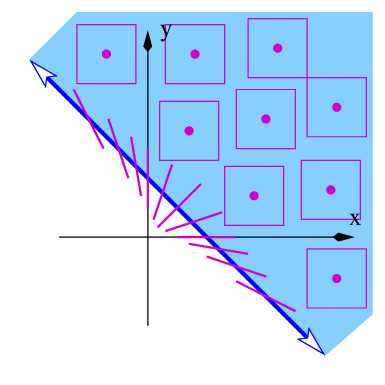}\end{array}
\ \too\
\begin{array}{@{}c@{}}\includegraphics[height=30mm]{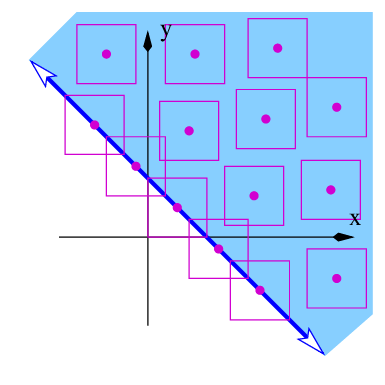}\end{array}
\ \too\
\begin{array}{@{}c@{}}\includegraphics[height=30mm]{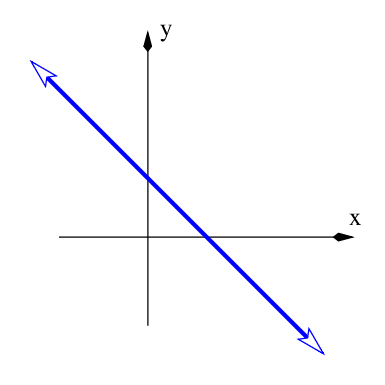}\end{array}
\ \to\
0
$$
That is, $K_\aa$ agrees with $M_\aa$ for degrees~$\aa$ outside of~$L$,
and $K_\aa$ is the line in~$M_\aa$ of slope $b/a$ through the origin
when $\aa$ lies on~$L$.  This kernel~$K$ is not tame.  Indeed, if
$\aa$ and~$\aa'$ are distinct points on~$L$, then the homomorphisms
$K_\aa \to K_{\aa\vee\aa'}$ and $K_{\aa'} \to K_{\aa\vee\aa'}$ have
different images, so $\aa$ and~$\aa'$ are forced to lie in different
constant regions in every constant subdivision of~$\RR^2$ subordinate
to~$K$.  (Note the relation between this example and
Proposition~\ref{p:U->D}.\ref{i:U->D} for $Q = U \subset \RR^2$ and $D
= L \subset Q$.)
\end{example}

\begin{remark}\label{r:lurie}
Encoding of a $\cQ$-module~$\cM$ by a poset morphism is related to
viewing~$\cM$ as a sheaf on~$\cQ$ with its Alexandrov topology that is
constructible in the sense of Lurie \cite[Definitions~A.5.1
and~A.5.2]{lurie2017}.
The difference is that poset encoding requires constancy (in the sense
of Definition~\ref{d:constant-subdivision}) on fibers of the encoding
morphism, whereas Alexandrov constructibility requires only local
constancy in the sense of sheaf theory.
This distinction is decisive for Example~\ref{e:ker}, where the
kernel~$K$ is constructible but not finitely encoded.
\end{remark}

Because of Remark~\ref{r:lurie}, allowing arbitrary homomorphisms
between tame modules would step outside of the tame class.  More
formally, inside the category of $\cQ$-modules, the full subcategory
generated by the tame modules contains modules that are not tame.  To
preserve tameness, it is thus necessary to restrict the allowable
morphisms.

\begin{defn}\label{d:tame-morphism}
A homomorphism $\phi: M \to N$ of $\cQ$-modules is \emph{tame} if
$\cQ$ admits a finite constant subdivision subordinate to both $M$
and~$N$ such that for each constant region~$I$ the composite
isomorphism $M_I \to M_\ii \to N_\ii \to N_I$ does not depend~on~$\ii
\in I$.  The map $\phi$ is semialgebraic, PL, subanalytic, or
class~$\mathfrak X$ if this constant subdivision~is.
\end{defn}

\begin{lemma}\label{l:ker-coker}
The kernel and cokernel of any tame homomorphism of $\cQ$-modules are
tame morphisms of tame modules.  The same is true when tameness is
replaced by semialgebraic, PL, subanalytic, or class~$\mathfrak X$.
\end{lemma}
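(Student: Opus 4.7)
The plan is to reuse the constant subdivision witnessing the tameness of $\phi$ without modification for both $\ker\phi$ and $\coker\phi$, and also for the canonical inclusion $\ker\phi \hookrightarrow M$ and projection $N \twoheadrightarrow \coker\phi$. Fix a finite constant subdivision of~$\cQ$ subordinate to~$M$ and~$N$ such that for every region~$I$ the composite $M_I \to M_\ii \to N_\ii \to N_I$ defines a map $\phi_I : M_I \to N_I$ independent of $\ii \in I$. Declare $(\ker\phi)_I := \ker\phi_I$ and $(\coker\phi)_I := \coker\phi_I$, and take the structure maps on $\ker\phi$ and $\coker\phi$ to be those inherited from~$M$ and~$N$.

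The first step is to observe that for each $\ii \in I$ the square with top edge $\phi_I$, bottom edge $\phi_\ii$, and vertical isomorphisms $M_I \to M_\ii$ and $N_I \to N_\ii$ commutes---this commutativity being precisely the tameness condition on~$\phi$. It follows that $M_I \to M_\ii$ restricts to an isomorphism $\ker\phi_I \simto \ker\phi_\ii$ and $N_I \to N_\ii$ descends to an isomorphism $\coker\phi_I \simto \coker\phi_\ii$, so the chosen vector spaces $(\ker\phi)_I$ and $(\coker\phi)_I$ are compatible with the $\cQ$-graded pieces at every $\ii \in I$.

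The second step verifies no monodromy. For a comparable pair $\ii \preceq \jj$ with $\ii \in I$ and $\jj \in J$, the composite
\[
(\ker\phi)_I \to \ker\phi_\ii \to \ker\phi_\jj \to (\ker\phi)_J
\]
is the restriction to $(\ker\phi)_I$ of $M_I \to M_\ii \to M_\jj \to M_J$ (the structure maps on~$M$ carry $\ker\phi_\ii$ into $\ker\phi_\jj$ because $\phi$ commutes with structure maps), and the latter composite is independent of $\ii, \jj$ by no monodromy on~$M$. The cokernel case is formally dual, using~$N$. Hence $\ker\phi$ and $\coker\phi$ are tame.

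For the morphism tameness of $\iota : \ker\phi \hookrightarrow M$, Definition~\ref{d:tame-morphism} requires that $(\ker\phi)_I \to \ker\phi_\ii \hookrightarrow M_\ii \to M_I$ not depend on~$\ii \in I$. But this composite is the restriction of $M_I \to M_\ii \to M_I$ to $(\ker\phi)_I$, which by Lemma~\ref{l:no-monodromy} is the identity on~$M_I$; so the composite is just the subspace inclusion $(\ker\phi)_I \hookrightarrow M_I$. The dual argument handles $N \twoheadrightarrow \coker\phi$. Because every claim rests on the \emph{same} constant subdivision that tames~$\phi$, its geometric type (semialgebraic, PL, subanalytic, or class~$\mathfrak X$) transfers automatically. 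No substantive obstacle is anticipated: the argument rides entirely on commutativity of $\phi$ with structure maps, which is built into the tameness of~$\phi$ from the outset.
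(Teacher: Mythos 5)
Your proposal is correct and takes essentially the same approach as the paper, which observes in a single sentence that the constant subdivision from Definition~\ref{d:tame-morphism} is already subordinate to $\ker\phi$ and $\coker\phi$ with region vector spaces $\ker(M_I \to N_I)$ and $\coker(M_I \to N_I)$; you simply unpack the details (commuting squares, no monodromy, and the tameness of the canonical inclusion and projection) that the paper leaves implicit.
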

\begin{proof}
Any constant subdivision as in Definition~\ref{d:tame-morphism} is
subordinate to both the kernel and cokernel of $M \to N$, with the
vector spaces assocated to any constant region~$I$ being $\ker(M_I \to
N_I)$ and $\coker(M_I \to N_I)$.
\end{proof}

\begin{defn}\label{d:abelian-category}
The \emph{category of tame modules} is the subcategory of
$\cQ$-modules whose objects are the tame modules and whose morphisms
are the tame homomorphisms.
\end{defn}

\begin{remark}\label{r:morphism}
To be precise with language, a \emph{morphism} of tame modules is
required to be tame, whereas a \emph{homomorphism} of tame modules is
not.  That is, morphisms in the category of tame modules are called
morphisms, whereas morphisms in the category of $\cQ$-modules are
called homomorphisms.  To avoid confusion, the set of tame morphisms
from a tame module~$M$ to another tame module~$N$ is denoted
$\Mor(M,N)$ instead of $\Hom(M,N)$.
\end{remark}

\begin{prop}\label{p:abelian-category}
Over any poset~$\cQ$, the category of tame $\cQ$-modules is~abelian.
If~$\cQ$ is a subposet of a partially ordered real vector space of
finite dimension, then the category of semialgebraic, PL, subanalytic,
or class~$\mathfrak X$ modules is~abelian.
\end{prop}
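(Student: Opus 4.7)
The plan is to verify directly that the subcategory of tame modules inherits the abelian structure of the ambient category of all $\cQ$-modules, in which the abelian axioms already hold. The work splits into showing that (i) tame modules and tame morphisms form a subcategory with a zero object and biproducts, (ii) kernels and cokernels exist and agree with the ambient ones, and (iii) the normal axiom transfers from the ambient abelian category.

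First I would show tame morphisms form a subcategory. The identity on a tame module $M$ is tame relative to any finite constant subdivision subordinate to $M$. For composition of tame morphisms $\phi \colon M \to N$ and $\psi \colon N \to P$, pass to a common refinement of their subordinate finite constant subdivisions; by Lemma~\ref{l:subdiv} this remains a finite constant subdivision subordinate to all three modules, and on each region $I$ the maps $M_I \to N_I$ and $N_I \to P_I$ are well defined by tameness of $\phi$ and $\psi$, so their composite $M_I \to P_I$ is as well. The zero module with its zero morphisms is trivially tame; biproducts of tame $M, N$ are obtained by placing $M_I \oplus N_I$ on each region of a common refinement of subordinate subdivisions, with inclusions and projections tame by inspection.

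Kernels and cokernels of tame morphisms are provided by Lemma~\ref{l:ker-coker}, and inspection of that lemma's proof shows they are computed regionwise as $\ker(M_I \to N_I)$ and $\coker(M_I \to N_I)$, so as $\cQ$-modules they coincide with the kernels and cokernels in the ambient abelian category. Consequently a tame morphism $f \colon M \to N$ is monic in the tame subcategory if and only if its ambient kernel vanishes---the inclusion $\ker(f) \hookrightarrow M$ is a tame witness of non-injectivity when the ambient kernel is nonzero---hence if and only if $f$ is monic in the ambient category; dually for epimorphisms. The normal axiom then transfers: for a monic tame $f$ with tame cokernel $c \colon N \to C$, one has $\ker(c) = \operatorname{image}(f)$ in the ambient category, and the isomorphism $M \to \operatorname{image}(f)$ induced by $f$ is tame because it is the restriction of $f$ on any common subdivision witnessing tameness of $f$. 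The dual argument handles epimorphisms.

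For the auxiliary cases, every construction above is carried out relative to a common refinement of finitely many subdivisions of the given class, and each such class is closed under finite intersection---semialgebraic, PL, and subanalytic by Proposition~\ref{p:auxiliary-hypotheses}.\ref{i:classes}, and class~$\mathfrak X$ by definition---so common refinements remain in the class and all the constructions preserve the auxiliary hypothesis. The main obstacle I anticipate is the normal axiom, since subcategories are notoriously delicate on this front; it dissolves once one observes that the kernel and cokernel constructions from Lemma~\ref{l:ker-coker} agree regionwise with those in the ambient abelian category, so monics, epics, kernels, and cokernels all transfer faithfully between the tame subcategory and the category of all $\cQ$-modules.
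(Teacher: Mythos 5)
Your proof is correct and takes the route the paper itself flags as an alternative: working directly with common refinements of finite constant subdivisions, rather than reducing to finite encodings via Theorem~\ref{t:tame}. The paper's primary argument shows $\Mor(M,N)$ is an abelian group and that products and coproducts exist by re-encoding along the product $\pi \times \pi'$ of two encoding poset morphisms, then remarks---precisely because the encoding step fails in the subanalytic case without compact support---that everything can instead be done by refining pairs of constant subdivisions. You apply the common-refinement technique uniformly, which is actually cleaner: it handles the subanalytic case without any special pleading. Your careful spelling out of why tame monics and epics agree with the ambient ones (because kernels and cokernels are computed regionwise by Lemma~\ref{l:ker-coker}, so a nonzero ambient kernel gives a tame witness of non-injectivity) makes explicit something the paper compresses into the parenthetical ``it is the kernel of its cokernel in the category of $\cQ$-modules,'' and is a worthwhile elaboration of the normal axiom.

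One small omission to flag: you never explicitly verify preadditivity, i.e., that the sum $f + g$ of two tame morphisms $M \to N$ is tame. This is exactly what the paper addresses with the statement that $\Mor(M,N)$ is an abelian subgroup of $\Hom(M,N)$. Your common-refinement technique handles it immediately (refine the two witnessing subdivisions and note the sum is constant on each region), so the fix is trivial, but it is needed if you frame the verification as checking the usual ``additive category plus kernels, cokernels, and normality'' axioms. Alternatively, if you invoke the elementary characterization of abelian categories via zero object, finite products, finite coproducts, kernels, cokernels, and normality, with preadditivity derived rather than assumed, you should say ``products and coproducts'' rather than ``biproducts,'' since a biproduct diagram already presupposes the additive structure you have not yet established.
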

\begin{proof}
Over any poset, the category in question is a subcategory of the
category of $\cQ$-modules, which is abelian.  The subcategory is not
full, but $\Mor(M,N)$ is an abelian subgroup of $\Hom(M,N)$; this is
most easily seen via Theorem~\ref{t:tame}, for if $\phi: M \to N$
and $\phi': M \to N'$ are finitely encoded by $\pi: \cQ \to P$ and
$\pi': \cQ \to P'$, respectively, then $\phi + \phi'$ is finitely
encoded by $\pi \times \pi': \cQ \to P \times P'$.  The same
construction, but with the source of~$\pi'$ being a new module~$M'$
instead of~$M$, shows that the ordinary product and direct sum of a
pair of finitely encoded modules serves as a product and coproduct in
the tame category.  Kernels and cokernels of morphisms in the tame
category exist by Lemma~\ref{l:ker-coker}, which also implies that
every monomorphism is a kernel (it is the kernel of its cokernel in
the category of $\cQ$-modules) and every epimorphism is a cokernel (it
is the cokernel of its kernel in the category of $\cQ$-modules).

The semialgebraic, PL, and class~$\mathfrak X$ cases have the same
proof, noting that $\pi \times \pi'$ has fibers of the desired type if
$\pi$ and~$\pi'$ both do.  The subanalytic case only follows from this
argument when restricted to the category of modules whose nonzero
graded pieces lie in a bounded subset of~$\cQ$ (the subset is allowed
to depend on the module).  However, the argument in the previous
paragraph can be done directly with common refinements of pairs of
constant subdivisions, so reducing to Theorem~\ref{t:tame} is not
necessary.
\end{proof}

\section{Primary decomposition over partially ordered groups}\label{s:decomp}

In the context of persistent homology, multiparameter features can die
in many ways, persisting indefinitely as some of the parameters
increase without limit but dying when any of the others increase
sufficiently.  The essence of this phenomenon is captured in
elementary language by Theorem~\ref{t:elementary-coprimary}.  In
the ordinary situation of one parameter, the only distinction being
made here is that a feature can be mortal or immortal.  Beyond the
intrinsic mathematical value, decomposing a module according to these
distinctions has concrete benefits for statistical analysis using
multipersistence \cite{primary-distance}.

\subsection{Polyhedral partially ordered groups}\label{sub:polyhedral}\mbox{}

\noindent
The next definition, along with elementary foundations surrounding it,
can be found in Goodearl's book \cite[Chapter~1]{goodearl86}.

\begin{defn}\label{d:pogroup}
An abelian group~$\cQ$ is \emph{partially ordered} if it is generated
by a submonoid~$\cQ_+$, called the \emph{positive cone}, that has
trivial unit group.  The partial order is: $q \preceq q' \iff q' - q
\in \cQ_+$.  All partially ordered groups in this paper are assumed
abelian.
\end{defn}

\begin{example}\label{e:discrete-pogroup}
The finitely generated free abelian group $\cQ = \ZZ^n$ can be
partially ordered with any positive cone~$\cQ_+$, polyhedral or
otherwise, resulting in a \emph{discrete partially ordered group}.
The free commutative monoid $\cQ_+ = \NN^n$ of integer vectors with
nonnegative coordinates is the most common instance and serves as a
well behaved, well known foundational case (see Section~\ref{s:ZZn})
to which substantial parts of the general theory reduce.  For
notational clarity, $\ZZ_+^n$ always means the nonnegative orthant
in~$\ZZ^n$, which induces the standard componentwise partial order
on~$\ZZ^n$.  Other partial orders can be specified using notation $\cQ
\cong \ZZ^n$ with arbitrary positive cone~$\cQ_+$.
\end{example}

\begin{example}\label{e:real-pogroup}
The group $\cQ = \RR^n$ can be partially ordered with any positive
cone~$\cQ_+$, polyhedral or otherwise, closed, open (away from the
origin~$\0$) or anywhere in between, resulting in a \emph{real
partially ordered group}.  The orthant $\cQ_+ = \RR_+^n$ of vectors
with nonnegative coordinates is most useful for multiparameter
persistence.  For notational clarity, $\RR_+^n$ always means the
nonnegative orthant in~$\RR^n$, which induces the standard
componentwise partial order on~$\RR^n$.  Other partial orders can be
specified using notation $\cQ \cong \RR^n$ with arbitrary positive
cone~$\cQ_+$.
\end{example}

\begin{example}\label{e:torsion}
Definition~\ref{d:pogroup} allows the group to have torsion.  Thus the
submonoid of $\ZZ \oplus \ZZ/2\ZZ$ generated by $\bigl[\twoline
10\bigr]$ and $\bigl[\twoline 11\bigr]$
is a positive cone in the group.  There is a continuous version in
which the resulting partial order is easier to
\begin{figure}[h]
\vspace{-1ex}
$$%
\includegraphics[height=20mm]{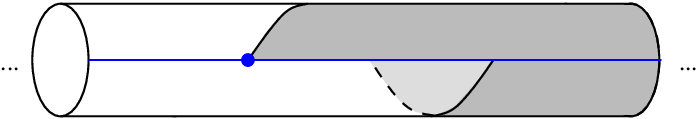}
$$
\vspace{-4ex}
\end{figure}
see geometrically: $\cQ = \RR \times \RR/\ZZ$ with $\cQ_+$ generated
by $\bigl[\twoline 10\bigr]$ and $\bigl[\twoline 11\bigr]$.  In the
figure, the blue center line is the first factor~$\RR$, with
origin~$\0$ at the fat blue dot.  The positive cone~$\cQ_+$ is shaded.
\end{example}

The following allows the free use of the language of either
$\cQ$-modules or $\cQ$-graded $\kk[\cQ_+]$-modules, as appropriate to
the context.

\begin{lemma}\label{l:Q-graded}
A module over a partially ordered abelian group~$\cQ$ is the same
thing as a $\cQ$-graded\/ module over~$\kk[\cQ_+]$.\hfill\qed
\end{lemma}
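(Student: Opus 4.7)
The plan is to exhibit mutually inverse constructions between the two types of data, by identifying, for each $p \in \cQ_+$, the action of the monomial $x^p \in \kk[\cQ_+]$ with the structure homomorphism indexed by the relation $q \preceq q + p$.

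Given a $\cQ$-module $M = \bigoplus_{q \in \cQ} M_q$ with structure homomorphisms $\phi_{q,q'} : M_q \to M_{q'}$ for $q \preceq q'$, I would define an action of $\kk[\cQ_+]$ on the underlying graded vector space by letting $x^p$ act on $m \in M_q$ as $\phi_{q, q+p}(m) \in M_{q+p}$, extended by $\kk$-linearity. This action is $\cQ$-graded, since $x^p$ has degree~$p$ and shifts the grading by~$p$. The monoid algebra axioms translate directly into the two clauses of Definition~\ref{d:poset-module}: the element $x^{\0} = 1$ acts as the identity on each $M_q$ because $\phi_{q,q} = \id_{M_q}$ (the $q = q' = q''$ instance of the composition axiom), and associativity $x^{p'} \cdot (x^p \cdot m) = x^{p+p'} \cdot m$ for $m \in M_q$ amounts to $\phi_{q+p, q+p+p'} \circ \phi_{q, q+p} = \phi_{q, q+p+p'}$, which is again the composition axiom.

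Conversely, from a $\cQ$-graded $\kk[\cQ_+]$-module $M = \bigoplus_{q \in \cQ} M_q$, I would recover the structure homomorphisms by setting $\phi_{q,q'} : M_q \to M_{q'}$ to be multiplication by the monomial $x^{q'-q}$. This is well defined because $q \preceq q'$ means exactly $q' - q \in \cQ_+$ by Definition~\ref{d:pogroup}. The equalities $x^{\0} = 1$ and $x^{q''-q'} \cdot x^{q'-q} = x^{q''-q}$ in $\kk[\cQ_+]$ immediately yield $\phi_{q,q} = \id_{M_q}$ and $\phi_{q,q''} = \phi_{q',q''} \circ \phi_{q,q'}$, giving a $\cQ$-module in the sense of Definition~\ref{d:poset-module}. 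By inspection these two constructions are mutually inverse on objects.

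For morphisms, a degree-preserving $\kk$-linear map $M \to N$ commutes with all structure homomorphisms $\phi_{q,q'}$ on both sides if and only if it commutes with multiplication by every monomial $x^p$, which (since the monomials span $\kk[\cQ_+]$ over~$\kk$) is equivalent to being a $\kk[\cQ_+]$-module homomorphism. There is no serious obstacle anywhere in this argument; the only point that uses the group structure in an essential way, rather than merely the poset structure on~$\cQ$, is the appearance of the difference $q' - q$ in the definition of $\phi_{q,q'}$ from the module structure, which is precisely what allows the monoid algebra $\kk[\cQ_+]$ to encode all of the structure homomorphisms simultaneously.
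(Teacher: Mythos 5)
The paper offers no argument for this lemma---it is stated with \verb|\qed| appended, signaling that the author regards the equivalence as immediate---so your proof is the standard verification filled in, and it is essentially correct. The dictionary you set up ($x^p$ acting as $\phi_{q,q+p}$, and conversely $\phi_{q,q'}$ recovered as multiplication by $x^{q'-q}$) is exactly the right one, and your observations about where the group structure enters and how morphisms match up are apt.

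One claim in your argument is not quite right as stated: you say that $\phi_{q,q} = \id_{M_q}$ is ``the $q = q' = q''$ instance of the composition axiom.'' That instance only yields $\phi_{q,q} = \phi_{q,q} \circ \phi_{q,q}$, i.e.\ idempotence, not the identity---an idempotent projection onto a proper subspace would also satisfy it. What you actually need is the (standard, but in Definition~\ref{d:poset-module} unstated) functoriality convention that a poset module assigns the identity map to each relation $q \preceq q$; equivalently, that a $\cQ$-module is a functor from the poset $\cQ$ regarded as a category into $\kk$-vector spaces, and functors send identities to identities. With that convention made explicit---or simply invoked---the unitality of the $\kk[\cQ_+]$-action follows immediately, and the rest of your argument goes through without change.
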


\begin{example}\label{e:ZZn-graded}
When $\cQ = \ZZ^n$ and $\cQ_+ = \NN^n$, the relevant monoid
algebra is the polynomial ring $\kk[\NN^n] = \kk[\xx]$, where $\xx =
x_1,\dots,x_n$ is a sequence of $n$ commuting~variables.
\end{example}

Primary decomposition of $\cQ$-modules depends on certain finiteness
conditions.  In ordinary commutative algebra, where $\cQ = \ZZ^n$, the
finiteness comes from~$\cQ_+$, which is assumed to be finitely
generated (so it is an \emph{affine semigroup}).  This condition
implies that finitely generated $\cQ$-modules are noetherian: every
increasing chain of submodules stabilizes.  Primary decomposition is
then usually derived as a special case of the theory for finitely
generated modules over noetherian rings.  But the noetherian condition
is stronger than necessary: in the presence of a tame hypothesis, it
suffices for the positive cone to have finitely many faces, in the
following sense.  To the author's knowledge, the notion of polyhedral
partially ordered group is new, and there is no existing literature on
primary decomposition in this setting.

\begin{defn}\label{d:face}
A \emph{face} of the positive cone~$\cQ_+$ of a partially ordered
group~$\cQ$ is a submonoid $\sigma \subseteq \cQ_+$ such that
$\cQ_{+\!} \minus \sigma$ is an ideal of the monoid~$\cQ_+$.
Sometimes it is simpler to say that~$\sigma$ is a \emph{face}
of~$\cQ$.  Call~$\cQ$ \emph{polyhedral} if it has only finitely
many~faces.
\end{defn}

Polyhedrality suffices to prove existence of (finite) primary
decomposition (Theorem~\ref{t:primDecomp}).  However, many of the
ingredients, such as localization along or taking support on a face
(Definition~\ref{d:PF}), make sense also under a different sort
of~\mbox{hypothesis}.

\begin{defn}\label{d:closed}
Let $\cQ$ be a partially ordered group.
\begin{enumerate}
\item%
A \emph{ray} of the positive cone~$\cQ_+$ is a face that is totally
ordered as a partially ordered submonoid of~$\cQ$.

\item%
The partially ordered group $\cQ$ is \emph{closed} if the complement
$\cQ_{+\!} \minus \tau$ of each face~$\tau$ is generated as an upset
(i.e., as an ideal) of~$\cQ_{+\!}$ by $\rho \minus \{\0\}$ for the
rays $\rho \not\subseteq \tau$.
\end{enumerate}
\end{defn}

\begin{example}\label{e:closed}
Any real partially ordered group (Example~\ref{e:real-pogroup})~$\cQ$
whose positive cone~$\cQ_+$ is closed in the usual topology on~$\cQ$
is a closed partially ordered group by the Krein--Milman theorem:
$\cQ_+$ is the set of nonnegative real linear combinations of vectors
on extreme rays of~$\cQ_+$.  For instance, a non-polyhedral closed
partial order on $\cQ = \RR^3$ results by taking $\cQ_+$ to be a cone
over a~disk, such as either half of the cone $x^2 + y^2 \leq z^2$.  In
contrast, if~$\cQ_+$ is an intersection of finitely many closed
half-spaces, then there are only finitely many extreme rays.  (This
case is crucial in applications---see \cite{kashiwara-schapira2019,
essential-real}, for instance.)  Even in the polyhedral case the cone
need not be rational; that is, the vectors that generate it---or the
linear functions defining the closed half-spaces whose intersection
is~$\cQ_+$---need not have rational entries.
\end{example}

\begin{example}\label{e:discrete-polyhedral}
Any discrete partially ordered group
(Example~\ref{e:discrete-pogroup}) whose positive cone is a finitely
generated submonoid is automatically both polyhedral and closed; see
\cite[Lemma~7.12]{cca}.  A discrete partially ordered group can also
have a positive cone that is not a finitely generated submonoid, such
as $\cQ = \ZZ^2$ with $\cQ_+ = C \cap \ZZ^2$ for the cone $C \subseteq
\RR^2$ generated by $\bigl[\twoline 10\bigr]$ and $\bigl[\twoline
1\pi\bigr]$.  This particular irrational cone yields a partially
ordered group that is polyhedral but not closed.  Indeed, there are
fewer than the expected faces, because only some of the faces of~$C$
result in faces of~$\cQ_+$ itself.  The image of~$\cQ$ is not discrete
in the quotient of $\cQ \otimes \RR$ modulo the subgroup spanned by
the irrational real face, which can have unexpected consequences for
the algebra of poset modules under localization along such a face.
\end{example}

\begin{example}\label{e:torsion'}
The cylindrical group~$\cQ$ in Example~\ref{e:torsion} has two faces:
the origin~$\0$ (the fat blue dot) and~$\RR_{+\!}$ (the rightmost half
of the horizontal blue center line).
\end{example}

\subsection{Primary decomposition of downsets}\label{sub:downsets}

\begin{defn}\label{d:PF}
Fix a face~$\tau$ of the positive cone~$\cQ_+$ in a polyhedral or
closed partially ordered group~$\cQ$ and a downset $D \subseteq \cQ$.
Write $\ZZ \tau$ for the subgroup of~$\cQ$ generated~by~$\tau$.
\begin{enumerate}
\item\label{i:localization}%
The \emph{localization} of~$D$ \emph{along~$\tau$} is the subset
$$%
  D_\tau = \{q \in D \mid q + \tau \subseteq D\}.
$$

\item\label{i:globally-supported}%
An element $q \in D$ is \emph{globally supported on~$\tau$} if $q
\not\in D_{\tau'}$ whenever $\tau' \not\subseteq \tau$.

\item%
The part of~$D$ \emph{globally supported on~$\tau$} is
$$%
  \Gamma_{\!\tau} D = \{q \in D \mid q \text{ is globally supported on }\tau\}.
$$

\item%
An element $q \in D$ is \emph{locally supported on~$\tau$} if $q$ is
globally supported on~$\tau$ in~$D_\tau$.

\item%
The \emph{local $\tau$-support} of~$D$ is the subset
$\Gamma_{\!\tau}(D_\tau) \subseteq D$ consisting of elements globally
supported on~$\tau$ in the localization~$D_\tau$.

\item\label{i:primary-component}%
The \emph{$\tau$-primary component} of~$D$ is the downset
$$%
  P_\tau(D) = \Gamma_{\!\tau}(D_\tau) - \cQ_+
$$
cogenerated by the local $\tau$-support of~$D$.
\end{enumerate}
\end{defn}

\begin{example}\label{e:PF}
The local $\tau$-supports of the under-hyperbola downset in
Example~\ref{e:hyperbola-GD} are the subsets depicted on the
right-hand side there, for the faces $\tau = \0$, $x$-axis, and
$y$-axis, respectively.  The corresponding primary components are
depicted in Example~\ref{e:hyperbola-PD}.  In contrast, the global
support on (say) the $y$-axis consists of the part of the local
support that sits strictly above the $x$-axis, and the global support
at~$\0$ is the part of~$D$ strictly in the positive quadrant.

This example demonstrates that the $\tau$-primary component of~$D$ in
Definition~\ref{d:PF} need not be supported on~$\tau$.  Indeed, $D =
P_\0(D)$ here, and points outside of~$\cQ_+$ are not supported at the
origin, being instead locally supported at either the $x$-axis (if the
point is below the $x$-axis) or the $y$-axis (if the point is behind
the $y$-axis).
\end{example}

\begin{remark}\label{r:PF}
Definition~\ref{d:PF} makes formal sense in any partially ordered
group, but extreme
caution is recommended without the closed or polyhedral assumptions.
Indeed, without such assumptions, faces can be virutally present, such
as a missing face in a real polyhedron that is not closed or the
irrational face in Example~\ref{e:discrete-polyhedral}.  In such
cases, aspects of Definition~\ref{d:PF} might produce unintended
output.  That said, the natural generality for the concepts in
Definition~\ref{d:PF}~is~unclear.
\end{remark}

\begin{example}\label{e:coprincipal}
The \emph{coprincipal} downset $\aa + \tau - \cQ_+$ inside of $\cQ =
\ZZ^n$ \emph{cogenerated} by~$\aa$ \emph{along~$\tau$} is globally
supported along~$\tau$.  It also equals its own localization
along~$\tau$, so it equals its local $\tau$-support and is its own
$\tau$-primary component.  Note that when $\cQ_+ = \NN^n$, faces
of~$\cQ_+$ correspond to subsets of~$[n] = \{1,\dots,n\}$, the
correspondence being $\tau \leftrightarrow \chi(\tau)$, where
$\chi(\tau) = \{i \in [n] \mid \ee_i \in \tau\}$ is the
\emph{characteristic subset} of~$\tau$ in~$[n]$.  (The vector~$\ee_i$
is the standard basis vector whose only nonzero entry is $1$ in
slot~$i$.)
\end{example}

\begin{remark}\label{r:freely}
The localization of~$D$ along~$\tau$ is acted on freely by~$\tau$.
Indeed, $D_\tau$ is the union of those cosets of~$\ZZ \tau$ each of
which is already contained in~$D$.  The minor point being made here is
that the coset $q + \ZZ \tau$ is entirely contained in~$D$ as soon as
$q + \tau \subseteq D$ because $D$ is a downset: $q + \ZZ \tau = q +
\tau - \tau \subseteq q + \tau - \cQ_+ \subseteq D$ if~\mbox{$q + \tau
\subseteq D$}.
\end{remark}

\begin{remark}\label{r:monomial-localization}
The localization of~$D$ is defined to reflect localization at the
level of $\cQ$-modules: enforcing invertibility of structure
homomorphisms $\kk[D]_q \to \kk[D]_{q+f}$ for $f \in \tau$ results in
a localized indicator module $\kk[D][\ZZ \tau] = \kk[D_\tau]$; see
Definition~\ref{d:support}.
\end{remark}

\begin{example}\label{e:support}
Fix a downset $D$ in a partially ordered group~$\cQ$ that is closed
(Definition~\ref{d:closed} and subsequent examples).  An element $q
\in D$ is globally supported on~$\tau$ if and only if it lands outside
of~$D$ when pushed far enough up in any direction outside
of~$\tau$---that is, every $f \in \cQ_{+\!} \minus \tau$ has a
nonnegative integer multiple~$\lambda f$~with~$\lambda f +\nolinebreak
q \not\in D$.

One implication is easy: if every $f \in \cQ_{+\!} \minus \tau$ has
$\lambda f + q \not\in D$ for some $\lambda \in \NN$, then any element
$f' \in \tau' \minus \tau$ has a multiple $\lambda f' \in \tau'$ such
that $\lambda f' + q \not\in D$, so $q \not\in D_{\tau'}$.  For the
other direction, use Definition~\ref{d:closed}: $q \in \Gamma_{\!\tau}
D \implies q \not\in D_\rho$ for all rays~$\rho$ of~$\cQ_{+\!}$ that
are not contained in~$\tau$, so along each such ray~$\rho$ there is a
vector~$v_\rho$ with $v_\rho + q \not\in D$.  Given $f \in \cQ_{+\!}
\minus \tau$, choose $\lambda \in \NN$ big enough so that $\lambda f
\succeq v_\rho$ for some~$\rho$.
\end{example}

\begin{defn}\label{d:primDecomp}
Fix a downset~$D$ in a polyhedral partially ordered group~$\cQ$.
\begin{enumerate}
\item%
The downset~$D$ is \emph{coprimary} if $D = P_\tau(D)$ for some
face~$\tau$ of the positive cone~$\cQ_+$.  If $\tau$ needs to
specified then $D$ is called \emph{$\tau$-coprimary}.

\item%
A \emph{primary decomposition} of~$D$ is an expression $D =
\bigcup_{i=1}^r D_i$ of coprimary downsets~$D_i$, called
\emph{components} of the decomposition.
\end{enumerate}
\end{defn}

\begin{thm}\label{t:PF}
Every downset $D$ in a polyhedral partially ordered group~$\cQ$ is the
union $\bigcup_\tau \Gamma_{\!\tau}(D_\tau)$ of its local
$\tau$-supports for all faces $\tau$ of the positive cone.
\end{thm}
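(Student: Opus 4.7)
The easy inclusion $\bigcup_\tau \Gamma_{\!\tau}(D_\tau) \subseteq D$ follows at once from $\Gamma_{\!\tau}(D_\tau) \subseteq D_\tau \subseteq D$; the work is in the reverse inclusion. Given $q \in D$, the plan is to produce the required face~$\tau$ by maximality. Consider the collection $\cF_q = \{\tau \text{ a face of } \cQ_+ : q + \tau \subseteq D\}$, which is nonempty (it contains $\{\0\}$) and finite by polyhedrality, hence admits a maximal element~$\tau$ under inclusion. Then $q \in D_\tau$ by construction, and to prove $q \in \Gamma_{\!\tau}(D_\tau)$ I fix a face $\tau' \not\subseteq \tau$ and show $q \notin (D_\tau)_{\tau'}$; unwinding Definition~\ref{d:PF}, this amounts to producing $f \in \tau'$ and $g \in \tau$ with $q + f + g \notin D$. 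Suppose for contradiction that $q + \tau + \tau' \subseteq D$.

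The main obstacle is that $\tau + \tau'$ need not itself be a face: for example, in a square-pyramidal cone in~$\RR^3$, the sum of two opposite edges is a two-dimensional submonoid whose relative interior lies in the interior of the cone. The remedy is to pass to the face~$\sigma$ generated by $\tau \cup \tau'$, which strictly contains $\tau$ since $\tau' \not\subseteq \tau$. A short monoid calculation identifies $\sigma$ with the downward closure $(\tau + \tau')^\downarrow = \{a \in \cQ_+ : a + b \in \tau + \tau' \text{ for some } b \in \cQ_+\}$: this set is a submonoid (because $\tau + \tau'$ is), is closed under $\cQ_+$-summands, hence is a face containing $\tau \cup \tau'$, and its minimality as such is automatic.

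The payoff uses the downset hypothesis on~$D$ crucially. For any $a \in \sigma$, choose $b \in \cQ_+$ with $a + b \in \tau + \tau'$; the standing assumption $q + \tau + \tau' \subseteq D$ gives $q + a + b \in D$, and since $q + a \preceq q + a + b$, the downset property forces $q + a \in D$. Therefore $q + \sigma \subseteq D$, exhibiting an element of~$\cF_q$ strictly larger than~$\tau$ and contradicting maximality. Notice that polyhedrality enters only to guarantee $\cF_q$ has a maximal element; the rest of the argument works over any partially ordered group, with downward closure and the downset hypothesis together bridging the gap between the Minkowski sum of two faces and the face that it generates.
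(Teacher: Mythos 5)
Your argument is correct and takes the same route as the paper: pick a face $\tau$ maximal among those with $q \in D_\tau$, which exists because $\cQ$ has finitely many faces. The paper then asserts that local $\tau$-support of $q$ ``follows immediately,'' whereas you fill in the genuine content of that step: if $q + \tau + \tau' \subseteq D$ for some $\tau' \not\subseteq \tau$, then since $\tau + \tau'$ need not be a face you must pass to the face $\sigma$ it generates, and you correctly use the downset hypothesis on $D$ to descend from $q + \tau + \tau' \subseteq D$ to $q + \sigma \subseteq D$ (equivalently, $(D_\tau)_{\tau'} = D_\sigma$), contradicting maximality of $\tau$. Your closing remark that polyhedrality is used only to extract a maximal element of $\cF_q$ is also accurate and worth having spelled out.
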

\begin{proof}
Given an element $q \in D$, finiteness of the number of faces implies
the existence of a face~$\tau$ that is maximal among those such that
$q \in D_\tau$; note that $q \in D = D_\0$ for the trivial face $\0$
consisting of only the identity of~$\cQ$.  It follows immediately that
$q$ is supported on~$\tau$ in~$D_\tau$.
\end{proof}

\begin{cor}\label{c:PF}
Every downset $D$ in a polyhedral partially ordered group~$\cQ$ has a
canonical primary decomposition $D = \bigcup_\tau P_\tau(D)$, the
union being over all faces~$\tau$ of the positive cone with nonempty
support $\Gamma_{\!\tau}(D_\tau)$.
\end{cor}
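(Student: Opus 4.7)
The plan is to deduce Corollary~\ref{c:PF} from Theorem~\ref{t:PF} in two moves. First, I would thicken each local $\tau$-support $\Gamma_{\!\tau}(D_\tau)$ to its downset closure $P_\tau(D) = \Gamma_{\!\tau}(D_\tau) - \cQ_+$ while keeping the union equal to~$D$. Second, I would verify that each $P_\tau(D)$ is $\tau$-coprimary, i.e., $P_\tau(P_\tau(D)) = P_\tau(D)$. Canonicity will then be automatic, since the construction $\tau \mapsto P_\tau(D)$ depends only on $D$ and the face lattice of~$\cQ_+$, with no auxiliary choices.

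For the first move, the inclusion $\Gamma_{\!\tau}(D_\tau) \subseteq P_\tau(D)$ is immediate (take $\0 \in \cQ_+$), and $P_\tau(D) \subseteq D$ holds because $\Gamma_{\!\tau}(D_\tau) \subseteq D_\tau \subseteq D$ and $D$ is a downset, so $\Gamma_{\!\tau}(D_\tau) - \cQ_+ \subseteq D - \cQ_+ = D$. Sandwiching Theorem~\ref{t:PF} between these inclusions yields $D = \bigcup_\tau P_\tau(D)$, and of course only faces~$\tau$ with $\Gamma_{\!\tau}(D_\tau) \neq \nothing$ contribute.

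For the second move---the real content---write $E = P_\tau(D)$ and $\Gamma = \Gamma_{\!\tau}(D_\tau)$, so $E = \Gamma - \cQ_+$. Since $D_\tau$ is $\ZZ\tau$-stable by Remark~\ref{r:freely} and the condition of being globally supported on~$\tau$ in~$D_\tau$ is itself $\ZZ\tau$-invariant, $\Gamma$ is $\ZZ\tau$-stable. This immediately gives $E_\tau = E$: for any $q = r - s \in E$ with $r \in \Gamma$ and $s \in \cQ_+$ and any $t \in \tau$, one has $q + t = (r+t) - s \in \Gamma - \cQ_+ = E$. The crux is to show $\Gamma \subseteq \Gamma_{\!\tau}(E)$. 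Fix $q \in \Gamma$ and a face $\tau' \not\subseteq \tau$; since $q \in \Gamma$, there is $t' \in \tau'$ with $q + t' \notin D_\tau$, hence $q + t' + s \notin D$ for some $s \in \tau$. If $q + t'$ were in $E$, there would exist $r' \in \Gamma \subseteq D_\tau$ with $q + t' \preceq r'$, and then $q + t' + s \preceq r' + s \in D$ would force $q + t' + s \in D$, a contradiction. Thus $q + t' \notin E$, so $q \in \Gamma_{\!\tau}(E) = \Gamma_{\!\tau}(E_\tau)$. Combined with the trivial containment $\Gamma_{\!\tau}(E_\tau) \subseteq E$, this gives $E = \Gamma - \cQ_+ \subseteq \Gamma_{\!\tau}(E_\tau) - \cQ_+ \subseteq E$, so $P_\tau(E) = E$ and $E$ is $\tau$-coprimary.

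The step I expect to be the main obstacle is the sandwich contradiction above: it is where $\ZZ\tau$-stability of~$\Gamma$ and the downset closure in the definition of~$E$ must interact delicately, and where one must confirm that thickening $\Gamma$ downward by $-\cQ_+$ does not accidentally create new $\tau'$-persistent elements that would spoil $\tau$-coprimariness. Everything else is bookkeeping around Theorem~\ref{t:PF}.
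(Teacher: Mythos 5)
Your proof is correct. The paper gives no explicit proof of Corollary~\ref{c:PF}---it is silently deduced from Theorem~\ref{t:PF} and the definitions---so the argument is essentially yours to supply, and you supply it completely. Your first move (sandwiching: $\Gamma_{\!\tau}(D_\tau) \subseteq P_\tau(D) \subseteq D$, then applying Theorem~\ref{t:PF}) is the obvious one. Your second move correctly identifies the genuine content that the paper leaves implicit: the verification that $P_\tau$ is idempotent on downsets, i.e., that each $P_\tau(D)$ is $\tau$-coprimary in the sense of Definition~\ref{d:primDecomp}. The three ingredients---$\ZZ\tau$-stability of $\Gamma_{\!\tau}(D_\tau)$ from $\ZZ\tau$-stability of $D_\tau$ and invariance of global support under $\ZZ\tau$-translation; the consequence $E_\tau = E$; and the containment $\Gamma_{\!\tau}(D_\tau) \subseteq \Gamma_{\!\tau}(E)$ via the sandwich contradiction---are each justified properly, and the last one in particular shows the delicate interaction you flagged (that the downward thickening $\Gamma - \cQ_+$ does not create new $\tau'$-persistent elements) does not in fact spoil coprimariness. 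One small presentational point: the deduction that $q+t' \notin D_\tau$ yields some $s \in \tau$ with $q+t'+s \notin D$ implicitly covers both the case $q+t' \notin D$ (take $s = \0$) and the case $q+t' \in D$ with $(q+t')+\tau \not\subseteq D$; stating the case split explicitly would make that step more transparent, but it is not a gap.
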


\begin{remark}\label{r:disjoint}
The union in Theorem~\ref{t:PF} is not necessarily disjoint.  Nor,
consequently, is the union in Corollary~\ref{c:PF}.  There is a
related union, however, that is disjoint: the sets $(\Gamma_{\!\tau}
D) \cap D_\tau$ do not overlap.  Their union need not be all of~$D$,
however; try Example~\ref{e:PF}, where the negative quadrant
intersects none of the sets~$(\Gamma_{\!\tau} D) \cap D_\tau$.

Algebraically, $(\Gamma_{\!\tau} D) \cap D_\tau$ should be interpreted
as taking the elements of~$D$ globally supported on~$\tau$ and then
taking their images in the localization along~$\tau$, which deletes
the elements that aren't locally supported on~$\tau$.  That is,
$(\Gamma_{\!\tau} D) \cap D_\tau$ is the set of degrees where the
image of $\Gamma_{\!\tau}\kk[D] \to \kk[D]_\tau$ is nonzero.
\end{remark}

\begin{example}\label{e:PF'}
The decomposition in Theorem~\ref{t:PF}---and hence
Corollary~\ref{c:PF}---is not necessarily minimal: it might be that
some of the canonically defined components can be omitted.  This
occurs, for instance, in Example~\ref{e:hyperbola-PD}.  The general
phenomenon, as in this hyperbola example, stems from geometry of the
elements in~$D_\tau$ supported on~$\tau$, which need not be bounded in
any sense, even in the quotient $\cQ/\ZZ \tau$.  In contrast, for
(say) quotients by monomial ideals in the polynomial
ring~$\kk[\NN^n]$, only finitely many elements have support at the
origin, and the downset they cogenerate is consequently~artinian.
\end{example}

\subsection{Localization and support}\label{sub:local-support}\mbox{}

\begin{defn}\label{d:support}
Fix a face~$\tau$ of a partially ordered group~$\cQ$.  The
\emph{localization} of a $\cQ$-module~$\cM$ \emph{along~$\tau$} is the
tensor product
$$%
  \cM_\tau = \cM \otimes_{\kk[\cQ_{+\!}]} \kk[\cQ_{+\!} + \ZZ \tau],
$$
viewing~$\cM$ as a $\cQ$-graded $\kk[\cQ_{+\!}]$-module.  The
submodule of~$\cM\hspace{-1.17pt}$ \emph{globally
supported~on~$\tau$}~is
$$%
  \Gamma_{\!\tau} \cM
  =
  \bigcap_{\tau' \not\subseteq \tau}\bigl(\ker(\cM \to \cM_{\tau'})\bigr)
  =
  \ker \bigl(\cM \to \prod_{\tau' \not\subseteq \tau} \cM_{\tau'}\bigr).
$$
\end{defn}

\begin{example}\label{e:Gamma}
Definition~\ref{d:PF}.2 says that $1_q \in \kk[D]_q = \kk$ lies
in~$\Gamma_{\!\tau} \kk[D]$ if and only if $q \in \Gamma_{\!\tau} D$,
because $q \not\in D_{\tau'}$ if and only if $1_q \mapsto 0$ under
localization of~$\kk[D]$ along~$\tau'$.
\end{example}

\begin{example}\label{e:global-support}
The global supports of the indicator subquotient for the interval
$$%
\psfrag{x}{\tiny$x$}
\psfrag{y}{\tiny$y$}
  \begin{array}{@{}c@{}l@{}}
	{\red .}\qquad\quad\ \\[-1.7ex]
	{\red .}\qquad\quad\ \\[-1.7ex]
	{\red .}\qquad\quad\ \\[-1ex]
	\includegraphics[height=29mm]{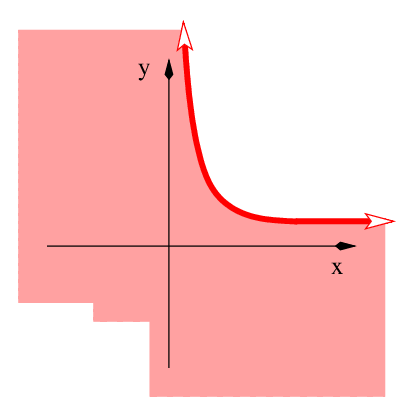}
	&\raisebox{3ex}{\red$\!\cdot\!\cdot\!\cdot$}
	\end{array}\!\!
\quad\ \ \goesto\quad\ \
  \begin{array}{@{}c@{}}
	{\red .}\quad\,\\[-1.7ex]
	{\red .}\quad\,\\[-1.7ex]
	\makebox[0pt][l]{\quad$\tau = \nothing$}
	{\red .}\quad\,\\[-1ex]
	\includegraphics[height=29mm]{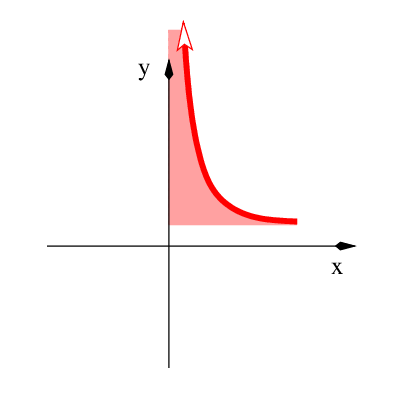}
	\end{array}
\ ,\,\quad
  \begin{array}{@{}c@{}l@{}}
	\phantom{.}\qquad\quad\ \ \\[-1.7ex]
	\phantom{.}\qquad\quad\ \ \\[-1.7ex]
	\makebox[0pt][l]{\qquad$\tau = \{x\}$}
	\phantom{.}\qquad\quad\ \ \\[-1ex]
	\includegraphics[height=29mm]{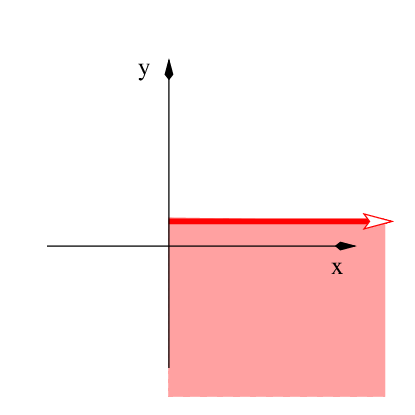}
	&\raisebox{3ex}{\red$\!\cdot\!\cdot\!\cdot$}
	\end{array}
\ ,\,\quad
  \begin{array}{@{}c@{}}
	{\red .}\qquad\quad\ \ \\[-1.7ex]
	{\red .}\qquad\quad\ \ \\[-1.7ex]
	\makebox[0pt][l]{\qquad$\tau = \{y\}$}
	{\red .}\qquad\quad\ \ \\[-1ex]
	\includegraphics[height=29mm]{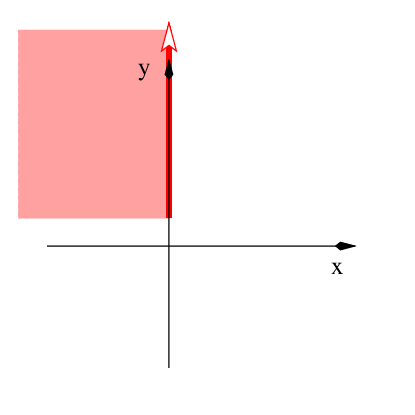}
	\end{array}
$$
in~$\RR^2$ on the left-hand side of this display are the indicator
subquotients for the intervals on the right-hand side, each labeled by
the relevant face~$\tau$.  Caution: this example is not to be confused
with Examples~\ref{e:hyperbola-GD}, \ref{e:hyperbola-PD}, \ref{e:PF},
and~\ref{e:PF'}, where the curve is a hyperbola whose asymptotes are
the two axes.  In contrast, here upper boundary of the interval has
the vertical axis as an asymptote, whereas the horizontal axis is
exactly parallel to the positive end of the upper boundary.
\end{example}

\begin{lemma}\label{l:left-exact}
The kernel of any natural transformation between two exact covariant
functors is left-exact.  In more detail, if $\alpha$ and $\beta$ are
two exact covariant functors $\cA \to \cB$ for abelian categories
$\cA$ and~$\cB$, and $\gamma_X: \alpha(X) \to \beta(X)$ naturally for
all objects~$X$ of~$\cA$, then the association $X \mapsto \ker
\gamma_X$ is a left-exact covariant functor~$\cA \to \cB$.
\end{lemma}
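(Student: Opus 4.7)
My plan is to verify functoriality of $K := \ker \gamma$ via the universal property of the kernel, and then deduce left-exactness from a standard morphism-of-exact-sequences diagram chase using the exactness of $\alpha$ and $\beta$.

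For functoriality, given any morphism $f: X \to Y$ in $\cA$, let $\iota_X: \ker \gamma_X \to \alpha(X)$ and $\iota_Y: \ker \gamma_Y \to \alpha(Y)$ denote the canonical monomorphisms. Naturality of $\gamma$ gives
\[
  \gamma_Y \circ \alpha(f) \circ \iota_X
  \,=\, \beta(f) \circ \gamma_X \circ \iota_X
  \,=\, 0,
\]
so the universal property of $\iota_Y$ produces a unique morphism $K(f): \ker \gamma_X \to \ker \gamma_Y$ satisfying $\iota_Y \circ K(f) = \alpha(f) \circ \iota_X$. Then $K(\id) = \id$ and $K(g \circ f) = K(g) \circ K(f)$ are immediate from the corresponding identities for $\alpha$ combined with uniqueness in this factorization, so $K$ is a covariant functor $\cA \to \cB$.

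For left-exactness, suppose $0 \to X' \to X \to X''$ is exact in $\cA$. Exactness of $\alpha$ and $\beta$ yields a morphism of left-exact sequences with vertical components $\gamma_{X'}, \gamma_X, \gamma_{X''}$, so applying kernels column-by-column produces the sequence $0 \to \ker\gamma_{X'} \to \ker\gamma_X \to \ker\gamma_{X''}$ of induced maps. I would then verify its exactness by a routine chase: the first map is a monomorphism because its composite with $\iota_X$ factors as two monomorphisms $\ker\gamma_{X'} \to \alpha(X') \to \alpha(X)$; and for exactness at $\ker\gamma_X$, any morphism into $\ker\gamma_X$ that dies in $\ker\gamma_{X''}$ maps to zero in $\alpha(X'')$, so by exactness of the top row it lifts to a morphism into $\alpha(X')$, and the composite of that lift with $\gamma_{X'}$ must vanish because it becomes zero in $\beta(X)$ and $\beta(X') \to \beta(X)$ is a monomorphism.

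The main obstacle is really just notational: the ``element-chase'' in the previous paragraph must be phrased either via universal properties as above or by invoking Mitchell's embedding theorem, since $\cA$ and $\cB$ are arbitrary abelian categories. As an alternative to the chase, one may observe abstractly that $\ker \gamma_X$ is the equalizer of $\gamma_X$ and the zero map $\alpha(X) \to \beta(X)$, so $K$ is a pointwise finite limit of the two exact functors $\alpha$ and $\beta$; since limits commute with limits and left-exactness is preservation of finite limits, $K$ inherits left-exactness from $\alpha$ and $\beta$. Either route is short and essentially formal, which is consistent with this lemma being an auxiliary tool for the subsequent local-support functor constructions.
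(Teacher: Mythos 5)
Your proof is correct and takes essentially the same route as the paper, which simply remarks that the lemma ``can be checked by diagram chase or spectral sequence'' without writing out details; you carry out that diagram chase carefully, phrased correctly via generalized elements (morphisms into the objects) so that it is valid in an arbitrary abelian category. Your alternative observation that $\ker\gamma_X$ is the equalizer of $\gamma_X$ and $0$, so that $K$ is a pointwise finite limit of left-exact functors and hence left-exact because finite limits commute with finite limits, is a cleaner formal route and equally valid.
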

\begin{proof}
This can be checked by diagram chase or spectral sequence.
\end{proof}

\begin{prop}\label{p:support-left-exact}
The global support functor\/ $\Gamma_{\!\tau\!}$ is left-exact.
\end{prop}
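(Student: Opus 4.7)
The plan is to exhibit $\Gamma_{\!\tau}$ as the kernel of a natural transformation between two exact covariant functors on the category of $\cQ$-modules, and then invoke Lemma~\ref{l:left-exact} directly.

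First I would set up the two functors. The source is the identity functor $\id$ on $\cQ$-modules, which is trivially exact. The target is the functor $\Pi_\tau$ defined by
$$\Pi_\tau(\cM) = \prod_{\tau' \not\subseteq \tau} \cM_{\tau'},$$
together with the natural transformation $\gamma\colon \id \to \Pi_\tau$ whose component $\gamma_\cM$ is the product of the localization maps $\cM \to \cM_{\tau'}$. By Definition~\ref{d:support},
$$\Gamma_{\!\tau}\cM = \ker \gamma_\cM,$$
functorially in~$\cM$.

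Next I would verify that $\Pi_\tau$ is exact. For each face~$\tau'$, the localization $\cM \mapsto \cM_{\tau'} = \cM \otimes_{\kk[\cQ_+]} \kk[\cQ_+ + \ZZ\tau']$ is exact because $\kk[\cQ_+ + \ZZ\tau']$ is the localization of~$\kk[\cQ_+]$ obtained by inverting the monomials in~$\tau'$, hence is flat over~$\kk[\cQ_+]$. Products of exact functors valued in a category of modules are exact, because products in an abelian category of modules (graded or not) are computed componentwise, and a sequence of modules is exact iff each graded component is exact; taking products of vector spaces preserves exactness (finite products in any abelian category are exact, and for the category of $\cQ$-graded vector spaces arbitrary products are computed degreewise as products of $\kk$-vector spaces, which are exact). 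Thus $\Pi_\tau$ is exact regardless of whether $\cQ$ is polyhedral, so the size of the indexing set is not an issue.

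With $\id$ and $\Pi_\tau$ both exact and $\gamma\colon \id \to \Pi_\tau$ a natural transformation, Lemma~\ref{l:left-exact} applied to $(\alpha,\beta,\gamma) = (\id,\Pi_\tau,\gamma)$ yields that $\cM \mapsto \ker \gamma_\cM = \Gamma_{\!\tau}\cM$ is a left-exact covariant functor, as claimed. The only potential obstacle is the exactness of the possibly infinite product, but this is automatic in any module category, so the proof reduces entirely to unwinding the definitions and citing the lemma.
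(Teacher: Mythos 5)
Your argument is exactly the paper's own: realize $\Gamma_{\!\tau}$ as the kernel of the natural transformation from the identity functor to the product of localizations $\prod_{\tau'\not\subseteq\tau}(-)_{\tau'}$ and invoke Lemma~\ref{l:left-exact}. You simply spell out the routine verifications (localization is exact, products of exact functors are exact) that the paper leaves implicit, so the proof is correct and the approach is the same.
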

\begin{proof}
Use Lemma~\ref{l:left-exact}: global support is the kernel of the
natural transformation from the identity to a direct product of
localizations.
\end{proof}

\begin{prop}\label{p:support-localizes}
For modules over a polyhedral partially ordered group, localization
commutes with taking support: $(\Gamma_{\!\tau'} \cM)_\tau =
\Gamma_{\!\tau'}(\cM_\tau)$, and both sides are~$0$ unless~$\tau'
\supseteq \tau$.
\end{prop}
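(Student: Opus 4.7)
The plan is to exploit polyhedrality to replace the defining intersection for $\Gamma_{\!\tau'}$ by a single kernel, then push through the localization functor $(-)_\tau$ using its exactness, and finally recognize the result as the definition of $\Gamma_{\!\tau'}(\cM_\tau)$. The vanishing when $\tau\not\subseteq\tau'$ will drop out as a byproduct.

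Polyhedrality of $\cQ$ (Definition~\ref{d:face}) makes the set of faces $\sigma\not\subseteq\tau'$ finite, so
$$\Gamma_{\!\tau'}\cM=\ker\Bigl(\cM\to\bigoplus_{\sigma\not\subseteq\tau'}\cM_\sigma\Bigr).$$
The localization functor $(-)_\tau=-\otimes_{\kk[\cQ_+]}\kk[\cQ_+ + \ZZ\tau]$ is exact, being tensor product with a localization of $\kk[\cQ_+]$, and commutes with direct sums, so applying it preserves the kernel presentation:
$$(\Gamma_{\!\tau'}\cM)_\tau=\ker\Bigl(\cM_\tau\to\bigoplus_{\sigma\not\subseteq\tau'}(\cM_\sigma)_\tau\Bigr).$$
Associativity of tensor products identifies $(\cM_\sigma)_\tau=\cM\otimes_{\kk[\cQ_+]}\kk[\cQ_+ + \ZZ\sigma + \ZZ\tau]=(\cM_\tau)_\sigma$, so the right-hand side above is exactly $\Gamma_{\!\tau'}(\cM_\tau)$ by Definition~\ref{d:support}.

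When $\tau\not\subseteq\tau'$, the face $\tau$ itself is one of the summands on the right, and idempotence of localization---meaning $(\cM_\tau)_\tau=\cM_\tau$ canonically, because $\kk[\cQ_+ + \ZZ\tau]$ already inverts $\tau$---makes the corresponding component of the map the identity on $\cM_\tau$. A map one of whose components is the identity has trivial kernel, so both sides vanish. The only real obstacle, modest as it is, lies in justifying that localization passes inside the intersection defining $\Gamma_{\!\tau'}$; this is where polyhedrality is indispensable, as it reduces the \emph{a priori} infinite intersection to a kernel of a single map into a finite direct sum, after which exactness and compatibility with coproducts finish the argument. Without this finiteness one would confront the familiar failure of tensor product to commute with infinite direct products.
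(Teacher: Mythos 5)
Your proof is correct and takes essentially the same approach as the paper: both arguments hinge on exactness of localization, the finiteness of the set of faces $\sigma\not\subseteq\tau'$ afforded by polyhedrality, and the identity $(\cM_\sigma)_\tau=(\cM_\tau)_\sigma$, with the vanishing claim read off from the identity component indexed by $\sigma=\tau$. The only cosmetic difference is that you package the finite intersection of kernels as the kernel of a single map into a finite direct sum before localizing, whereas the paper commutes localization with the finite intersection directly; these amount to the same thing.
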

\begin{proof}
Localization along~$\tau$ is exact, so
$$%
  \ker(\cM \to \cM_{\tau''})_\tau
  =
  \ker\bigl(\cM_\tau \to (\cM_{\tau''})_\tau\bigr)
  =
  \ker\bigl(\cM_\tau \to (\cM_\tau)_{\tau''}\bigr).
$$
Since localization along~$\tau$ commutes with finite intersections of
submodules, $(\Gamma_{\!\tau'} \cM)_\tau$ is the intersection of the
leftmost of these modules over the faces $\tau'' \not\subseteq \tau'$,
of which there are only finitely many by the polyhedral hypothesis.
But $\Gamma_{\!\tau'}(\cM_\tau)$ equals the same intersection of the
rightmost of these modules by definition.  And if $\tau' \not\supseteq
\tau$ then one of these $\tau''$ equals~$\tau$, so $\cM_\tau \to
(\cM_\tau)_{\tau''} = \cM_\tau$ is the identity map, whose
kernel~is~$0$.
\end{proof}

\begin{defn}\label{d:local-support}
Fix a $\cQ$-module $\cM$ for a polyhedral partially ordered
group~$\cQ$.  The \emph{local $\tau$-support} of~$\cM$ is the module
$\Gamma_{\!\tau} \cM_\tau$ of elements globally supported on~$\tau$ in
the localization~$\cM_\tau$, or equivalently (by
Proposition~\ref{p:support-localizes}) the localization along~$\tau$
of the submodule of~$\cM$ globally supported on~$\tau$.
\end{defn}

\begin{defn}\label{d:coprimary}%
A module $\cM$ over a polyhedral partially ordered group is
\emph{coprimary} if for some face~$\tau$, the localization map $\cM
\into \cM_\tau$ is injective and $\Gamma_{\!\tau} \cM_\tau$ is an
essential submodule of~$\cM_\tau$: every nonzero submodule
of~$\cM_\tau$ intersects $\Gamma_{\!\tau} \cM_\tau$ nontrivially.
\end{defn}

\begin{remark}\label{r:unique-face}
It is easy to check that over any polyhedral partially ordered group,
if a module $E$ is coprimary then it is $\tau$-coprimary for a unique
face~$\tau$ of~$\cQ$.
\end{remark}

\begin{remark}\label{r:ZZn-coprimary}
It is an interesting exercise to check that every element of a
coprimary module is coprimary when the polyhedral partially ordered
group is discrete (Example~\ref{e:discrete-polyhedral}) and closed
(Definition~\ref{d:closed}).
\end{remark}

The coprimary concept has an elementary, intuitive formulation in the
language of persistence, when the ambient partially ordered group is
polyhedral and closed.

\begin{defn}\label{d:elementary-coprimary}
Fix a face~$\tau$ of the positive cone~$\cQ_+$ in a partially ordered
group~$\cQ$.  A homogeneous element $y \in M_q$ in a $Q$-module~$M$ is
\begin{enumerate}
\item%
\emph{$\tau$-persistent} if it has nonzero image in $M_{q'}$ for all
$q' \in q + \tau$;

\item%
\emph{$\ol\tau$-transient} if, for each $f \in \cQ_+ \minus \tau$, the
image of~$y$ vanishes in $M_{q'}$ whenever $q' = q + \lambda f$ for
$\lambda \gg 0$;

\item%
\emph{$\tau$-coprimary} if it is $\tau$-persistent and
$\ol\tau$-transient.
\end{enumerate}
\end{defn}

\begin{thm}\label{t:elementary-coprimary}
Fix a $\cQ$-module~$\cM$ and a face~$\tau$ of the positive
cone~$\cQ_+$ in a closed polyhedral partially ordered group~$\cQ$.
The module~$\cM$ is $\tau$-coprimary if and only if every homogeneous
element divides a $\tau$-coprimary element, where $y \in M_q$
\emph{divides} $y' \in M_{q'}$ if $q \preceq q'$ and $y$ has
image~$y'$ under the structure morphism $M_q \to M_{q'}$.
\end{thm}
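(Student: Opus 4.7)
The plan is to reduce both directions to Definition~\ref{d:coprimary} via a preliminary that identifies $\Gamma_{\!\tau}\cM$ with the submodule of $\ol\tau$-transient elements, as in Example~\ref{e:support} but for an arbitrary $\cQ$-module.  Specifically, $y \in M_q$ lies in $\Gamma_{\!\tau}\cM$ exactly when for each face $\tau' \not\subseteq\tau$ some $t' \in \tau'$ kills $y$, and the closed hypothesis (Definition~\ref{d:closed}) lets me replace this by the $\ol\tau$-transience condition:~for each $f \in \cQ_{+\!}\minus\tau$ some multiple $\lambda f$ kills $y$, since $\cQ_{+\!}\minus\tau$ is generated as an upset by $\rho \minus \{\0\}$ over rays $\rho \not\subseteq \tau$.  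Polyhedrality will then guarantee that the finite intersection defining $\Gamma_{\!\tau}$ commutes with localization along~$\tau$, via Proposition~\ref{p:support-localizes}.

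For the forward direction, I assume $\cM$ is $\tau$-coprimary and take a nonzero homogeneous $y \in M_q$.  Its image $\ol y$ in $\cM_\tau$ is nonzero by injectivity of $\cM \into \cM_\tau$, so the cyclic submodule $\kk[\cQ_{+\!}+\ZZ\tau]\cdot\ol y$ meets $\Gamma_{\!\tau}\cM_\tau$ in a nonzero homogeneous element $z$.  Since each graded piece of $\kk[\cQ_{+\!}+\ZZ\tau]$ is at most $1$-dimensional, I can write $z = \alpha\cdot g s^{-1}\cdot\ol y$ for $\alpha \in \kk^\times$, $g \in \cQ_+$, and $s \in \tau$.  Then $s\cdot z = \alpha g\cdot\ol y$ is the image in~$\cM_\tau$ of the element $w := \alpha\cdot g\cdot y \in M_{q+g}$, and $w$ is divisible by~$y$.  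I then verify that $w$ is $\tau$-coprimary: $\tau$-persistence holds because multiplication by~$\tau$ is invertible on~$\cM_\tau$ and $s z \neq 0$ there, so the injectivity $\cM \into \cM_\tau$ forces $t\cdot w \neq 0$ in~$\cM$ for all $t \in \tau$; and $\ol\tau$-transience holds because $s z$ lies in $\Gamma_{\!\tau}\cM_\tau = (\Gamma_{\!\tau}\cM)_\tau$ by Proposition~\ref{p:support-localizes}, and the preliminary equivalence together with injectivity transfers this transience back to~$w$ in~$\cM$.

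For the converse, I assume every nonzero homogeneous element divides a $\tau$-coprimary element.  Injectivity of $\cM \to \cM_\tau$ is immediate: a nonzero $y \in M_q$ vanishing in $\cM_\tau$ would force every element it divides (in particular, the $\tau$-coprimary $y'$ guaranteed by hypothesis) to vanish in $\cM_\tau$, contradicting $\tau$-persistence of~$y'$, which requires $t\cdot y' \neq 0$ in~$\cM$ for all $t \in \tau$.  For essentiality of $\Gamma_{\!\tau}\cM_\tau$ in~$\cM_\tau$, I take a nonzero submodule $N \subseteq \cM_\tau$ and pick a nonzero homogeneous $z \in N$, represented by some nonzero $w \in M_{r+t}$ with $t \in \tau$ and $z = w/t$ in $\cM_\tau$; the hypothesis gives a $\tau$-coprimary $w' = g\cdot w \in M_{r+t+g}$ whose image in $\cM_\tau$ is $g t\cdot z \in N$, lies in $\Gamma_{\!\tau}\cM_\tau$ by Proposition~\ref{p:support-localizes}, and is nonzero by injectivity plus $\tau$-persistence of~$w'$.

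The main obstacle is the careful bookkeeping between $\cM$ and $\cM_\tau$: elements of $\cM_\tau$ are represented by elements of~$\cM$ only after clearing a $\tau$-denominator, so producing an honest divisor relation $y \mid w$ in~$\cM$ from an abstract cyclic-submodule computation in~$\cM_\tau$ requires both the injectivity $\cM \into \cM_\tau$ and the freedom to absorb denominators by multiplying by elements of~$\tau$.  The closed polyhedral hypothesis enters decisively at two points: polyhedrality (finitely many faces) makes Proposition~\ref{p:support-localizes} applicable so that $\Gamma_{\!\tau}$ commutes with localization, and closedness supplies the ray decomposition identifying $\Gamma_{\!\tau}$ with $\ol\tau$-transience—without which the elementary characterization of coprimary elements would fail.
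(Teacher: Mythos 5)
Your proof is correct and follows essentially the same route as the paper: reduce both directions to Definition~\ref{d:coprimary} by identifying $\Gamma_{\!\tau}$ with $\ol\tau$-transience via the closed hypothesis, use essentiality of $\Gamma_{\!\tau}\cM_\tau$ to locate a supported element in the cyclic submodule for the forward direction, and use $\tau$-persistence of coprimary divisors for injectivity in the converse. The main thing you add is an explicit treatment of the denominator-clearing ($z \mapsto sz = $ image of $w \in \cM$) needed to turn an abstract element of the cyclic submodule in $\cM_\tau$ into an honest divisor relation inside $\cM$; the paper leaves this implicit, and your careful bookkeeping is the right way to make it precise.
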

\begin{proof}
If $\cM$ is $\tau$-coprimary and $y \in \cM_q$ is a nonzero
homogeneous element, then $y$ is $\tau$-persistent because $\cM$ is a
submodule of~$\cM_\tau$ on which $\kk[\ZZ\tau]$ acts freely.  On the
other hand, $y$ divides a $\ol\tau$-transient element because
$\Gamma_{\!\tau} \cM_\tau$ is an essential submodule of~$\cM_\tau$:
the submodule of $\cM_\tau$ generated by~$y$ intersects
$\Gamma_{\!\tau} \cM_\tau$ nontrivially.
The closed hypothesis on~$\cQ$ implies that an element supported
on~$\tau$ is $\ol\tau$-transient, as in Example~\ref{e:support}.

The other direction does not require the closed hypothesis.  Assume
that every homogeneous element of~$\cM$ divides a $\tau$-coprimary
element.  The graded component of the localization~$\cM_\tau$ in
degree $q \in \cQ$ is the direct limit of~$\cM_q'$ over $q' \in q +
\tau$.  If $y \in \cM_q$ lies in $\ker(\cM \to \cM_\tau)$, then the
image of~$y$ must vanish in some~$\cM_{q'}$, whence $y = 0$ to begin
with, by $\tau$-persistence.  On the other hand, that $\Gamma_{\!\tau}
\cM_\tau$ is an essential submodule of~$\cM_\tau$ follows because
every $\ol\tau$-transient element is supported on~$\tau$.
\end{proof}

\subsection{Primary decomposition of modules}\label{sub:prim-decomp}

\begin{defn}\label{d:primDecomp'}
Fix a $\cQ$-module $\cM$ for a polyhedral partially ordered
group~$\cQ$.  A \emph{primary decomposition} of~$\cM$ is an injection
$\cM \into \bigoplus_{i=1}^r \cM/\cM_i$ into a direct sum of coprimary
quotients $\cM/\cM_i$, called \emph{components} of the decomposition.
\end{defn}

\begin{remark}\label{r:primary}
Primary decomposition is usually phrased in terms of \emph{primary
submodules} $\cM_i \subseteq \cM$, which by definition have coprimary
quotients~$\cM/\cM_i$, satisfying $\bigcap_{i=1}^r \cM_i = 0$
in~$\cM$.  This is equivalent to Definition~\ref{d:primDecomp'}.
\end{remark}

\begin{example}\label{e:prim-decomp-downset}
A primary decomposition $D = \bigcup_{i=1}^r D_i$ of a downset~$D$
yields a primary decomposition of the corresponding indicator
quotient, namely the injection $ \kk[D] \into \bigoplus_{i=1}^r
\kk[D_i] $ induced by the surjections $\kk[D] \onto \kk[D_i]$.  See,
e.g., Example~\ref{e:hyperbola-PD}.
\end{example}

\begin{example}\label{e:global-support'}
The interval module in Example~\ref{e:global-support} has a primary
decomposition
$$%
\psfrag{x}{\tiny$x$}
\psfrag{y}{\tiny$y$}
  \kk\!
  \left[
  \begin{array}{@{\!}c@{\!\!}}\includegraphics[height=29mm]{decomp}\end{array}
  \right]
\:\into\ 
  \kk\!
  \left[
  \begin{array}{@{\!}c@{\!\!}}\includegraphics[height=29mm]{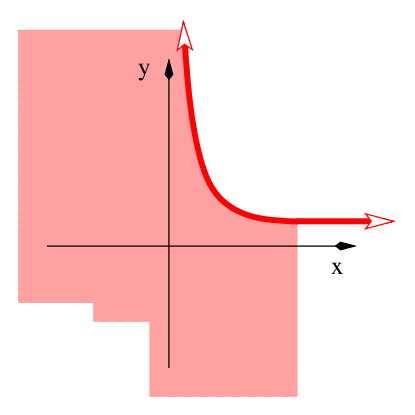}\end{array}
  \right]
\oplus\,
  \kk\!
  \left[
  \begin{array}{@{\!}c@{\!\!}}\includegraphics[height=29mm]{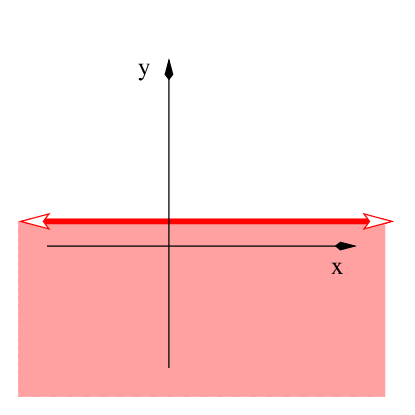}\end{array}
  \right]
\oplus\,
  \kk\!
  \left[
  \begin{array}{@{\!}c@{\!\!\!\!}}\includegraphics[height=29mm]{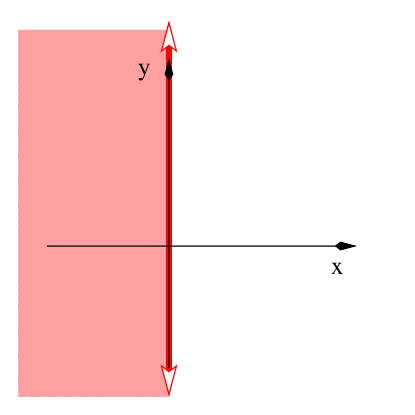}\end{array}
  \right]
$$
in which the global support along each face is extended downward so as
to become a quotient instead of a submodule of the original interval
module.
\end{example}

The existence of primary decomposition in Theorem~\ref{t:primDecomp}
is intended for tame modules, but because it deals with essential
submodules and not generators, it only requires the downset half of a
fringe presentation.

\begin{defn}\label{d:downset-hull}
A \emph{downset hull} of a module~$\cM$ over an arbitrary poset is an
injection $\cM \into \bigoplus_{j \in J} E_j$ with each $E_j$ being a
downset module The hull is \emph{finite} if $J$ is~finite.  The
module~$\cM$ is \emph{downset-finite} if it admits a finite downset
hull.
\end{defn}

\begin{thm}\label{t:primDecomp}
Every downset-finite module over a polyhedral partially ordered group
admits a primary decomposition.
\end{thm}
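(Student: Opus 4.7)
The idea is to combine the hypothesized finite downset hull of $\cM$ with the canonical primary decomposition of downsets from Corollary~\ref{c:PF}, which is a \emph{finite} union by the polyhedral hypothesis on~$\cQ$.

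By Definition~\ref{d:downset-hull}, $\cM$ admits a finite downset hull $\cM \into \bigoplus_{j=1}^r \kk[D_j]$. For each~$j$, Corollary~\ref{c:PF} gives a primary decomposition $D_j = \bigcup_\tau P_\tau(D_j)$ whose terms are indexed by a subset of the finite set of faces of~$\cQ_+$. Following Example~\ref{e:prim-decomp-downset}, the surjections $\kk[D_j] \onto \kk[P_\tau(D_j)]$ assemble into an injection $\kk[D_j] \into \bigoplus_\tau \kk[P_\tau(D_j)]$, and composing with the hull yields
\[
  \cM \into \bigoplus_j \kk[D_j] \into \bigoplus_{j,\tau} \kk[P_\tau(D_j)],
\]
a finite injection into a direct sum of indicator modules for $\tau$-coprimary downsets. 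This will be a primary decomposition of~$\cM$ in the sense of Definition~\ref{d:primDecomp'} provided each summand $\kk[P_\tau(D_j)]$ is $\tau$-coprimary as a module in the sense of Definition~\ref{d:coprimary}.

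The technical heart is therefore to show that if $D = \Gamma_{\!\tau}(D_\tau) - \cQ_+$ then $\kk[D]$ is $\tau$-coprimary as a module. First one verifies that $D = D_\tau$: any $q \in D$ can be written $q = q' - p$ with $q' \in \Gamma_{\!\tau}(D_\tau) \subseteq D_\tau$ and $p \in \cQ_+$, so for every $f \in \tau$ one has $q + f \preceq q' + f \in D$, whence $q + f \in D$ because $D$ is a downset. By Remark~\ref{r:monomial-localization} this gives $\kk[D]_\tau = \kk[D_\tau] = \kk[D]$, so the localization $\kk[D] \to \kk[D]_\tau$ is the identity map, certainly injective. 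For essentiality, any nonzero submodule $N \subseteq \kk[D]$ contains $1_q$ for some $q \in D$; writing $q = q' - p$ as above gives $q' \in \Gamma_{\!\tau}(D_\tau)$ with $q \preceq q'$, and the structure map $\kk[D]_q \to \kk[D]_{q'}$ is the identity on~$\kk$, so $1_{q'} \in N$ is a nonzero element of $\kk[\Gamma_{\!\tau} D]$. By Example~\ref{e:Gamma}, $\kk[\Gamma_{\!\tau} D] = \Gamma_{\!\tau}\kk[D] = \Gamma_{\!\tau}\kk[D]_\tau$, establishing essentiality.

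The main obstacle will be this essentiality step: it requires bookkeeping that the polyhedral definition $D = \Gamma_{\!\tau}(D_\tau) - \cQ_+$---which notably does not force $D$ to lie in any translate of $\ZZ\tau$---nevertheless guarantees that every homogeneous element of $\kk[D]$ is dominated by a globally $\tau$-supported one. Once this is in hand the argument is purely formal: finiteness of $r$ together with the polyhedral hypothesis on the face set of~$\cQ_+$ ensures the resulting primary decomposition has finitely many components, as required by Definition~\ref{d:primDecomp'}.
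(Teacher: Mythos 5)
Your proof follows the same route as the paper's: take a finite downset hull $\cM \into \bigoplus_j \kk[D_j]$, apply the canonical primary decomposition of each downset $D_j$ from Corollary~\ref{c:PF}, compose, and invoke the polyhedral hypothesis for finiteness. The paper's version is slightly more terse and groups the components by face~$\tau$ (forming $E^\tau = \bigoplus_{ij}\{E_{ij}:\ \text{$\tau$-coprimary}\}$, then $\cM/\cM^\tau = \image(\cM \to E^\tau)$) before declaring the result, but this grouping is inessential. Where you add value is in actually verifying the implicit claim behind Example~\ref{e:prim-decomp-downset}: that $\kk[P_\tau(D)]$ is a $\tau$-coprimary \emph{module} in the sense of Definition~\ref{d:coprimary}, not merely a downset module for a $\tau$-coprimary downset. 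Your argument there is correct---showing $P_\tau(D) = (P_\tau(D))_\tau$ by using that every $q \in P_\tau(D)$ lies below some $q' \in \Gamma_{\!\tau}(D_\tau)$, then showing $N \cap \Gamma_{\!\tau}\kk[P_\tau(D)] \neq 0$ by pushing up to such a $q'$---though you should be careful with notation: after replacing $D$ by $P_\tau(D)$ you need to confirm $q' \in \Gamma_{\!\tau}(P_\tau(D))$, not just $q' \in \Gamma_{\!\tau}(D_\tau)$; this holds because $P_\tau(D) \subseteq D_\tau$, so $(P_\tau(D))_{\tau'} \subseteq (D_\tau)_{\tau'}$, but it deserves a sentence.

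The one genuine gap is at the very end: your composite injection $\cM \into \bigoplus_{j,\tau} \kk[P_\tau(D_j)]$ lands in a direct sum of coprimary modules, but Definition~\ref{d:primDecomp'} requires an injection into a direct sum of coprimary \emph{quotients of $\cM$}. Those downset modules are not themselves quotients of $\cM$. The fix is easy and the paper does it: set $\cM_{j,\tau} = \ker(\cM \to \kk[P_\tau(D_j)])$, so that $\cM/\cM_{j,\tau}$ is a submodule of the coprimary $\kk[P_\tau(D_j)]$ and hence coprimary; then $\cM \to \bigoplus_{j,\tau}\cM/\cM_{j,\tau}$ is injective because its kernel $\bigcap_{j,\tau}\cM_{j,\tau}$ equals the kernel of the map you constructed, which is~$0$. (Alternatively, cite Remark~\ref{r:primary}.) You should also say why a submodule of a $\tau$-coprimary module is $\tau$-coprimary; it is true, via left-exactness of $\Gamma_{\!\tau}$ and exactness of localization, but it is a step.
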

\begin{proof}
If $\cM \into \bigoplus_{j=1}^k E_j$ is a downset hull of the
module~$\cM$, and $E_j \into \bigoplus_{i=1}^\ell E_{ij}$ is a primary
decomposition for each~$j$ afforded by Corollary~\ref{c:PF} and
Example~\ref{e:prim-decomp-downset}, then let $E^\tau$ be the direct
sum of the downset modules $E_{ij}$ that are $\tau$-coprimary.  Set
$M^\tau = \ker(\cM \to E^\tau)$.  Then $\cM/\cM^\tau$ is coprimary,
being a submodule of a coprimary module.  Moreover, $\cM \to
\bigoplus_{\tau} \cM/\cM^\tau$ is injective because its kernel is the
same as the kernel of $\cM \to \bigoplus_{ij} E_{ij}$, which is a
composite of two injections and hence injective by construction.
Therefore $\cM \to \bigoplus_{\tau} \cM/\cM^\tau$ is a primary
decomposition.
\end{proof}

\begin{example}\label{e:circular-cone}
The finiteness of primary decomposition depends on the polyhedral
condition that posits finiteness of the number of faces of the
positive cone (Definition~\ref{d:face}).  When the positive cone has
infinitely many faces, such as the positive half~$Q_+$ of the right
circular cone $x^2 + y^2 \leq z^2$ in~$Q = \RR^3$, the $\cQ$-module
$$%
  \kk[\del Q_+] = \kk[Q_+]/\kk[Q_+^\circ]
$$
does not admit a finite primary decomposition.  The module $M =
\kk[\del Q_+]$ has a vector space of dimension~$1$ on the boundary of
the positive cone and~$0$ elsewhere.  Every face of the positive cone
must get its own summand $M/M_i$ in Definition~\ref{d:primDecomp'} for
the homomorphism $M \to \bigoplus_{i=1}^r \cM/\cM_i$ there to be
injective, and in that case the infinite number of faces would force
the direct sum to become a direct product.  This particular example,
with the right circular cone, works as well in the discrete partially
ordered group~$\ZZ^3$ because the circle has infinitely many rational
points.
\end{example}

\section{Finitely determined \texorpdfstring{$\ZZ^n$}{Zn}-modules}\label{s:ZZn}

Unless otherwise stated, this section is presented over the discrete
polyhedral partially ordered group $\cQ = \ZZ^n$ with $\cQ_+ = \NN^n$.
It begins by reviewing the structure of finitely determined
$\ZZ^n$-mod\-ules, including (minimal) injective and flat resolutions.
These are the foundation underlying the syzygy theorem for tame
modules (Section~\ref{sub:syzygy}), including the existence of fringe
presentations.  They also serve as models for the concepts of socle,
cogenerator, and downset hull over real polyhedral groups, covered in
the sequel to this work \cite{essential-real}, as well as their dual
notions of top, generator, and upset covers.

The main references for $\ZZ^n$-modules used here are
\cite{alexdual,cca}.  The development of homological theory for
injective and flat resolutions in the context of finitely determined
modules is functorially equivalent to the development for finitely
generated modules, by \cite[Theorem~2.11]{alexdual}, but it is
convenient to have on hand the statements in the finitely determined
case directly.  The characterization of finitely determined modules in
Proposition~\ref{p:determined} and (hence)
Theorem~\ref{t:finitely-determined} is apparently new.

\subsection{Definitions}\label{sub:def-finitely-determined}\mbox{}

\noindent
The essence of the finiteness here is that all of the relevant
information about the relevant modules should be recoverable from what
happens in a bounded box in~$\ZZ^n$.

\begin{defn}\label{d:determined}
A $\ZZ^n$-finite module~$\cN$ is \emph{finitely determined} if for
each $i = 1,\dots,n$ the multiplication map $\cdot x_i: N_\bb \to
N_{\bb+\ee_i}$ (see Example~\ref{e:ZZn-graded} for notation) is an
isomorphism whenever $b_i$ lies outside of some bounded interval.
\end{defn}

\begin{remark}\label{r:determined}
This notion of finitely determined is the same notion as in
Example~\ref{e:convex-projection}.  A module is finitely determined if
and only if, after perhaps translating its $\ZZ^n$-grading, it is
\emph{$\aa$-determined} for some $\aa \in \NN^n$, as defined in
\cite[Definition~2.1]{alexdual}.
\end{remark}

\begin{remark}\label{r:fg/ZZ^n}
For $\ZZ^n$-modules, the finitely determined condition is
weaker---that is, more inclusive---than finitely generated, but it is
much stronger than tame or (equivalently, by Theorem~\ref{t:tame})
finitely encoded.  The reason is essentially
Example~\ref{e:convex-projection}, where the encoding has a very
special nature.  For a generic sort of example, the restriction
to~$\ZZ^n$ of any $\RR^n$-finite $\RR^n$-module with finitely many
constant regions of sufficient width is a tame $\ZZ^n$-module, and
there is simply no reason why the constant regions should be
commensurable with the coordinate directions in~$\ZZ^n$.  Already the
toy-model fly wing modules in Examples~\ref{e:toy-model-fly-wing}
and~\ref{e:encoding} yield infinitely generated but tame
$\ZZ^n$-modules, and this remains true when the discretization $\ZZ^n$
of~$\RR^n$ is rescaled by any factor.
\end{remark}

\begin{example}\label{e:local-cohomology}
The local cohomology of an affine semigroup rings is tame but usually
not finitely determined; see \cite{injAlg} and \cite[Chapter~13]{cca},
particularly Theorem~13.20, Example~13.17, and Example~13.4 in the
latter.
\end{example}

\subsection{Injective hulls and resolutions}\label{sub:inj}\mbox{}

\begin{remark}\label{r:injective}
Every $\ZZ^n$-finite module that is injective in the category of
$\ZZ^n$-modules is a direct sum of downset modules $\kk[D]$ for
downsets $D$ cogenerated (Example~\ref{e:coprincipal}) by vectors
along faces.  This statement holds over any polyhedral discrete
partially ordered group (Definition~\ref{d:face} and
Example~\ref{e:discrete-pogroup}) by \cite[Theorem~11.30]{cca}.
\end{remark}

Minimal injective resolutions work for finitely determined modules
just as they do for finitely generated modules.  The standard
definitions are as follows.

\begin{defn}\label{d:inj}
Fix a $\ZZ^n$-module~$\cN$.
\begin{enumerate}
\item%
An \emph{injective hull} of~$\cN$ is an injective homomorphism $\cN
\to E$ in which $E$ is an injective $\ZZ^n$-module (see
Remark~\ref{r:injective}).  This injective hull is
\begin{itemize}
\item%
\emph{finite} if $E$ has finitely many indecomposable summands and
\item%
\emph{minimal} if the number of such summands is minimal.
\end{itemize}
\item%
An \emph{injective resolution} of~$\cN$ is a complex~$E^\spot$ of
injective $\ZZ^n$-modules whose differential $E^i \to E^{i+1}$ for $i
\geq 0$ has only one nonzero homology $H^0(E^\spot) \cong\nolinebreak
\cN$ (so $\cN \into E^0$ and $\coker(E^{i-1} \to E^i) \into E^{i+1}$
are injective hulls for all $i \geq 1$).  \nolinebreak$E^\spot$
\begin{itemize}
\item%
has \emph{length~$\ell$} if $E^i = 0$ for $i > \ell$ and $E^\ell \neq
0$;
\item%
is \emph{finite} if $E^\spot = \bigoplus_i E^i$ has finitely many
indecomposable summands; and
\item%
is \emph{minimal} if $\cN \into E^0$ and $\coker(E^{i-1} \to E^i)
\into E^{i+1}$ are minimal injective hulls for all $i \geq 1$.
\end{itemize}
\end{enumerate}
\end{defn}

\begin{prop}\label{p:determined}
The following are equivalent for a $\ZZ^n$-module~$\cN$.
\begin{enumerate}
\item%
$\cN$ is finitely determined.
\item%
$\cN$ admits a finite injective resolution.
\item%
$\cN$ admits a finite minimal injective resolution.
\end{enumerate}
Any finite minimal resolution is unique up to isomorphism and has
length~at~most~$n$.
\end{prop}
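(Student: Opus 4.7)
The plan is to prove the cycle $(3)\Rightarrow(2)\Rightarrow(1)\Rightarrow(3)$, then establish uniqueness and the length bound alongside the $(1)\Rightarrow(3)$ construction. The implication $(3)\Rightarrow(2)$ is immediate.

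For $(2)\Rightarrow(1)$: by Remark~\ref{r:injective}, every indecomposable $\ZZ^n$-finite injective has the form $\kk[\aa+\ZZ\tau-\NN^n]$ for some $\aa\in\ZZ^n$ and some face $\tau$ of~$\NN^n$, and such a module is finitely determined by inspection: for $i\in\chi(\tau)$ the map $\cdot x_i$ is an isomorphism at every degree in the support, while for $i\notin\chi(\tau)$ it is an isomorphism at~$\bb$ whenever $b_i\neq a_i$. Finite direct sums preserve finite determination. The class of finitely determined modules is closed under kernels: given $\phi:M\to N$ between finitely determined modules with bad intervals $I_i^M,I_i^N\subseteq\ZZ$ in coordinate~$i$, for $b_i$ outside $I_i^M\cup I_i^N$ both $\cdot x_i\colon M_\bb\to M_{\bb+\ee_i}$ and $\cdot x_i\colon N_\bb\to N_{\bb+\ee_i}$ are isomorphisms, hence so is the induced map on $(\ker\phi)_\bb\to(\ker\phi)_{\bb+\ee_i}$. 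A finite injective resolution $\cN\into E^0\to E^1$ therefore exhibits $\cN\cong\ker(E^0\to E^1)$ as finitely determined.

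For $(1)\Rightarrow(3)$: build a minimal injective hull $\cN\into E^0$ by assembling face-wise socles. For each face~$\tau$ of~$\NN^n$, the face-$\tau$ socle is the $\kk$-subspace of~$\cN_\tau$ annihilated by every $x_j$ with $j\notin\chi(\tau)$; each homogeneous basis vector of degree~$\bb$ in this socle contributes an indecomposable summand $\kk[\bb+\ZZ\tau-\NN^n]$ to~$E^0$. Because $\cN$ is $\aa$-determined up to translation, each face-$\tau$ socle is finite dimensional, and since $\NN^n$ has only finitely many faces, $E^0$ is a finite direct sum and $\cN\into E^0$ is an essential monomorphism realizing the minimal injective hull. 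By the cokernel analogue of the kernel closure argument above, $E^0/\cN$ is again finitely determined, so iterating produces a minimal injective resolution whose syzygies remain finitely determined throughout.

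To bound the length by~$n$ and obtain uniqueness, apply the functorial equivalence \cite[Theorem~2.11]{alexdual} between $\aa$-determined $\ZZ^n$-modules and finitely generated $\ZZ^n$-graded modules over~$\kk[\xx]$, which matches minimal injective resolutions on both sides, together with the classical fact that over the regular ring $\kk[\xx]$ of Krull dimension~$n$, every finitely generated $\ZZ^n$-graded module has injective dimension at most~$n$. Minimal injective hulls are unique up to isomorphism (the multiplicity of each indecomposable summand equals a face-wise Bass number intrinsic to~$\cN$), so the entire resolution is unique as well. The main obstacle I expect is verifying that the face-wise socle construction really yields an essential embedding into a \emph{finite} direct sum of indecomposable injectives: this needs the $\aa$-determined hypothesis to cut each face socle down to finite dimension, and it needs an essentiality check that every nonzero submodule of~$\cN$ meets some face socle, handled by choosing~$\tau$ maximal among faces along which a given submodule survives under localization and then reducing to the classical essentiality statement for finitely generated modules over~$\kk[\xx]$.
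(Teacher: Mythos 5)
Your proof is correct in broad strokes, and your treatment of $(2)\Rightarrow(1)$ is essentially the paper's argument made explicit: the paper phrases it as ``the category of finitely determined modules is abelian'' (with kernels and cokernels inherited via pushforward/pullback through a convex projection), while you verify directly that kernels of homomorphisms between finitely determined modules are finitely determined by checking $\cdot x_i$ isomorphisms outside a bounded box; either way the content is the same.

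Where you diverge substantively is in $(1)\Rightarrow(3)$. The paper does not construct the injective hull from scratch. Instead it translates so that $\cN$ is $\aa$-determined, truncates to the $\NN^n$-graded part $\cN_{\succeq\0}$ (which is positively $\aa$-determined, hence \emph{finitely generated}), invokes Goto--Watanabe for a finite minimal injective resolution $\cN_{\succeq\0}\to E^\spot$ of that finitely generated module, and then applies the \v Cech hull functor~$\vC$, which is exact, fixes the indecomposable injectives that appear, and recovers $\cN \to \vC E^\spot$ as a finite minimal injective resolution. Your approach instead builds $\cN \into E^0$ directly by assembling face-wise socles and iterates. This can be made to work, but it requires you to develop the Bass-number/socle theory for finitely determined modules from first principles, and your statement that ``each face-$\tau$ socle is finite dimensional'' is imprecise: the socle of the localization $\cN_\tau$ carries a free $\ZZ\tau$-action and is typically infinite-dimensional over~$\kk$; what is finite is its $\ZZ^n/\ZZ\tau$-graded version (equivalently, the number of $\ZZ\tau$-orbits of socle degrees). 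The essentiality check you sketch likewise ends up reducing to the finitely generated case. Notice that for uniqueness and the length bound you invoke exactly the equivalence of \cite[Theorem~2.11]{alexdual}, which is the truncation--\v{C}ech-hull machinery the paper uses for existence as well; since that equivalence matches minimal injective resolutions on both sides, it would give you $(1)\Rightarrow(3)$ in one stroke and make the from-scratch socle construction unnecessary. In short: your route is viable but more laborious, and the paper's reduction to Goto--Watanabe via truncation and \v Cech hull is the more economical path you were already halfway to taking.
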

\begin{proof}
The proof is based on existence of finite minimal injective hulls and
resolutions for finitely generated $\ZZ^n$-modules, along with
uniqueness and length~$n$ given minimality, as proved by Goto and
Watanabe \cite{GWii}.

First assume $\cN$ is finitely determined.  Translating the
$\ZZ^n$-grading affects nothing about existence of a finite injective
resolution.  Therefore, using Remark~\ref{r:determined}, assume that
$\cN$ is $\aa$-determined.  Truncate by taking the $\NN^n$-graded part
of~$\cN$ to get a positively $\aa$-determined---and hence finitely
generated---module~$\cN_{\succeq\0}$; see
\cite[Definition~2.1]{alexdual}.  Take any minimal injective
resolution $\cN_{\succeq\0} \to E^\spot$.  Extend backward using the
\v Cech hull \cite[Definition~2.7]{alexdual}, which is exact
\cite[Lemma~2.9]{alexdual}, to get a finite minimal injective
resolution $\vC(\cN_{\succeq\0} \to E^\spot) = (\cN \to \vC E^\spot)$,
noting that $\vC$ fixes indecomposable injective modules whose
$\NN^n$-graded parts are nonzero and is zero on all other
indecomposable injective modules \cite[Lemma~4.25]{alexdual}.  This
proves 1~$\implies$~3.

That 3 $\implies$~2 is trivial.  The remaining implication,
2~$\implies$~1, follows because every indecomposable injective is
finitely determined and the category of finitely determined modules is
abelian.  (The category of $\ZZ^n$-modules each of which is nonzero
only in a bounded set of degrees is abelian, and constructions such as
kernels, cokernels, or direct sums in the category of finitely
determined modules are pulled back from there.)
\end{proof}

\subsection{Flat covers and resolutions}\label{sub:flat}\mbox{}

\noindent
Minimal flat resolutions are not commonplace, but the notion is Matlis
dual to that of minimal injective resolution.  In the context of
finitely determined modules, flat resolutions work as well as
injective resolutions.  The definitions are as follows.

\begin{defn}\label{d:flat}
Fix a $\ZZ^n$-module~$\cN$.
\begin{enumerate}
\item%
A \emph{flat cover} of~$\cN$ is a surjective homomorphism $F \to \cN$
in which $F$ is a flat $\ZZ^n$-module (see Remark~\ref{r:flat}).  This
flat cover is
\begin{itemize}
\item%
\emph{finite} if $F$ has finitely many indecomposable summands and
\item%
\emph{minimal} if the number of such summands is minimal.
\end{itemize}
\item%
A \emph{flat resolution} of~$\cN$ is a complex~$F_\spot$ of flat
$\ZZ^n$-modules whose differential $F_{i+1} \to F_i$ for $i \geq 0$
has only one nonzero homology $H_0(F_\spot) \cong \cN$ (so $F_0 \onto
\cN$ and $F_{i+1} \onto \ker(F_i \to F_{i-1})$ are flat covers for all
$i \geq 1$).  The flat resolution~$F_\spot$
\begin{itemize}
\item%
has \emph{length~$\ell$} if $F_i = 0$ for $i > \ell$ and $F_\ell \neq
0$;
\item%
is \emph{finite} if $F_\spot = \bigoplus_i F_i$ has finitely many
indecomposable summands; and
\item%
is \emph{minimal} if $F_0 \onto \cN$ and $F_{i+1} \onto \ker(F_i \to
F_{i-1})$ are minimal flat covers for all $i \geq 1$.
\end{itemize}
\end{enumerate}
\end{defn}

\begin{defn}\label{d:matlis}
The \emph{Matlis dual} of a $\ZZ^n$-module~$\cM$ is the
$\ZZ^n$-module~$\cM^\vee$ defined by
$$%
  (\cM^\vee)_\aa = \Hom_\kk(\cM_{-\aa},\kk),
$$
so the homomorphism $(\cM^\vee)_\aa \to (M^\vee)_\bb$ is transpose to
$\cM_{-\bb} \to \cM_{-\aa}$.
\end{defn}

\begin{lemma}\label{l:vee-vee}
$(\cM^\vee)^\vee\!$ is canonically isomorphic to~$\cM$ for any
$\ZZ^n$-finite module~$\cM$.~\hspace{1ex}$\square$
\end{lemma}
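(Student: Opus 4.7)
The plan is to construct a canonical $\ZZ^n$-module morphism $\epsilon_\cM: \cM \to (\cM^\vee)^\vee$ from the classical evaluation maps $V \to V^{**}$ on vector spaces, and then to invoke $\ZZ^n$-finiteness to conclude that $\epsilon_\cM$ is an isomorphism.

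First I would unwind Definition~\ref{d:matlis} degree by degree: for each $\aa \in \ZZ^n$,
$$
  ((\cM^\vee)^\vee)_\aa
  = \Hom_\kk((\cM^\vee)_{-\aa}, \kk)
  = \Hom_\kk(\Hom_\kk(\cM_\aa, \kk), \kk),
$$
which is the ordinary algebraic double dual of~$\cM_\aa$. The standard evaluation map $v \mapsto (\phi \mapsto \phi(v))$ defines a $\kk$-linear map $\cM_\aa \to ((\cM^\vee)^\vee)_\aa$, and assembling these over all $\aa \in \ZZ^n$ gives a $\ZZ^n$-graded linear map $\epsilon_\cM$.

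Next I would check that $\epsilon_\cM$ respects the $\ZZ^n$-module structure. By Definition~\ref{d:matlis}, the structure morphism $\cM_\aa \to \cM_\bb$ for $\aa \preceq \bb$ has $\kk$-linear transpose equal to the structure morphism $(\cM^\vee)_{-\bb} \to (\cM^\vee)_{-\aa}$; transposing once more recovers the structure morphism of $(\cM^\vee)^\vee$ from degree~$\aa$ to degree~$\bb$. Since the evaluation $V \to V^{**}$ is natural in~$V$, the resulting square commutes, so $\epsilon_\cM$ is a morphism of $\ZZ^n$-modules, and it is manifestly natural in~$\cM$.

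Finally, I would invoke the hypothesis that $\cM$ is $\ZZ^n$-finite (Definition~\ref{d:tame}.\ref{i:Q-finite}), meaning every $\cM_\aa$ has finite dimension over~$\kk$. For finite-dimensional vector spaces, evaluation $V \to V^{**}$ is an isomorphism, so $\epsilon_\cM$ is an isomorphism in every degree, hence an isomorphism of $\ZZ^n$-modules. The only real obstacle is bookkeeping: Definition~\ref{d:matlis} negates degrees each time the Matlis dual is taken, and the structure morphisms get transposed twice, so one must be careful that the signs and directions land consistently; once this is straightened out, the lemma reduces entirely to the familiar canonical isomorphism $V \cong V^{**}$ for finite-dimensional $V$.
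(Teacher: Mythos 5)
Your proof is correct and is exactly the standard argument the paper treats as immediate — the lemma is stated with a terminal $\square$ and no written proof, so there is no competing approach to compare against. Your degree-by-degree unwinding (noting the double negation of degrees cancels and the double transpose of structure maps restores the original direction) together with naturality of evaluation $V \to V^{**}$ and finite-dimensionality from $\ZZ^n$-finiteness is precisely the expected reasoning.
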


\begin{remark}\label{r:flat}
By the adjunction between Hom and $\otimes$, a module is flat if and
only its Matlis dual is injective (see \cite[\S1.2]{alexdual}, for
example).  The Matlis dual of Remark~\ref{r:injective} therefore says
that every $\cQ$-finite flat module over a discrete polyhedral
partially ordered group~$\cQ$ is isomorphic to a direct sum of upset
modules~$\kk[U]$ for upsets of the form $U = \bb + \ZZ\tau + \cQ_+$.
These upset modules are the graded translates of localizations
of~$\kk[\cQ_+]$ along faces.
\end{remark}

\subsection{Flange presentations}\label{sub:flange}\mbox{}

\begin{defn}\label{d:flange}
Fix a $\ZZ^n$-module~$\cN$.
\begin{enumerate}
\item%
A \emph{flange presentation} of~$\cN$ is a $\ZZ^n$-module morphism
$\phi: F \to E$, with image isomorphic to~$\cN$, where $F$ is flat and
$E$ is injective in the category of \mbox{$\ZZ^n$-modules}.
\item%
If $F$ and~$E$ are expressed as direct sums of indecomposables, then
$\phi$ is \emph{based}.
\item%
If $F$ and~$E$ are finite direct sums of indecomposables, then $\phi$
is \emph{finite}.
\item%
If the number of indecomposable summands of~$F$ and~$E$ are
simultaneously minimized then $\phi$ is \emph{minimal}.
\end{enumerate}
\end{defn}

\begin{remark}\label{r:portmanteau-fl}
The term \emph{flange} is a portmanteau of \emph{flat} and
\emph{injective} (i.e., ``flainj'') because a flange presentation is
the composite of a flat cover and an injective hull.
\end{remark}

The same notational trick to make fringe presentations effective data
structures (Definition~\ref{d:monomial-matrix-fr}) works on flange
presentations.

\begin{defn}\label{d:monomial-matrix-fl}
Fix a based finite flange presentation $\phi:
\bigoplus_p\hspace{-.2pt} F_p = F \to E =
\nolinebreak\bigoplus_q\hspace{-.2pt} E_q$.  A \emph{monomial matrix}
for $\phi$ is an array of \emph{scalar entries}~$\phi_{qp}$ whose
columns are labeled by the indecomposable flat summands~$F_p$ and
whose rows are labeled by the indecomposable injective summands~$E_q$:
$$%
\begin{array}{ccc}
  &
  \monomialmatrix
	{F_1\\\vdots\ \\F_k\\}
	{\begin{array}{ccc}
		   E_1    & \cdots &    E_\ell   \\
		\phi_{11} & \cdots & \phi_{1\ell}\\
		\vdots    & \ddots &   \vdots    \\
		\phi_{k1} & \cdots & \phi_{k\ell}\\
	 \end{array}}
	{\\\\\\}
\\
  F_1 \oplus \dots \oplus F_k = F
  & \fillrightmap
  & E = E_1 \oplus \dots \oplus E_\ell.
\end{array}
$$
\end{defn}

The entries of the matrix $\phi_{\spot\spot}$ correspond to
homomorphisms $F_p \to E_q$.

\begin{lemma}\label{l:F->E}
If $F = \kk[\aa + \ZZ\tau' + \NN^n]$ is an indecomposable flat
$\ZZ^n$-module and $E = \kk[\bb + \ZZ\tau - \NN^n]$ is an
indecomposable injective $\ZZ^n$-module, then $\Hom_{\ZZ^n}(F, E) = 0$
unless $(\aa + \ZZ\tau' + \NN^n) \cap (\bb + \ZZ\tau - \NN^n) \neq
\nothing$, in which case $\Hom_{\ZZ^n}(F, E) = \kk$.
\end{lemma}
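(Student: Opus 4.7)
The plan is to recognize $F$ and $E$ as an upset module and a downset module, respectively, and then invoke Proposition~\ref{p:U->D}.\ref{i:U->D}, reducing the statement to the assertion that the intersection $U \cap D$ is connected (in the poset sense of Definition~\ref{d:connected-poset}.\ref{i:connected}) whenever it is nonempty. Specifically, by Remark~\ref{r:flat} the indecomposable flat module $F = \kk[U]$ for the upset $U = \aa + \ZZ\tau' + \NN^n$, and by Remark~\ref{r:injective} the indecomposable injective module $E = \kk[D]$ for the downset $D = \bb + \ZZ\tau - \NN^n$. Thus Proposition~\ref{p:U->D}.\ref{i:U->D} gives $\Hom_{\ZZ^n}(F,E) = \kk^{\pi_0(U \cap D)}$, so the proof reduces to checking $|\pi_0(U \cap D)| \in \{0,1\}$.

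Next I would make the sets $U$ and $D$ explicit using the bijection $\tau \leftrightarrow \chi(\tau) \subseteq [n]$ between faces of~$\NN^n$ and subsets (Example~\ref{e:coprincipal}): $U$ consists of those $\cc \in \ZZ^n$ with $c_i \geq a_i$ for each $i \notin \chi(\tau')$ (with no constraint for $i \in \chi(\tau')$), and $D$ consists of those $\cc$ with $c_i \leq b_i$ for each $i \notin \chi(\tau)$. The intersection $U \cap D$ is therefore a ``generalized box'' cut out by coordinatewise inequalities, which is nonempty precisely when $a_i \leq b_i$ for each $i \notin \chi(\tau') \cup \chi(\tau)$.

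The key observation---and the one real (but very easy) step---is that $U \cap D$ is closed under both the coordinatewise maximum $\vee$ and minimum $\wedge$ in~$\ZZ^n$: each defining inequality at a coordinate $i$ either bounds $c_i$ from below (and so is preserved by $\vee$) or bounds $c_i$ from above (and so is preserved by $\wedge$), while coordinates in $\chi(\tau') \cap \chi(\tau)$ are unconstrained. Given any two elements $\cc, \cc' \in U \cap D$, the chain $\cc \preceq \cc \vee \cc' \succeq \cc'$ then lies entirely in $U \cap D$ and witnesses that $\cc$ and $\cc'$ lie in the same connected component. Hence $U \cap D$ is connected whenever it is nonempty, so $\pi_0(U \cap D)$ is a single point and $\Hom_{\ZZ^n}(F,E) = \kk$; when $U \cap D$ is empty, $\pi_0(U \cap D) = \nothing$ and $\Hom_{\ZZ^n}(F,E) = 0$. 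There is no serious obstacle here: once the dictionary between indecomposable flats/injectives and upset/downset modules is in place, everything is a short lattice-theoretic check.
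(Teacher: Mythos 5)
Your proof is correct and takes essentially the same approach as the paper: the paper's proof is a one-line citation of Corollary~\ref{c:U->D}.\ref{i:kk}, which reduces (via Proposition~\ref{p:U->D}.\ref{i:U->D}) to the connectedness of $U \cap D$, exactly the fact you verify directly by observing that $U \cap D$ is closed under coordinatewise join. You have simply unpacked the short chain of citations the paper leaves implicit.
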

\begin{proof}
Corollary~\ref{c:U->D}.\ref{i:kk}.
\end{proof}

\begin{defn}\label{d:F<E}
In the situation of Lemma~\ref{l:F->E}, write $F \preceq E$ if their
degree sets have nonempty intersection: $(\aa + \ZZ\tau' + \NN^n) \cap
(\bb + \ZZ\tau - \NN^n) \neq \nothing$.
\end{defn}

\begin{prop}\label{p:scalars-fl}
With notation as in Definition~\ref{d:monomial-matrix-fl}, $\phi_{pq} =
0$ unless $F_p \preceq E_q$.  Conversely, if an array of scalars
$\phi_{qp} \in \kk$ with rows labeled by indecomposable flat modules
and columns labeled by indecomposable injectives has $\phi_{pq} = 0$
unless $F_q \preceq E_q$, then it represents a flange presentation.
\end{prop}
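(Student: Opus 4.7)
The plan is to reduce this proposition entirely to Lemma~\ref{l:F->E}, mirroring the proof of the fringe analogue Proposition~\ref{p:scalars}. The first direction is essentially a tautology once the monomial matrix notation is unpacked. The entry $\phi_{pq}$ of a monomial matrix is, by Definition~\ref{d:monomial-matrix-fl}, the scalar representing the $(p,q)$-component of $\phi$ relative to the isomorphism $\Hom_{\ZZ^n}(F_p,E_q)\cong\kk$ furnished by Lemma~\ref{l:F->E} when $F_p\preceq E_q$. When $F_p\not\preceq E_q$, the same lemma gives $\Hom_{\ZZ^n}(F_p,E_q)=0$, and hence the $(p,q)$-component of any morphism $F\to E$ must be zero; the scalar $\phi_{pq}$ is therefore forced to vanish.

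For the converse, I will build a based finite flange presentation directly from the array of scalars. Given $\{\phi_{pq}\}$ with the stipulated vanishing, define $\phi:F\to E$ with $F=\bigoplus_p F_p$ and $E=\bigoplus_q E_q$ by setting the $(p,q)$-component to be $\phi_{pq}$ times the distinguished basis element of the one-dimensional space $\Hom_{\ZZ^n}(F_p,E_q)$ when $F_p\preceq E_q$, and zero otherwise. This specification is well defined because every relevant $\Hom$ group is either zero or one-dimensional by Lemma~\ref{l:F->E}, and the vanishing hypothesis is precisely what allows the zero entries to fill in the remaining slots. The module $F$ is flat and $E$ is injective since each is a direct sum of indecomposable summands of the corresponding type described in Remark~\ref{r:flat} and Remark~\ref{r:injective}. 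Hence $\phi:F\to E$ fits the template of Definition~\ref{d:flange}, representing a based finite flange presentation of its image $\cN:=\image(\phi)$.

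I do not anticipate a genuine obstacle; the content of the proposition is a translation between two pieces of notation (scalar arrays versus morphisms of direct sums of indecomposables), and both directions follow by assembling or disassembling the morphism component by component using Lemma~\ref{l:F->E}. The only point needing a brief mention is a consistent choice of generator in each one-dimensional $\Hom_{\ZZ^n}(F_p,E_q)$: once the summands $F_p$ and $E_q$ are presented in their standard form $\kk[\aa+\ZZ\tau'+\NN^n]$ and $\kk[\bb+\ZZ\tau-\NN^n]$, the canonical generator is the $\ZZ^n$-module homomorphism that acts as the identity on~$\kk$ in every degree lying in the intersection of degree sets (and necessarily zero elsewhere), which is exactly the convention implicit in Definition~\ref{d:monomial-matrix-fl}.
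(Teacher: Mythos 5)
Your proof is correct and follows the same route as the paper, which simply cites Lemma~\ref{l:F->E} and Definition~\ref{d:F<E}; you have expanded that one-line citation into the explicit component-by-component bookkeeping, including the choice of canonical generator for each one-dimensional $\Hom_{\ZZ^n}(F_p,E_q)$, but no new idea is introduced.
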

\begin{proof}
Lemma~\ref{l:F->E} and Definition~\ref{d:F<E}.
\end{proof}

The unnatural hypothesis that a persistence module be finitely
generated results in data types and structure theory that are
asymmetric regarding births as opposed to deaths.  In contrast, the
notion of flange presentation is self-dual: their duality interchanges
the roles of births~($F$) and deaths~($E$).

\begin{prop}\label{p:duality}
A $\ZZ^n$-module $\cN$ has a finite flange presentation $F \to E$ if
and only if the Matlis dual $E^\vee \to F^\vee$ is a finite flange
presentation of the Matlis dual $\cN^\vee$.
\end{prop}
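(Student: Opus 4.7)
The plan is to apply the Matlis duality functor $(-)^\vee$ directly to a finite flange presentation and verify that the result is again a finite flange presentation, then invoke the double-dual identification in Lemma~\ref{l:vee-vee} for the converse. Since the statement is a biconditional, only one direction needs a detailed argument.

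First, I would note that Matlis duality is an exact contravariant functor on the category of $\ZZ^n$-finite modules: since each graded component $\cM_\aa$ is finite-dimensional over~$\kk$ (by $\ZZ^n$-finiteness), the formula $(\cM^\vee)_\aa = \Hom_\kk(\cM_{-\aa},\kk)$ exhibits each graded piece as the $\kk$-linear dual of a finite-dimensional space, and $\kk$-linear dualization is exact. Applying $(-)^\vee$ to a finite flange presentation $\phi: F \to E$ with image~$\cN$ then produces a morphism $\phi^\vee: E^\vee \to F^\vee$, and factoring $\phi$ as $F \onto \cN \into E$ and dualizing shows $\phi^\vee$ factors as $E^\vee \onto \cN^\vee \into F^\vee$; hence its image is~$\cN^\vee$.

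Second, I would invoke Remark~\ref{r:flat}, which says that Matlis duality interchanges $\ZZ^n$-finite flat and injective modules. Therefore $E^\vee$ is flat and $F^\vee$ is injective, and $\phi^\vee$ is indeed a flange presentation of~$\cN^\vee$. For the finiteness claim I would observe that Matlis duality commutes with finite direct sums and carries the indecomposable flat $\kk[\aa + \ZZ\tau + \NN^n]$ to the indecomposable injective $\kk[-\aa + \ZZ\tau - \NN^n]$ and vice versa, as can be read off directly from the graded-piece definition. Consequently a finite decomposition into indecomposable summands dualizes to a finite decomposition with the same number of summands.

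The converse then follows immediately by applying the above to $\phi^\vee$ and using Lemma~\ref{l:vee-vee} to identify $(\phi^\vee)^\vee$ with~$\phi$ and $(\cN^\vee)^\vee$ with~$\cN$. The only potential obstacle is bookkeeping around the image calculation, to ensure that exactness of $(-)^\vee$ is genuinely used (so that $\image(\phi^\vee) = (\image\phi)^\vee$) rather than merely the existence of a map between duals; this is where $\ZZ^n$-finiteness of the ambient modules, inherited from $\ZZ^n$-finiteness of $F$ and~$E$ on the finitely supported pieces, is essential.
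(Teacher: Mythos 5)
Your proof is correct and follows essentially the same route as the paper, which likewise proves the biconditional by observing that Matlis duality is an exact contravariant functor that interchanges flat and injective objects among finitely determined modules, combined with the double-dual isomorphism of Lemma~\ref{l:vee-vee}. One small imprecision worth noting: exactness of $(-)^\vee = \Hom_\kk(-,\kk)$ holds for \emph{all} $\ZZ^n$-modules, not just $\ZZ^n$-finite ones, because over a field every vector space is injective; so the image computation $\image(\phi^\vee) = (\image\phi)^\vee$ does not actually rely on finiteness. Where $\ZZ^n$-finiteness (and indeed the finitely determined hypothesis implicit in the finite flange presentation) is genuinely essential is in the converse direction, to invoke reflexivity $(\cN^\vee)^\vee \cong \cN$ from Lemma~\ref{l:vee-vee} and to identify $(\phi^\vee)^\vee$ with~$\phi$ --- which you do correctly cite, so the argument stands.
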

\begin{proof}
Matlis duality is an exact, contravariant functor on~$\ZZ^n$-modules
that takes the subcategory of finitely determined $\ZZ^n$-modules to
itself (these properties are immediate from the definitions),
interchanges flat and injective objects therein, and has the property
that the natural map $(\cN^\vee)^\vee \to \cN$ is an isomorphism for
finitely determined~$\cN$ (Lemma~\ref{l:vee-vee}); see
\cite[\S1.2]{alexdual} for a discussion of these properties.
\end{proof}

\subsection{Syzygy theorem for \texorpdfstring{$\ZZ^n$}{Zn}-modules}\label{sub:Zsyzygy}

\begin{thm}\label{t:finitely-determined}
A $\ZZ^n$-module is finitely determined if and only if it admits one,
and hence all, of the following:
\begin{enumerate}
\item\label{i:flange}%
a finite flange presentation; or
\item\label{i:flat-presentation}%
a finite flat presentation; or
\item\label{i:injective-copresentation}%
a finite injective copresentation; or
\item\label{i:flat-res}%
a finite flat resolution; or
\item\label{i:injective-res}%
a finite injective resolution; or
\item\label{i:minimal}%
a minimal one of any of the above.
\end{enumerate}
Any minimal one of these objects is unique up to noncanonical
isomorphism, and the resolutions have length at most~$n$.
\end{thm}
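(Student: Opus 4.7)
The backbone on the injective side is already Proposition~\ref{p:determined}, which gives the equivalence of ``finitely determined'' with admitting a finite (and also a finite minimal) injective resolution, plus uniqueness up to isomorphism and length at most~$n$. This settles item~\ref{i:injective-res} and the injective part of item~\ref{i:minimal}. To obtain the flat counterparts in item~\ref{i:flat-res} and the flat half of item~\ref{i:minimal}, I would transfer across via Matlis duality: Lemma~\ref{l:vee-vee} and the properties of $(\spot)^\vee$ recalled in the proof of Proposition~\ref{p:duality}---exactness, contravariance, interchange of indecomposable flats and injectives, and preservation of finite-determinedness---imply that $\cN$ is finitely determined iff $\cN^\vee$ is, and that dualizing a finite minimal injective resolution of $\cN^\vee$ produces a finite minimal flat resolution of~$\cN$ of the same length.

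The two-term items~\ref{i:flat-presentation} and~\ref{i:injective-copresentation} follow by truncation in one direction. For the converse, I would use that the full subcategory of finitely determined $\ZZ^n$-modules is abelian: finitely many finitely determined modules are simultaneously $\aa$-determined for a common $\aa \in \NN^n$ after translating (cf.~Remark~\ref{r:determined}), and $\aa$-determined modules form an abelian subcategory of $\ZZ^n$-modules. Each indecomposable flat or injective, whose explicit form is given in Remarks~\ref{r:injective} and~\ref{r:flat}, is itself finitely determined by direct inspection of Definition~\ref{d:determined} applied to $\kk[\bb + \ZZ\tau + \NN^n]$ and its Matlis dual. Hence a finite flat presentation exhibits $\cN$ as a cokernel inside this abelian subcategory, and dually for injective copresentations. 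Item~\ref{i:flange} then falls out: composing a finite (minimal) flat cover with a finite (minimal) injective hull from items~\ref{i:flat-res} and~\ref{i:injective-res} yields a finite (minimal) flange presentation, while conversely any finite flange presentation $F \to E$ realizes $\cN$ as the image of a morphism of finitely determined modules.

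Minimality and uniqueness of the minimal flange presentation reduce to the corresponding statements for the minimal flat cover and minimal injective hull, both inherited from Proposition~\ref{p:determined} (directly for injectives and by Matlis duality for flats). The main obstacle is the converse halves of items~\ref{i:flange}, \ref{i:flat-presentation}, and~\ref{i:injective-copresentation}, where one must cleanly argue that two-term data alone forces finite-determinedness. The crux there is the pointwise check that $\kk[\bb + \ZZ\tau + \NN^n]$ and its Matlis dual satisfy Definition~\ref{d:determined}---the multiplication map $\cdot x_i$ fails to be an isomorphism only at a single value of~$b_i$ in each non-$\tau$ coordinate direction---together with the abelian subcategory argument above, after which the remaining assertions are essentially bookkeeping.
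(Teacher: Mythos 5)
Your proposal is correct and follows essentially the same route as the paper: Proposition~\ref{p:determined} supplies the minimal injective resolution, Matlis duality (Lemma~\ref{l:vee-vee} and Proposition~\ref{p:duality}) transfers it to the flat side, and the converse for items~\ref{i:flange}--\ref{i:injective-copresentation} uses that finitely determined modules form an abelian subcategory in which each indecomposable flat or injective lies. The extra details you flesh out (common $\aa$-determinedness, the pointwise check on $\cdot x_i$ for indecomposable flats) are consistent with, and expand faithfully on, the paper's terser argument.
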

\begin{proof}
The hard work is done by Proposition~\ref{p:determined}.  It implies
that $\cN$ is finitely determined $\iff \cN^\vee$ has a minimal
injective resolution $\iff \cN$ has a minimal flat resolution of
length at most~$n$, since the Matlis dual of any finitely determined
module~$\cN$ is finitely determined.  Having both a minimal injective
resolution and a minimal flat resolution is stronger than having any
of items~\ref{i:flange}--\ref{i:injective-copresentation}, minimal or
otherwise, so it suffices to show that $\cN$ is finitely determined if
$\cN$ has any of
items~\ref{i:flange}--\ref{i:injective-copresentation}.  This follows,
using that the category of finitely determined modules~is~abelian as
in the proof of Proposition~\ref{p:determined}, from the fact that
every indecomposable injective or flat $\ZZ^n$-module is finitely
determined.
\end{proof}

\begin{remark}\label{r:finitely-determined}
Conditions~\ref{i:flange}--\ref{i:minimal} in
Theorem~\ref{t:finitely-determined} remain equivalent for
$\RR^n$-modules, with the standard positive cone $\RR^n_+$, assuming
that the finite flat and injective modules in question are finite
direct sums of localizations of~$\RR^n$ along faces and their Matlis
duals.  (The equivalence, including minimality, is a consequence of
the generator and cogenerator theory over real polyhedral groups
\cite{essential-real}.)  The equivalent conditions do not characterize
$\RR^n$-modules that are pulled back under convex projection from
arbitrary modules over an interval in~$\RR^n$, though, because all
sorts of infinite things can can happen inside of a box, such as
having generators~along~a~curve.
\end{remark}

\section{Homological algebra of poset modules}\label{s:syzygy}

\subsection{Indicator resolutions}\label{sub:indicator-res}

\begin{defn}\label{d:resolutions}
Fix any poset~$\cQ$ and a $\cQ$-module~$\cM$.
\begin{enumerate}
\item%
An \emph{upset resolution} of~$\cM$ is a complex~$F_\spot$ of
$\cQ$-modules, each a direct sum of upset submodules of~$\kk[\cQ]$,
whose differential $F_i \to F_{i-1}$ decreases homological degrees,
has components $\kk[U] \to \kk[U']$ that are connected
(Definition~\ref{d:connected-homomorphism}), and has only one nonzero
homology $H_0(F_\spot) \cong \cM$.

\item%
A \emph{downset resolution} of~$\cM$ is a complex~$E^\spot$ of
$\cQ$-modules, each a direct sum of downset quotient modules
of~$\kk[\cQ]$, whose differential $E^i \to E^{i+1}$ increases
cohomological degrees, has components $\kk[D'] \to \kk[D]$ that are
connected, and has only one nonzero homology $H^0(E^\spot)
\cong\nolinebreak \cM$.
\end{enumerate}\setcounter{separated}{\value{enumi}}
An upset or downset resolution is called an \emph{indicator
resolution} if the up- or down- nature is unspecified.  The
\emph{length} of an indicator resolution is the largest
(co)homological degree in which the complex is nonzero.  An indicator
resolution
\begin{enumerate}\setcounter{enumi}{\value{separated}}
\item%
is \emph{finite} if the number of indicator module summands is finite,

\item%
\emph{dominates} a constant subdivision or encoding of~$\cM$ if the
subdivision or encoding is subordinate to each indicator summand, and

\item\label{i:auxiliary-resolution}%
is \emph{semialgebraic}, \emph{PL}, \emph{subanalytic}, or \emph{of
class~$\mathfrak X$} if $\cQ$ is a subposet of a real partially
ordered group and the resolution dominates a constant subdivision or
encoding of the corresponding type.
\end{enumerate}
\end{defn}

\begin{defn}\label{d:resolution-monomial-matrix}
Monomial matrices for indicator resolutions are defined similarly to
those for fringe presentations in
Definition~\ref{d:monomial-matrix-fr}, except that for the
cohomological case the row and column labels are source and target
downsets, respectively, while in the homological case the row and
column labels are target and source upsets, respectively:
$$%
\begin{array}{ccc}
  &
  \monomialmatrix
	{\vdots\ \\D^i_p\\\vdots\ }
	{\begin{array}{ccc}
		 \cdots & D^{i+1}_q & \cdots \\
		        &           &        \\
		        & \phi_{pq} &        \\
		        &           &        \\
	 \end{array}}
	{\\\\\\}
\\
    E^i
  & \fillrightmap
  & E^{i+1}
\end{array}
\qquad\text{and}\qquad
\begin{array}{ccc}
  &
  \monomialmatrix
	{\vdots\ \\U_i^p\\\vdots\ }
	{\begin{array}{ccc}
		 \cdots & U_{i+1}^q & \cdots \\
		        &           &        \\
		        & \phi_{pq} &        \\
		        &           &        \\
	 \end{array}}
	{\\\\\\}
\\
    F_i
  & \filleftmap
  & F_{i+1}.
\end{array}
$$
(Note the switch of source and target from cohomological to
homological, so the map goes from right to left in the homological
case, with decreasing homological indices.)
\end{defn}

As in Proposition~\ref{p:pullback-monomial-matrix}, pullbacks have
transparent monomial matrix interpretations.

\begin{prop}\label{p:syzygy-monomial-matrix}
Fix a poset~$\cQ$ and an encoding of a $\cQ$-module~$\cM$ by a poset
morphism $\pi: \cQ \to \cP$ and $\cP$-module~$\cH$.  Monomial matrices
for any indicator resolution of~$\cH$ pull back to monomial matrices
for an indicator resolution of~$\cM$ that dominates the encoding by
replacing the row and column labels with their preimages under~$\pi$.
\hfill$\square$
\end{prop}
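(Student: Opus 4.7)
The plan is to verify that the pullback functor $\pi^*$ (sending a $\cP$-module $\cN$ to the $\cQ$-module $\bigoplus_{q\in\cQ} \cN_{\pi(q)}$) carries indicator resolutions of $\cH$ to indicator resolutions of $\cM = \pi^*\cH$ while preserving the entire scalar data recorded in the monomial matrix. In other words, the construction is entirely formal, relying on three elementary functorial properties of $\pi^*$ that I would establish first.

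First I would check that $\pi^*$ is exact: since $(\pi^*\cN)_q = \cN_{\pi(q)}$ degree-by-degree, a sequence of $\cP$-modules is exact at $\cN$ if and only if each sequence of vector spaces at $\pi(q)$ is exact, which in turn is equivalent to exactness of the pulled-back sequence of $\cQ$-modules at each $q \in \cQ$. Second, $\pi^*$ takes indicator modules to indicator modules of the same type: Lemma~\ref{l:constant} gives that if $U \subseteq \cP$ is an upset then $\pi^{-1}(U) \subseteq \cQ$ is an upset with $\pi^*\kk[U] = \kk[\pi^{-1}(U)]$, and dually for downsets. Third---and this is the only point that needs brief justification---$\pi^*$ preserves connectedness of component morphisms in the sense of Definition~\ref{d:connected-homomorphism}: a map $\kk[S'] \to \kk[S]$ of indicator subquotients of $\kk[\cP]$ acting as multiplication by a single scalar $\lambda \in \kk$ on every degree in $S \cap S'$ pulls back to a map $\kk[\pi^{-1}(S')] \to \kk[\pi^{-1}(S)]$ which acts as multiplication by the same $\lambda$ on every degree in $\pi^{-1}(S) \cap \pi^{-1}(S') = \pi^{-1}(S \cap S')$.

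Given these three properties, applying $\pi^*$ to an indicator resolution $F_\spot \to \cH$ or $\cH \to E^\spot$ produces a complex of indicator $\cQ$-modules with connected component morphisms and a single nonzero (co)homology $\pi^*\cH \cong \cM$ in degree zero, which is precisely an indicator resolution of $\cM$ in the sense of Definition~\ref{d:resolutions}. Under the monomial matrix notation of Definition~\ref{d:resolution-monomial-matrix}, the scalar entry $\phi_{pq}$ attached to the pair of indicator summands labeled by $U_i^p$ and $U_{i+1}^q$ (or $D^i_p$ and $D^{i+1}_q$) is exactly the scalar by which the connected component morphism acts, and this scalar is transported unchanged by $\pi^*$. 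Therefore the matrix obtained by replacing each row and column label with its preimage under~$\pi$, while retaining the same array of scalars, represents the pulled-back resolution.

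Finally, the resolution thus constructed dominates the encoding automatically: every summand $\kk[\pi^{-1}(U)]$ or $\kk[\pi^{-1}(D)]$ is constant on the fibers of~$\pi$, so the fiber partition---which is the constant subdivision underlying the encoding---is subordinate to each summand in the sense of Definition~\ref{d:resolutions}.4. The only substantive step in the whole argument is verifying preservation of connectedness, which is straightforward from the definition; the rest is bookkeeping.
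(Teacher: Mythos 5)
Your argument is correct and is essentially the implicit proof that the paper leaves as immediate (the proposition is stated with a tombstone and no proof, just as its twin Proposition~\ref{p:pullback-monomial-matrix} is). You verify exactly the three things that need verifying: that $\pi^*$ is exact because it acts degree-by-degree, that $\pi^*$ carries indicator $\cP$-modules to indicator $\cQ$-modules with $\pi^*\kk[U] = \kk[\pi^{-1}(U)]$ (the ``if'' direction of Lemma~\ref{l:constant}), and that connectedness of component morphisms is preserved because the scalar on $\pi^{-1}(S)\cap\pi^{-1}(S') = \pi^{-1}(S\cap S')$ is inherited unchanged. Together with the observation that each pulled-back summand is by construction constant on the fibers of $\pi$, giving domination of the encoding, this is a complete and tight justification of the claim.
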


\begin{defn}\label{d:indicator-(co)presentation}
Fix any poset~$\cQ$ and a $\cQ$-module~$\cM$.
\begin{enumerate}
\item%
An \emph{upset presentation} of~$\cM$ is an expression of~$\cM$ as the
cokernel of a homomorphism $F_1 \to F_0$ such that each $F_i$ is a
direct sum of upset modules and every component $\kk[U'] \to \kk[U]$
of the homomorphism is connected
(Definition~\ref{d:connected-homomorphism}).

\item%
A \emph{downset copresentation} of~$\cM$ is an expression of~$\cM$ as
the kernel of a homomorphism $E^0 \to E^1$ such that each $E^i$ is a
direct sum of downset modules and every component $\kk[D] \to \kk[D']$
of the homomorphism is connected.
\end{enumerate}\setcounter{separated}{\value{enumi}}
These \emph{indicator presentations} are \emph{finite}, or
\emph{dominate} a constant subdivision or encoding of~$\cM$, or are
\emph{semialgebraic}, \emph{PL}, \emph{subanalytic}, or \emph{of
class~$\mathfrak X$} as in Definition~\ref{d:resolutions}.
\end{defn}

\begin{example}\label{e:one-param-upset}
In one parameter, the bar $[a,b)$ in Example~\ref{e:one-param-fringe},
has upset presentation
$$%
\hspace{8ex}
\psfrag{a}{\footnotesize\raisebox{-.2ex}{$a$}}
\psfrag{b}{\footnotesize\raisebox{-.2ex}{$b$}}
\psfrag{vert-to}{\small\raisebox{-.2ex}{$\downarrow$}}
\psfrag{vert-into}{\small\raisebox{-.2ex}{$\lhookdownarrow$}}
\psfrag{vert-onto}{\small\raisebox{-.2ex}{$\twoheaddownarrow$}}
\psfrag{has image}{}
\begin{array}{@{}c@{}}
\includegraphics[height=20mm]{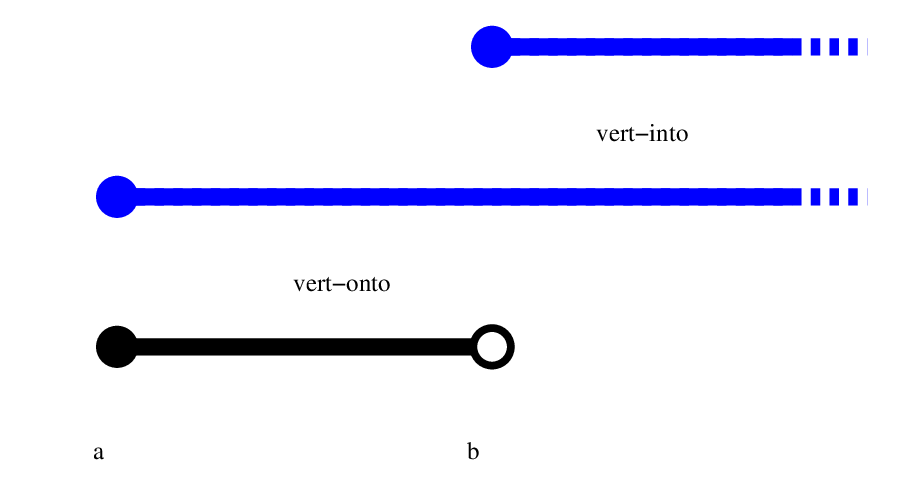}\\[-5ex]
\llap{with cokernel\hspace{10ex}}\\[2ex]
\end{array}
$$
isomorphic to the single bar.  When there are multiple bars, the
bijection from left to right endpoints yields a monomial matrix whose
scalar entries again form an identity matrix, with rows labeled by
positive rays having the specified left endpoints (the ray is the
whole real line when the left endpoint is~$-\infty$) and columns
labeled by positive rays having the corresponding right
endpoints---but with their open or closed nature reversed---as left
endpoints (the ray is empty when the specified right
endpoint~is~$+\infty$).
\end{example}

\begin{example}\label{e:two-param-upset}
$$%
\begin{array}{c}
\\[-1.48ex]
\begin{array}{@{}r@{\hspace{-.4pt}}|@{}l@{}}
\includegraphics[height=25mm]{semialgebraic}&\ \,\hspace{-.3pt}\\[-4.2pt]\hline
\end{array}
\\[-1.52ex]\mbox{}
\end{array}
\ \
\begin{array}{@{}c@{}}
\text{is the cokernel of}\\[2ex]
\end{array}
\qquad
\begin{array}{@{}r@{\hspace{-6.3pt}}|@{}l@{}}
\raisebox{-.2mm}{\includegraphics[height=25mm]{upset-blue}}
&\ \,\\[-4pt]\hline
\end{array}
\,\otni\!
\begin{array}{@{}r@{\hspace{-5.6pt}}|@{}l@{}}
\raisebox{-.4mm}{\includegraphics[height=25mm]{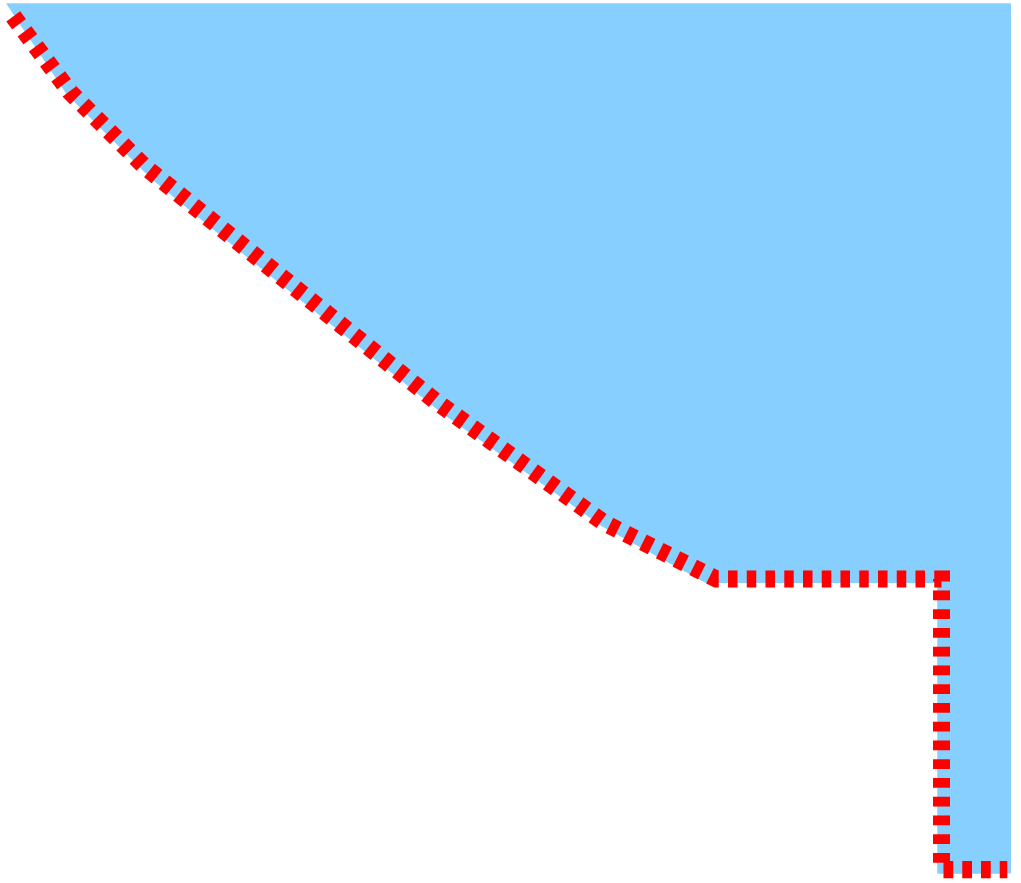}}
&\ \,\hspace{-.2pt}\\[-4pt]\hline
\end{array}
$$
\end{example}

\begin{lemma}\label{l:morphisms}
The homomorphisms in indicator presentations and resolutions are tame,
so their kernels and cokernels are tame.  If the indicator modules in
question are semialgebraic, PL, subanalytic, or of class~$\mathfrak X$
then the morphisms are, as well.
\end{lemma}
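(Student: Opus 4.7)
The plan is to establish tameness of each differential as a morphism in the sense of Definition~\ref{d:tame-morphism}, after which Lemma~\ref{l:ker-coker} immediately provides tameness of kernels and cokernels. The underlying observation is that each indicator module $\kk[S]$, for $S$ an upset or downset of $\cQ$, is tame via the two-region constant subdivision $\{S, \cQ \minus S\}$: the value $\kk$ is attached to $S$ and $0$ to the complement, with all structure maps between comparable pairs being either the identity on $\kk$ or zero. When $S$ is semialgebraic, PL, subanalytic, or of class~$\mathfrak X$, this two-region subdivision is of the same type, using that each class is closed under complements (Proposition~\ref{p:auxiliary-hypotheses}.\ref{i:classes}, and Definition~\ref{d:auxiliary-hypotheses}.\ref{i:classX} in the class~$\mathfrak X$ case).

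First I would handle a finite direct sum $F = \bigoplus_p \kk[S_p]$ of indicator modules. Its common refinement, formed by intersecting the two-region partitions of the individual summands, is a finite constant subdivision of $\cQ$ subordinate to $F$, with at most $2^k$ regions for $k$ summands, and it inherits the semialgebraic, PL, subanalytic, or class~$\mathfrak X$ property by closure under finite intersections. On each region~$I$ of this refinement, each summand $\kk[S_p]$ contributes either $\kk$ (if $I \subseteq S_p$) or $0$ (if $I \cap S_p = \nothing$), constant across~$I$, so $F_I$ is unambiguously defined.

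Next, given a homomorphism $\phi: F \to E$ between two such finite sums whose matrix entries are connected in the sense of Definition~\ref{d:connected-homomorphism}, I would take the common refinement of the subdivisions subordinate to $F$ and to~$E$; this remains a finite constant subdivision of the same geometric type. On each region $I$, each entry $\phi_{pq}$ acts by a single scalar $\lambda_{pq} \in \kk$ on the entire overlap of the source and target indicator sets, by the connectedness hypothesis; hence it acts by~$\lambda_{pq}$ on $I$ when $I$ is contained in that overlap, and by~$0$ otherwise. The induced map $F_I \to E_I$ is therefore a scalar matrix that does not depend on the choice of $\ii \in I$, which is exactly what Definition~\ref{d:tame-morphism} demands. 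The same refinement works simultaneously for each differential appearing in an indicator presentation or resolution, so all such differentials are tame morphisms of tame modules, with the auxiliary geometric class preserved.

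The second assertion of the lemma, on kernels and cokernels, then follows immediately from Lemma~\ref{l:ker-coker} applied to each differential. The key technical step is essentially the one-line observation that a connected matrix entry contributes a single scalar on each region of the common refinement; the main thing to be mildly careful about is the subanalytic case, but since the argument uses only finite intersections and complements of the indicator-module supports, and never forms Minkowski sums with the positive cone, no boundedness hypothesis is needed here.
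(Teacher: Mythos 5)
Your proof is correct and follows essentially the same route as the paper's, which simply asserts that a connected homomorphism among indicator modules is tame by Definition~\ref{d:tame-morphism} and then cites Proposition~\ref{p:abelian-category} (whose proof rests on Lemma~\ref{l:ker-coker}, the lemma you invoke directly). The common-refinement-of-two-region-subdivisions argument you supply is precisely the verification the paper leaves implicit, and your remark that the subanalytic case needs no boundedness because only complements and finite intersections are used is accurate.
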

\begin{proof}
Any connected homomorphism among indicator modules is tame---and
satisfies one of the auxiliary hypotheses, if the source and target
do---by Definition~\ref{d:tame-morphism}, so the conclusion follows
from Proposition~\ref{p:abelian-category}.
\end{proof}

\begin{example}\label{e:puuska-nonconstant-isotypic'}
The poset module in Example~\ref{e:puuska-nonconstant-isotypic} has an
upset presentation
$$%
\begin{array}{ccc}
  &
  \monomialmatrix
	{L\\R\\}
	{\begin{array}{rr}
		 T &  B \\
		 2 &  1 \\
		-1 & -1 \\
	 \end{array}}
	{\\\\}
\\
  \kk[L] \oplus \kk[R]
  & \filleftmap
  & \kk[T]
\end{array}
$$
in which the monomial matrix has row and column labels
\begin{itemize}
\item%
$L$, the upset generated by the leftmost element;
\item%
$R$, the upset generated by the rightmost element;
\item%
$T$, the upset consisting solely of the maximal element depicted on
top; and
\item%
$B$, the upset consisting solely of the maximal element depicted on
the bottom.
\end{itemize}
Although the disjoint union of $T$ and~$B$ is an upset, and there is a
homomorphism $\phi: \kk[T \cup B] \to \kk[L] \oplus \kk[R]$ whose
cokernel is the desired poset module, there is no way to arrange for
the homomorphism~$\phi$ to be connected.
\end{example}

\begin{remark}\label{r:augmentation}
It is tempting to think that a fringe presentation is nothing more
than the concatenation of the augmentation map of an upset resolution
(that is, the surjection at the end) with the augmentation map of a
downset resolution (that is, the injection at the beginning), but
there is no guarantee that the components $F_i \to E_j$ of the
homomorphism thus produced are connected
(Definition~\ref{d:connected-homomorphism}).  In contrast, a flange
presentation (Definition~\ref{d:flange}) is in fact nothing more than
the concatenation of the augmentation maps of a flat resolution and an
injective resolution, since connected homomorphisms are forced by
Lemma~\ref{l:F->E}.
\end{remark}

\subsection{Syzygy theorem for modules over posets}\label{sub:syzygy}

\begin{prop}\label{p:pushforward}
For any inclusion $\iota: \cP \to \cZ$ of posets and
$\cP$-module~$\cH$ there is a $\cZ$-module $\iota_*\cH$, the
\emph{pushforward to~$\cZ$}, whose restriction to~$\iota(\cP)$
is~$\cH$ and is universally repelling: $\iota_*\cH$ has a canonical
map to every $\cZ$-module whose restriction to~$\iota(\cP)$ is~$\cH$.
\end{prop}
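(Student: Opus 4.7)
The plan is to construct $\iota_*\cH$ as the left Kan extension of $\cH$ along~$\iota$. Explicitly, set
$$
  (\iota_*\cH)_z \;=\; \dirlim_{\substack{p \in \cP\\ \iota(p) \preceq z}} \cH_p
$$
for each $z \in \cZ$, with the colimit taken over the subposet of~$\cP$ consisting of those $p$ with $\iota(p) \preceq z$ and the transition maps inherited from the structure homomorphisms of~$\cH$. Whenever $z \preceq z'$ in~$\cZ$, the indexing poset for~$z$ is a subposet of the indexing poset for~$z'$, so the universal property of the colimit induces a canonical homomorphism $(\iota_*\cH)_z \to (\iota_*\cH)_{z'}$; compatibility with composition of poset relations (the third bullet of Definition~\ref{d:poset-module}) is immediate from the functoriality of colimits.

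Next, verify that the restriction of~$\iota_*\cH$ to~$\iota(\cP)$ agrees with~$\cH$. Since $\iota$ is an order-embedding, for $z = \iota(p_0)$ the indexing set $\{p \in \cP : \iota(p) \preceq \iota(p_0)\}$ coincides with $\{p \in \cP : p \preceq p_0\}$, which has~$p_0$ as its maximum. The colimit therefore collapses to~$\cH_{p_0}$, and the induced transition maps along relations in~$\cP$ match those of~$\cH$.

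For the universal property, let $\cN$ be a $\cZ$-module whose restriction to~$\iota(\cP)$ is identified with~$\cH$. For each pair $(z,p)$ with $\iota(p) \preceq z$, the $\cZ$-module structure of~$\cN$ provides a homomorphism $\cH_p = \cN_{\iota(p)} \to \cN_z$. These homomorphisms form a cocone on the diagram defining $(\iota_*\cH)_z$, yielding a unique linear map $(\iota_*\cH)_z \to \cN_z$. Naturality in~$z$, inherited from the defining universal properties, promotes this family to a $\cZ$-module homomorphism $\iota_*\cH \to \cN$. The map is canonical because its restriction to each $\iota(p)$-component is forced to be the identity on~$\cH_p$, which determines the map on the entire colimit.

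There is no substantive obstacle: the construction is a standard left Kan extension. The only point requiring care is the interpretation of ``inclusion of posets'': one wants $\iota$ to be an order-embedding rather than merely an order-preserving injection, so that for $z = \iota(p_0)$ the indexing poset is precisely the principal downset of~$p_0$ in~$\cP$, which is what makes the restriction recover~$\cH$ on the nose.
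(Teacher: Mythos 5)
Your proposal is correct and follows essentially the same route as the paper: the paper defines $(\iota_*\cH)_z$ as the colimit $\dirlim \cH_{\preceq z}$ over elements of~$\cP$ whose images precede~$z$ and invokes the universal property of colimits, which is exactly your left Kan extension construction (and indeed Remark~\ref{r:kan-extension} explicitly identifies the pushforward as a left Kan extension). Your extra observation that ``inclusion'' should be read as order-embedding so the indexing poset at $z = \iota(p_0)$ has maximum $p_0$ is a sensible clarification of a point the paper leaves implicit.
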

\begin{proof}
At $z \in \cZ$ the pushforward $\iota_*\cH$ places the colimit
$\dirlim\cH_{\preceq z}$ of the diagram of vector spaces indexed by
the elements of~$\cP$ whose images precede~$z$.  The universal
pro\-perty of colimits implies that $\iota_*\cH$ is a $\cZ$-module
with the desired universal property.
\end{proof}

\begin{remark}\label{r:kan-extension}
With perspectives as in Remark~\ref{r:curry}, the pushforward is a
left Kan extension \cite[Remark~4.2.9]{curry-thesis}.  This instance
is a special case of~\cite[Example~4.4]{curry2019}.
\end{remark}

\pagebreak

\begin{thm}[Syzygy theorem]\label{t:syzygy}
A module~$\cM$ over a poset~$\cQ$ is tame if and only if it admits
one, and hence all, of the following:
\begin{enumerate}
\item\label{i:syzygy-tame}%
a finite constant subdivision of~$\cQ$ subordinate to~$\cM$; or

\item\label{i:syzygy-encoding}%
a finite poset encoding subordinate to~$\cM$; or

\item\label{i:fringe}%
a finite fringe presentation; or

\item\label{i:upset-presentation}%
a finite upset presentation; or

\item\label{i:downset-copresentation}%
a finite downset copresentation; or

\item\label{i:upset-res}%
a finite upset resolution; or

\item\label{i:downset-res}%
a finite downset resolution; or

\item\label{i:dominating}%
any of the above dominating any given finite encoding; or

\item\label{i:subordinate-encoding}%
a finite encoding subordinate to any given one of
items~\ref{i:syzygy-tame}--\ref{i:downset-res}; or

\item\label{i:subordinate-constant}%
a finite constant subdivision subordinate to any given one of
items~\ref{i:syzygy-tame}--\ref{i:downset-res}.
\end{enumerate}
The statement remains true over any subposet of a real partially
ordered group if ``tame'' and all occurrences of ``finite'' are
replaced by ``semialgebraic'', ``PL'', or ``class~$\mathfrak X$''.
Moreover, any tame or semialgebraic, PL, or class~$\mathfrak X$
morphism $\cM \to \cM'$ lifts to a similarly well behaved morphism of
presentations or resolutions as in
parts~\ref{i:fringe}--\ref{i:downset-res}.  All of these results
except item~\ref{i:subordinate-encoding} hold in the subanalytic case
if~$\cM$ has compact support.
\end{thm}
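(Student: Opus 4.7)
The plan is to reduce every implication to the $\ZZ^n$-case already handled by Theorem~\ref{t:finitely-determined}, using finite encoding as the pivot between topological/combinatorial tameness on one hand and algebraic/homological tameness on the other.

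First I would verify that items \ref{i:syzygy-tame} and \ref{i:syzygy-encoding} are equivalent, which is exactly Theorem~\ref{t:tame}, and note its refinement that an uptight encoding subordinate to any subdivision can be constructed while preserving the auxiliary hypothesis class. For the core reduction, given a finite encoding $\pi \colon \cQ \to \cP$ with $\cP$-module $\cH$ finite-dimensional in each degree, I would embed the finite poset $\cP$ as a subposet $\iota \colon \cP \hookrightarrow \ZZ^n$ for suitable $n$ (e.g.\ via any order-preserving injection into $\NN^{|\cP|}$ using characteristic functions of principal order filters), then form the pushforward $\iota_*\cH$ from Proposition~\ref{p:pushforward}. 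Because $\cP$ is finite and $\iota(\cP)$ sits inside a bounded box of $\ZZ^n$, the module $\iota_*\cH$ is finitely determined in the sense of Definition~\ref{d:determined}. Theorem~\ref{t:finitely-determined} then supplies finite flat and injective resolutions, a finite flange presentation, and their minimal versions, with length at most~$n$.

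Next I would pull everything back along $\iota\circ\pi \colon \cQ \to \ZZ^n$. By Example~\ref{e:pullback}, pullbacks of indecomposable flat $\ZZ^n$-modules are upset modules over~$\cQ$, and pullbacks of indecomposable injectives are downset modules; Proposition~\ref{p:syzygy-monomial-matrix} (and its flange analogue via Proposition~\ref{p:pullback-monomial-matrix}) converts the monomial matrices verbatim. Exactness is preserved because pullback along a poset morphism is exact on graded pieces. This yields items~\ref{i:fringe}--\ref{i:downset-res} simultaneously, each dominating the given encoding (item~\ref{i:dominating}), since every row/column label is a fiber of $\iota\circ\pi$ saturated to an up- or downset. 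Connectedness of the components $\kk[U] \to \kk[U']$, $\kk[D']\to\kk[D]$, and $\kk[U]\to\kk[D]$ required by Definitions \ref{d:fringe} and~\ref{d:resolutions} follows from Lemma~\ref{l:F->E}, because on each fiber of the encoding the pulled-back maps are constant scalars.

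For the reverse direction, I would show that any of items~\ref{i:fringe}--\ref{i:downset-res} produces a subordinate finite encoding (item~\ref{i:subordinate-encoding}) by applying Theorem~\ref{t:tame} to the uptight poset of the finite family of upsets/downsets (and their complements) appearing as row/column labels; that family is a finite constant subdivision subordinate to~$\cM$ (item~\ref{i:subordinate-constant}), yielding the cycle. Morphism lifting follows by running the argument on the mapping cylinder, or more concretely by choosing a common finite encoding subordinate to both source and target (using Definition~\ref{d:tame-morphism}) and invoking functoriality of the $\ZZ^n$-construction. For the auxiliary hypotheses, every step is class-preserving: uptight regions remain semialgebraic, PL, or of class~$\mathfrak X$ by Theorem~\ref{t:tame}.\ref{i:auxiliary-uptight} together with Proposition~\ref{p:auxiliary-hypotheses}, and the pullbacks of indicator modules carry along the relevant up- and downset geometry.

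The main obstacle is the subanalytic case, precisely because Proposition~\ref{p:auxiliary-hypotheses}.\ref{i:sum-subanalytic} requires bounded Minkowski summands: the constant upsets $I+\cQ_+$ can fail to be subanalytic when $I$ is unbounded, so the passage from subordinate subdivision to uptight encoding (and hence to indicator resolutions with subanalytic pieces) breaks down without a compactness hypothesis on $\mathrm{supp}(\cM)$. This is why the subanalytic conclusion is restricted to the compactly supported setting and why the compact clause in Definition~\ref{d:compactly-supported} was built in; under that hypothesis, the unique unbounded constant region $I$ has $I\pm\cQ_+=\cQ$, and all other constant regions are bounded, so Proposition~\ref{p:auxiliary-hypotheses}.\ref{i:sum-subanalytic} applies and the entire argument goes through.
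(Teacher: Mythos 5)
Your proposal is correct and follows essentially the same route as the paper: reduce to the finitely determined $\ZZ^n$ case via a finite encoding $\pi\colon\cQ\to\cP$, embed $\cP\into\ZZ^n$, push forward with $\iota_*$, apply Theorem~\ref{t:finitely-determined}, and pull back to get indicator (co)presentations and resolutions dominating the encoding, with the reverse implications handled by common refinement of constant subdivisions and the uptight construction of Theorem~\ref{t:tame}. The only cosmetic difference is that the paper cites Corollary~\ref{c:U->D} together with strong connectedness of $\ZZ^n$ (Example~\ref{e:connected-poset}) for automatic connectedness of all component homomorphisms, rather than the flat-to-injective special case Lemma~\ref{l:F->E}, but the underlying content is identical.
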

\begin{proof}
Tame is equivalent to item~\ref{i:syzygy-tame} without auxiliary
hypotheses by Definition~\ref{d:tame} and with auxiliary hypotheses by
Definition~\ref{d:auxiliary-hypotheses}.  Tame is equivalent to
item~\ref{i:syzygy-encoding} by
Theorem~\ref{t:tame}.\ref{i:admits-finite-encoding}.  With auxiliary
hypotheses, \ref{i:syzygy-tame} $\implies$ \ref{i:syzygy-encoding} by
Theorem~\ref{t:tame}.\ref{i:auxiliary-uptight}; to apply that result
in the subanalytic case starting from an arbitrary subanalytic finite
constant subdivision subordinate to~$\cM$, construct a compact such
subdivision by keeping the bounded constant regions as they are and
taking the union of all unbounded constant regions to get a single
unbounded one.  The implication \ref{i:syzygy-encoding} $\implies$
\ref{i:syzygy-tame} holds because the fibers of the encoding poset
morphism form a constant subdivision of the relevant type.

The necessity to construct an auxiliary compact subdivision from the
given one is the reason to exclude item~\ref{i:subordinate-encoding}
from the subanalytic case, as the upcoming argument produces constant
subdivisions, not directly encodings.  For all of the other cases,
item~\ref{i:subordinate-encoding} proceeds via
item~\ref{i:subordinate-constant}, given the uptight constructions in
the previous paragraph.  For item~\ref{i:subordinate-constant}, to
produce a subordinate finite constant subdivision given a finite
fringe presentation, take the common refinement of the canonical
constant subdivision subordinate to each of its indicator summands.
The same construction works if indicator presentations or resolutions
are given, and it preserves auxiliary hypotheses by
Proposition~\ref{p:auxiliary-hypotheses}.\ref{i:classes}.

What remains is item~\ref{i:dominating}: a finitely encoded
$\cQ$-module~$\cM$ has finite upset and downset resolutions and
(co)presentations, as well as a finite fringe presentation, all
dominating the given encoding.  (As noted in the first paragraph, the
fibers of the encoding morphism are already a constant subdivision of
the relevant type.)  The domination takes care of the cases with
auxiliary hypotheses by
Definitions~\ref{d:fringe}.\ref{i:auxiliary-fringe},
\ref{d:subordinate-encoding}.\ref{i:auxiliary-encoding},
\ref{d:resolutions}.\ref{i:auxiliary-resolution},
and~\ref{d:indicator-(co)presentation}.

Fix a $\cQ$-module~$\cM$ finitely encoded by a poset morphism $\pi:
\cQ \hspace{-.2ex}\to\hspace{-.2ex} \cP$ and
\mbox{$\cP$-module}~$\cH$.  The finite poset~$\cP$ has order
dimension~$n$ for some positive integer~$n$; as such $\cP$ has an
embedding $\iota: \cP \into \ZZ^n$.  The pushforward $\iota_*\cH$
(Proposition~\ref{p:pushforward}) is finitely determined
(Definition~\ref{d:determined}; see also
Example~\ref{e:convex-projection}) as it is pulled back from any box
containing~$\iota(\cP)$.  The desired presentation or resolution is
pulled back to~$\cQ$ (via $\iota \circ \pi: \cQ \to \ZZ^n$) from the
corresponding flange, flat, or injective presentation or resolution
of~$\iota_*\cH$ afforded by Theorem~\ref{t:finitely-determined}.
These pullbacks are finite indicator resolutions of~$\cM$
dominating~$\pi$ by Example~\ref{e:pullback} and
Lemma~\ref{l:constant}.  The component homomorphisms are connected
because, by Corollary~\ref{c:U->D} and Example~\ref{e:connected-poset}
(see Definition~\ref{d:connected-poset}), components of flange
presentations, flat resolutions, and injective resolutions
over~$\ZZ^n$ are~\mbox{automatically}~\mbox{connected}.

The preceding argument proves the claim about a morphism $\cM \to
\cM'$, as well, since
\begin{itemize}
\item%
only one poset morphism is required to encode the morphism $\cM \to
\cM'$;
\item%
the push-pull constructions are functorial; and
\item%
morphisms of finitely determined modules can be lifted to the relevant
presentations and resolutions, since the relevant covers,
presentations, and resolutions are free or injective in the category
of finitely determined modules.\qedhere
\end{itemize}
\end{proof}


\begin{remark}
Comparing Theorems~\ref{t:syzygy} and~\ref{t:finitely-determined},
what happened to minimality?  It is not clear in what generality
minimality can be characterized.  The sequel \cite{essential-real} to
this paper can be seen as a case study for posets arising from abelian
groups that are either finitely generated and free (the closed
discrete polyhedral case) or real vector spaces of finite dimension
(the closed real polyhedral case).  The answer is much more nuanced in
the real case, obscuring how minimality might generalize beyond
these~cases.
\end{remark}

\begin{remark}\label{r:pullback-twice}
In the situation of the proof of Theorem~\ref{t:syzygy}, composing two
applications of Proposition~\ref{p:pullback-monomial-matrix}---one for
the encoding $\pi: \cQ \to \cP$ and one for the embedding $\iota: \cP
\into \ZZ^n$---yields a monomial matrix for a fringe presentation
of~$\cM$ directly from a monomial matrix for a flange presentation.
\end{remark}

\begin{remark}\label{r:RRn-mod}
Lesnick and Wright consider \mbox{$\RR^n$-modules}
\cite[\S2]{lesnick-wright2015} in finitely presented cases.  As
they indicate,
homological algebra of such $\RR^n$-modules is no different than
finitely generated \mbox{$\ZZ^n$-modules}.  This can be seen by finite
encoding: any finite poset in~$\RR^n$ is embeddable in~$\ZZ^n$,
because a product of finite chains is all that~is~needed.
\end{remark}

\subsection{Syzygy theorem for complexes of modules}\label{sub:complexes}\mbox{}

\noindent
Theorem~\ref{t:syzygy} is stated for individual modules, but the proof
works just as well for complexes, in a sense recorded here for
reference during the proof of Theorem~\ref{t:res}.

\begin{defn}\label{d:tame-complex}
Fix a complex $M^\spot$ of modules over a poset~$\cQ$.
\begin{enumerate}
\item%
$\cM^\spot$ is \emph{tame} if its modules and morphisms are tame
(Definitions~\ref{d:tame} and~\ref{d:tame-morphism}).

\item%
A constant subdivision or poset encoding is \emph{subordinate}
to~$\cM^\spot$ if it is subordinate to all of the modules and
morphisms therein, and in that case $\cM^\spot$ is said to
\emph{dominate} the subdivision or encoding.

\item%
An \emph{upset resolution} of~$\cM^\spot$ is a complex of
$\cQ$-modules in which each $F_i$ is a direct sum of upset modules and
the components $\kk[U] \to \kk[U']$ are connected, with a homomorphism
$F^\spot \to \cM^\spot$ of complexes inducing an isomorphism on
homology.

\item%
A \emph{downset resolution} of~$\cM^\spot$ is a complex of
$\cQ$-modules in which each $E_i$ is a direct sum of downset modules
and the components $\kk[D] \to \kk[D']$ are connected, with a
homomorphism $\cM^\spot \to E^\spot$ of complexes inducing an
isomorphism on homology.
\end{enumerate}
These resolutions are \emph{finite}, or \emph{dominate} a constant
subdivision or encoding, or are \emph{semialgebraic}, \emph{PL},
\emph{subanalytic}, or \emph{of class~$\mathfrak X$} as in
Definition~\ref{d:resolutions}.
\end{defn}

\begin{thm}[Syzygy theorem for complexes]\label{t:syzygy-complexes}
Theorem~\ref{t:syzygy} holds verbatim for a bounded complex $M^\spot$
in place of the module~$M$ as long as items~\ref{i:fringe},
\ref{i:upset-presentation}, and \ref{i:downset-copresentation} are
ignored.
\end{thm}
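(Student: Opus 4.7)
The plan is to mimic the proof of Theorem~\ref{t:syzygy} almost verbatim, handling one complex at a time in place of a single module, with the one genuinely new ingredient being a Cartan--Eilenberg--style construction of a finite indicator resolution of a bounded complex of finitely determined $\ZZ^n$-modules.

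First I would dispatch the equivalences not involving indicator resolutions, i.e.\ items \ref{i:syzygy-tame}, \ref{i:syzygy-encoding}, \ref{i:dominating}, \ref{i:subordinate-encoding}, and \ref{i:subordinate-constant}. By Definition~\ref{d:tame-complex} a tame complex consists of finitely many tame modules and morphisms, so a common refinement of finite constant subdivisions provided by Definitions~\ref{d:tame} and~\ref{d:tame-morphism} supplies a single finite constant subdivision subordinate to $\cM^\spot$; fibers of a finite encoding subordinate to $\cM^\spot$ give the same thing. Bootstrapping through the uptight poset of constant upsets, as in Theorem~\ref{t:tame}, produces a finite encoding subordinate to $\cM^\spot$, exactly mirroring the single-module case. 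The arguments for domination, for refining a constant subdivision subordinate to a given resolution, and for preservation of the semialgebraic, PL, and class~$\mathfrak X$ hypotheses under common refinement and products go through unchanged using Proposition~\ref{p:auxiliary-hypotheses}; the subanalytic compactification trick from the proof of Theorem~\ref{t:syzygy} again forces the exclusion of item~\ref{i:subordinate-encoding} in that case, while producing constant subdivisions still succeeds.

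The core step is then the reduction to $\ZZ^n$ for items \ref{i:upset-res} and \ref{i:downset-res}. Given a finite encoding of $\cM^\spot$ by $\pi: \cQ \to \cP$ and a bounded complex $\cH^\spot$ of $\cP$-modules, I would embed $\iota: \cP \into \ZZ^n$ and push forward term by term to a bounded complex $\iota_* \cH^\spot$ of finitely determined $\ZZ^n$-modules (each term is finitely determined because it is pulled back from a box containing $\iota(\cP)$, as in the original proof). Now I would invoke the Cartan--Eilenberg construction inside the abelian category of finitely determined $\ZZ^n$-modules: Theorem~\ref{t:finitely-determined} supplies finite flat resolutions of the terms, boundaries, and cycles of $\iota_*\cH^\spot$, and standard homological algebra assembles these into a finite flat double complex quasi-isomorphic to $\iota_*\cH^\spot$; the total complex is finite because every flat resolution in sight has length at most~$n$ and the complex is bounded. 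Dually one produces a finite injective resolution. Pulling the result back via $\iota \circ \pi$, Example~\ref{e:pullback} and Lemma~\ref{l:constant} convert it to finite upset or downset resolutions of $\cM^\spot$ dominating the encoding, and Corollary~\ref{c:U->D} together with Example~\ref{e:connected-poset} ensures that all components remain connected. The morphism-lifting assertion works identically: a morphism of tame complexes has a common encoding by the product construction used in Proposition~\ref{p:abelian-category}, after which pushforward, Cartan--Eilenberg lifting in the finitely determined category, and pullback deliver the lifted morphism of resolutions.

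The main obstacle is verifying that the Cartan--Eilenberg construction can be carried out with finite control inside the finitely determined category and that the total complex has the claimed finite length; everything else is a nearly mechanical translation of the single-module proof. Once that is in hand, the auxiliary hypothesis cases (semialgebraic, PL, class~$\mathfrak X$, and subanalytic away from item~\ref{i:subordinate-encoding}) follow because domination transports geometric structure along $\iota \circ \pi$ exactly as in Theorem~\ref{t:syzygy}.
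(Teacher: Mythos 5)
Your proposal is correct and follows essentially the same route as the paper: the paper's proof is simply "the proof is the same," with the one additional remark that standard constructions (the Cartan--Eilenberg procedure you name explicitly) produce finite injective resolutions of bounded complexes of finitely determined $\ZZ^n$-modules, after which Matlis duality yields the flat case. Your account fills in the boundedness and length bookkeeping the paper leaves implicit, but the underlying argument is identical.
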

\begin{proof}
As already noted, the proof is the same.  It bears mentioning that
finite injective and flat resolutions of complexes exist in the
category of finitely determined $\ZZ^n$-modules because finite
injective resolutions do (Proposition~\ref{p:determined}): any of the
standard constructions that produce injective resolutions of complexes
given that modules have injective resolutions works in this setting,
and then Matlis duality (Definition~\ref{d:matlis}) produces finite
flat resolutions (see Remark~\ref{r:flat}).
\end{proof}

\section{Derived applications: conjectures of Kashiwara and Schapira}\label{s:derived}

The syzygy theorem for poset modules (Theorem~\ref{t:syzygy}) enhances
an arbitrary finite constant subdivision (the tame condition) to a
more structured subdivision (a~finite encoding)---or even an algebraic
presentation or resolution---whose pieces play well with the ambient
combinatorial structure.  In the context of partially ordered real
vector spaces, this enhancement produces a $\cQ_+$-stratification from
an arbitrary subanalytic triangulation.  If the triangulation is
subordinate to a given constructible derived $\cQ_+$-sheaf, meaning an
object in the bounded derived category of constructible sheaves with
microsupport contained in the negative polar cone of~$\cQ_+$, then
this enhancement produces a $\cQ_+$-structured resolutions of the
given sheaf.  These two instances of the syzygy theorem are the
crucial ingredients for proofs of two conjectures due to Kashiwara and
Schapira.  Since the mathematical context is more sophisticated than
the rest of the paper, these notions require review to make precise
statements and proofs.

To avoid endlessly repeating hypotheses, and so readers can quickly
identify when the same hypotheses are in effect, the blanket
assumption in this section is for $Q$ to satisfy the following, where
a positive cone is \emph{full} if it has nonempty interior.  Real
partially ordered groups are partially ordered real vector spaces of
finite dimension~(Example~\ref{e:real-pogroup}).

\begin{hyp}\label{h:full-closed-cone}
$Q$ is a real partially ordered group with closed, full,
subanalytic~$Q_+$.
\end{hyp}

\subsection{Stratifications, topologies, and cones}\label{sub:cones}

This subsection collects the relevant definitions and theorems from
the literature.  The sizeable edifice on which the subject is built
makes it unavoidable that readers seeing some of these topics for the
first time will need to consult the cited sources for additional
background.  The goal here is to bring readers as quickly as possible
to a general statement (Theorem~\ref{t:res}) while circumscribing the
ingredients necessary for its proof in such a way that those already
familiar with the conjectures of Kashiwara and Schapira, specifically
\cite[Conjecture~3.17]{kashiwara-schapira2017} and
\cite[Conjecture~3.20]{kashiwara-schapira2019}, can skip seamlessly to
Section~\ref{sub:res} after skimming Section~\ref{sub:cones} for
terminology.

\begin{remark}\label{r:used-without-comment}
Some basic notions are used freely without further comment.
\begin{enumerate}
\item%
The notion of simplicial complex here is the one in
\cite[Definition~8.1.1]{kashiwara-schapira1990}: a collection~$\Delta$
of subsets (called \emph{simplices}) of a fixed vertex set that is
closed under taking subsets (called \emph{faces}), contains every
vertex, and is locally finite in the sense that every vertex
of~$\Delta$ lies in finitely many simplices of~$\Delta$.  Any
simplicial complex~$\Delta$ has a realization~$|\Delta|$ as a
topological space, with each relatively open simplex $|\sigma|$ being
an open convex set in an appropriate affine space.

\item%
The notion of subanalytic set in an analytic manifold is as in
\cite[\S8.2]{kashiwara-schapira1990}.

\item%
The term \emph{sheaf} on a topological space here means a sheaf of
$\kk$-vector spaces.  Sometimes in the literature this word is used to
mean an object in the bounded derived category of sheaves of
$\kk$-vector spaces; for clarity here, the term \emph{derived sheaf}
is always used when an object in the derived category is intended.
\end{enumerate}
\end{remark}

\subsubsection{Subanalytic triangulation}\label{ssub:triangulation}

\begin{defn}\label{d:subanalytic-triangulation}
Fix a real analytic manifold~$X$.
\begin{enumerate}
\item%
A \emph{subanalytic triangulation} of a subanalytic set~$Y \subseteq
X$ is a homeomorphism $|\Delta| \simto Y$ such that the image in~$Y$
of the realization~$|\sigma|$ of the relative interior of each simplex
$\sigma \in \Delta$ is a subanalytic submanifold of~$X$.

\item%
A subanalytic triangulation of~$Y$ is \emph{subordinate} to a
(derived) sheaf $\FF$ on~$X$ if $Y$ contains the support of~$\FF$ and
(every homology sheaf~of)~$\FF$ restricts to a constant sheaf on the
image in~$Y$ of every cell~$|\sigma|$.
\end{enumerate}
\end{defn}

\subsubsection{Subanalytic constructibility}\label{ssub:constructible}

\begin{defn}\label{d:constructible-sheaf}
A (derived) sheaf on a real analytic manifold is \emph{subanalytically
weakly constructible} if there is a subanalytic triangulation
subordinate to it.  $\!$The word\hspace{-.1ex} ``weakly'' is omitted
if, in addition, the stalks have finite dimension as $\kk$-vector
spaces.
\end{defn}

\begin{remark}\label{r:constructible-sheaf}
Readers less familiar with constructibility can safely take
Definition~\ref{d:constructible-sheaf} at face value.  For readers
familiar with constructibility by other definitions,
this characterization is a nontrivial theorem, which rests on the
triangulability of subanalytic sets
\cite[Proposition~8.2.5]{kashiwara-schapira1990} and other results
concerning subanalytic stratification; see
\cite[\S8.4]{kashiwara-schapira1990} for the full proof of
equivalence, especially Theorem~8.4.2, Definition~8.4.3, and part~(a)
of the proof of Theorem~8.4.5(i) there.  Note that the modifier
``subanalytically'' in Definition~\ref{d:constructible-sheaf} does not
appear in \cite{kashiwara-schapira1990}, because the context there is
subanalytic throughout.  Also note that it makes no difference whether
one takes constructible objects in the derived category or the derived
category of constructible objects, since they yield the same result
\cite[Theorem~8.4.5]{kashiwara-schapira1990}: every constructible
derived sheaf is represented by a complex of constructible sheaves.
\end{remark}

The reason to use subanalytic triangulation instead of arbitrary
subanalytic stratification is the following, which is a step on the
way to a constant subdivision.

\begin{lemma}[{\cite[Proposition~8.1.4]{kashiwara-schapira1990}}]\label{l:tri-constant}
For a simplex~$\sigma$ in a subanalytic triangulation subordinate to a
constructible sheaf~$\FF$, there is a natural isomorphism
$\Gamma(|\sigma|,\FF) \simto \FF_x$ from the sections over~$|\sigma|$
to the stalk at every point~$x \in \sigma$.
\end{lemma}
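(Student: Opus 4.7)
The plan is to exploit two ingredients: the convexity (hence connectedness) of the open cell $|\sigma|$ afforded by the definition of a subanalytic triangulation, and the constancy of $\FF$ on $|\sigma|$ afforded by the subordinate hypothesis. Together these reduce the claim to the elementary fact that a constant sheaf on a connected space has its global sections canonically identified with its stalk at any point.

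In more detail, first I would unwind Definition~\ref{d:subanalytic-triangulation}: a subanalytic triangulation is a homeomorphism $|\Delta|\simto Y$, so each relatively open cell $|\sigma|$ is homeomorphic to the relative interior of an actual geometric simplex in an affine space, which is a convex open set and hence contractible---in particular path-connected and locally path-connected. By Definition~\ref{d:subanalytic-triangulation}(2), subordinacy of the triangulation to~$\FF$ means that the restriction $\FF|_{|\sigma|}$ is (isomorphic to) a constant sheaf on~$|\sigma|$, with some value $V$ that is a finite-dimensional $\kk$-vector space by the constructibility hypothesis.

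Next, for any constant sheaf with value $V$ on a connected, locally connected topological space, the canonical map $\Gamma(\cdot,V_{\text{const}})\to V$ evaluating a locally constant $V$-valued function at a point is an isomorphism; dually, the stalk $V_{\text{const},x}$ at every point~$x$ is canonically~$V$. Applying this to $\FF|_{|\sigma|}$ yields a commutative triangle in which the two arrows from $\Gamma(|\sigma|,\FF)$ and from $\FF_x$ to the common value~$V$ are isomorphisms, so the restriction-to-stalk map $\Gamma(|\sigma|,\FF)\to\FF_x$---defined universally for any sheaf and any point in the domain---is an isomorphism. Naturality in $x\in|\sigma|$ follows because any two points in the connected set $|\sigma|$ pick out the same value of the constant sheaf, and naturality in~$\FF$ is a formal consequence of naturality of the stalk and section functors.

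The only genuine subtlety to check is that ``restricts to a constant sheaf'' in Definition~\ref{d:subanalytic-triangulation}(2) really does mean globally constant on each $|\sigma|$ (not merely locally constant); this is the standard convention and is forced in practice by the connectedness of $|\sigma|$ together with local constancy, so no obstacle arises here. There are no other substantive steps; the lemma is essentially a packaging of definitions together with the elementary behavior of constant sheaves on connected spaces, and its role in the sequel is simply to license identifying ``sections over a cell'' with ``stalk along a cell'' when assembling constant subdivisions from subanalytic triangulations in the proof of Theorem~\ref{t:res}.
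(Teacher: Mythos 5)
The paper gives no proof of this lemma at all: it simply cites \cite[Proposition~8.1.4]{kashiwara-schapira1990}, and the lemma as stated is precisely the $\Gamma(|\sigma|;\FF)\simto\FF_x$ portion of that proposition (the cited result also establishes the further isomorphism from sections over the open star of~$\sigma$, which is the genuinely nontrivial part and is not used here). Your reconstruction---$|\sigma|$ is connected because it is (homeomorphic to) the relative interior of a geometric simplex, subordinacy forces $\FF|_{|\sigma|}$ to be constant, and a constant sheaf on a connected space has global sections canonically identified with each stalk via the evaluation map---is exactly the standard argument for this portion and is correct. The only cosmetic remark is that finite-dimensionality of the stalks plays no role: the argument goes through for weakly constructible sheaves just as well, so invoking constructibility to get finiteness of~$V$ is unnecessary for this particular lemma.
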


The reason for specifically including the piecewise linear (PL)
condition in previous sections is for its application here, as one of
the conjectures is in that setting.  For this purpose, the sheaf
version of this particularly strong type of constructibility is
needed.

\begin{defn}\label{d:PL-constructible}
Fix $Q$ satisfying Hypothesis~\ref{h:full-closed-cone}.
\begin{enumerate}
\item\label{i:stratification}%
A~subanalytic subdivision
(Definition~\ref{d:auxiliary-hypotheses}.\ref{i:subanalytic}) of~$Q$
is \emph{subordinate} to a (derived) sheaf~$\FF$ on~$Q$ if the
restriction of~$\FF$ to every \emph{stratum} (meaning subset in the
subdivision) is constant of finite rank.

\item\label{i:PL-constructible}%
If the subanalytic subdivision~is PL
(Definition~\ref{d:auxiliary-hypotheses}.\ref{i:PL}) and $Q$ is
polyhedral (Definition~\ref{d:face}), then $\FF$ is said to be
\emph{piecewise linear}, abbreviated \emph{PL}.
\end{enumerate}
\end{defn}

\begin{remark}\label{r:PL}
Definition~\ref{d:PL-constructible}.\ref{i:PL-constructible} is not
verbatim the same as \cite[Definition~2.3]{kashiwara-schapira2019},
which only requires $Q$ to be a (nondisjoint) union of finitely
polyhedra on which $\FF$ is constant.  However, the notion of PL
(derived) sheaf thus defined is the same, since any finite union of
polyhedra can be refined to a finite union that is disjoint---that is,
a partition.  This refinement can be done, for example, by
expressing~$Q$ as the union of (relatively open) faces in the
arrangement of all hyperplanes bounding halfspaces defining the given
polyhedra, of which there are only finitely many.
\end{remark}

\subsubsection{Conic and Alexandrov topologies}\label{ssub:topology}

\begin{defn}\label{d:conic-topology}
Fix a real partially ordered group~$Q$ with closed positive
cone~$Q_+$.
\begin{enumerate}
\item%
The \emph{conic topology} on~$Q$ induced by~$Q_+$ (or induced by the
partial order) consists of the upsets that are open in the ordinary
topology on~$Q$.

\item%
The \emph{Alexandrov topology} on~$Q$ induced by~$Q_+$ (or induced by
the partial order) consists of all the upsets in~$Q$.
\end{enumerate}
To avoid confusion when it might occur, write
\begin{enumerate}
\item%
$Q^\con$ for the set~$Q$ with the conic topology induced by~$Q_+$,

\item%
$Q^\ale$\, for the set~$Q$ with the Alexandrov topology induced
by~$Q_+$,~and

\item%
$Q^\ord$ for the set~$Q$ with its ordinary topology.
\end{enumerate}
\end{defn}

\begin{remark}\label{r:conic-topology}
The conic topology in Definition~\ref{d:conic-topology} is also known
as the \emph{$\gamma$-topology}, where $\gamma = Q_+$
\cite{kashiwara-schapira1990, kashiwara-schapira2018,
kashiwara-schapira2019}.  The Alexandrov topology makes just as much
sense on any poset.
\end{remark}

The type of stratification Kashiwara and Schapira specify
\cite[Conjecture~3.17]{kashiwara-schapira2017} is not quite the same
as subanalytic subdivision in
Definition~\ref{d:auxiliary-hypotheses}.\ref{i:subanalytic}.  To be
precise, first recall two standard topological concepts.

\begin{defn}\label{d:locally-closed}
A subset of a topological space~$Q$ is \emph{locally closed} if it is
the intersection of an open subset and a closed subset.  A family of
subsets of~$Q$ is \emph{locally finite} if each compact subset of~$Q$
meets only finitely many members of the family.
\end{defn}

\begin{defn}[{\cite[Definition~3.15]{kashiwara-schapira2017}}]\label{d:stratification}
Fix $Q$ satisfying Hypothesis~\ref{h:full-closed-cone}.
\begin{enumerate}
\item%
A \emph{conic stratification} of a closed subset $S \subseteq Q$ is a
locally finite family of pairwise disjoint subanalytic subsets, called
\emph{strata}, which are locally closed in the conic topology and have
closures whose union is~$S$.

\item%
The stratification is \emph{subordinate} to a (derived) sheaf~$\FF$
on~$Q$ if $S$ equals the support of~$\FF$ and the restriction of (each
homology sheaf of)~$\FF$ to every stratum is locally constant of
finite~rank.
\end{enumerate}
\end{defn}

\begin{remark}\label{r:stratification}
A conic stratification is called a $\gamma$-stratification in
\cite[Definition~3.15]{kashiwara-schapira2017}, with $\gamma = Q_+$.
The only differences between conic stratification and subanalytic
partition of a subset~$S$ in
Definition~\ref{d:auxiliary-hypotheses}.\ref{i:subanalytic} are that
\begin{itemize}
\item%
conic stratifications are only required to be locally finite, not
necessarily finite;

\item%
conic strata are required to be locally closed in the conic topology
(that is, an intersection of an open upset in~$Q^\ord$ with a closed
downset in~$Q^\ord$); and

\item%
the union need not actually equal all of~$S$, because the only the
union of the stratum closures is supposed to equal~$S$.
\end{itemize}
\end{remark}

\begin{prop}\label{p:stalks}
Fix a real partially ordered group~$Q$ with closed
positive~cone~$Q_+$.
\begin{enumerate}
\item\label{i:identity-continuous}%
The identity on~$Q$ yields continuous maps of topological spaces
$$%
  \iota: Q^\ord \to Q^\con
  \qquad\text{and}\qquad
  \jmath: Q^\ale \to Q^\con.
$$

\item\label{i:ord-pulled-back}%
Any sheaf~$\FF$ on~$Q^\ord$ pulled back from $Q^\con$ has natural maps
$$%
  \FF_q \to \FF_{q'}\text{ for }q \preceq q'\text{ in }Q
$$
on stalks that functorially define a $Q$-module $\bigoplus_{q\in Q}
\FF_q$.

\item\label{i:alexdrov=mod}%
Similarly, any sheaf~$\GG$ on~$Q^\ale$ has natural maps
$$%
  \GG_q \to \GG_{q'}\text{ for }q \preceq q'\text{ in }Q
$$
on stalks that functorially define a $Q$-module $\bigoplus_{q\in Q}
\GG_q$.  This functor from sheaves on~$Q^\ale$ to $Q$-modules is an
equivalence of categories.

\item\label{i:same-sheaf}%
If sheaves $\FF$ on~$Q^\ord$ and~$\GG$ on $Q^\ale$ are both pulled
back from the same sheaf $\EE$ on~$Q^\con$, then the $Q$-modules in
items~\ref{i:ord-pulled-back} and~\ref{i:alexdrov=mod} are the same.

\item\label{i:exact-pushforward}%
The pushforward functor $\jmath_*$ is exact, and $\jmath_* \jmath^{-1}
\EE \cong \EE$.
\end{enumerate}
\end{prop}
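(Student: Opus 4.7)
My plan is to address the five parts in order: Parts~\ref{i:identity-continuous}--\ref{i:same-sheaf} are essentially formal consequences of the definitions and of general facts about inverse image functors, while Part~\ref{i:exact-pushforward} requires a genuine analysis of the interaction between the Alexandrov and conic topologies.

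For Part~\ref{i:identity-continuous}, I unwind definitions: every conic-open set is both open in the ordinary topology (giving continuity of $\iota$) and an upset, hence Alexandrov-open (giving continuity of $\jmath$). For Part~\ref{i:ord-pulled-back}, the standard fact that $\iota^{-1}$ preserves stalks gives $\FF_q = \EE_q = \varinjlim_{V \ni q \text{ conic-open}} \EE(V)$. The key observation, which recurs throughout the proof, is that since every conic-open is an upset, any conic-open $V$ containing $q$ also contains every $q' \succeq q$. A germ at $q$ represented by $\sigma \in \EE(V)$ therefore simultaneously represents a germ at $q'$, giving the natural map $\EE_q \to \EE_{q'}$; functoriality in chains $q \preceq q' \preceq q''$ is immediate because the same section $\sigma$ serves at every element. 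For Part~\ref{i:alexdrov=mod}, the smallest Alexandrov-open containing $q$ is $q + Q_+$, so $\GG_q = \GG(q + Q_+)$, with transition map $\GG_q \to \GG_{q'}$ given by restriction along $q' + Q_+ \subseteq q + Q_+$; the equivalence of categories is realized by the inverse functor sending a $Q$-module $M$ to the Alexandrov sheaf with sections $\GG(U) = \varprojlim_{p \in U} M_p$ on each Alexandrov-open $U$, which is automatically a sheaf since every Alexandrov open cover admits the covered set itself as a refinement. Part~\ref{i:same-sheaf} then follows because both $\iota^{-1}\EE$ and $\jmath^{-1}\EE$ have the same stalks---namely the conic stalk $\EE_q$---with identical transition maps constructed as in Part~\ref{i:ord-pulled-back}.

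For Part~\ref{i:exact-pushforward}, the isomorphism $\jmath_*\jmath^{-1}\EE \cong \EE$ is direct: for any conic-open $V$, one has $(\jmath_*\jmath^{-1}\EE)(V) = (\jmath^{-1}\EE)(V) = \varinjlim_{V' \supseteq V, V' \text{ conic-open}} \EE(V') = \EE(V)$, since $V$ is itself the minimal element of that colimit system. This shows $\jmath^{-1}$ is fully faithful, and consequently $\jmath_*$ is automatically left exact as a right adjoint, so the real content is surjectivity on conic stalks. Given a short exact sequence $0 \to \GG' \to \GG \to \GG'' \to 0$ on $Q^{\ale}$, I would verify that $(\jmath_*\GG)_v \to (\jmath_*\GG'')_v$ is surjective for each $v \in Q$ using the cofinal family of conic-open neighborhoods $V_u = v - u + Q_+^\circ$ parameterized by $u \in Q_+^\circ$, whose intersection equals $v + Q_+$ by the closedness and fullness of $Q_+$ granted by Hypothesis~\ref{h:full-closed-cone}. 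Through the equivalence of Part~\ref{i:alexdrov=mod}, each $\GG(V_u)$ identifies with $\varprojlim_{p \in V_u}\GG_p$, and the obstruction to lifting a section in $\GG''(V_u)$ to $\GG(V_u)$ lives in $\lim^1$ of $\GG'$ on the poset $V_u$. The hard part, which I expect to be the main obstacle of the entire proof, is showing that these obstructions vanish in the filtered colimit over $u$: the limiting upset $v + Q_+$ has $v$ as its minimum and hence trivial higher limits for any functor, and the plan is to upgrade this to vanishing of $\varinjlim_u \lim^1_{V_u}$ via a direct \v{C}ech-type argument exploiting the specific shape of the $V_u$ relative to $v+Q_+$ that the closed, full, subanalytic structure of $Q_+$ makes possible.
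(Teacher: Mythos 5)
Parts~\ref{i:identity-continuous}--\ref{i:same-sheaf} are handled essentially as in the paper (which disposes of \ref{i:identity-continuous}, \ref{i:ord-pulled-back}, \ref{i:same-sheaf} by the observations you make and cites Curry for~\ref{i:alexdrov=mod}); a small slip is the claim that the functor from $Q$-modules to Alexandrov presheaves ``is automatically a sheaf since every Alexandrov open cover admits the covered set itself as a refinement,'' which is false as stated --- the correct reason is that the cover by minimal opens $\{p + Q_+ : p \in U\}$ refines every Alexandrov cover of~$U$.

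Part~\ref{i:exact-pushforward} contains two genuine gaps. First, in $\jmath_*\jmath^{-1}\EE \cong \EE$ you write $(\jmath^{-1}\EE)(V) = \varinjlim_{V' \supseteq V}\EE(V') = \EE(V)$, but the middle term is the \emph{presheaf} pullback; $\jmath^{-1}\EE$ is its sheafification, and by Part~\ref{i:alexdrov=mod} its sections over~$V$ are $\varprojlim_{p\in V}\EE_p$, which is \emph{a priori} only receiving an injection from $\EE(V)$. The equality you want is in fact a consequence of the very isomorphism you are trying to prove, so the argument as written is circular; the paper instead checks that the unit $\EE \to \jmath_*\jmath^{-1}\EE$ is an isomorphism on stalks. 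Second and more seriously, for exactness you correctly locate the obstruction to lifting a section of $\GG''(V_u)$ in a $\lim^1$ of the subsheaf over the poset~$V_u$, but you leave its vanishing as a ``plan'' rather than a proof, and the proposed \v{C}ech argument is never carried out. In fact no $\lim^1$ analysis is needed: given a conic-open $V \ni v$, pick $u \in Q_+^\circ$ with $v - u + Q_+^\circ \subseteq V$, set $w = v - u/2$, and note the chain of Alexandrov-opens
$$
  V' := w + Q_+^\circ \ \subseteq\ w + Q_+ \ \subseteq\ V,
$$
with $V'$ conic-open and $v \in V'$. A section $s' \in \GG''(V)$ restricts to $\GG''(w + Q_+) = \GG''_w$, lifts to $\GG_w = \GG(w + Q_+)$ by surjectivity on Alexandrov stalks (here the existence of a minimum $w$ makes the lift free), and restricting that lift to $V'$ gives the required germ. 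This interposition of the principal upset $w + Q_+$ between two conic-opens is the idea both your sketch and the paper's very terse ``because direct limits are exact'' leave implicit; without it (or the cited Berkouk--Petit lemma) Part~\ref{i:exact-pushforward} is not established.
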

\begin{proof}
The maps in item~\ref{i:identity-continuous} are continuous by
definition: the inverse image of any open set is open because the
ordinary topology refines each of the target topologies.

For item~\ref{i:ord-pulled-back}, if $\FF = \iota^{-1}\EE$ is pulled
back to~$Q^\ord$ from a sheaf $\EE$ on~$Q^\con$, then $\FF$ has the
same stalks as~$\EE$ (as a sheaf pullback in any context does), so the
natural morphisms are induced by the restriction maps of~$\EE$ from
open neighborhoods of~$q$~to~those~of~$q'$.

The result in~\ref{i:alexdrov=mod} holds for arbitrary posets; for an
exposition in a context relevant to persistence, see
\cite[Theorem~4.2.10 and Remark~4.2.11]{curry-thesis} and
\cite{curry2019}.

For item~\ref{i:same-sheaf}, the stalks $\FF_q = \EE_q = \GG_q$ are
the same.

For item~\ref{i:exact-pushforward}, exactness is proved in passing in
the proof of \cite[Lemma~3.5]{berkouk-petit2019}, but it is also
elementary to check that a surjection $\GG \onto \GG'$ of sheaves
on~$Q^\ale$ yields a surjection of stalks for the pushforwards
to~$Q^\con$ because direct limits (filtered colimits) are exact.  That
$\jmath_* \jmath^{-1} \EE \cong \EE$ is because the natural morphism
is the identity on stalks.
\end{proof}

\subsubsection{Conic microsupport}\label{ssub:microsupport}\mbox{}

\noindent
The \emph{microsupport} of a (derived) sheaf on an analytic
manifold~$X$ is a certain closed conic isotropic subset of the
cotangent bundle $T^*X$.  The notion of microsupport is a central
player in \cite{kashiwara-schapira1990}, to which the reader is
referred for background on the topic.  However, although the main
result in this section (Theorem~\ref{t:res}) is stated in terms of
microsupport, the next theorem allows the reader to ignore it
henceforth, as pointed out by Kashiwara and Schapira themselves
\cite[Remark~1.9]{kashiwara-schapira2018}, by immediately translating
to the more elementary context of sheaves in the conic topology in
Section~\ref{ssub:topology}.

\begin{thm}[{\cite[Theorem~1.5 and Corollary~1.6]{kashiwara-schapira2018}}]\label{t:microsupport}
Fix $Q$ satisfying Hypothesis~\ref{h:full-closed-cone}.  The
pushforward $\iota_*$ of the map~$\iota$ from
Proposition~\ref{p:stalks}.\ref{i:identity-continuous} induces an
equivalence from the category of sheaves with microsupport contained
in the negative polar cone~$Q_+^\vee$ to the category of sheaves in
the conic topology.  The pullback~$\iota^{-1}$ is a quasi-inverse.
The same assertions hold for the bounded derived categories.
\end{thm}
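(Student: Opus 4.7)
The plan is to realize the desired equivalence as the restriction of the adjoint pair $(\iota^{-1}, \iota_*)$ to the full subcategory of sheaves on $Q^\ord$ whose microsupport lies in $Q_+^\vee$. The crucial tool is the Kashiwara--Schapira microsupport propagation theorem \cite[\S5.2]{kashiwara-schapira1990}, which translates the condition $SS(\FF) \subseteq Q_+^\vee$ into the concrete statement that restriction maps $\Gamma(U, \FF) \to \Gamma(U - v, \FF)$ are isomorphisms for $v$ in the interior of $Q_+$ and suitable open sets $U$; this is precisely the conic-constancy property enjoyed by pullbacks from $Q^\con$.

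First I would verify that $\iota^{-1}$ of any conic sheaf $\EE$ has microsupport in $Q_+^\vee$. Each open set in the conic topology is stable under addition by $Q_+$, so the restriction maps $\Gamma(U, \EE) \to \Gamma(U-v, \EE)$ for $v \in Q_+$ are identities, forcing $SS(\iota^{-1}\EE) \subseteq Q_+^\vee$ upon unwinding the definition of microsupport. Dually, the unit $\EE \to \iota_*\iota^{-1}\EE$ is an isomorphism essentially by construction: $\iota$ is the identity on underlying points (Proposition~\ref{p:stalks}.\ref{i:identity-continuous}), every conic open is ordinary open, and both sheaves assign the same sections to any conic open set.

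The main obstacle will be proving the counit $\iota^{-1}\iota_*\FF \to \FF$ is an isomorphism whenever $SS(\FF) \subseteq Q_+^\vee$. Checking on stalks at $q \in Q$, the left-hand side is the filtered colimit $\dirlim \Gamma(U, \FF)$ over conic opens $U \ni q$, while the right-hand side is the analogous colimit over all ordinary opens. Hypothesis~\ref{h:full-closed-cone} is essential here: closedness, fullness, and subanalyticity of $Q_+$ guarantee that the ordinary-open neighborhoods of~$q$ obtained as $V + Q_+$ for bounded opens $V \ni q$ are conic and form a cofinal family. Propagation then identifies the sections over such a conic neighborhood with those over an arbitrary ordinary open neighborhood, yielding the required stalk isomorphism and hence the counit isomorphism.

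For the bounded derived category version, I would establish that $\iota_*$ is exact on the subcategory with microsupport in $Q_+^\vee$, so that $R\iota_* = \iota_*$ there. Higher direct image vanishing reduces, by computing $R^i\iota_*\FF$ on stalks, to vanishing of \v{C}ech cohomology of $\FF$ on cofinal systems of conic opens; propagation forces these groups to collapse to the stalk contributions already handled. With $\iota_*$ exact on the subcategory and $\iota^{-1}$ always exact, the equivalence of abelian categories promotes triangle-exactly to a quasi-inverse pair of equivalences between $D^b(Q^\con)$ and the full triangulated subcategory of $D^b(Q^\ord)$ whose cohomology sheaves have microsupport in $Q_+^\vee$.
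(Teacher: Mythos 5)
This theorem is stated in the paper with a citation to Kashiwara--Schapira \cite[Theorem~1.5 and Corollary~1.6]{kashiwara-schapira2018}; the paper gives no proof of its own, so there is nothing internal to compare your argument against. You have in effect reconstructed the KS argument from the microlocal sheaf theory toolkit, which is a reasonable exercise but not something the paper attempts.

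As an outline of the KS proof, your sketch is on the right track: the unit $\EE \to \iota_*\iota^{-1}\EE$ is essentially formal (both sheaves have the same sections on conic opens), the counit is where the microsupport hypothesis does the work via propagation, and the derived statement then hinges on showing $R^i\iota_*$ vanishes for $i>0$ on the subcategory. Two cautions, though. First, the sentence claiming that the sets $V+Q_+$ ``form a cofinal family'' is ambiguous and, read literally as cofinality among ordinary-open neighborhoods of $q$, is false: a conic open is never contained in a small ball. What you actually need is that $\{V+Q_+ : V \text{ bounded open}, q\in V\}$ is cofinal among conic opens containing~$q$ (easy, and uses none of the cone hypotheses), and then the noncharacteristic deformation / propagation results of \cite[\S2.7, \S5.2]{kashiwara-schapira1990} give $\Gamma(V+Q_+,\FF)\simto\Gamma(V,\FF)$ (more precisely $R\Gamma$, after shrinking $V$), so the two colimits match up term by term. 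You should phrase it that way rather than as a cofinality statement between incomparable families. Second, subanalyticity of $Q_+$ is not among the hypotheses used in \cite[Theorem~1.5]{kashiwara-schapira2018} --- that theorem needs only a closed proper convex cone with nonempty interior --- so invoking subanalyticity in this step overstates what is required; the subanalytic hypothesis is in Hypothesis~\ref{h:full-closed-cone} for the constructibility results elsewhere in Section~\ref{s:derived}, not for the $\iota_*/\iota^{-1}$ equivalence. Finally, the exactness-of-$\iota_*$ paragraph is the genuinely hard part and is left as a gesture; if you intend this to be a proof rather than a plan, that is where the real content of the propagation theorem has to be deployed carefully.
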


\begin{remark}\label{r:push-and-pull}
The pushforward $\iota_*$ and the pullback $\iota^{-1}$ have concrete
geometric descriptions.  Since $\iota$ is the identity on~$Q$, the
pushforward of a sheaf~$\FF$ on $Q$ has~sections
$$%
  \Gamma(U, \iota_*\FF) = \Gamma(U,\FF)
$$
for any open upset~$U$, where ``open upset'' means the same things as
``upset that is open in the usual topology'' and ``subset that is open
in the conic topology''.  On the other hand, over any convex
ordinary-open set~$\OO$, the pullback to the ordinary topology of a
sheaf~$\EE$ in the conic topology has sections
$$%
  \Gamma(\OO,\iota^{-1}\EE) = \Gamma(\OO + Q_+, \EE),
$$
namely the sections of~$\EE$ over the upset generated by~$\OO$
\cite[(3.5.1)]{kashiwara-schapira1990}.
\end{remark}

\begin{remark}\label{r:microsupport}
What Theorem~\ref{t:microsupport} does in practice is allow a given
(derived) sheaf with microsupport contained in the negative polar
cone~$Q_+^\vee$ to be replaced with an isomorphic object that is
pulled back from the conic topology induced by the partial order.  The
reason for mentioning the notion of microsupport at all is to
emphasize that constructibility in the sense of
Definition~\ref{d:constructible-sheaf} requires the ordinary topology.
This may seem a fine distinction, but the conjectures of Kashiwara and
Schapira proved in Section~\ref{sub:strat} entirely concern the
transition from the ordinary to the conic topology, so it is crucial
to be clear on this point.
\end{remark}

\subsection{Resolutions of constructible sheaves}\label{sub:res}

\begin{defn}\label{d:flasque}
Fix $Q$ satisfying Hypothesis~\ref{h:full-closed-cone}.
\begin{enumerate}
\item%
A \emph{subanalytic upset sheaf} on~$Q$ is the extension by zero of
the rank~$1$ constant sheaf on an open subanalytic upset in~$Q^\ord$.

\item%
A \emph{subanalytic downset sheaf} on~$Q$ is the pushforward of the
rank~$1$ locally constant sheaf on a closed subanalytic downset
in~$Q^\ord$.

\item%
A \emph{subanalytic upset resolution} of a complex~$\FF^\spot$ of
sheaves on~$Q^\ord$ is a homomorphism $\UU^\spot \to \FF^\spot$ of
complexes inducing an isomorphism on homology, with each $\UU^i$ being
a direct sum of subanalytic upset sheaves.

\item%
A \emph{subanalytic downset resolution} of a complex~$\FF^\spot$ of
sheaves on~$Q^\ord$ is a homomorphism $\FF^\spot \to \DD^\spot$ of
complexes inducing an isomorphism on homology, with each each $\DD^i$
being a direct sum of subanalytic downset sheaves.
\end{enumerate}
Either type of resolution is
\begin{itemize}
\item%
\emph{finite} if the total number of summands across all homological
degrees is finite;

\item%
\emph{PL} if $Q$ is polyhedral and the upsets or downsets are PL.
\end{itemize}
\end{defn}

\begin{prop}\label{p:upset}
Fix an upset $U$ in a real partially ordered group~$Q$ with closed
positive cone.  If $U^\circ$ is the interior of~$U$ in~$Q^\ord$, then
the sheaves on~$Q^\ale$ corresponding to~$\kk[U]$ and $\kk[U^\circ]$
push forward to the same sheaf on~$Q^\con$.
\end{prop}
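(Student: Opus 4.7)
The plan is to show that the inclusion $\kk[U^\circ] \hookrightarrow \kk[U]$ of $Q$-modules, which corresponds via the equivalence in Proposition~\ref{p:stalks}.\ref{i:alexdrov=mod} to a morphism of sheaves on $Q^\ale$, induces an isomorphism after pushforward along $\jmath$. Since every open set in $Q^\con$ is an open upset $V \subseteq Q^\ord$, it suffices to prove bijectivity of the induced map on sections over each such $V$. Under the equivalence, the sections over an upset $V$ of the sheaf corresponding to a $Q$-module $M$ are canonically $\Hom_Q(\kk[V], M)$, and Proposition~\ref{p:U->D}.\ref{i:U'->U} identifies this with $\kk^{I(V, W)}$ when $M = \kk[W]$ is an upset module, where $I(V, W) = \{S \in \pi_0 V : S \subseteq W\}$. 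The induced map on sections is then the canonical inclusion $\kk^{I(V, U^\circ)} \hookrightarrow \kk^{I(V, U)}$.

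Hence the problem reduces to the set-theoretic claim $I(V, U) = I(V, U^\circ)$ for every open upset $V \subseteq Q^\ord$. Since $U^\circ \subseteq U$, one containment is immediate; for the reverse, it suffices to show that every poset-connected component $S \in \pi_0 V$ is itself open in $Q^\ord$, for then $S \subseteq U$ with $S$ open forces $S \subseteq U^\circ$. To prove this openness, use that $Q_+$ has nonempty interior (as a hypothesis of the proposition), fixing $v$ in the interior of $Q_+$ and $\epsilon > 0$ with $v + B_\epsilon(0) \subseteq Q_+$, where $B_\epsilon(0)$ is the open ball of radius $\epsilon$ about the origin. Given $q \in S$, shrink $\epsilon$ further so that $q + B_\epsilon(0) \subseteq V$, which is possible because $V$ is open. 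For any $q' \in q + B_\epsilon(0)$, setting $\delta = q' - q$ yields $v + \delta \in Q_+$, so $q + v + \delta = q' + v \succeq q$; together with $q' + v \succeq q'$, this gives a zigzag $q \preceq q' + v \succeq q'$ entirely in $V$ (the midpoint lying in $V$ because $V$ is an upset containing $q'$). Thus $q'$ shares its poset-connected component with $q$, so $q + B_\epsilon(0) \subseteq S$.

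The main obstacle is establishing this openness of poset-connected components, which leans essentially on $Q_+$ having nonempty interior: without that hypothesis, open upsets can have non-open components, and the containment $S \subseteq U$ need not promote to $S \subseteq U^\circ$. Once the openness is in hand, naturality in $V$ of the identifications promotes the pointwise bijections to the desired isomorphism of sheaves on $Q^\con$.
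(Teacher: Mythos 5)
Your argument is correct, and it takes a genuinely different route from the paper. The paper computes the \emph{stalks} of the pushforward to~$Q^\con$: the stalk at~$q$ is the direct limit over $p \in q - Q_+^\circ$ of the sections over the conic neighborhoods $p + Q_+^\circ$, each of which is automatically poset-connected (so no reasoning about~$\pi_0$ is needed), and this limit detects only whether $q$ lies in the interior of the upset; since $U$ and~$U^\circ$ have the same interior, the two pushforwards have identical stalks. You instead compute \emph{sections} over an arbitrary conic-open set~$V$, which forces you to understand~$\pi_0 V$; the payoff is your clean lemma that every poset-connected component of an open upset is open in~$Q^\ord$ once $Q_+$ has nonempty interior, a geometric fact the paper never makes explicit. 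Both arguments rely on fullness of~$Q_+$ (yours via the zigzag $q \preceq q' + v \succeq q'$ with $v \in Q_+^\circ$, the paper's via the neighborhood basis $\{p + Q_+^\circ\}$); you were right to flag this, and it is supplied by the section's blanket Hypothesis~\ref{h:full-closed-cone} even though the proposition's own statement omits the word ``full.'' The stalk computation is somewhat shorter; your sections-based version makes the interaction between order-theoretic connectivity and the ordinary topology more transparent.
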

\begin{proof}
The stalk at~$q$ of any sheaf on~$Q^\con$ is the direct limit over
points $p \in q - Q_+^\circ$ of the sections over $p + Q_+^\circ$.  In
the case of the pushforward of the sheaf on~$Q^\ale$ corresponding to
an upset module, these sections are~$\kk$ if $p$ lies interior to the
upset and~$0$ otherwise.  The result holds because the upsets $U$
and~$U^\circ$ have the same interior, namely~$U^\circ$.
\end{proof}

\begin{prop}\label{p:downset}
Fix a downset $D$ in a real partially ordered group~$Q$ with closed
positive cone.  If $\oD$ is the closure of $D$ in~$Q^\ord$, then the
sheaves on~$Q^\ale$ corresponding to~$\kk[D]$ and $\kk[\oD]$ push
forward to the same sheaf on~$Q^\con$.
\end{prop}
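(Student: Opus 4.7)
The plan is to deduce the proposition from Proposition~\ref{p:upset} by passing to complementary upsets and invoking the exactness of $\jmath_*$ (Proposition~\ref{p:stalks}.\ref{i:exact-pushforward}). First, two elementary observations. The closure $\oD$ is itself a downset: if $q \in \oD$ is the limit of a sequence $d_n \in D$ and $q' \preceq q$, then $d_n + (q'-q) \in D$ (since $D$ is a downset and $q'-q \in -Q_+$) converges to~$q'$, so $q' \in \oD$. Writing $U = Q \minus D$ for the complementary upset, its ordinary-topology interior $U^\circ$ equals $Q \minus \oD$: a point admits an ordinary-open neighborhood disjoint from~$D$ exactly when it is not a limit of~$D$.

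The main step will be to fit $\kk[D]$ and $\kk[\oD]$ into parallel short exact sequences with $\kk[Q]$,
\begin{align*}
  0 &\to \kk[U^\circ] \to \kk[Q] \to \kk[\oD] \to 0,\\
  0 &\to \kk[U] \to \kk[Q] \to \kk[D] \to 0,
\end{align*}
together with a morphism of short exact sequences induced by the inclusion $U^\circ \subseteq U$, whose middle arrow is the identity on~$\kk[Q]$ and whose right arrow is the induced surjection $\kk[\oD] \onto \kk[D]$. Under the equivalence of Proposition~\ref{p:stalks}.\ref{i:alexdrov=mod} these become short exact sequences of sheaves on~$Q^\ale$, and applying the exact functor $\jmath_*$ transports the diagram to~$Q^\con$. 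Proposition~\ref{p:upset}, applied to~$U$ and~$U^\circ$, makes the left vertical arrow an isomorphism, while the middle is the identity; the five-lemma then forces the right vertical arrow---from the pushforward of the Alexandrov sheaf for $\kk[\oD]$ to that for $\kk[D]$---to be an isomorphism as well.

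I do not anticipate any real obstacle: once the two preliminary observations are in place, everything reduces by formal diagram-chasing to Proposition~\ref{p:upset}. The closedness of~$Q_+$ enters only through its use in Proposition~\ref{p:upset} itself; the argument here is otherwise purely order-theoretic and sheaf-theoretic.
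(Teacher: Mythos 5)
Your proposal is correct, and it takes a genuinely different route from the paper. The paper's own proof directly mimics the stalk computation in the proof of Proposition~\ref{p:upset}: it asserts that for the Alexandrov sheaf attached to a downset module $\kk[D]$, the sections over $p + Q_+^\circ$ are $\kk$ when $p$ lies interior to~$D$ and $0$ otherwise, and then observes that $D$ and $\oD$ have the same interior. Your argument instead \emph{reduces} Proposition~\ref{p:downset} to Proposition~\ref{p:upset} formally: after noting that $\oD$ is a downset and $Q \minus \oD = (Q\minus D)^\circ = U^\circ$, you fit $\kk[D]$ and $\kk[\oD]$ into the two parallel short exact sequences with $\kk[Q]$ in the middle, transport the resulting morphism of short exact sequences through the equivalence of Proposition~\ref{p:stalks}.\ref{i:alexdrov=mod} and the exact pushforward $\jmath_*$ of Proposition~\ref{p:stalks}.\ref{i:exact-pushforward}, and then appeal to the five lemma. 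The formal route buys something: it sidesteps the slightly delicate verification implicit in the paper's stalk claim, namely that $(p + Q_+^\circ) \cap D$ is connected so the section space is $\kk$ rather than a larger product, and it also sidesteps the (true but unproved in the paper) identity $D^\circ = \oD^\circ$. What it costs is the need to observe that the map $\jmath_*\kk[U^\circ]^\ale \to \jmath_*\kk[U]^\ale$ induced by the inclusion is the isomorphism asserted in Proposition~\ref{p:upset}---a small point, but one worth a sentence, since the proposition's statement says only that the pushforwards are ``the same sheaf'' without naming the map; the stalk computation in that proof does show the natural map is a stalkwise isomorphism, so your use of it is justified.
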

\begin{proof}
Calculating stalks as in the previous proof, in the case of the
pushforward of the sheaf on~$Q^\ale$ corresponding to a downset
module, the sections over $p + Q_+^\circ$ are~$\kk$ if $p$ lies
interior to the downset and~$0$ otherwise.  The result holds because
the downsets $D$ and~$\oD$ have the same interior.
\end{proof}

\begin{remark}\label{r:interior}
The fundamental difference between Alexandrov and conic topologies
reflected by\hspace{-.9pt} Propositions~\ref{p:upset}
and~\ref{p:downset} is explored in detail by\hspace{-.9pt} Berkouk
and~\mbox{Petit}~\cite{berkouk-petit2019}.
\end{remark}

Here is the main result of Section~\ref{s:derived}.  It is little more
than a restatement of the relevant part of
Theorem~\ref{t:syzygy-complexes} in the language of sheaves.

\begin{thm}\label{t:res}
Fix $Q$ satisfying Hypothesis~\ref{h:full-closed-cone}.  If
$\FF^\spot$ is a complex of compactly supported constructible sheaves
on~$Q^\ord$ with microsupport in the negative polar cone~$Q_+^\vee$
then $\FF^\spot$ has a finite subanalytic upset resolution and a
finite subanalytic downset resolution.  If $Q$ is polyhedral and
$\FF^\spot$ is PL, then $\FF^\spot$ has PL such resolutions.
\end{thm}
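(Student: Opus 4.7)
The plan is to shuttle $\FF^\spot$ from the ordinary topology into the Alexandrov topology (where sheaves are $Q$-modules), apply the syzygy theorem for complexes, and then shuttle the resulting indicator resolutions back. First I would invoke Theorem~\ref{t:microsupport}: since $\FF^\spot$ has microsupport in $Q_+^\vee$, up to quasi-isomorphism I may replace $\FF^\spot$ by $\iota^{-1}\iota_*\FF^\spot$, so without loss of generality $\FF^\spot = \iota^{-1}\EE^\spot$ for a complex $\EE^\spot$ of constructible sheaves on $Q^\con$. Pulling back further via the continuous map $\jmath : Q^\ale \to Q^\con$ of Proposition~\ref{p:stalks}.\ref{i:identity-continuous} produces $\GG^\spot = \jmath^{-1}\EE^\spot$, a complex of sheaves on the Alexandrov space; Proposition~\ref{p:stalks}.\ref{i:alexdrov=mod} then turns $\GG^\spot$ into a bounded complex $\cM^\spot$ of $Q$-modules whose stalks agree with those of $\FF^\spot$ by Proposition~\ref{p:stalks}.\ref{i:same-sheaf}.

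Next I would verify that $\cM^\spot$ is a tame complex in the sense of Definition~\ref{d:tame-complex}, and in fact subanalytic (or PL, in the polyhedral case). The constructibility of $\FF^\spot$ supplies a subanalytic triangulation $|\Delta| \simto \mathrm{supp}(\FF^\spot)$ subordinate to it; by Lemma~\ref{l:tri-constant}, on each relatively open simplex the stalks of each homology sheaf are identified canonically with the sections, and this identification is compatible with the structure morphisms on stalks induced by inclusions of points. Refining this triangulation into a subanalytic partition of all of $Q$ (by grouping the complement of the support as a single unbounded stratum, which is subanalytic since $\FF^\spot$ has compact support) gives a finite subanalytic constant subdivision subordinate to every module and morphism in $\cM^\spot$. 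In the PL case, the triangulation may be chosen PL, and the partition is automatically PL.

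Now apply Theorem~\ref{t:syzygy-complexes}: $\cM^\spot$ admits a finite subanalytic upset resolution $F_\spot \to \cM^\spot$ and a finite subanalytic downset resolution $\cM^\spot \to E^\spot$, with PL versions available in the PL case (the compact-support hypothesis in Theorem~\ref{t:syzygy-complexes} is exactly what I verified above). Each summand $\kk[U]$ of an $F_i$ transports under the equivalence of Proposition~\ref{p:stalks}.\ref{i:alexdrov=mod} to the Alexandrov sheaf associated to the upset~$U$, and its pushforward $\jmath_*$ to $Q^\con$ equals the pushforward of $\kk[U^\circ]$ by Proposition~\ref{p:upset}; pulling back via $\iota^{-1}$ from $Q^\con$ to $Q^\ord$ then yields the subanalytic upset sheaf of Definition~\ref{d:flasque}, namely the extension by zero of the constant sheaf on the open upset $U^\circ$. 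Dually, each downset summand $\kk[D]$ passes via Proposition~\ref{p:downset} to the pushforward of the constant sheaf on the closure $\oD$. Since $\jmath_*$ is exact (Proposition~\ref{p:stalks}.\ref{i:exact-pushforward}) and $\iota^{-1}$ is exact (as a sheaf pullback), applying $\iota^{-1}\jmath_*$ termwise converts the module-level resolutions into finite subanalytic (resp.\ PL) upset and downset resolutions of $\iota^{-1}\jmath_*\GG^\spot \cong \iota^{-1}\EE^\spot = \FF^\spot$ in the sense of Definition~\ref{d:flasque}.

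The step I expect to be most delicate is the translation between module-level indicator summands and sheaf-level subanalytic up/down-set sheaves, because tame poset modules are insensitive to whether an upset contains its lower boundary whereas the sheaf-theoretic definition of a subanalytic upset sheaf insists on an \emph{open} upset (and dually \emph{closed} downset). Propositions~\ref{p:upset} and~\ref{p:downset} are precisely the tool that bridges this gap, flattening each upset to its interior and each downset to its closure in the conic topology, so the core task is to check that these replacements preserve (i)~the subanalytic or PL class of each upset/downset and (ii)~the connectedness of component morphisms required to make the resulting sheaf maps well defined. Both follow because taking interiors and closures preserves the subanalytic/PL class and, under the exact functor $\iota^{-1}\jmath_*$, the connected scalar entries in the monomial matrices of Definition~\ref{d:resolution-monomial-matrix} descend to scalar multiples of the canonical maps between the corresponding subanalytic indicator sheaves, finishing the proof.
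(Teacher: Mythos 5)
Your proposal follows the same route as the paper's proof: translate $\FF^\spot$ to the conic topology via Theorem~\ref{t:microsupport}, form the corresponding $Q$-module complex, tame it by a finite subanalytic triangulation (using compact support to get finiteness and Lemma~\ref{l:tri-constant} to get constancy), apply Theorem~\ref{t:syzygy-complexes}, push forward along the exact functor $\jmath_*$, adjust upsets/downsets to their interiors/closures via Propositions~\ref{p:upset}--\ref{p:downset}, and pull back to $Q^\ord$. The only cosmetic difference is that the paper builds the $Q$-module directly from $\FF^\spot$ via Proposition~\ref{p:stalks}.\ref{i:ord-pulled-back} rather than passing through $\jmath^{-1}\EE^\spot$; by Proposition~\ref{p:stalks}.\ref{i:same-sheaf} these yield the same module, and the worry in your last paragraph about preserving connectedness is moot since $\jmath_*\kk[U]$ and $\jmath_*\kk[U^\circ]$ are literally identical sheaves on $Q^\con$, so the morphisms carry over automatically.
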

\begin{proof}
Using Theorem~\ref{t:microsupport}, assume that $\FF^\spot$ is pulled
back to~$Q^\ord$ from~$Q^\con$, say $\FF^\spot = \iota^{-1}\EE^\spot$.
Since $\FF^\spot$ has compact support, any subordinate subanalytic
triangulation (Definition~\ref{d:subanalytic-triangulation}) afforded
by Definition~\ref{d:constructible-sheaf} is necessarily finite
because it is locally finite.  The complex $F^\spot = \bigoplus_{q\in
Q} \FF^\spot_q$ of $Q$-modules that comes from
Proposition~\ref{p:stalks}.\ref{i:ord-pulled-back} is tamed by the
triangulation, which is a constant subdivision
(Definition~\ref{d:constant-subdivision}) because
\begin{itemize}
\item%
simplices are connected, so locally constant sheaves on them are
constant, and

\item%
$\Gamma(|\sigma_p|,\FF^i) \to \FF^i_p \to \FF^i_q \from
\Gamma(|\sigma_q|,\FF^i)$ is locally constant---and hence constant, as
simplices are connected---when $p \preceq q$ in~$Q$.  Here $\sigma_x$
is the simplex containing~$x$, the middle arrow is from
Proposition~\ref{p:stalks}.\ref{i:ord-pulled-back}, and the outer
arrows are the natural isomorphisms from Lemma~\ref{l:tri-constant}.
\end{itemize}
Hence the complex $F^\spot$ of $Q$-modules has resolutions of the
desired sort by Theorem~\ref{t:syzygy-complexes}.  Viewing any of
these resolutions as a complex of sheaves on~$Q^\ale$ via
Proposition~\ref{p:stalks}.\ref{i:alexdrov=mod}, push it forward from
the Alexandrov topology to the conic topology via the exact
functor~$\jmath_*$ in
Proposition~\ref{i:exact-pushforward}.\ref{p:stalks}.  The resulting
complex of sheaves on~$Q^\con$ is a resolution of a complex isomorphic
to~$\EE^\spot$ by Proposition~\ref{p:stalks}.\ref{i:same-sheaf}
and~\ref{p:stalks}.\ref{i:exact-pushforward}.  The upsets or downsets
in the summands of the resolution may as well be assumed open or
closed, respectively, by Propositions~\ref{p:upset}
or~\ref{p:downset}.  The proof is concluded by pulling back the
resolution from~$Q^\con$ to~$Q^\ord$ via the equivalence of
Theorem~\ref{t:microsupport}.
\end{proof}

\begin{remark}\label{r:compact}
Theorem~\ref{t:res} assumes compact support to get finite instead of
locally finite subdivisions.  The application in
Section~\ref{sub:strat} to constructible sheaves without any
assumption of compact support yields a locally finite subdivision by
reducing to the case of compact support.
\end{remark}

\begin{remark}\label{r:semialgebraic}
The final sentence of Theorem~\ref{t:res} is true with ``polyhedral''
and ``PL'' all replaced by ``semialgebraic'', with the same proof, as
long as the definitions of these semialgebraic concepts in the
constructible sheaf setting are made appropriately.  The semialgebraic
constructible sheaf versions are not treated here because they are not
relevant to the conjectures proved in Section~\ref{sub:strat}.
\end{remark}

\subsection{Stratifications from constructible sheaves}\label{sub:strat}%

\begin{cor}[{\cite[Conjecture~3.20]{kashiwara-schapira2019}}]\label{c:PL}
Fix $Q$ satisfying Hypothesis~\ref{h:full-closed-cone} with~$Q_+$
polyhedral.  If $\FF^\spot$ is a PL object in the derived category of
compactly supported constructible sheaves on~$Q^\ord$ with
microsupport contained in the negative polar cone~$Q_+^\vee$ then the
isomorphism class of~$\FF^\spot$ is represented by a complex that is a
finite direct sum of constant sheaves on bounded polyhedra that are
locally closed in the conic topology.
\end{cor}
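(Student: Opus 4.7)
The plan is to apply Theorem~\ref{t:res} to $\FF^\spot$ to obtain a finite PL upset resolution, and then cut that resolution down to a bounded polyhedral region in order to turn each unbounded open-upset summand into a bounded polyhedron locally closed in the conic topology.

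First I would invoke Theorem~\ref{t:res} to obtain a finite PL upset resolution $\UU^\spot \to \FF^\spot$ in which each term $\UU^i$ is a finite direct sum of sheaves $\kk_{U_j^i}$ for open PL upsets $U_j^i \subseteq Q$. Each such open upset is open in the conic topology, and so is already locally closed there, but it is unbounded; a further localization produces the desired bounded pieces. Because $\FF^\spot$ has compact support $K$ and $Q_+$ is full and polyhedral, I would choose elements $a,b \in Q$ such that $K$ is contained in the bounded open set $(a + Q_+^\circ) \cap (b - Q_+^\circ)$, and set
$$
  B \;=\; (a + Q_+^\circ) \;\cap\; (b - Q_+).
$$
Then $B$ is a bounded PL polyhedron that is locally closed in the conic topology, being the intersection of the open upset $a+Q_+^\circ$ (open in the conic topology) with the closed downset $b-Q_+$ (closed in the conic topology), and the ordinary-topology interior of $B$ contains $K$.

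Let $i: B \into Q^\ord$ denote the locally closed inclusion. The functor $i_! i^{-1}$ is exact (it factors through the exact functors for a closed inclusion and an open inclusion), and the canonical counit $i_! i^{-1} \FF^\spot \to \FF^\spot$ is an isomorphism because $\FF^\spot$ is supported in the ordinary interior of~$B$. Applying $i_! i^{-1}$ to the resolution $\UU^\spot \to \FF^\spot$ therefore yields a quasi-isomorphism $i_! i^{-1} \UU^\spot \to \FF^\spot$, and each term $i_! i^{-1} \UU^i$ is a finite direct sum of sheaves $i_! i^{-1} \kk_{U_j^i} = \kk_{U_j^i \cap B}$. Each subset $U_j^i \cap B$ equals $(U_j^i \cap (a + Q_+^\circ)) \cap (b - Q_+)$, the intersection of an open PL upset (open in the conic topology) with a closed PL downset (closed in the conic topology), contained in the bounded set $B$; hence it is a bounded PL polyhedron locally closed in the conic topology, and the complex $i_!i^{-1}\UU^\spot$ has the required form.

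The main technical points, both routine, are the two identifications: that for the locally closed inclusion $i$ the exact functor $i_!i^{-1}$ sends the open-upset sheaf $\kk_U$ to $\kk_{U \cap B}$ (a stalkwise check, using that extension by zero from the locally closed set $U \cap B$ and restriction-then-extension through $B$ agree), and that the counit $i_!i^{-1}\FF^\spot \to \FF^\spot$ is an isomorphism precisely when $\FF^\spot$ is supported in $B$. The remaining ingredient is the existence of $a$ and $b$ with the stated property, which is immediate from the compactness of $K$ together with the fullness of $Q_+$.
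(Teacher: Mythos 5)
Your argument is correct and follows the same overall strategy as the paper's: invoke Theorem~\ref{t:res} to obtain a finite PL upset resolution, then truncate it to a bounded region via the exact functor $i_! i^{-1}$, noting that the counit is an isomorphism because $\FF^\spot$ is supported inside the region. The one place your write-up is more careful than the paper's one-line argument concerns the choice of truncation region. The paper restricts each indicator summand to the support of~$\FF^\spot$ and extends by~$0$; but the support is merely an ordinary-closed compact PL set, not in general locally closed in the conic topology, so intersecting a conic-open upset $U^i_j$ with it is not obviously of the form required by the conclusion. Your region $B = (a+Q_+^\circ) \cap (b-Q_+)$ is the intersection of a conic-open upset with a conic-closed downset, hence conic-locally-closed by construction, bounded because $Q_+$ is pointed and polyhedral, and a convex polyhedron; each $U^i_j \cap B$ then inherits all three properties. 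So your proof matches the paper's idea while making explicit the detail that puts the truncated summands in the class demanded by the statement.
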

\begin{proof}
The statement would directly be a special case of Theorem~\ref{t:res}
were it not for the boundedness hypothesis on the polyhedra, since
either a PL upset or PL downset resolution would satisfy the
conclusion.  That said, boundedness is easy to impose:
since~$\FF^\spot$ has compact support, and the resolution has
vanishing homology outside of the support of~$\FF^\spot$, each upset
or downset sheaf can be restricted to the support of~$\FF^\spot$ and
extended by~$0$.
\end{proof}

\begin{cor}[{\cite[Conjecture\,3.17]{kashiwara-schapira2017}}]\label{c:strat}
Fix $Q$ satisfying Hypothesis~\ref{h:full-closed-cone}.  If a
compactly supported derived sheaf with microsupport in the negative
polar cone~$Q_+^\vee$ is subanalytically constructible, then its
support has a subordinate conic stratification.
\end{cor}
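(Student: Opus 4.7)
The plan is to extract a conic stratification from the upsets appearing in a finite subanalytic upset resolution of~$\FF^\spot$ afforded by Theorem~\ref{t:res}. The construction mirrors Theorem~\ref{t:tame}.\ref{i:uptight-encoding}: the uptight regions associated with those upsets will be the desired strata, because open upsets in~$Q^\ord$ are open in~$Q^\con$ as well, so the resulting uptight regions are automatically locally closed in the conic topology.

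First I would apply Theorem~\ref{t:res} to obtain a finite subanalytic upset resolution $\UU^\spot \to \FF^\spot$, whose terms are finite direct sums of extensions by zero of rank-$1$ constant sheaves on open subanalytic upsets $U_1,\dots,U_k \subseteq Q^\ord$. Setting $\Upsilon = \{U_1,\dots,U_k\}$, the associated uptight subdivision of~$Q$ (Definition~\ref{d:uptight-region}) has at most~$2^k$ regions, each a Boolean combination of the~$U_i$'s, hence subanalytic. By Remark~\ref{r:iso-uptight}, each uptight region~$R$ equals $\bigl(\bigcap_{U_i \supseteq R} U_i\bigr) \cap \bigcap_{U_i \not\supseteq R}(Q \minus U_i)$, an intersection of open subsets of~$Q^\con$ with closed subsets of~$Q^\con$, so $R$ is locally closed in~$Q^\con$. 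Next I would verify that every cohomology sheaf of~$\FF^\spot$ is locally constant of finite rank on each such~$R$: since $R$ lies entirely inside or entirely outside each~$U_i$, every summand of $\UU^i|_R$ is either the constant rank-$1$ sheaf on~$R$ or zero, and exactness of restriction forces $\UU^\spot|_R \to \FF^\spot|_R$ to remain a quasi-isomorphism, exhibiting each cohomology sheaf of $\FF^\spot|_R$ as a subquotient of constant sheaves of finite rank on~$R$.

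In particular every uptight region lies entirely inside or entirely outside the support~$S$ of~$\FF^\spot$, so the regions contained in~$S$ form a finite collection of pairwise disjoint subanalytic subsets, locally closed in~$Q^\con$, whose union equals~$S$; since~$S$ is closed in~$Q^\ord$, the union of their closures is~$S$ as well, producing the required conic stratification. The only step that is not mere bookkeeping---and thus the main obstacle---is the passage from the combinatorial constancy captured by the uptight subdivision to honest local constancy of the cohomology sheaves of~$\FF^\spot$ on each stratum, which is handled by exactness of the restriction functor together with the finiteness of~$\UU^\spot$.
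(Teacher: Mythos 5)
Your proof is essentially the approach taken in the paper: apply Theorem~\ref{t:res} to obtain a finite subanalytic upset resolution, then form the Boolean atoms of the birth upsets (your ``uptight subdivision'' is, by Remark~\ref{r:common-refinement}, precisely the ``common refinement of the partitions $Q = U \cupdot (Q\minus U)$'' that the paper uses), observe that each such region is subanalytic and locally closed in~$Q^\con$, and conclude by checking constancy of the cohomology on each region.

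Two small points, one cosmetic and one substantive.  The cosmetic one: the paper first invokes Remark~\ref{r:constructible-sheaf} to replace the given derived sheaf by an actual complex of constructible sheaves before Theorem~\ref{t:res} can be applied; you implicitly use the same replacement without flagging it.  The substantive one is your phrase ``exhibiting each cohomology sheaf of $\FF^\spot|_R$ as a subquotient of constant sheaves of finite rank on~$R$'': being a subquotient of a constant sheaf does \emph{not} force local constancy (the extension by zero of a constant sheaf on a proper open subset is a subsheaf of a constant sheaf that is not locally constant).  What actually saves the argument---and what the paper's proof implicitly relies on---is that the differentials in the upset resolution have connected components (Definition~\ref{d:connected-homomorphism}), so after restriction to the region~$R$ each differential is multiplication by a fixed scalar matrix.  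Thus $\UU^\spot|_R$ is a complex of constant sheaves with \emph{constant} differentials, and the cohomology of such a complex is itself a constant sheaf, not merely a subquotient.  With that repair your proof matches the paper's.
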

\begin{proof}
Part~(ii) in the proof of \cite[Theorem~3.17]{kashiwara-schapira2018}
reduces to the case where the support of the given derived sheaf is
compact.  The argument is presented in the case where $Q$ is
polyhedral and the derived sheaf is~PL, but the argument works
verbatim for $Q$ satisfying Hypothesis~\ref{h:full-closed-cone},
without any polyhedral or PL assumptions, because the requisite lemma,
namely \cite[Lemma~3.5]{kashiwara-schapira2018}---and indeed, all of
\cite[\S3.1]{kashiwara-schapira2018}---is stated and proved in
this non-polyhedral generality.  So henceforth assume the given
derived sheaf has compact support.

Remark~\ref{r:constructible-sheaf} allows the assumption that the
given derived sheaf is represented by a complex~$\FF^\spot$ of
constructible sheaves.  Theorem~\ref{t:res} produces a subanalytically
indicator resolution, which for concreteness may as well be an upset
resolution.  Each upset that appears as a summand in the resolution
partitions~$Q$ into the upset itself, which is open subanalytic, and
its complement, which is a closed subanalytic downset.  The common
refinement of the partitions induced by the finitely many open
subanalytic upsets in the resolution and their closed subanalytic
downset complements is a partition of~$Q$ into finitely many strata
such that
\begin{itemize}
\item%
each stratum is subanalytic and locally closed in the conic topology,
and

\item%
the restriction of~$\FF^\spot$ to each stratum has constant homology.
\end{itemize}
The strata with nonvanishing homology form the desired conic
stratification.
\end{proof}

\begin{remark}\label{r:lambda}
The reference in \cite[Conjecture\,3.17]{kashiwara-schapira2017} to a
cone~$\lambda$ contained in the interior of the positive cone union
the origin appears to be unnecessary, since (in the notation there)
any $\gamma$-stratification is automatically a
$\lambda$-stratification by
\cite[Definition~3.15]{kashiwara-schapira2017} and the fact that
$\lambda \subseteq \gamma$.
\end{remark}

\begin{remark}\label{r:flexibility}
One main point of Corollary~\ref{c:strat} is that, while the notion of
a sheaf with microsupport contained in the negative polar cone
of~$\cQ_+$ is equivalent to the notion of a sheaf in the conic
topology, the notion of constructibility has until now only been
available on the microsupport side, where simplices from arbitrary
subanalytic triangulations achieve constancy of the sheaves in
question.  One way to interpret Corollary~\ref{c:strat} is that
constructibility can be detected entirely with the more rigid conic
topology, without the flexibility of appealing to arbitrary
subanalytic triangulations.
\end{remark}

\addtocontents{toc}{\protect\setcounter{tocdepth}{2}}

\vspace{-1.8ex}

\end{document}